\newcommand{\R}{{\mathbb R}}
\newcommand{\E}{{\mathbb E}}\makeatletter
\newcommand{\N}{{\mathbb N}}
\providecommand{\algorithmname}{Algorithm}
\theoremstyle{definition}
\newtheorem*{condition*}{\protect\conditionname}
\theoremstyle{plain}
\newtheorem{thm}{\protect\theoremname}[section]
\theoremstyle{plain}
\newtheorem{prop}[thm]{\protect\propositionname}
\theoremstyle{definition}
\theoremstyle{plain}
\newtheorem{lem}[thm]{\protect\lemmaname}
\theoremstyle{plain}
\newtheorem{cor}[thm]{\protect\corollaryname}
\theoremstyle{plain}
\newtheorem{remark}[thm]{\protect\remarkname}
\theoremstyle{plain}
\theoremstyle{plain}
\newtheorem*{assumption*}{\protect\assumptionname}
\numberwithin{equation}{section}
\providecommand{\assumptionname}{Assumption}
\providecommand{\conditionname}{Condition}
\providecommand{\corollaryname}{Corollary}
\providecommand{\definitionname}{Definition}
\providecommand{\lemmaname}{Lemma}
\providecommand{\propositionname}{Proposition}
\providecommand{\theoremname}{Theorem}
\providecommand{\remarkname}{Remark}
\providecommand{\examplename}{Example}
\begin{document}
\title{Weak and strong error analysis for mean-field rank-based particle approximations of one dimensional viscous scalar conservation laws}
\author{O. Bencheikh and B. Jourdain\thanks{Cermics, \'Ecole des Ponts, INRIA, Marne-la-Vall\'ee, France. E-mails : benjamin.jourdain@enpc.fr, oumaima.bencheikh@enpc.fr. The authors would like to acknowledge financial support from Universit\'e Mohammed VI Polytechnique.}}
\maketitle

\begin{abstract}
In this paper, we analyse the rate of convergence of a system of $N$ interacting particles with mean-field rank-based interaction in the drift coefficient and constant diffusion coefficient. We first adapt arguments by Kolli and Shkolnikhov \cite{ShkoKol} to check trajectorial propagation of chaos with optimal rate $N^{-1/2}$ to the associated stochastic differential equations nonlinear in the sense of McKean. We next relax the assumptions needed by Bossy \cite{MBoss} to check the convergence in $L^1\left(\R\right)$ with rate ${\mathcal O}\left(\frac{1}{\sqrt N} + h\right)$ of the empirical cumulative distribution function of the Euler discretization with step $h$ of the particle system to the solution of a one dimensional viscous scalar conservation law. Last, we prove that the bias of this stochastic particle method behaves as ${\mathcal O}\left(\frac{1}{N} + h\right)$. We provide numerical results which confirm our theoretical estimates.
\end{abstract}


\section{Introduction}

The order of weak convergence in terms of the number $N$ of particles for the approximation of diffusions nonlinear in the sense of McKean solving
\begin{equation}
   X_t=X_0+\int_0^t\varsigma(s,X_s,\mu_s)\,dW_s+\int_0^t\vartheta(s,X_s,\mu_s)\,ds\mbox{ with $\mu_s$ denoting the probability distribution of $X_s$},\label{edsnlg}
\end{equation}
by the systems of $N$ interacting particles
\begin{equation}
   \check X^{i,N}_t=\check X^i_0+\int_0^t\varsigma(s,\check X^{i,N}_s,\check \mu^N_s)\,dW^i_s+\int_0^t\vartheta(s,\check X^{i,N}_s,\check \mu^N_s)\,ds,\;i\in\{1,\hdots,N\}\mbox{ with }\check\mu^N_s=\frac{1}{N}\sum_{i=1}^N\delta_{\check X^{i,N}_s},\label{syspart}
\end{equation}
has been recently investigated in several papers \cite{Mischler,BenJou,Chassa,ChaudruFrikha}.
Here $\left(W_t\right)_{t\ge 0}$ is a $d$-dimensional Brownian motion independent of the initial $\R^n$-valued random vector $X_0$, $\left(W^i,\check X^i_0\right)_{i\ge 1}$ are i.i.d. copies of $\left(W,X_0\right)$, $\varsigma:[0,T]\times\R^n\times{\mathcal P}(\R^n)\rightarrow \R^{n\times d}$ and $\vartheta:[0,T]\times\R^n\times{\mathcal P}(\R^n)\rightarrow \R^{n}$ with ${\mathcal P}(\R^n)$ denoting the space of Borel probability distributions on $\R^n$.
Typically, under some regularity assumptions, the weak error (or bias) $\displaystyle \E\left[\varphi\left(\check X^{1,N}_T\right)-\varphi\left(X_T\right)\right]=\E\left[\int_{\R^n}\varphi(x)\check\mu^N_T(dx)\right]-\int_{\R^n}\varphi(x)\mu_T(dx)$ for test functions $\varphi:\R^n\to\R$ is of order $N^{-1}$. On the other hand, it is well known since \cite{Szn91} that the strong error $\max\limits_{1\le i\le N}\E\left[\sup_{t\in[0,T]}\left|\check X^{i,N}_t-X^i_t\right|\right]$ where $(X^i_t)_{t\in[0,T]}$ solves \eqref{edsnlg} with $(X_0,W)$ replaced by $(X^i_0,W^i)$ is of order $N^{-1/2}$. From a numerical perspective, this implies that simulating $N$ independent copies of the system with $N$ particles leads to a bias and a statistical error both of order $N^{-1}$ which is also the order of the global error resulting from one single simulation of the system with $N^2$ particles. When the computation time of the interaction is quadratic, then the cost of these $N$ copies is of order $N^3$ compared to the order $N^4$ of the computation cost of the system with $N^2$ particles.

In Theorem 6.1 \cite{Mischler}, Mischler, Mouhot and Wennberg prove that for $\varsigma$ uniformly elliptic and not depending on the time and measure arguments, $\displaystyle \sup_{t\in[0,T]}\left|\E\left[\varphi\left(\check X^{1,N}_t\right)\right]-\int_{\R^n}\varphi(x)\mu_t(dx)\right|\le \frac{C}{N}$ when $\varphi$ is Lispchitz and has some Sobolev regularity and $\vartheta(t,x,\mu)=Ax+\int U(x-y)\mu(dy)$ for some constant matrix $A$ and some function $U$ with Sobolev regularity.
In \cite{BenJou}, we consider the case of interaction through moments: 
$\left(\begin{array}{c}\varsigma \\\vartheta \end{array}\right)(s,x,\mu)=\left(\begin{array}{c}\sigma\\b \end{array}\right)\left(s,\displaystyle \int_{\R^n}\alpha(x)\mu(dx),x\right)$. When $\alpha:\R^n\to\R^p$, $\sigma:[0,T]\times\R^p\times \R^n\to\R^{n\times d}$, $b:[0,T]\times\R^p\times \R^n\to\R^{n}$ and $\varphi:\R^n\to\R$ are twice continuously differentiable with bounded derivatives and Lipschitz second order derivatives and $\sigma\sigma^*$ is globally Lipschitz, we obtain: $$\exists \, C<\infty,\;\forall h\in[0,T],\;\forall N\in\N^*,\quad \sup_{t\in[0,T]}\left|\E\left[\varphi\left(\check X^{1,N,h}_t\right)\right]-\int_{\R^n}\varphi(x)\mu_t(dx)\right|\le C\left(\frac{1}{N}+h\right)$$ where $\check X^{i,N,0}_t$ denotes the particle system \eqref{syspart} and $\check X^{i,N,h}_t$ its Euler discretization with step $h$ when $h>0$.

When $n=d$, in Theorem 2.17 \cite{Chassa}, Chassagneux, Szpruch and Tse  prove the expansion of the bias $$\displaystyle \E\left[\Phi\left(\check \mu^N_T\right)\right]-\Phi\left(\mu_T\right)=\sum_{j=1}^{k-1}\frac{C_j}{N^j}+{\mathcal O}\left(\frac{1}{N^k}\right),$$ for time-homogeneous coefficients $\varsigma$ and $\vartheta$, $(2k+1)$-times differentiable with respect to both the spatial coordinates and the probability measure argument (for the notion of lifted differentiability introduced by Lions in his lectures at the Coll\`ege de France \cite{cardaliaguet}) with $\varsigma$ bounded and $X_0$ admitting a finite moment of order $(2k+1)$. They assume the same regularity on the test function $\Phi$ on the space of probability measures on $\R^d$ which is possibly nonlinear: $\Phi(\mu)$ is not necessarily of the form $\int_{\R^d}\varphi(x)\mu(dx)$.
In Theorem 3.6 \cite{ChaudruFrikha}, under uniform ellipticity, Chaudru de Raynal and Frikha prove $\left|\E\left[\Phi\left(\check \mu^N_T\right)\right]-\Phi\left(\mu_T\right)\right|\le \frac{C}{N}$ when $\Phi$ has bounded and H\"older continuous first and second order linear functional derivatives and $\varsigma\varsigma^*$ and $\vartheta$ are bounded and globally H\"older continuous with respect to the spatial variables and have bounded and H\"older continuous first and second order linear functional derivatives with respect to the measure argument. Notice that the existence of a linear functional derivative requires less regularity than the Fr\'echet differentiability of the lift since the lifted derivative is the gradient of the linear functional derivative with respect to the spatial variables.\\

Our aim in the present paper is to check that the ${\mathcal O}\left(\frac{1}{N}+h\right)$ behaviour of the weak error for the Euler discretization with step $h$ of the system with $N$ particles generalizes to a stochastic differential equation with an even discontinuous drift coefficient. This SDE is one-dimensional ($n=d=1$) and has a constant diffusion coefficient $\varsigma(s,x,\mu)=\sigma$ for $\sigma>0$. The drift coefficient writes $\vartheta(s,x,\mu)=\lambda(\mu((-\infty,x]))$ where $\R\times{\mathcal P}(\R)\ni(x,\mu)\mapsto \mu\left((-\infty,x]\right)$ is not even continuous and $\lambda$ is the derivative of a $C^1$ function $\Lambda:[0,1] \to \R$:
\begin{equation}\label{edsnl}
  \left\{
      \begin{aligned}
        \displaystyle &X_t = X_0 + \sigma W_t + \int_0^t \lambda\left(F(s,X_s)\right)\,ds,\quad t\in[0,T]\\
        &F(s,x) = \mathbb{P}\left(X_s \le x\right),\;\forall (s,x)\in[0,T]\times\R.\\
      \end{aligned}
    \right.
\end{equation}
We denote by $m$ the probability distribution of $X_0$ and by $F_0$ its cumulative distribution function. According to Section \ref{secmainres} in the paper \cite{BossTal} specialized to the case $\Lambda(u) = u^2/2$ and Proposition 1.2 and Theorem 2.1 \cite{JDP} for a general function $\Lambda$, weak existence and uniqueness hold for the SDE \eqref{edsnl}. By \cite{Veret}, it actually admits a unique strong solution. For $t>0$, by the Girsanov theorem, the law $\mu_t$ of $X_t$ admits a density  $p(t,x)$ with respect to the Lebesgue measure (see Lemma \ref{ctyF} below). The function $p(t,x)$ is a weak solution to the Fokker-Planck equation $ \partial_t p(t,x) + \partial_x \left(\lambda(F(t,x)) p(t,x)\right)  = \frac{\sigma^2}{2} \partial_{xx}p(t,x)$. By integration with respect to the spatial variable $x$, we deduce that $F(t,x)$ is a weak solution to the following viscous conservation law:
\begin{equation}\label{vpde}
  \left\{
      \begin{aligned}
        &\partial_t F(t,x) + \partial_x\Big(\Lambda(F(t,x))\Big) = \frac{\sigma^2}{2}\partial_{xx} F(t,x),\\
        &F_0(x) = m \left((-\infty,x]\right).\\
      \end{aligned}
    \right.
  \end{equation}
  
The corresponding particle dynamics is 
\begin{align}\label{PSDEbis}
  \displaystyle  \breve X^{i,N}_t = X^{i}_0 + \sigma W^{i}_t + \int_0^t \lambda\left(\frac 1 N\sum \limits_{j=1}^{N} \mathbf{1}_{ \left\{  \breve X^{j,N}_s \leq \breve X^{i,N}_s \right\} } \right)\,ds, \quad 1\le i\le N,\;t\in[0,T].
\end{align}
As for the initial positions $\left(X^{i}_0\right)_{i \ge 1}$, we will consider both cases of the random initialization ($\left(X^i_0\right)_{i\ge 1}$ i.i.d. according to $m$) and an optimal deterministic initialization which will be made precise in Section \ref{secmainres}.

In fact, for $1\le i\le N$, the coefficient $\lambda(i/N)$ is close to
\begin{align}\label{driftPart}
    \lambda^{N}(i) = N \left( \Lambda\left(\frac{i}{N}\right) - \Lambda\left(\frac{i-1}{N}\right) \right)
\end{align}
so that the dynamics is close (see Corollary \ref{corec2systpart} for a precise statement) to the one introduced in \cite{JMal} : 
\begin{align}\label{PSDE}
  \displaystyle  X^{i,N}_t = X^{i}_0 + \sigma W^{i}_t + \int_0^t \lambda^N\left(\sum \limits_{j=1}^{N} \mathbf{1}_{ \left\{  X^{j,N}_s \leq X^{i,N}_s \right\} } \right)\,ds, \quad 1\le i\le N,\;t\in[0,T].
\end{align}
We denote by $\mu^{N}_t = \frac{1}{N}\sum \limits_{i=1}^N \delta_{X^{i,N}_t}$ the empirical measure and by $F^N(t,x)=\frac 1N\sum\limits_{i=1}^N{\mathbf 1}_{\{X^{i,N}_t\le x\}}$ the empirical cumulative distribution function at time $t$ of this second particle system.
Both dynamics are called rank-based models since the drift coefficient only depends on the rank of the $i$-th particle in the system. We call them mean-field rank-based since the interaction between the particles is also of mean-field type. The ability of rank-based models to reproduce stylized empirical properties observed on stock markets \cite{Fern}, has motivated their mathematical study \cite{BanFerKar}. By the Girsanov theorem, the stochastic differential equations \eqref{PSDEbis} and \eqref{PSDE} admit a unique weak solution and, according to  \cite{Veret}, they actually admit a unique strong solution. Under concavity of $\Lambda$, Jourdain and Malrieu \cite{JMal} prove the propagation of chaos with optimal rate $N^{-1/2}$ and study the long-time behaviour of the particle system \eqref{PSDE} and its mean-field limit \eqref{edsnl}. For the particle system \eqref{PSDEbis}, this study is extended by Jourdain and Reygner \cite{jrProp} when the diffusion coefficient is no longer constant but also of mean-field rank-based type. For this more general model and without the concavity assumption, Kolli and Shkolnikhov \cite{ShkoKol} recently proved the propagation of chaos with optimal rate $N^{-1/2}$ and convergence of the associated fluctuations when the initial probability measure $m$ admits a bounded density with respect to the Lebesgue measure. 
We choose to focus on the modified dynamics \eqref{PSDE} because when $y^1<y^2<\hdots<y^N$, then the distribution derivative of $x\mapsto \Lambda\left(\frac 1N\sum\limits_{i=1}^N{\mathbf 1}_{\{y^i\le x\}}\right)$ is $x\mapsto \frac 1N\sum\limits_{i=1}^N\lambda^N(i){\mathbf 1}_{\{y^i\le x\}}$ and not (when $\Lambda$ is not affine) $x\mapsto \frac 1N\sum\limits_{i=1}^N\lambda(i/N){\mathbf 1}_{\{y^i\le x\}}$.
For this reason, it is more closely connected to the PDE \eqref{vpde}. As our error analysis is based on a comparison of the mild formulation of the PDE \eqref{vpde} and the perturbed mild formulation satisfied by empirical cumulative distribution function of the Euler discretization of the particle system, we concentrate on \eqref{PSDE}, for which no extra error term appears in this perturbed version but we will also explain how our results extend to \eqref{PSDEbis}. This approach, also called integral representation technique, was used in \cite{TangTsai} to study certain system of particles and its limit.
Let us also introduce the Euler discretization with time-step $h \in (0,T]$ of \eqref{PSDE} : 
\begin{align}\label{edsnh}
    \displaystyle X^{i,N,h}_t &= X^i_0+ \sigma W^{i}_t +  \int_0^t \lambda^N\left(\sum \limits_{j=1}^{N} \mathbf{1}_{ \left\{ X^{j,N,h}_{\tau^{h}_{s}} \le X^{i,N,h}_{\tau^{h}_{s}} \right\}} \right)\,ds,\;1\le i\le N,\;t\in[0,T] \mbox{ where }\tau^h_s=\lfloor s/h\rfloor h.
\end{align}
The empirical cumulative distribution function of $\mu^{N,h}_t = \frac{1}{N}\sum \limits_{i=1}^N \delta_{X^{i,N,h}_t}$ is $ F^{N,h}(t,x) := \frac{1}{N} \sum \limits_{j=1}^N \mathbf{1}_{\left\{X^{j,N,h}_t \le x \right\}} $. It is natural and convenient to consider that $\tau^0_s=s$ and $\left(X^{i,N,0}_t\right)_{t\in[0,T],1\le i\le N}=\left(X^{i,N}_t\right)_{t\in[0,T],1\le i\le N}$. Using these notations, we then have by convention that $F^{N,0}(t,x) = F^{N}(t,x)$. Moreover, we will refer to  the empirical cumulative distribution function $F^{N,h}_0$ at initialization by $\hat F^N_0$ when choosing positions that are i.i.d. according to $m$ and by $\tilde F^N_0$ when choosing optimal deterministic initial positions. Finally let us define $\left(\breve X^{i,N,h}_t\right)_{t\in[0,T],1\le i\le N}$ like $\left(\breve X^{i,N}_t\right)_{t\in[0,T],1\le i\le N}$ by replacing $\lambda^N(k)$ by $\lambda(k/N)$ in \eqref{edsnh} and set $\breve\mu^{N,h}_t=\frac 1N \sum\limits_{i=1}^N\delta_{X^{i,N,h}_t}$ and $\breve F^{N,h}(t,x) := \frac{1}{N} \sum \limits_{j=1}^N \mathbf{1}_{\left\{\breve X^{j,N,h}_t \le x \right\}} $.

The paper is organized as follows. In Section $2$, we state our results. Taking advantage of the constant diffusion coefficient, we adapt the arguments in \cite{ShkoKol} to obtain propagation of chaos with optimal rate $N^{-1/2}$ for the particle systems \eqref{PSDE} and \eqref{PSDEbis} without any assumption on the initial probability measure $m$. Then we state that the strong rate of convergence of $\mu^{N,h}_t$ to $\mu_t$ for the Wasserstein distance with index one (or equivalently of $F^{N,h}(t,.)$ to $F(t,.)$ for the $L^1$ norm) is ${\mathcal O}\left(\frac{1}{\sqrt{N}}+h\right)$, a result already obtained long ago by Bossy \cite{MBoss} under more regularity assumptions on the initial probability measure $m$ and the function $\Lambda$. Our main result is that the weak rate of convergence is ${\mathcal O}(\frac{1}{N}+h)$. In Section $3$, we introduce the reordered particle system and establish the mild formulation of the PDE \eqref{vpde} satisfied by $F(t,x)$ and the perturbed version satisfied by $F^{N,h}(t,x)$. Section $4$ is dedicated to the proofs of the results in Section $2$. We finally provide numerical experiments in Section $5$ to illustrate our results. Beforehand, we introduce some additional notation.

\subsection*{Notation:}
\begin{itemize}
  \item  We denote by $L_{\Lambda} = \sup_{u \in [0,1]} \left| \lambda(u)\right|$ the Lipchitz constant of $\Lambda$. When $\lambda$ is also assumed to be Lipschitz continuous, we denote similarly its Lipschitz constant by $L_\lambda$.
\item For $ 1 \le p < \infty$, we denote by $L^p(\R)$ the space of measurable real valued functions which are $L^p$-integrable for the Lebesgue measure i.e. $f\in L^p$ if $\displaystyle \left\Vert f\right\Vert_{L^p} = \left( \int_{\R} |f(x)|^p \; dx \right)^{\frac{1}{p}}<\infty$.
\item The space $L^{\infty}(\R)$ refers to the space of almost everywhere bounded measurable real valued functions endowed with the norm $\left\Vert f\right\Vert_{L^{\infty}} = \inf\{ C \ge 0: |f(x)|\le C \text{ for almost every } x \in \R \} $.
\item The $*$ in $f*g$ stands for the standard convolution operator.
\item The set of natural numbers starting at one is denoted by $\N^*$; and the interval notation $\llbracket a,b\rrbracket$ for $a,b\in\N^*$ with $a\le b$ stands for the set of natural numbers between $a$ and $b$. 
\item For $x,y\in\R$, we set $x\wedge y=\min(x,y)$, $x\vee y=\max(x,y)$ and denote the positive part of $y$ by $y^{+} = \max(y,0)$.
\item We denote by $\Gamma$ the Gamma function defined by $\displaystyle \Gamma(x) = \int_{0}^{+ \infty}y^{x-1}\exp(-y)\,dy$ for $x\in (0,+\infty)$. 
\item To simplify the notations, when a function $g$ defined on $[0,T] \times \R$ and $x \in \R$, we may use sometimes the notation $g_0(x) := g(0,x)$.
\end{itemize}

\section{Main results}\label{secmainres}

We will state the propagation of chaos result with optimal rate $N^{-1/2}$ before giving the main results concerning the convergence of the empirical cumulative distribution function $F^{N,h}$ of the Euler discretization with time-step $h$ of the system with $N$ interacting particles towards its limit $F$. We will make an intensive use of the interpretation of the $L^1$-norm of their difference as the Wasserstein distance with index $1$ between $\mu^{N,h}_t=\frac 1N\sum\limits_{i=1}^n\delta_{X^{i,N,h}_t}$ and the law $\mu_t$ of $X_t$.

The Wasserstein distance of index $\rho\ge 1$ between two probability measures $\mu$ and $\nu$ on $\R^d$ is defined by 
$$\mathcal{W}_\rho^\rho(\mu, \nu)= \inf \left\{ \E\left[|X-Y|^\rho\right]; \text{Law}(X) = \mu, \text{Law}(Y) = \nu \right\}.$$
In dimension $d=1$, the Hoeffding-Fr\'echet or comonotone coupling given by the inverse transform sampling is optimal:
\begin{align}\label{Wasserstein}
  \displaystyle \mathcal{W}_\rho^\rho(\mu, \nu) = \int_0^1 \left|F^{-1}_{\mu}(u) - F^{-1}_{\nu}(u) \right|^\rho\,du
\end{align}
where $\R\ni x\mapsto F_{\eta}(x) = \eta\left(\left(-\infty;x\right] \right)$ and $(0,1)\ni u\mapsto F^{-1}_{\eta}(u) = \inf\left\{ x \in \R: F_{\eta}(x) \geq u\right\}$ respectively denote the cumulative distribution function and the quantile function of a probability measure $\eta$ on $\R$. Since $\int_0^1 \left|F^{-1}_{\mu}(u) - F^{-1}_{\nu}(u)\right|\,du= \int_{\R}\left|F_{\mu}(x) - F_{\nu}(x) \right|\,dx$, the $\mathcal{W}_1$ distance between two probability measures $\mu$ and $\nu$ on the real line is equal to the $L^1$-norm of the difference between the cumulative distribution functions of $\mu$ and $\nu$:

 \begin{equation}\label{w1fdr}
   \mathcal{W}_1(\mu,\nu)= \int_{\R}\left|F_{\mu}(x) - F_{\nu}(x) \right|\,dx.
\end{equation}

We will also take advantage of the dual formulation of the $\mathcal{W}_1$ distance which holds whatever $d\in\N^*$:
\begin{align}
    \mathcal{W}_1(\mu,\nu)= \sup_{\varphi \in \mathcal{L}} \left(\int_{\R^d}\varphi(x)\mu(dx)-\int_{\R^d}\varphi(x)\nu(dx) \right)\label{dualwass}
\end{align}
where $\mathcal{L}$ denotes the set of all $1$-Lipschitz function $\varphi:\R^d \to \R$. 

\subsection{Propagation of chaos}

Kolli and Shkolnikov \cite{ShkoKol} prove a quantitative propagation of chaos result at optimal rate $N^{-1/2}$ and convergence of the associated fluctuations for the particle system without time-discretization in the much more general and difficult case when the diffusion coefficient is also mean-field rank-based. Taking advantage of the constant diffusion coefficient, we are going to relax their assumptions on $\lambda$ and $m$ to prove the following result.
\begin{thm}\label{thmpropchaos}
  Let the initial positions $X^i_0$ be i.i.d. according to $m$ and $(X^i_t)_{t\ge 0}$ denote the solution to the stochastic differential equation nonlinear in the sense of McKean \eqref{edsnl} starting from $X^i_0$ and driven by $(W^i_t)_{t\ge 0}$. If $\lambda$ is Lispchitz continuous, then
   $$\forall \rho,T>0,\;\exists \, C<\infty,\;\forall N\in\N^*,\;\max_{1\le i\le N}\E\left[\sup_{t\in[0,T]}\left|X^i_t-X^{i,N}_t\right|^\rho+\sup_{t\in[0,T]}\left|X^i_t-\breve X^{i,N}_t\right|^\rho\right]\le C N^{-\rho/2}.$$
 \end{thm}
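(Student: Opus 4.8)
The plan is to couple, for each $i$, the particle $\breve X^{i,N}$ (resp.\ $X^{i,N}$) with the nonlinear process $X^i$ solving \eqref{edsnl} and driven by the same Brownian motion $W^i$ from the same starting point $X^i_0$, and then to close an estimate by a Gr\"onwall argument that absorbs a mean-field fluctuation term of size $N^{-1/2}$. Write $d^i_s:=|\breve X^{i,N}_s-X^i_s|$ and $\breve F^N(s,x):=\frac1N\sum_{j=1}^N\mathbf 1_{\{\breve X^{j,N}_s\le x\}}$, so that the drift felt by $\breve X^{i,N}_s$ is $\lambda\big(\breve F^N(s,\breve X^{i,N}_s)\big)$. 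Subtracting the two SDEs and using that $\lambda$ is $L_\lambda$-Lipschitz gives $d^i_t\le L_\lambda\int_0^t\big|\breve F^N(s,\breve X^{i,N}_s)-F(s,X^i_s)\big|\,ds$; since the integrand is nonnegative, the right-hand side also bounds $\sup_{u\le t}d^i_u$. For $X^{i,N}$ the only modification is an extra deterministic term $\le\frac{L_\lambda T}{2N}=o(N^{-1/2})$ coming from $|\lambda^N(k)-\lambda(k/N)|\le\frac{L_\lambda}{2N}$ (the mean value theorem, $\Lambda$ being $C^1$ with $\lambda$ Lipschitz); hence it suffices to treat $\breve X^{i,N}$, and by exchangeability of the particles (the $X^i_0$ being i.i.d.) only $i=1$ matters.

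I would then insert the empirical c.d.f.\ $\bar G^N(s,x):=\frac1N\sum_{j=1}^N\mathbf 1_{\{X^j_s\le x\}}$ of the i.i.d.\ nonlinear copies and telescope:
\[
\breve F^N(s,\breve X^{1,N}_s)-F(s,X^1_s)=\underbrace{\big[\breve F^N-\bar G^N\big](s,\breve X^{1,N}_s)}_{(\mathrm A)}+\underbrace{\big[\bar G^N-F\big](s,\breve X^{1,N}_s)}_{(\mathrm B)}+\underbrace{\big[F(s,\breve X^{1,N}_s)-F(s,X^1_s)\big]}_{(\mathrm C)}.
\]
Term $(\mathrm B)$ is at most $\sup_x|\bar G^N(s,x)-F(s,x)|$, and since $X^1_s,\dots,X^N_s$ are i.i.d.\ with c.d.f.\ $F(s,\cdot)$, the Dvoretzky--Kiefer--Wolfowitz inequality gives $\mathbb E\big[\sup_x|\bar G^N(s,x)-F(s,x)|^\rho\big]\le C_\rho\,N^{-\rho/2}$, uniformly in $s$. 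For term $(\mathrm C)$, Lemma \ref{ctyF} provides a density $p(s,\cdot)$ of $\mu_s$ with $\|p(s,\cdot)\|_\infty\le C\,s^{-1/2}$ (a standard consequence of Girsanov and the boundedness of the drift by $L_\Lambda$), so $F(s,\cdot)$ is $\|p(s,\cdot)\|_\infty$-Lipschitz and $|(\mathrm C)|\le C\,s^{-1/2}\,d^1_s$; this is the term that re-enters the Gr\"onwall estimate, with a time-singular but integrable kernel.

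The crux is term $(\mathrm A)$: the discrepancy between the empirical c.d.f.\ of the particle system and that of the true nonlinear copies, evaluated at the \emph{correlated} random point $\breve X^{1,N}_s$. Pointwise $\big|\mathbf 1_{\{\breve X^{j,N}_s\le y\}}-\mathbf 1_{\{X^j_s\le y\}}\big|\le\mathbf 1_{\{|y-X^j_s|\le d^j_s\}}$, so $|(\mathrm A)|\le\frac1N\sum_{j=1}^N\mathbf 1_{\{|\breve X^{1,N}_s-X^j_s|\le d^j_s\}}$, a ``crossing number''. The difficulty is that $\breve X^{1,N}_s$ and $\breve X^{j,N}_s$ are both correlated with $X^j_s$ through the mean-field interaction, so one cannot directly condition and apply a density bound; a naive union/Markov split according to whether $d^j_s$ exceeds a threshold $\delta$ only produces a contribution of order $\sqrt{\,\|p(s,\cdot)\|_\infty\,\sup_{u\le s}\mathbb E[d^1_u]\,}$, whose feedback would cap the convergence rate at $N^{-1/4}$. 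The remedy is to import, and adapt to the present constant-diffusion setting, the quantitative estimates of Kolli and Shkolnikov \cite{ShkoKol}: because the diffusion coefficient is constant (and not itself mean-field rank-based as in \cite{ShkoKol}), Girsanov also controls the conditional densities of the particles that are needed to estimate the crossing number, and one obtains for $(\mathrm A)$ a bound of the same shape as $(\mathrm B)$ and $(\mathrm C)$ --- of order $N^{-\rho/2}$ plus $C\,\|p(s,\cdot)\|_\infty$ times the running $\rho$-th moment error, with \emph{no} square root. This is the step I expect to be the main obstacle, and it is precisely where weakening the assumptions on $\lambda$ and $m$ becomes possible.

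Finally, setting $\Psi_t:=\mathbb E\big[\sup_{u\le t}(d^1_u)^\rho\big]$, raising the basic inequality to the power $\rho$, taking expectations, and using H\"older's inequality with the weight $\|p(s,\cdot)\|_\infty$ to keep the kernel integrable, the three estimates assemble into
\[
\Psi_t\ \le\ C\,N^{-\rho/2}+C\int_0^t s^{-1/2}\,\Psi_s\,ds .
\]
A singular Gr\"onwall lemma (iterating this weakly singular Volterra inequality, whose iterated kernels involve the $\Gamma$ function and stay summable up to $T$) yields $\Psi_T\le C\,N^{-\rho/2}$; the same bound for $X^{1,N}$ follows from the $O(1/N)$ closeness of the drifts noted above. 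Since $\rho,T>0$ were arbitrary, this establishes the claim.
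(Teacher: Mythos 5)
Your proof has the right skeleton (couple with the nonlinear copy driven by the same Brownian motion, invoke the density bound $\|p(t,\cdot)\|_\infty\le C t^{-1/2}$ to make $\lambda\circ F(t,\cdot)$ Lipschitz with a singular but integrable constant, and close with a singular Gr\"onwall inequality), and you correctly reduce from $X^{i,N}$ to $\breve X^{i,N}$ via $|\lambda^N(k)-\lambda(k/N)|\le L_\lambda/(2N)$. But there is a genuine gap at exactly the place you flag as the crux: you never actually control the term $(\mathrm A)=\breve F^N(s,\breve X^{1,N}_s)-\bar G^N(s,\breve X^{1,N}_s)$. Saying that one should ``import and adapt'' Kolli--Shkolnikov's conditional-density estimates for the crossing number is not a proof step; it is precisely the difficulty the theorem is supposed to overcome, and the argument as written collapses to the $N^{-1/4}$ rate you yourself warn against.

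The paper avoids this obstruction by a different and cleaner decomposition that does not go through your term $(\mathrm A)$ at all. Starting from $\E\bigl[|\lambda(F(u,\breve X^{i,N}_u))-\lambda(\breve F^N(u,\breve X^{i,N}_u))|^\rho\bigr]$, one first uses exchangeability to replace the index $i$ by an average over $j$, then replaces the original positions by the increasingly reordered ones $\breve Y^{1,N}_u\le\cdots\le\breve Y^{N,N}_u$, at which point the empirical c.d.f.\ becomes deterministic: $\breve F^N(u,\breve Y^{j,N}_u)=j/N$ almost surely (the particles are a.s.\ distinct at any fixed $u>0$ by Girsanov). Inserting and subtracting $F(u,Y^j_u)$, where $Y^1_u\le\cdots\le Y^N_u$ is the increasing reordering of the nonlinear i.i.d.\ copies, one is left with two pieces: $\lambda(F(u,\breve Y^{j,N}_u))-\lambda(F(u,Y^j_u))$, which is $\le L_\lambda\,C_{\infty,T}u^{-1/2}|\breve Y^{j,N}_u-Y^j_u|$ and, by the $W_\rho$-optimality of the comonotone coupling, is dominated after averaging by $\frac1N\sum_j|\breve X^{j,N}_u-X^j_u|^\rho$, which feeds into Gr\"onwall; and $\lambda(F(u,Y^j_u))-\lambda(j/N)$, where $(F(u,Y^j_u))_j$ are the order statistics of $N$ i.i.d.\ uniforms, whose $\rho$-th moment deviation from $j/N$ is $O(N^{-\rho/2})$ (this is the place where \cite{ShkoKol} is actually cited, for an order-statistics moment bound). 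This reordering step is the key idea your proposal is missing: it removes the randomness from the empirical c.d.f.\ evaluated at a particle position and thereby eliminates the ``crossing number'' term entirely, without any conditional-density analysis. Your term $(\mathrm B)$ (controlled by DKW) plays a role similar to the order-statistics piece, but your $(\mathrm A)$ simply does not appear in the correct argument, and as stated you have no way to bound it.
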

 The estimation $\E\left[\sup_{t\in[0,T]}\left|X^i_t-\breve X^{i,N}_t\right|^\rho\right]\le C N^{-\rho/2}$ follows from Theorem 1.6 \cite{ShkoKol} when $\lambda$ is differentiable with an H\"older continuous derivative and $m$ has a bounded density w.r.t. the Lebesgue measure and a finite moment of order $2+\varepsilon$ for some $\varepsilon>0$.
An immediate consequence of Theorem \ref{thmpropchaos} is to quantify the proximity of the two particles dynamics \eqref{PSDEbis} and \eqref{PSDE}.
\begin{cor}\label{corec2systpart}
  Assume that the initial positions $X^i_0$ are i.i.d. according to $m$ and that $\lambda$ is Lispchitz continuous. Then:
   $$\forall \rho,T>0,\;\exists C<\infty,\;\forall N\in\N^*,\;\max_{1\le i\le N}\E\left[\sup_{t\in[0,T]}\left|X^{i,N}_t-\breve X^{i,N}_t\right|^\rho\right]\le C N^{-\rho/2}.$$
\end{cor}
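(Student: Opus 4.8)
The plan is to obtain this estimate directly from Theorem~\ref{thmpropchaos} by the triangle inequality, the key point being that in that theorem the two particle systems \eqref{PSDE} and \eqref{PSDEbis} are both coupled to the \emph{same} nonlinear process $(X^i_t)_{t\in[0,T]}$ solving \eqref{edsnl}, all three dynamics being driven by the common Brownian motion $W^i$ and started from the common initial position $X^i_0$. Hence $X^i_t$ can serve as an intermediate term at no cost. (One could instead subtract the stochastic differential equations \eqref{PSDE} and \eqref{PSDEbis} directly, but this is more delicate because the two rank sums may order the particles differently; going through the common mean-field limit avoids this.)

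First I would fix $\rho,T>0$ and $1\le i\le N$ and write, for every $t\in[0,T]$,
\begin{equation*}
  \left|X^{i,N}_t-\breve X^{i,N}_t\right|\le \sup_{s\in[0,T]}\left|X^{i,N}_s-X^i_s\right|+\sup_{s\in[0,T]}\left|X^i_s-\breve X^{i,N}_s\right|.
\end{equation*}
Taking the supremum over $t\in[0,T]$ on the left-hand side, raising to the power $\rho$ and using the elementary bound $(a+b)^\rho\le 2^{(\rho-1)^{+}}\left(a^\rho+b^\rho\right)$ for $a,b\ge 0$ (convexity of $x\mapsto x^\rho$ when $\rho\ge 1$, subadditivity when $\rho\in(0,1)$), I would obtain
\begin{equation*}
  \sup_{t\in[0,T]}\left|X^{i,N}_t-\breve X^{i,N}_t\right|^\rho\le 2^{(\rho-1)^{+}}\left(\sup_{s\in[0,T]}\left|X^{i,N}_s-X^i_s\right|^\rho+\sup_{s\in[0,T]}\left|X^i_s-\breve X^{i,N}_s\right|^\rho\right).
\end{equation*}
Then I would take expectations, take the maximum over $1\le i\le N$, and invoke Theorem~\ref{thmpropchaos}---whose hypotheses ($\lambda$ Lipschitz continuous, $X^i_0$ i.i.d.\ according to $m$) are precisely those assumed here---to bound the expectation of the right-hand side by $2^{(\rho-1)^{+}}CN^{-\rho/2}$, which gives the claim with $C$ replaced by $2^{(\rho-1)^{+}}C$.

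There is no genuine obstacle: the statement is an immediate corollary. The only point worth flagging is that one must work under the coupling of Theorem~\ref{thmpropchaos}, in which $\left(X^{i,N}_t\right)_{t\in[0,T]}$, $\left(\breve X^{i,N}_t\right)_{t\in[0,T]}$ and $\left(X^i_t\right)_{t\in[0,T]}$ share the data $\left(X^i_0,W^i\right)$; once this coupling is fixed, the triangle inequality above introduces no additional error term and the estimate follows at once.
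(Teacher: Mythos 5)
Your proof is correct and is exactly what the paper intends, as the paper presents Corollary~\ref{corec2systpart} with no proof beyond the remark that it is an immediate consequence of Theorem~\ref{thmpropchaos}. The triangle inequality via the common intermediate process $X^i_t$ under the shared coupling $(X^i_0,W^i)$, together with the elementary bound $(a+b)^\rho\le 2^{(\rho-1)^+}(a^\rho+b^\rho)$, is precisely the intended argument.
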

\begin{remark}
By Theorem 2.1 in \cite{Jabir}, propagation of chaos also holds at optimal rate $N^{-1/2}$ in total variation distance.
\end{remark}

\subsection{Initialization error}

In addition to the random initialization of the particles which permits to obtain propagation of chaos, we will also consider deterministic initial positions. 
\begin{itemize}
\item When choosing a random initialization, we denote by $\hat F^N_0(x)=\frac{1}{N}\sum \limits_{i=1}^N \mathbf{1}_{\left\{X^i_0 \le x \right\}}$ and $\hat \mu^{N}_0 = \frac{1}{N}\sum \limits_{i=1}^N \delta_{X^{i}_0}$ the empirical cumulative distribution function and the empirical measure of the $N$ first random variables in the sequence $(X^i_0)_{i\ge 1}$ i.i.d. according to $m$.
\item When choosing a deterministic initialization, we seek to construct a family $x^{N}_{1}\le x_2^N\le \hdots\le x^N_N$ of initial positions minimizing the $L^1$ norm of the difference between the piecewise constant function $\tilde F^N_0(x) = \frac{1}{N}\sum \limits_{i=1}^N \mathbf{1}_{\left\{x^N_{i} \le x \right\}} $ and $F_0$. According to \eqref{Wasserstein},
  $$\int_\R\left|\tilde F^N_0(x)-F_0(x)\right|\,dx=\sum \limits_{i=1}^N \int_{\frac{i-1}{N}}^{\frac{i}{N}} \left|x^N_{i} - F^{-1}_0(u) \right|\,du.$$
Since, as remarked in \cite{jrdcds}, for $i\in\llbracket1,N\rrbracket$, $\displaystyle y\mapsto N\int_{\frac{i-1}{N}}^{\frac{i}{N}} \left|y -F^{-1}_0(u) \right|\,du$ is minimal for $y$ equal to the median $F_0^{-1}\left(\frac{2i-1}{2N}\right)$ of the image of the uniform law on $\left[\frac{i-1}{N},\frac i N \right]$ by $F_0^{-1}$, we choose $X^i_0=x_i^N=F_0^{-1}\left(\frac{2i-1}{2N}\right)$. We denote  by $\tilde \mu^{N}_0 = \frac{1}{N}\sum \limits_{i=1}^N \delta_{F_0^{-1}\left(\frac{2i-1}{2N}\right)}$ the associated empirical measure.
\end{itemize}

The next proposition, discusses assumptions under which the $L^1$-norm of the difference between $F_0$ and $\hat F_0^N$ or $\tilde F_0^N$ is of order $N^{-1/2}$. 
\begin{prop}\label{propvitsqn}
 We denote for simplicity $\displaystyle \int_{\R}|x|^{2+}m(dx)<\infty$ the existence of $\varepsilon>0$ such that $\displaystyle \int_{\R}|x|^{2+\varepsilon}m(dx)<\infty$ and $\displaystyle \int_{\R}|x|^{2-}m(dx)<\infty$ the fact that $\displaystyle \int_{\R}|x|^{2-\varepsilon}m(dx)<\infty$ for each $\varepsilon\in (0,2]$. We have the following results concerning the ${\mathcal O}(N^{-1/2})$ behaviour of the errors:
\begin{equation}
  \begin{array}{ccccc}
      & & \displaystyle{\sup_{N\ge 1} }\sqrt{N}\E \left[\mathcal{W}_1\left(\hat \mu^N_0,m \right) \right]<\infty& &\\
      & & \Updownarrow & &\\
      \displaystyle \int_{\R}|x|^{2+}m(dx)<\infty & \Rightarrow &\displaystyle \int_{\R}\sqrt{F_0(x)(1-F_0(x))}\,dx<\infty & \Rightarrow &\displaystyle \int_{\R}|x|^{2}m(dx)<\infty
  \end{array} \label{stiniid}
\end{equation}
\begin{equation}
  \begin{array}{ccccc}
      \displaystyle \int_{\R}|x|^{2}m(dx)<\infty &\Rightarrow & \displaystyle{\sup_{x\ge 0}}\,x^2\left(F_0(-x)+1-F_0(x)\right)<\infty&\Rightarrow &\displaystyle \int_{\R}|x|^{2-}m(dx)<\infty\\
      & &\Updownarrow &  &  \\
      & & \displaystyle{\sup_{N \ge 1} }\sqrt{N} \, \mathcal{W}_1 \left(\tilde \mu^N_0 ,m\right)<\infty& .
  \end{array}\label{stindet}
\end{equation}
\end{prop}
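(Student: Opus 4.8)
The plan is to treat the random initialization and the deterministic initialization separately, and in each case to reduce everything to the representation of the $\mathcal{W}_1$ distance as the $L^1$-norm of the difference of cumulative distribution functions given by \eqref{w1fdr}, together with the elementary tail bounds relating moments of $m$ to decay of $F_0(-x)$ and $1-F_0(x)$.

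For the random initialization in \eqref{stiniid}, I would start from
$$\E\left[\mathcal{W}_1\left(\hat\mu^N_0,m\right)\right]=\E\left[\int_\R\left|\hat F^N_0(x)-F_0(x)\right|\,dx\right]=\int_\R\E\left|\hat F^N_0(x)-F_0(x)\right|\,dx,$$
where the exchange of expectation and integral is justified by Tonelli. For fixed $x$, $N\hat F^N_0(x)$ is binomial with parameters $N$ and $F_0(x)$, so $\E|\hat F^N_0(x)-F_0(x)|\le \sqrt{\operatorname{Var}(\hat F^N_0(x))}=\sqrt{F_0(x)(1-F_0(x))/N}$ by Jensen. This gives immediately $\sqrt N\,\E[\mathcal{W}_1(\hat\mu^N_0,m)]\le\int_\R\sqrt{F_0(x)(1-F_0(x))}\,dx$, proving one direction of the top equivalence. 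For the reverse direction I would use a matching lower bound on $\E|\hat F^N_0(x)-F_0(x)|$: the mean absolute deviation of a centered binomial is of order its standard deviation, up to a universal constant, uniformly once $NF_0(x)(1-F_0(x))$ is bounded below, and one handles the region where it is small by a separate argument (there $F_0(x)(1-F_0(x))\le 1/N$, and the contribution of $\sqrt{F_0(x)(1-F_0(x))}$ over the corresponding $x$-range is controlled by the finiteness of the integral on the right, or conversely forces it). The two remaining implications $\int|x|^{2+}m(dx)<\infty\Rightarrow\int\sqrt{F_0(1-F_0)}\,dx<\infty\Rightarrow\int|x|^2m(dx)<\infty$ are pure tail estimates: for the first, $\sqrt{F_0(x)(1-F_0(x))}\le\sqrt{1-F_0(x)}$ for $x\ge 0$, and Markov at level $2+\varepsilon$ gives $1-F_0(x)\le C x^{-(2+\varepsilon)}$, hence $\sqrt{1-F_0(x)}\le C x^{-(1+\varepsilon/2)}$ which is integrable at $+\infty$ (symmetric treatment at $-\infty$); for the second, the layer-cake formula $\int_\R|x|^2m(dx)=\int_0^\infty 2x(F_0(-x)+1-F_0(x))\,dx$, combined with $F_0(-x)+1-F_0(x)\le C(\sqrt{F_0(-x)(1-F_0(-x))}+\sqrt{F_0(x)(1-F_0(x))})$ valid once the tails are $\le 1/2$, reduces to the finiteness of the middle integral after a change of variables.

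For the deterministic initialization in \eqref{stindet}, I would use the exact formula already displayed in the paper with $x^N_i=F_0^{-1}\big(\tfrac{2i-1}{2N}\big)$, namely
$$\mathcal{W}_1(\tilde\mu^N_0,m)=\sum_{i=1}^N\int_{\frac{i-1}{N}}^{\frac{i}{N}}\left|F_0^{-1}\!\left(\tfrac{2i-1}{2N}\right)-F_0^{-1}(u)\right|\,du.$$
Splitting each integral at the midpoint $\tfrac{2i-1}{2N}$ and using monotonicity of $F_0^{-1}$, this equals $\sum_{i=1}^N\int_{\frac{i-1}{N}}^{\frac{i}{N}}\operatorname{sgn}(u-\tfrac{2i-1}{2N})\big(F_0^{-1}(u)-F_0^{-1}(\tfrac{2i-1}{2N})\big)\,du$, which telescopes/compares to $\tfrac12\sum_i\big(F_0^{-1}(\tfrac iN)-F_0^{-1}(\tfrac{i-1}{N})\big)\cdot\tfrac1{2N}$-type quantities; the cleanest route is to bound it above and below by constant multiples of $\sum_{i=2}^{N-1}\tfrac1N\big(F_0^{-1}(\tfrac{i}{N})-F_0^{-1}(\tfrac{i-1}{N})\big)$ plus the two extreme-block terms $\int_0^{1/N}|F_0^{-1}(u)-F_0^{-1}(\tfrac1{2N})|\,du$ and $\int_{1-1/N}^1|F_0^{-1}(u)-F_0^{-1}(\tfrac{2N-1}{2N})|\,du$. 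The interior sum is a telescoping sum equal to $\tfrac1N(F_0^{-1}(\tfrac{N-1}{N})-F_0^{-1}(\tfrac1N))$, which is $O(N^{-1/2})$ iff $F_0^{-1}(1-\tfrac1N)$ and $-F_0^{-1}(\tfrac1N)$ are $O(\sqrt N)$, i.e. iff $\sup_{x\ge0}x^2(F_0(-x)+1-F_0(x))<\infty$ (setting $x=F_0^{-1}(1-\tfrac1N)$ gives $1-F_0(x)\le\tfrac1N$ and $x=O(\sqrt N)$; the converse is a rearrangement). The two extreme-block terms are of order $\int_0^{1/N}|F_0^{-1}(u)|\,du$ and $\int_{1-1/N}^1|F_0^{-1}(u)|\,du$ up to lower-order corrections, and $\sup_N\sqrt N\int_0^{1/N}|F_0^{-1}(u)|\,du<\infty$ is again equivalent to the quadratic tail condition via the substitution $u=F_0(-x)$. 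The two side implications in \eqref{stindet} are the standard facts that a finite second moment forces $x^2\,\mathbb P(|X_0|>x)\to0$ hence is bounded, and that boundedness of $x^2\mathbb P(|X_0|>x)$ gives all moments of order strictly below $2$ by layer-cake, both elementary.

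\textbf{The main obstacle} I anticipate is the lower bounds, i.e. the converse implications "$\Leftarrow$" in the two vertical equivalences: for the random case one needs a two-sided control of the mean absolute deviation of a binomial that is uniform in the success probability (delicate precisely in the regime $NF_0(x)(1-F_0(x))\asymp1$, at the edges of the support's bulk), and for the deterministic case one must show the extreme blocks really do contribute at order $N^{-1/2}$ and are not accidentally cancelled, which requires carefully isolating $\int_0^{1/N}|F_0^{-1}|\,du$ as the dominant term and not merely bounding it. The remaining implications are routine tail-integral manipulations via Markov's inequality and the layer-cake representation, together with the substitutions $u=F_0(x)$ that turn quantile integrals into distribution-function integrals.
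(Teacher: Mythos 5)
Your outline is sound on the easy (upper-bound) directions, but both converse implications in the vertical equivalences, as well as one side implication, have gaps that the proposal flags but does not close. For \eqref{stiniid}, the direction $\sup_N\sqrt N\,\E[\mathcal{W}_1(\hat\mu^N_0,m)]<\infty\Rightarrow\int_\R\sqrt{F_0(1-F_0)}\,dx<\infty$ cannot rest on a uniform two-sided comparison between the mean absolute deviation and the standard deviation of a $\mathrm{Bin}(N,p)$ random variable: that ratio degenerates when $Np\to 0$, since there $\E|B-Np|\sim 2Np$ while $\sqrt{\mathrm{Var}(B)}\sim\sqrt{Np}$. The paper does not reprove this chain; it refers to Bobkov--Ledoux, where the correct two-sided estimate has the integrand $\min\{F_0(1-F_0),\sqrt{F_0(1-F_0)/N}\}$, and the equivalence with $\int\sqrt{F_0(1-F_0)}<\infty$ then requires separating the region $\{x:F_0(x)(1-F_0(x))\le 1/(4N)\}$ from its complement — exactly the "separate argument" you defer but never supply. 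Separately, your derivation of $\int\sqrt{F_0(1-F_0)}<\infty\Rightarrow\int|x|^2\,m(dx)<\infty$ does not work as written: substituting $1-F_0(x)\le C\sqrt{F_0(x)(1-F_0(x))}$ into $\int_0^\infty 2x(1-F_0(x))\,dx$ leaves an extra factor of $x$ that no change of variables removes. The fix is elementary: $g:=\sqrt{1-F_0}$ is nonincreasing and integrable on $[x_0,\infty)$ for $F_0(x_0)\ge 1/2$, hence $x\,g(x)\le\|g\|_{L^1}$, so $x(1-F_0(x))=(xg(x))g(x)\le\|g\|_{L^1}\,g(x)$ and $\int_{x_0}^\infty x(1-F_0(x))\,dx\le\|g\|_{L^1}^2$.

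For \eqref{stindet}, the forward implication is fine, but the converse — $\sup_N\sqrt N\,\mathcal{W}_1(\tilde\mu^N_0,m)<\infty\Rightarrow\sup_{x\ge 0}x^2(F_0(-x)+1-F_0(x))<\infty$ — is where the content lies, and the block decomposition you propose does not reach it. Controlling $\sup_N\sqrt N\int_0^{1/N}|F_0^{-1}|\,du$ is not, on its own, equivalent to $\sup_{u\in(0,1/2]}\sqrt u\,|F_0^{-1}(u)|<\infty$, and the extreme-block integrand $|F_0^{-1}(\frac{1}{2N})-F_0^{-1}(u)|$ can partially cancel the contribution you want to isolate. The paper instead works in $x$-space from \eqref{w1opt}: the tail term $\int_{-\infty}^{F_0^{-1}(1/(2N))}F_0(x)\,dx$ is itself a lower bound on $\mathcal{W}_1(\tilde\mu^N_0,m)$, and the crucial step is a dyadic telescoping. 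Since $F_0\ge\frac{1}{4N}$ on $[F_0^{-1}(\frac{1}{4N}),F_0^{-1}(\frac{1}{2N})]$, one gets $\frac{1}{4N}(F_0^{-1}(\frac{1}{2N})-F_0^{-1}(\frac{1}{4N}))\le\int_{F_0^{-1}(1/(4N))}^{F_0^{-1}(1/(2N))}F_0\le\tilde C/\sqrt N$, and summing over $N=2^{k-1}$ yields $F_0^{-1}(u)\ge F_0^{-1}(1/2)-\frac{4\tilde C}{\sqrt 2-1}(u^{-1/2}-1)$ for $u\in(0,1/2]$, which translates into the quadratic tail bound. That dyadic summation is the missing idea; without it your converse remains unproved.
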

For the statements \eqref{stiniid} which concern the random initialization, we refer to \cite{bobkovledoux} Section 3.1. The statements \eqref{stindet} concerning the optimal deterministic initialization are proved in Section \ref{proofinitdet}.

Concerning the weak error, since the empirical cumulative distribution function of i.i.d. samples is unbiased, $\E\left[\hat F_0^N(x)\right]=F_0(x)$ for all $N\ge 1$ and $x\in\R$ and then
\begin{equation}
   \int_\R \left|\E\left[\hat F^N_0(x)\right]-F_0(x)\right|\,dx=0.
   \label{iidsansbiais}
\end{equation} 
As for the deterministic initialization, we have that:
\begin{align}\label{w1opt}
   \displaystyle \int_\R \left|\tilde F_0^N(x)-F_0(x)\right|\,dx = &\int_{-\infty}^{F_0^{-1}\left(\frac{1}{2N}\right)} F_0(x)\,dx + \sum_{i=1}^{N-1} \int_{F_0^{-1}\left(\frac{2i-1}{2N}\right)}^{F_0^{-1}\left(\frac{2i+1}{2N}\right)}\left|F_0(x)- \frac{i}{N} \right|\,dx +\int_{F_0^{-1}\left(\frac{2N-1}{2N}\right)}^{+\infty}\left(1-F_0(x)\right)\,dx
\end{align}
where the integrand is not greater than $1/(2N)$. When $m([c,d])=1$ with $-\infty<c\le d<\infty$, the integrand vanishes outside the interval $[c,d]$. One then easily deduces the next proposition proved in \cite{jrdcds} by using the alternative formulation:
\begin{equation}
   \displaystyle \int_\R \left|\tilde F_0^N(x)-F_0(x)\right|\,dx = \int_0^1\left|\left(\tilde F_0^N\right)^{-1}(u)-F_0^{-1}(u)\right|\,du = \sum_{i=1}^N \int_{\frac{2i-1}{2N}}^{\frac{i}{N}}\left(F_0^{-1}(u)-F_0^{-1}\left(u-\frac{1}{2N}\right)\right)\,du.\label{alterw1}
\end{equation}
\begin{prop}\label{detL1init}
  When $m$ is compactly supported i.e. there exists $ -\infty<c\le d<\infty$ such that $m([c,d])=1$, then $$\mathcal{W}_1 \left(\tilde \mu^N_0 ,m\right) \le \frac{d-c}{2N}.$$
\end{prop}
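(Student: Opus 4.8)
The plan is to read off the claim from the explicit expression of the optimal $L^1$ initialization error already recorded in \eqref{w1fdr}--\eqref{w1opt}. Concretely,
$$\mathcal{W}_1\left(\tilde\mu^N_0,m\right)=\int_\R\left|\tilde F_0^N(x)-F_0(x)\right|\,dx ,$$
and I would show that under the compact support assumption the integrand is simultaneously (i) bounded pointwise by $1/(2N)$ and (ii) supported in $[c,d]$; multiplying these two facts gives exactly $\mathcal{W}_1\left(\tilde\mu^N_0,m\right)\le (d-c)/(2N)$.

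For (i), I would use the decomposition \eqref{w1opt}: on $\bigl(-\infty,F_0^{-1}(\tfrac{1}{2N})\bigr]$ the integrand equals $F_0(x)$, which is $\le\tfrac1{2N}$ there by monotonicity of $F_0$ and the definition of the quantile function; symmetrically, on $\bigl[F_0^{-1}(\tfrac{2N-1}{2N}),+\infty\bigr)$ the integrand is $1-F_0(x)\le\tfrac1{2N}$; and on each intermediate interval $\bigl[F_0^{-1}(\tfrac{2i-1}{2N}),F_0^{-1}(\tfrac{2i+1}{2N})\bigr]$ the integrand is $\bigl|F_0(x)-\tfrac iN\bigr|$ with $F_0(x)$ confined to $\bigl[\tfrac{2i-1}{2N},\tfrac{2i+1}{2N}\bigr]$, hence again $\le\tfrac1{2N}$. (Alternatively one can argue from \eqref{alterw1}, bounding each increment $F_0^{-1}(u)-F_0^{-1}(u-\tfrac1{2N})$ by the increment of $F_0^{-1}$ over a containing interval of the partition of $(0,1)$ into $N$ pieces and telescoping, which produces the same $(d-c)/(2N)$ after treating the first and last indices using $F_0^{-1}(0^+)\ge c$ and $F_0^{-1}(1)\le d$.)

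For (ii), since $m([c,d])=1$ we have $F_0(x)=0$ for $x<c$ and $F_0(x)=1$ for $x\ge d$, and moreover each selected position $x^N_i=F_0^{-1}\!\left(\tfrac{2i-1}{2N}\right)$ lies in $[c,d]$ because $F_0^{-1}(u)\in[c,d]$ for every $u\in(0,1)$ in this case; hence $\tilde F_0^N(x)=0=F_0(x)$ for $x<c$ and $\tilde F_0^N(x)=1=F_0(x)$ for $x\ge d$, so $\tilde F_0^N-F_0$ vanishes off $[c,d]$. Combining (i) and (ii),
$$\mathcal{W}_1\left(\tilde\mu^N_0,m\right)=\int_c^d\left|\tilde F_0^N(x)-F_0(x)\right|\,dx\le\frac{d-c}{2N}.$$
There is no genuine obstacle here; the only point demanding mild care is the treatment of $F_0$ and $F_0^{-1}$ at the endpoints $c$ and $d$ (right-continuity of $F_0$, the generalized-inverse conventions, and the verification that the chosen quantiles fall inside $[c,d]$), all of which are routine.
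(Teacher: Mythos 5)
Your proof is correct and is essentially the argument the paper sketches in the paragraph preceding the proposition: from \eqref{w1opt} the integrand $|\tilde F_0^N - F_0|$ is pointwise bounded by $1/(2N)$, and under the compact support hypothesis it vanishes outside $[c,d]$ (since $F_0$ and $\tilde F_0^N$ both equal $0$ on $(-\infty,c)$ and $1$ on $[d,\infty)$, the latter because each $x^N_i=F_0^{-1}\left(\tfrac{2i-1}{2N}\right)\in[c,d]$), whence $\mathcal W_1(\tilde\mu^N_0,m)\le (d-c)/(2N)$. The parenthetical telescoping alternative via \eqref{alterw1} that you mention is precisely the route the paper attributes to \cite{jrdcds}, so your two variants together cover both approaches the text invokes.
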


\subsection{Strong and weak ${\cal W}_1$-errors}
We recall that $\lambda$ is the derivative of the $C^1$ function $\Lambda:[0,1]\to\R$.

Let us state our estimation of the strong error which is proved in Section \ref{secproofthm1}.

\begin{thm}\label{thm1}
 Assume that for some $\rho >1$, $\displaystyle \int_{\R}|x|^{\rho}m(dx) < \infty$ and assume either that the initial positions are optimal deterministic or the initial positions are i.i.d. according to $m$. Then
	$$\exists C< \infty, \forall N \in \N^{*}, \quad \sup_{t \leq T}\E\left[ \mathcal{W}_1\left(\mu^{N,0}_t,\mu_t \right)\right] \leq C \left(\E\left[ \mathcal{W}_1\left(\mu^{N}_0,m\right)\right] + \frac{1}{\sqrt{N}}\right). $$
	Moreover, if $\lambda$ is Lipschitz continuous then:
	$$\exists C< \infty, \forall N \in \N^{*}, \forall h \in (0,T], \quad \sup_{t \leq T}\E\left[ \mathcal{W}_1\left(\mu^{N,h}_t,\mu_t \right)\right] \le C \left(\E\left[ \mathcal{W}_1\left(\mu^{N}_0,m \right)\right] + \frac{1}{\sqrt{N}} + h\right). $$
\end{thm}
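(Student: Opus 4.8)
The plan is to compare the empirical cumulative distribution functions $F^{N,h}(t,\cdot)$ with the PDE solution $F(t,\cdot)$ using the mild (integral) formulation announced in the introduction, and to exploit the identity $\mathcal{W}_1(\mu^{N,h}_t,\mu_t)=\|F^{N,h}(t,\cdot)-F(t,\cdot)\|_{L^1}$ from \eqref{w1fdr}. First I would write the mild formulation of \eqref{vpde}: denoting by $G_t(x)=\frac{1}{\sqrt{2\pi\sigma^2 t}}\exp(-x^2/(2\sigma^2 t))$ the Gaussian heat kernel with variance $\sigma^2 t$, one has $F(t,x)=G_t*F_0(x)-\int_0^t \partial_x G_{t-s}*\Lambda(F(s,\cdot))(x)\,ds$ (after integrating the Fokker--Planck equation in space, the nonlinearity $\Lambda(F)$ plays the role of a flux). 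For the particle system \eqref{edsnh}, applying Itô/Tanaka or, more simply, using that $F^{N,h}(t,x)=\frac1N\sum_i \mathbf 1_{\{X^{i,N,h}_t\le x\}}$ together with the fact that each $X^{i,N,h}_t$ is, conditionally on the ordering being frozen on $[\tau^h_s,\tau^h_s+h]$, a Brownian motion with piecewise-constant drift, I would derive a \emph{perturbed} mild formulation $F^{N,h}(t,x)=G_t*F^{N,h}_0(x)-\int_0^t \partial_x G_{t-s}*\Lambda(F^{N,h}(s,\cdot))(x)\,ds + R^{N,h}(t,x)$, where $R^{N,h}$ collects (i) a martingale/fluctuation term coming from replacing the indicator dynamics by their conditional expectation, of size $O(N^{-1/2})$ in $L^1$ after taking expectations and using the independence structure, and (ii) a time-discretization term coming from $\tau^h_s$ versus $s$, of size $O(h)$. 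The fact that the drift in \eqref{PSDE} uses $\lambda^N$ rather than $\lambda(i/N)$ is precisely what makes the flux term exactly $\Lambda(F^{N,h})$ with no extra error, as noted in the introduction.

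Next I would subtract the two mild formulations and estimate $g(t):=\E\big[\|F^{N,h}(t,\cdot)-F(t,\cdot)\|_{L^1}\big]$. The initialization term contributes $\E\big[\|G_t*(F^{N,h}_0-F_0)\|_{L^1}\big]\le \E\big[\|F^{N,h}_0-F_0\|_{L^1}\big]=\E[\mathcal{W}_1(\mu^N_0,m)]$ since convolution with a probability density is an $L^1$ contraction. For the flux term, I would use the smoothing estimate $\|\partial_x G_{t-s}*\phi\|_{L^1}\le \frac{C}{\sqrt{t-s}}\|\phi\|_{L^1}$ together with the Lipschitz bound $|\Lambda(a)-\Lambda(b)|\le L_\Lambda|a-b|$ (recall $L_\Lambda=\sup_{[0,1]}|\lambda|$ from the Notation) to get $\big\|\partial_x G_{t-s}*(\Lambda(F^{N,h}(s,\cdot))-\Lambda(F(s,\cdot)))\big\|_{L^1}\le \frac{C L_\Lambda}{\sqrt{t-s}}\|F^{N,h}(s,\cdot)-F(s,\cdot)\|_{L^1}$. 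Taking expectations and bounding the remainder by $\E[\|R^{N,h}(t,\cdot)\|_{L^1}]\le C(N^{-1/2}+h)$, I arrive at $g(t)\le \E[\mathcal{W}_1(\mu^N_0,m)]+C(N^{-1/2}+h)+C L_\Lambda\int_0^t \frac{g(s)}{\sqrt{t-s}}\,ds$. A singular (fractional) Grönwall lemma then yields $\sup_{t\le T}g(t)\le C\big(\E[\mathcal{W}_1(\mu^N_0,m)]+N^{-1/2}+h\big)$, which is the claimed bound; the first (undiscretized, $h=0$) statement follows the same way without assuming $\lambda$ Lipschitz, since only $L_\Lambda<\infty$ and the $O(N^{-1/2})$ fluctuation bound are used there, and the $h$-term is absent.

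The main obstacle is the rigorous derivation and $L^1$-control of the remainder term $R^{N,h}$, in two respects. First, the fluctuation part: one must show that the difference between $\frac1N\sum_i \mathbf 1_{\{X^{i,N,h}_t\le x\}}$ and its mild-formula ``prediction'' is genuinely $O(N^{-1/2})$ in $\E\|\cdot\|_{L^1}$; this is where the propagation-of-chaos machinery (Theorem \ref{thmpropchaos}) or a direct martingale/exchangeability argument on the reordered particle system enters, and where integrability of $m$ (the hypothesis $\int|x|^\rho m(dx)<\infty$, $\rho>1$) is needed to control the tails so that the $L^1$-norm over all of $\R$, not just a compact set, stays finite and of the right order. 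Second, the discretization part requires estimating $\E[\,|X^{i,N,h}_s-X^{i,N,h}_{\tau^h_s}|\,]=O(\sqrt h)$ and, more delicately, propagating this through the rank/indicator nonlinearity to get a clean $O(h)$ (rather than $O(\sqrt h)$) bound on the flux discretization error — this typically uses that the one-dimensional marginals have bounded densities (Lemma \ref{ctyF}) so that shifting the argument of an indicator by $O(\sqrt h)$ costs $O(\sqrt h)$ pointwise but, after the regularizing convolution with $\partial_x G_{t-s}$ and integration in $s$, only $O(h)$ overall; the Lipschitz continuity of $\lambda$ is used here to compare $\lambda^N$-drifts at frozen versus current configurations. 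Handling these two contributions carefully, and making the reordered-particle bookkeeping precise, is the technical heart of the argument; the Grönwall step and the convolution estimates are routine.
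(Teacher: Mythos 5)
Your architecture agrees with the paper's — mild formulations for both $F$ and $F^{N,h}$, subtraction, the $\mathcal{W}_1=L^1$ identity, a singular Gr\"onwall step, and the correct observation that the $h=0$ case does not need $\lambda$ Lipschitz. However, two pieces of your sketch are off target and would leave the proof open. First, you invoke propagation of chaos (Theorem~\ref{thmpropchaos}) or an exchangeability argument to control the fluctuation remainder, but the paper controls $R^{N,h}(t,x)=-\frac{\sigma}{N}\sum_i\int_0^t G_{t-s}(X^{i,N,h}_s-x)\,dW^i_s$ directly by It\^o isometry and a weighted Young's inequality (Lemma~\ref{controlRN}); the hypothesis $\int_\R|x|^\rho m(dx)<\infty$, $\rho>1$, enters only to make $\int_\R(1+|x|^\rho)^{-1}dx<\infty$ and to bound $\sup_t\E\big[\tfrac1N\sum_i|X^{i,N,h}_t|^\rho\big]$ (Lemma~\ref{controlM}). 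Propagation of chaos plays no role in Theorem~\ref{thm1}.

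Second, your heuristic for the discretization error — shift the indicator argument by $O(\sqrt h)$, use bounded one-dimensional densities, and recover $O(h)$ after convolving with $\partial_x G_{t-s}$ — does not describe the mechanism the paper uses and would not obviously close. The actual discretization remainder is $E^{N,h}(t,x)=\tfrac1N\sum_i\int_0^t G_{t-s}(Y^{i,N,h}_s-x)\big[\lambda^N(i)-\lambda^N(\eta_{\tau^h_s}^{-1}(\eta_s(i)))\big]\,ds$, and the difference of $\lambda^N$-values is not directly estimable. The key idea (Lemma~\ref{ControlE}) is to reindex via the permutation $\eta_s^{-1}\circ\eta_{\tau^h_s}$ so as to replace the drift mismatch by a heat-kernel increment $G_{t-s}(Y^{i,N,h}_s-x)-G_{t-s}(X^{\eta_{\tau^h_s}(i),N,h}_s-x)$, then apply It\^o's formula relative to the last grid time; this produces factors $(t\wedge\bar\tau^h_s-s)\le h$ multiplying $\partial_x G_{t-s}$ integrated against $ds$, $d\beta^i$, $d\tilde\beta^i$, and the local-time increment $\gamma^i_s\,d|K|_s$ of the reordered system. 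The local-time piece $e_1^{N,h}$ is where $\lambda$ Lipschitz enters ($|\lambda^N(i)-\lambda^N(i-1)|\le L_\lambda/N$), and it needs two complementary bounds on $\E\big[\int_s^t\gamma^i_u\,d|K|_u\big]$ (Lemma~\ref{estimGam}) and a modulus $\|F(t,\cdot)-F(s,\cdot)\|_{L^1}\le Q(\sqrt t-\sqrt s)-L_\Lambda(t-s)\ln(t-s)$. Finally, the resulting bound on $\E\|E^{N,h}(t,\cdot)\|_{L^1}$ (Proposition~\ref{ControlFinalE}) is \emph{not} an additive $O(N^{-1/2}+h)$ as you assume: it contains $\sqrt h\,\E\|F^{N,h}(t,\cdot)-F(t,\cdot)\|_{L^1}$ and a singular convolution of the error, so the Gr\"onwall argument must first absorb the $\sqrt h$-weighted error into the left-hand side, which only works for $h\le 1/(4Z^2)$, with the complementary range handled by the crude bound from Lemma~\ref{integrF}. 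The reindexing/It\^o mechanism and the implicit (self-referential) form of the $E^{N,h}$ estimate are the missing ideas.
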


Combining the theorem with Proposition \ref{propvitsqn}, we have the following corollary:
\begin{cor}\label{cor1}
Assume that the initial positions are \begin{itemize}
\item either i.i.d. according to $m$ and $\displaystyle \int_{\R}\sqrt{F_0(x)(1 - F_0(x))}\,dx < \infty$,
  \item or optimal deterministic and $\displaystyle \sup_{x \ge 1} x\int_{x}^{+\infty} \left(F_0(-y) + 1 - F_0(y) \right)\,dy<\infty$.
\end{itemize} Then:
  $$\exists C< \infty, \forall N \in \N^{*}, \quad \sup_{t \leq T}\E\left[ \mathcal{W}_1\left(\mu^{N,0}_t,\mu_t \right)\right] \le \frac{C}{\sqrt{N}}. $$
  Moreover, if $\lambda$ is Lipschitz continuous then:
  $$\exists C< \infty, \forall N \in \N^{*}, \forall h \in (0,T], \quad \sup_{t \leq T}\E\left[ \mathcal{W}_1\left(\mu^{N,h}_t,\mu_t \right)\right] \le C\left(\frac{1}{\sqrt{N}} + h\right). $$
\end{cor}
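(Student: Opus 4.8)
The plan is to read off the two inequalities of the statement directly from Theorem \ref{thm1}, whose right-hand sides involve $\E[\mathcal W_1(\mu^N_0,m)]$, after checking that under each of the two sets of hypotheses this initialization error is $\mathcal O(N^{-1/2})$ and that the standing moment assumption $\int_\R|x|^\rho m(dx)<\infty$ for some $\rho>1$ of Theorem \ref{thm1} is satisfied. Once $\E[\mathcal W_1(\mu^N_0,m)]\le C/\sqrt N$ is known, the first inequality of Theorem \ref{thm1} gives $\sup_{t\le T}\E[\mathcal W_1(\mu^{N,0}_t,\mu_t)]\le C'/\sqrt N$, and, when $\lambda$ is moreover Lipschitz, the second inequality gives $\sup_{t\le T}\E[\mathcal W_1(\mu^{N,h}_t,\mu_t)]\le C'(1/\sqrt N+h)$, in both cases by absorbing the explicit $1/\sqrt N$ (resp.\ $1/\sqrt N+h$) term on the right.

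For the i.i.d.\ initialization, the assumption $\int_\R\sqrt{F_0(x)(1-F_0(x))}\,dx<\infty$ is, by the equivalence displayed in \eqref{stiniid}, precisely the statement $\sup_{N\ge1}\sqrt N\,\E[\mathcal W_1(\hat\mu^N_0,m)]<\infty$, and the same display also yields $\int_\R|x|^2m(dx)<\infty$, hence the moment assumption with $\rho=2$. For the optimal deterministic initialization, set $G(y):=F_0(-y)+1-F_0(y)$, which is a bounded, non-negative, non-increasing function on $[0,\infty)$, and let $M:=\sup_{x\ge1}x\int_x^{+\infty}G(y)\,dy<\infty$ denote the quantity assumed finite. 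Using monotonicity of $G$, for $x\ge2$ we have $\frac{x}{2}G(x)\le\int_{x/2}^xG(y)\,dy\le\int_{x/2}^{+\infty}G(y)\,dy\le\frac{2M}{x}$, so that $x^2G(x)\le4M$; combined with the boundedness of $G$ on $[0,2]$ this gives $\sup_{x\ge0}x^2G(x)<\infty$. By \eqref{stindet} this simultaneously yields $\sup_{N\ge1}\sqrt N\,\mathcal W_1(\tilde\mu^N_0,m)<\infty$ and $\int_\R|x|^{3/2}m(dx)<\infty$, so the moment assumption holds with $\rho=3/2$, and $\E[\mathcal W_1(\mu^N_0,m)]=\mathcal W_1(\tilde\mu^N_0,m)\le C/\sqrt N$.

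The proof is thus essentially bookkeeping: apart from quoting Theorem \ref{thm1} and Proposition \ref{propvitsqn}, the only computation needed is the elementary passage, in the deterministic case, from the integrated tail bound to the pointwise tail bound $\sup_{x\ge0}x^2G(x)<\infty$, where the monotonicity of $G$ is the key point. I do not anticipate any further difficulty.
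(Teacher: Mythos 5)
Your proposal is correct and follows the same route the paper sketches (the paper states the corollary with the one-line justification ``Combining the theorem with Proposition \ref{propvitsqn}''). You correctly isolate the two obligations inherited from Theorem \ref{thm1} --- a moment condition $\int_\R|x|^\rho m(dx)<\infty$ for some $\rho>1$ and the bound $\E[\mathcal W_1(\mu^N_0,m)]\le C/\sqrt N$ --- and verify each under both sets of hypotheses. Your elementary passage from $\sup_{x\ge1}x\int_x^\infty G(y)\,dy<\infty$ to $\sup_{x\ge0}x^2G(x)<\infty$ via monotonicity of $G$ is exactly the argument the paper uses (it is inequality \eqref{minointqueue} in the proof of \eqref{stindet}, and the equivalence is recorded in the final remark of Section~4.3), so there is no gap to report.
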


Let us now state our main result, proved in Section \ref{secproofthm2}, concerning the weak error : the $L^1$-weak error between the empirical cumulative distribution function $F^{N,h}$ of the Euler discretization with time-step $h$ of the system with $N$ interacting particles and its limit $F$ is $\mathcal O\left(\frac{1}{N} +h \right)$. We denote by $\E\left[\mu^{N,h}_t\right]$ the probability measure on $\R$ defined by
$$\int_\R\varphi(x) \E\left[\mu^{N,h}_t\right](dx)= \E\left[\int_\R\varphi(x)\mu^{N,h}_t(dx)\right]=\E \left[\frac{1}{N}\sum \limits_{i=1}^N \varphi\left( X^{i,N,h}_t\right) \right]$$for each $\varphi:\R\to\R$ measurable and bounded. The cumulative distribution function of $\E\left[\mu^{N,h}_t\right]$ is equal to $\E\left[F^{N,h}(t,x)\right]$ and $\displaystyle \mathcal{W}_1 \left(\E\left[\mu^{N,h}_t\right],\mu_t\right) = \int_\R \left|\E\left[F^{N,h}(t,x)\right]-F(t,x)\right|\,dx$.
\begin{thm}\label{thmBias}
Assume that $\lambda$ is Lipschitz continuous and the initial positions are \begin{itemize}
\item either i.i.d. according to $m$ and $\displaystyle \int_{\R}\sqrt{F_0(x)(1 - F_0(x))}\,dx < \infty$,
  \item  or optimal deterministic and $\displaystyle \sup_{x \ge 1} x\int_{x}^{+\infty} \left(F_0(-y) + 1 - F_0(y) \right)\,dy<\infty$.
\end{itemize} Then:
	$$\exists C_b< \infty, \forall N \in \N^{*}, \forall h \in [0,T],\quad \sup_{t \leq T}\mathcal{W}_1 \left(\E\left[\mu^{N,h}_t\right],\mu_t\right) \le C_b \left(\mathcal{W}_1 \left(\E\left[\mu^{N}_0\right],m\right) +  \left(\frac{1}{N} + h \right)\right). $$
\end{thm}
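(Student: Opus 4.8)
\emph{Mild formulations.} The plan is to compare the Duhamel (mild) formulation of the PDE \eqref{vpde} with a perturbed mild formulation for the empirical cumulative distribution function of the Euler scheme. Writing $(G_t)_{t>0}$ for the heat semigroup generated by $\tfrac{\sigma^2}{2}\partial_{xx}$, i.e. $G_t(x)=\tfrac{1}{\sigma\sqrt{2\pi t}}\exp(-x^2/(2\sigma^2t))$, the mild form of \eqref{vpde} reads $F(t,x)=G_t*F_0(x)-\int_0^t(G_{t-s})'*\Lambda(F(s,\cdot))(x)\,ds$, whereas applying It\^o's formula to $r\mapsto G_{t-r}*\mathbf{1}_{(-\infty,x]}(X^{j,N,h}_r)$ along the reordered particle system of Section~3 and averaging over $j$ gives, with $\tau^h_s=\lfloor s/h\rfloor h$ (and $\tau^0_s=s$),
\[
F^{N,h}(t,x)=G_t*F^{N,h}_0(x)-\int_0^t(G_{t-\tau^h_s})'*\Lambda\big(F^{N,h}(\tau^h_s,\cdot)\big)(x)\,ds+\mathcal{R}^{N,h}(t,x)+\sigma\,\mathcal{M}^{N,h}(t,x),
\]
where $\mathcal{M}^{N,h}(t,x)=-\tfrac1N\sum_{j=1}^N\int_0^t G_{t-r}(X^{j,N,h}_r-x)\,dW^j_r$ is square-integrable (using the density estimates of Lemma~\ref{ctyF}) and has zero expectation, and $\mathcal{R}^{N,h}(t,x)=-\int_0^t\tfrac1N\sum_j\lambda^N(R^j_{\tau^h_s})\big[G_{t-s}(X^{j,N,h}_s-x)-G_{t-\tau^h_s}(X^{j,N,h}_{\tau^h_s}-x)\big]\,ds$ (so $\mathcal{R}^{N,0}\equiv0$). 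The choice of $t-\tau^h_s$, rather than $t-s$, as the heat-kernel time in the frozen term is essential: applying It\^o's formula on $[\tau^h_s,s]$ to $r\mapsto G_{t-r}(X^{j,N,h}_r-x)$, the $\tfrac{\sigma^2}{2}\partial_{xx}$ correction cancels the time derivative of the kernel, leaving only a first-order term plus a martingale increment. Hence $\E[\mathcal{R}^{N,h}(t,x)]=-\int_0^t\tfrac1N\sum_j\E\big[(\lambda^N(R^j_{\tau^h_s}))^2\int_{\tau^h_s}^s(G_{t-r})'(X^{j,N,h}_r-x)\,dr\big]\,ds$, whose $L^1(\R,dx)$-norm is $\mathcal{O}(h)$ since $\|(G_{t-r})'\|_{L^1}=c(t-r)^{-1/2}$ and $\int_0^t\int_{\tau^h_s}^s(t-r)^{-1/2}\,dr\,ds\le 2h\sqrt{T}$.

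\emph{The $N^{-1}$ variance bound.} The second ingredient is $\sup_{t\le T}\int_{\R}\mathrm{Var}\big(F^{N,h}(t,x)\big)\,dx\le C\big(\tfrac1N+h\big)$. This quantity is finite to begin with because $\mathrm{Var}(F^{N,h}(t,x))\le\min\big(\mathbb{P}(X^{1,N,h}_t\le x),\mathbb{P}(X^{1,N,h}_t>x)\big)$ and $\E|X^{1,N,h}_t|^2<\infty$ (our initialization hypotheses force $\int_{\R}|x|^2\,m(dx)<\infty$ by Proposition~\ref{propvitsqn}). Subtracting $\E[F^{N,h}(t,\cdot)]$ from the perturbed mild formulation keeps the martingale; taking $L^2(\R)$-norms, squaring and taking expectations, one controls four contributions. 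The initial one $\E\|G_t*(F^{N,h}_0-\E F^{N,h}_0)\|_{L^2}^2$ is $0$ for the deterministic initialization and $\mathcal{O}(1/N)$ for the i.i.d. one (direct computation of the variance of $G_t*\hat F^N_0(x)$, integrable in $x$). The discretization one is $\E\|\mathcal{R}^{N,h}(t,\cdot)\|_{L^2}^2=\mathcal{O}(h)$, via the representation $\mathcal{R}^{N,h}=\int_0^t(G_{t-r})'*\Psi_s\,ds$-type bounds with $\Psi_s$ controlled by $\tfrac1N\sum_k|$position increment over the current step$|$. The nonlinear one is bounded, by Young's and Cauchy--Schwarz inequalities, by $C\int_0^t(t-s)^{-1/2}\int_{\R}\mathrm{Var}(\Lambda(F^{N,h}(\tau^h_s,x)))\,dx\,ds\le CL_\Lambda^2\int_0^t(t-s)^{-1/2}\int_{\R}\mathrm{Var}(F^{N,h}(\tau^h_s,x))\,dx\,ds$. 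Finally, and this is where the rate $N^{-1}$ is born, orthogonality of $W^1,\dots,W^N$ yields the exact, dimension-free identity $\int_{\R}\E[\mathcal{M}^{N,h}(t,x)^2]\,dx=\tfrac1N\int_0^t\|G_{t-r}\|_{L^2(\R)}^2\,dr=\tfrac{\sqrt t}{\sqrt\pi\,\sigma N}$. A singular Gr\"onwall lemma for the Volterra kernel $(t-s)^{-1/2}$ (whose square iterate is bounded) then closes this estimate.

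\emph{The bias Gr\"onwall.} Subtract the two mild formulations and take expectations; the martingale disappears. With $e^N(t):=\mathcal{W}_1(\E[\mu^{N,h}_t],\mu_t)=\|\E[F^{N,h}(t,\cdot)]-F(t,\cdot)\|_{L^1}$, Young's inequality and $\|(G_{t-s})'\|_{L^1}=c(t-s)^{-1/2}$ give
\[
e^N(t)\le\mathcal{W}_1(\E[\mu^N_0],m)+c\int_0^t\frac{\big\|\E[\Lambda(F^{N,h}(\tau^h_s,\cdot))]-\Lambda(F(\tau^h_s,\cdot))\big\|_{L^1}}{\sqrt{t-s}}\,ds+(\text{time-increment})+(\text{Euler consistency of the PDE})+\big\|\E[\mathcal{R}^{N,h}(t,\cdot)]\big\|_{L^1}.
\]
The Euler-consistency term — the error made by replacing $(G_{t-s})'*\Lambda(F(s,\cdot))$ by $(G_{t-\tau^h_s})'*\Lambda(F(\tau^h_s,\cdot))$ — is $\mathcal{O}(h)$ by the same calculus argument (differentiate in $s$, use the PDE and the bound $\|\partial_x p(r,\cdot)\|_{L^1}\le Cr^{-1/2}$ of Lemma~\ref{ctyF}); the term $\|\E[\mathcal{R}^{N,h}(t,\cdot)]\|_{L^1}=\mathcal{O}(h)$ was noted above; the time-increment $L_\Lambda\|F(\tau^h_s,\cdot)-F(s,\cdot)\|_{L^1}\le L_\Lambda\int_{\tau^h_s}^s(L_\Lambda+Cr^{-1/2})\,dr$ convolved against $(t-s)^{-1/2}$ is $\mathcal{O}(h)$ (separate the first time step, then use $\int_0^t(s(t-s))^{-1/2}\,ds=\pi$). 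For the remaining nonlinear term, decompose $\E[\Lambda(F^{N,h}(\tau^h_s,\cdot))]-\Lambda(F(\tau^h_s,\cdot))$ into the Jensen gap $\E[\Lambda(F^{N,h}(\tau^h_s,\cdot))]-\Lambda(\E[F^{N,h}(\tau^h_s,\cdot)])$, whose $L^1$-norm is at most $\tfrac{L_\lambda}{2}\int_{\R}\mathrm{Var}(F^{N,h}(\tau^h_s,x))\,dx\le\tfrac{L_\lambda}{2}C(\tfrac1N+h)$ by the previous step (this uses only that $\lambda$ is Lipschitz, i.e. $\Lambda\in C^{1,1}$), and $\Lambda(\E[F^{N,h}(\tau^h_s,\cdot)])-\Lambda(F(\tau^h_s,\cdot))$, bounded in $L^1$ by $L_\Lambda e^N(\tau^h_s)$. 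Collecting, $e^N(t)\le\mathcal{W}_1(\E[\mu^N_0],m)+C(\tfrac1N+h)+C\int_0^t(t-s)^{-1/2}e^N(\tau^h_s)\,ds$, and the singular Gr\"onwall lemma yields $\sup_{t\le T}e^N(t)\le C_b\big(\mathcal{W}_1(\E[\mu^N_0],m)+\tfrac1N+h\big)$, which is the assertion.

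\emph{Main obstacle.} The difficulty is conceptual rather than computational: a direct coupling to the limiting McKean--Vlasov equation (Theorem~\ref{thmpropchaos}) only produces the rate $N^{-1/2}$ for every quantity, and the genuine $N^{-1}$ rate becomes visible only by working with the mild formulation of the biased quantity $\E[F^{N,h}]$ itself. That rate is carried by the variance $\int_{\R}\mathrm{Var}(F^{N,h}(t,x))\,dx$, which is governed — through the exact identity for $\int_{\R}\E[\mathcal{M}^{N,h}(t,x)^2]\,dx$ made possible by the constant diffusion coefficient — by the $N^{-1}$ size of the fluctuation martingale, and is then transferred to the bias by the $C^{1,1}$ Jensen gap of $\Lambda$. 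The remaining technical points are: keeping the discretization remainder at $\mathcal{O}(h)$ rather than $\mathcal{O}(\sqrt h)$, which forces the $t-\tau^h_s$ device so as to avoid the non-integrable singularity $(t-s)^{-1}$ that a naive It\^o expansion would generate; controlling the $L^1$-in-space time-regularity of $F$ near $t=0$ via the gradient estimates of Lemma~\ref{ctyF}; and checking integrability of the stochastic integrals near $s=t$. These are routine once the above architecture is in place.
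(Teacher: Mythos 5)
Your overall architecture — compare mild formulations, establish $\int_\R\mathrm{Var}(F^{N,h}(t,x))\,dx\lesssim\frac1N+h$ by exploiting the orthogonality of the driving Brownian motions, transfer this to the bias through the $C^{1,1}$ Jensen/Taylor gap of $\Lambda$, then close by singular Gr\"onwall — is the same as the paper's (equation \eqref{eqFh}, Proposition \ref{normL2}, Lemma \ref{normL1ofE}, Taylor--Young). But your decomposition is genuinely different. You freeze both the heat-kernel time ($t-\tau^h_s$) and the argument of $\Lambda$ ($F^{N,h}(\tau^h_s,\cdot)$) at the previous grid point, which amounts to adding and subtracting the empirical drift $\lambda^N(R^j_{\tau^h_s})$ actually carried by particle $j$; the residual $\mathcal{R}^{N,h}$ is then handled by It\^o applied to the \emph{unordered} particles on each Euler step, so the local-time terms $\gamma^i\,d|K|$ never appear. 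The paper instead keeps $\partial_x G_{t-s}*\Lambda(F^{N,h}(s,\cdot))$ at the running time (Proposition \ref{propspde}), so its residual $E^{N,h}$ records the \emph{rank change} over a step, whose decomposition (Lemma \ref{ControlE}) requires It\^o applied to the reordered process and produces the local-time term $e_1^{N,h}$, whose control (Lemmas \ref{ControlGam}, \ref{estimGam}) is the most delicate part of the argument and additionally feeds on the strong rate of Corollary \ref{cor1}. Your version avoids all of this and your $\|\E[\mathcal{R}^{N,h}]\|_{L^1}=\mathcal{O}(h)$ is self-contained. The price is an extra ``Euler consistency of the PDE'' term, which the paper never faces because both sides of its Duhamel comparison carry the kernel $\partial_x G_{t-s}$ and the evaluation time $s$.

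That extra term is where there is a genuine gap. Controlling $\int_0^t\big\|\partial_x G_{t-\tau^h_s}*\Lambda(F(\tau^h_s,\cdot))-\partial_x G_{t-s}*\Lambda(F(s,\cdot))\big\|_{L^1}\,ds$ at rate $\mathcal{O}(h)$ requires a time-increment estimate of the form $\|F(t,\cdot)-F(s,\cdot)\|_{L^1}\lesssim(\sqrt t-\sqrt s)+(t-s)$ \emph{without} a logarithmic correction, which in turn requires $\|\partial_x p(r,\cdot)\|_{L^1}\lesssim r^{-1/2}$. You attribute this gradient estimate to Lemma \ref{ctyF}, but that lemma only asserts existence of the density; the paper establishes $\|p(r,\cdot)\|_{L^\infty}\lesssim r^{-1/2}$ (Lemma \ref{estidens}) and, for time increments, only Lemma \ref{AccroissF}, whose bound carries a $-(t-s)\ln(t-s)$ term. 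Plugging the latter into your convolution gives $\mathcal{O}(h\ln(1/h))$, not $\mathcal{O}(h)$. The missing bound $\|\partial_x p(r,\cdot)\|_{L^1}\lesssim r^{-1/2}$ is plausible and can be obtained, under the Lipschitz assumption on $\lambda$, by a bootstrap on the mild formulation of $\partial_x p$ closely parallel to the proof of Lemma \ref{estidens} (write $\partial_x p = \partial_x G_t*m-\int_0^t\partial_x G_{t-s}*\partial_x(\lambda(F)p)\,ds$, use $\|\lambda'(F)p^2\|_{L^1}\lesssim\|p\|_{L^\infty}$ and $\|\lambda(F)\partial_x p\|_{L^1}\lesssim\|\partial_x p\|_{L^1}$, then iterate the $(t-s)^{-1/2}$-kernel inequality); but this is a lemma's worth of work that you have neither stated nor proved, and the proof does not close without it. A minor side remark: your assertion that the initialization hypotheses force $\int_\R|x|^2\,m(dx)<\infty$ is wrong for the optimal deterministic case — Proposition \ref{propvitsqn} only yields $\int_\R|x|^{2-\varepsilon}\,m(dx)<\infty$ for every $\varepsilon\in(0,2]$. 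This is harmless for your argument, since the variance estimate (like the paper's Proposition \ref{normL2}) needs only a first moment, but the justification of finiteness should be phrased accordingly.
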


Combining the theorem with \eqref{iidsansbiais} and Proposition \ref{detL1init}, we obtain the following corollary:
\begin{cor}\label{corBiais}
  Assume that $\lambda$ is Lipschitz continuous and the initial positions are\begin{itemize}
    \item  either i.i.d. according to $m$ and $\displaystyle \int_\R\sqrt{F_0(x)(1-F_0(x))}\,dx<\infty$,
   \item or optimal deterministic with $m$ compactly supported.
\end{itemize}Then:
  $$\exists C_b< \infty, \forall N \in \N^{*}, \forall h \in [0,T],\quad \sup_{t \leq T}\mathcal{W}_1 \left(\E\left[\mu^{N,h}_t\right],\mu_t\right) \le C_b\left(\frac{1}{N} + h \right). $$
\end{cor}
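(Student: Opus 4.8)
The plan is to obtain Corollary~\ref{corBiais} as an immediate consequence of Theorem~\ref{thmBias}: it suffices to show that, under each of the two sets of hypotheses, the initialization term $\mathcal{W}_1\!\left(\E\left[\mu^{N}_0\right],m\right)$ appearing on the right-hand side of Theorem~\ref{thmBias} is itself of order $1/N$, so that it gets absorbed into the bound.

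First I would verify that the assumptions of Theorem~\ref{thmBias} hold in both situations. In the i.i.d.\ case this is automatic, since the hypothesis $\int_\R\sqrt{F_0(x)(1-F_0(x))}\,dx<\infty$ is precisely the one required there. In the deterministic case, assume $m([c,d])=1$ with $-\infty<c\le d<\infty$ and set $R=\max(|c|,|d|)$; then $F_0(-y)+1-F_0(y)=0$ for every $y\ge R$, so $x\int_x^{+\infty}\bigl(F_0(-y)+1-F_0(y)\bigr)\,dy=0$ for all $x\ge R$ and the supremum over $x\ge 1$ is finite. Hence Theorem~\ref{thmBias} applies in both cases.

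Next I would estimate the initialization term. For the i.i.d.\ initialization, the empirical cumulative distribution function of an i.i.d.\ sample is unbiased, $\E[\hat F^N_0(x)]=F_0(x)$ for all $x$, so $\E\left[\mu^{N}_0\right]=m$ and by \eqref{iidsansbiais} one has $\mathcal{W}_1\!\left(\E\left[\mu^{N}_0\right],m\right)=\int_\R\bigl|\E[\hat F^N_0(x)]-F_0(x)\bigr|\,dx=0$. For the optimal deterministic initialization, the initial positions are non-random, so $\E\left[\mu^{N}_0\right]=\tilde\mu^N_0$, and Proposition~\ref{detL1init} yields $\mathcal{W}_1\!\left(\E\left[\mu^{N}_0\right],m\right)=\mathcal{W}_1(\tilde\mu^N_0,m)\le\frac{d-c}{2N}$.

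Finally, plugging these estimates into Theorem~\ref{thmBias} gives, in both cases,
\[
\sup_{t\le T}\mathcal{W}_1\!\left(\E\left[\mu^{N,h}_t\right],\mu_t\right)\le C_b\left(\frac{d-c}{2N}+\frac1N+h\right)\le C_b'\left(\frac1N+h\right)
\]
for a constant $C_b'<\infty$ independent of $N\in\N^*$ and $h\in[0,T]$ (with the first term understood as $0$ in the i.i.d.\ case), which is the assertion. There is no real analytic obstacle in this argument — the only points needing a line of verification are that compact support of $m$ implies the tail condition of Theorem~\ref{thmBias} and that the expectation is trivial for a deterministic initialization; all the substance is carried by Theorem~\ref{thmBias}, Proposition~\ref{detL1init} and identity~\eqref{iidsansbiais}.
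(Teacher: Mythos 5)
Your proof is correct and follows exactly the route the paper indicates, namely combining Theorem~\ref{thmBias} with \eqref{iidsansbiais} in the i.i.d.\ case and with Proposition~\ref{detL1init} in the deterministic case. The only addition is your explicit check that compact support of $m$ implies the tail hypothesis of Theorem~\ref{thmBias}, which the paper leaves implicit.
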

Using the dual formulation \eqref{dualwass} of the Wasserstein distance, we deduce that if $\varphi:\R\to\R$ is Lipschitz continuous with constant ${\rm Lip}(\varphi)$ then  $$\forall N \in \N^{*}, \forall t,h \in [0,T],\quad \left| \E \left[\frac{1}{N}\sum \limits_{i=1}^N \varphi\left( X^{i,N,h}_t\right) \right] - \E \left[\varphi(X_t)\right]\right| \le C_b{\rm Lip}(\varphi) \left(\frac{1}{N} + h\right).$$

\begin{remark}\label{remcheck}
  For the dynamics \eqref{PSDEbis} with initial positions deterministic and given by $x^N_{i} = F^{-1}_0\left(\frac{i}{N}\right)$ when $i=1,\ldots,N-1$ and $x^N_N = F^{-1}_0\left( 1 - \frac{1}{2N}\right)$,  Bossy \cite{MBoss} proved an estimation also dealing with the supremum of the expected error between $\breve F^{N,h}(t,x)$ and $F(t,x)$  similar to the last statement in Corollary \ref{cor1}: $$\exists C< \infty, \forall N \in \N^{*}, \forall h \in (0,T],\;\sup_{t \leq T}\E\left[ \mathcal{W}_1\left(\breve \mu^{N,h}_t,\mu_t \right)\right]+ \sup_{(t,x) \in [0,T]\times \R}\E\left[ \left|\breve F^{N,h}(t,x) - F(t,x)\right| \right]\le C\left(\frac{1}{\sqrt{N}} + h\right).$$
  She assumes additional regularity on the coefficient $\Lambda$, namely that $\Lambda$ is $C^3$, and on the initial measure $m$, namely that $F_0$ is $C^2$ bounded with bounded first and second order derivatives in $x$ and that $\exists \;M, \beta>0, \; \alpha \geq 0 $ such that $|\partial_x F_0(x)| \leq \alpha \exp\left(-\beta x^2/2 \right)$ when $|x| >M$. Her proof is based on the regularity of the backward Kolmogorov PDE associated with the generator of the diffusion \eqref{edsnl}. By contrast, our approach is based on a comparison of the mild formulation of the forward in time PDE \eqref{vpde} satisfied by $F(t,x)$ and the pertubed mild formulation satisfied by $F^{N,h}(t,x)$.

In fact, all the above results hold with $\mu^{N,h}_t$ replaced by $\breve \mu^{N,h}_t$. For those concerning $\displaystyle \sup_{t \leq T}\E\left[ \mathcal{W}_1\left(\mu^{N}_t,\mu_t \right)\right]$, we just need to add the assumption that $\lambda$ is H\"older continuous with exponent $1/2$ to ensure that $\sup\limits_{1\le i\le N}\sqrt{N}\left|\lambda^N(i)-\lambda(i/N)\right|$ $<\infty$. Notice that under Lipschitz continuity of $\lambda$, we even get $\sup\limits_{1\le i\le N}N\left|\lambda^N(i)-\lambda(i/N)\right|<\infty$. See Remark \ref{mildEDSbis} below, where we outline how to adapt the proofs.
\end{remark}

\section{Dynamics of the reordered particle system and mild formulations}
The reordering of mean-field rank-based particle systems without time discretization has been first introduced in \cite{JSPA} and has proved to be a very useful tool in the study of the limit $N\to\infty$ with vanishing viscosity (the parameter $\sigma$ depends on $N$ and tends to $0$ as $N\to\infty$) \cite{JAppPro,JMelWoy} (the latter when the driving Brownian motions are replaced by symmetric $\alpha$-stable L\'evy processes with $\alpha>1$), the long time behaviour of both the particle system and its mean-field limit \cite{jrProp} and the small noise limit $\sigma\to 0$ of the particle system \cite{jrSmall}. Before deriving the dynamics of the reordering of the Euler discretization \eqref{edsnh}, let us check the existence of the density $p(t,x)$ of $X_t$ for $t>0$, which guarantees that, in the sense of distributions, $\partial_x \Lambda(F(t,x)) = \lambda(F(t,x))p(t,x)$ so that $F(t,x)$ is a weak solution of the viscous scalar conservation law \eqref{vpde}. Since the drift coefficient in \eqref{edsnl} is bounded, the Girsanov theorem easily implies the next statement.
\begin{lem}\label{ctyF}
  For $t>0$, $X_t$ admits a density $p(t,x)$ with respect to the Lebesgue measure.
\end{lem}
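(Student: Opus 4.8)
The plan is to remove the drift of \eqref{edsnl} by a Girsanov change of probability and thereby reduce the claim to the elementary fact that a nondegenerate Gaussian law has a density.

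Fix $T>0$. Since $\lambda=\Lambda'$ is continuous on the compact interval $[0,1]$, the drift process $b_s:=\lambda\!\left(F(s,X_s)\right)$ of \eqref{edsnl} is bounded by $L_\Lambda<\infty$, so Novikov's criterion $\E\!\left[\exp\!\left(\tfrac{1}{2\sigma^2}\int_0^T b_s^2\,ds\right)\right]\le\exp\!\left(\tfrac{T L_\Lambda^2}{2\sigma^2}\right)<\infty$ holds trivially. I would therefore introduce the probability measure $\mathbb{Q}$ on $\mathcal{F}_T$ with density $\exp\!\left(-\tfrac1\sigma\int_0^T b_s\,dW_s-\tfrac1{2\sigma^2}\int_0^T b_s^2\,ds\right)$ with respect to $\mathbb{P}$; by Girsanov's theorem $\mathbb{Q}\sim\mathbb{P}$ and $\tilde W_t:=W_t+\tfrac1\sigma\int_0^t b_s\,ds$, $t\le T$, is a $\mathbb{Q}$-Brownian motion for the underlying filtration, hence independent of $\mathcal{F}_0$ and in particular of $X_0$ under $\mathbb{Q}$. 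Under $\mathbb{Q}$ the equation \eqref{edsnl} then reads simply $X_t=X_0+\sigma\tilde W_t$.

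Next, fixing $t\in(0,T]$ and a Lebesgue-negligible set $A\subset\R$, I would condition on $X_0$ and use that $\sigma\tilde W_t\sim\mathcal N(0,\sigma^2 t)$ is independent of $X_0$ under $\mathbb{Q}$ to obtain
$$\mathbb{Q}(X_t\in A)=\E_{\mathbb{Q}}\!\left[\int_A\frac{1}{\sqrt{2\pi\sigma^2 t}}\exp\!\left(-\frac{(y-X_0)^2}{2\sigma^2 t}\right)dy\right]=0.$$
Since $\mathbb{P}\ll\mathbb{Q}$ on $\mathcal{F}_T$, this forces $\mathbb{P}(X_t\in A)=0$; thus the law $\mu_t$ of $X_t$ is absolutely continuous with respect to the Lebesgue measure and admits a density $p(t,\cdot)$ by the Radon--Nikodym theorem.

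There is no real obstacle here. The only inputs are the boundedness of $\lambda$ on $[0,1]$, the existence of a (weak, hence by \cite{Veret} strong) solution to \eqref{edsnl} which makes $b$ a bona fide bounded adapted process so that the stochastic exponential and Girsanov's theorem apply, and the trivial smoothing by the Gaussian kernel. In particular the argument never needs to identify the law of $X_0$ under $\mathbb{Q}$, only the conditional law of $X_t$ given $X_0$, which is why no regularity assumption on $m$ is required.
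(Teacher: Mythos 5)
Your proof is correct and is exactly the argument the paper has in mind: the paper simply remarks ``Since the drift coefficient in \eqref{edsnl} is bounded, the Girsanov theorem easily implies the next statement,'' and you have filled in precisely those details (bounded drift, Novikov, change of measure to make $X_t=X_0+\sigma\tilde W_t$ with $\tilde W$ a Brownian motion independent of $X_0$, Gaussian smoothing, and equivalence of $\mathbb P$ and $\mathbb Q$). No discrepancy.
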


Let for each $t\ge 0$, $\eta_t$ be a permutation of $\{1,\hdots,N\}$ such that $X^{\eta_t(1),N,h}_t\le X^{\eta_t(2),N,h}_t\le\hdots\le X^{\eta_t(N),N,h}_t$ and $\left(Y^{i,N,h}_t=X^{\eta_t(i),N,h}_t\right)_{i \in \llbracket1,N\rrbracket}$ denote the increasing reordering also called order statistics of $\left(X^{i,N,h}_t\right)_{i \in \llbracket1,N\rrbracket}$. Even if the empirical measures of the reordered and original positions do not coincide in general at the level of sample-paths, one has, for each $t\ge 0$, $ \frac{1}{N}\sum\limits_{i=1}^N\delta_{X^{i,N,h}_t}=\frac{1}{N}\sum\limits_{i=1}^N\delta_{Y^{i,N,h}_t}$ and therefore $\displaystyle F^{N,h}(t,x)=\frac{1}{N}\sum_{i=1}^N\mathbf{1}_{\left\{Y^{i,N,h}_t \le x \right\}}$. By the Girsanov Theorem, reasoning like in the proof of Lemma \ref{ctyF}, we show that for $t>0$, the vector $\left(X^{1,N,h}_t, X^{2,N,h}_t, \dots, X^{N,N,h}_t \right)$ admits a density with respect to the Lebesgue measure on $\R^N$ and therefore
\begin{align}\label{exchanG}
    \forall t>0, a.s., \text{ the original (resp. reordered) particles have distinct positions.}
\end{align}
The collisions are possible along time but have zero probability to occur at a fixed time $t>0$ (see e.g. \cite{Sarantsev}).
We are going to check that the function $F(t,x)$ solves a mild formulation of the PDE \eqref{vpde} and $F^{N,h}(t,x)$ solves a perturbed version of this mild formulation. To do so, it is convenient to obtain the dynamics of the reordered positions $Y^{i,N,h}_t$. 
Let $\overline{\tau}^h_s = \lceil s/h \rceil h $ denote the discretization time right after $s$. We recall that $\tau^h_s=\lfloor s/h\rfloor h$ denotes the discretization time right before $s$. We set $t_k = kh$ for $k \in \N$. For $s \in [t_k,t_{k+1})$, $\tau^h_s = t_k$ and, for $s \in (t_k, t_{k+1}]$, $\overline{\tau}^h_s = t_{k+1}$.  For $t>0$, let $\eta_t^{-1}$ denote the inverse of the permutation $\eta_t$. By \eqref{exchanG}, $a.s.$, for each $k\in\N^*$, the positions $\left(X^{i,N,h}_{t_k}\right)_{1\le i\le N}$ are distinct and for $t$ in the time-interval $[t_k,t_{k+1})$, $X^{i,N,h}_t$ evolves with the drift coefficient $\lambda^N\left(\eta_{t_k}^{-1}(i)\right)=\lambda^N\left(\eta_{\tau^h_t}^{-1}(i)\right)$. To obtain the same expression of the drift coefficient on the first time interval $[0,t_1)$ we will use from now on the convention 
\begin{equation}
   \eta_0^{-1}(i)=\sum_{j=1}^{N}{\mathbf 1}_{\left\{X^{j}_0\le X^{i}_0 \right\}} \quad \mbox{ for }1\le i\le N.\label{convinvinit}
\end{equation} 
With this convention, which is consistant with the usual definition of the inverse of a permutation only if the initial positions are distinct, we have 
$$dX^{i,N,h}_t=\sigma dW^i_t+\lambda^N\left(\eta_{\tau^h_t}^{-1}(i)\right)\,dt,\;1\le i\le N.$$
By the Girsanov theorem, we may define a new probability measure equivalent to the original one on each finite time horizon under which the processes $\left(X^{i,N,h}_t-X^i_0=W^i_t+\int_0^t\lambda^N\left(\eta_{\tau_s}^{-1}(i)\right)\,ds\right)_{t\ge 0,1\le i\le N}$ are independent Brownian motions. Applying Lemma 3.7 \cite{Szn91}, which states that under this probability measure, the reordered positions evolve as a $N$-dimensional Brownian motion normally reflected at the boundary of the simplex (also called Weyl chamber), we deduce that 
\begin{equation*}
    dY^{i,N,h}_t = \sum \limits_{j=1}^{N} \mathbf{1}_{\left \{ Y^{i,N,h}_t = X^{j,N,h}_t \right \}}\left(\sigma dW^{j}_t+\lambda^N\left(\eta_{\tau^h_t}^{-1}(j)\right)\,dt\right)+  \left(\gamma^{i}_t - \gamma^{i+1}_t \right)\,d|K|_t,\;1\le i\le N
  \end{equation*}
where the local-time process $K$ with coordinates $K^i_t=\int_0^t \left( \gamma^{i}_s - \gamma^{i+1}_s\right)\,d|K|_s$ is an $\R^N$-valued continuous process with finite variation $|K|$ such that:
\begin{align}\label{procVarFin}
  d|K|_t \; \text{a.e.} \;, \gamma^{1}_t = \gamma^{N+1}_t = 0\mbox{ and for }2\le i\le N,\; \gamma^{i}_t \geq 0 \; \text{and} \; \gamma^{i}_t\left(Y^{i,N,h}_t - Y^{i-1,N,h}_t \right) = 0.
\end{align}
Defining a $N$-dimensional Brownian motion $\left(\beta^1,\hdots,\beta^N\right)$ by $\beta^{i}_t = \sum \limits_{j=1}^{N} \displaystyle \int_0^t \mathbf{1}_{\left \{ Y^{i,N,h}_s = X^{j,N,h}_s \right \}}\,dW^{j}_s$ and using the definition of $\eta_t$ and \eqref{exchanG}, we have
\begin{equation}\label{dynamicY}
  dY^{i,N,h}_t = \sigma d\beta^{i}_t+ \lambda^N\left( \eta^{-1}_{\tau^h_t}\left(\eta_t(i)\right) \right)\,dt +  \left(\gamma^{i}_t - \gamma^{i+1}_t \right)\,d|K|_t,\;1\le i\le N.
\end{equation}
Denoting by  $G_t(x) = \exp(-\frac{x^2}{2\sigma^2t}) \Big/ \sqrt{2\pi\sigma^2t} $ the probability density function of the normal law $\mathcal{N}(0,\sigma^2t)$, we are now ready to state the mild formulation of the PDE \eqref{vpde} satisfied by $F(t,x)$ and the perturbed version satisfied by $F^{N,h}(t,x)$.

\begin{prop}\label{propspde}
For each $t\ge 0$ and each $h \in [0,T]$ , we have $dx$ a.e.:
\begin{align}
 \displaystyle F(t,x) =& \quad G_t*F_0(x) - \int_0^t \partial_xG_{t-s}*\Lambda(F(s,.))(x)\,ds,\label{milf}\\
 \text{a.s.} \quad F^{N,h}&(t,x) = \quad G_t*F^{N,h}_0(x) - \int_0^t \partial_xG_{t-s}*\Lambda(F^{N,h}(s,.))(x)\,ds - \frac{\sigma}{N}\sum \limits_{i=1}^N \int_0^t G_{t-s}(X^{i,N,h}_s -x)\,dW^{i}_s \notag \\
 &+ \frac{1}{N}\sum \limits_{i=1}^N \int_0^t G_{t-s}(Y^{i,N,h}_s -x) \left[ \lambda^N\left(i\right) - \lambda^N\left(\eta_{\tau^h_s}^{-1}(\eta_s(i))\right) \right]\,ds.\label{milfnh}
\end{align}
\end{prop}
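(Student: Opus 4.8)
The plan is to establish both identities by the integral representation technique of \cite{TangTsai}: one applies It\^o's formula along the relevant diffusion to the space--time function obtained by letting the heat kernel act backward in time on the discontinuous terminal datum $\mathbf{1}_{(-\infty,x]}$. The case $t=0$ being trivial, fix $t>0$ and, for $s\in[0,t)$, set $\phi(s,y)=\int_{-\infty}^{x}G_{t-s}(y-z)\,dz=\int_{y-x}^{+\infty}G_{t-s}(w)\,dw$. This function is smooth on $[0,t)\times\R$, bounded by $1$, solves the backward heat equation $\partial_s\phi+\frac{\sigma^2}{2}\partial_{yy}\phi=0$, satisfies $\partial_y\phi(s,y)=-G_{t-s}(y-x)$, and converges as $s\uparrow t$ to $\mathbf{1}_{\{y<x\}}$ at every $y\neq x$. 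By Lemma \ref{ctyF} and the density on $\R^N$ established just before \eqref{exchanG}, none of the random variables $X_t,X^{1,N,h}_t,\dots,X^{N,N,h}_t$ charges a given point $x$; this is what lets one identify the limits of $\phi(t-\varepsilon,\cdot)$ below, and it makes the stochastic integrals in \eqref{milfnh} well defined because, on $\{X^{i,N,h}_t\neq x\}$, $G_{t-s}(X^{i,N,h}_s-x)$ tends to $0$ fast enough as $s\uparrow t$ for $s\mapsto G_{t-s}(X^{i,N,h}_s-x)^2$ to be integrable on $[0,t]$ a.s.

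For \eqref{milf}, I would apply It\^o's formula to $\phi(s,X_s)$ on $[0,t-\varepsilon]$: the identity $\partial_s\phi+\frac{\sigma^2}{2}\partial_{yy}\phi=0$ cancels the $\partial_s\phi$-drift together with the second-order term, leaving $\phi(t-\varepsilon,X_{t-\varepsilon})=\phi(0,X_0)+\sigma\int_0^{t-\varepsilon}\partial_y\phi(s,X_s)\,dW_s-\int_0^{t-\varepsilon}G_{t-s}(X_s-x)\lambda(F(s,X_s))\,ds$. Taking expectations removes the martingale (a genuine one on $[0,t-\varepsilon]$, where the integrand is bounded), and letting $\varepsilon\downarrow0$ — the $ds$-integrand is dominated by $L_\Lambda\|G_{t-s}\|_{L^\infty}$, which is integrable near $t$, and $\E[\phi(t-\varepsilon,X_{t-\varepsilon})]\to F(t,x)$ by dominated convergence since $X_t$ has no atom at $x$ — gives $F(t,x)=\E[\phi(0,X_0)]-\int_0^t\E\big[G_{t-s}(X_s-x)\lambda(F(s,X_s))\big]\,ds$. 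A Fubini computation exploiting that $G_t$ is even yields $\E[\phi(0,X_0)]=G_t*F_0(x)$. For the integral term, fix $s>0$, use the density $p(s,\cdot)$ of $X_s$ (Lemma \ref{ctyF}) and the distributional chain rule $\partial_y\Lambda(F(s,\cdot))=\lambda(F(s,\cdot))p(s,\cdot)$ (valid because $F(s,\cdot)$ is absolutely continuous with density $p(s,\cdot)$ and $\Lambda\in C^1$) to write $\E[G_{t-s}(X_s-x)\lambda(F(s,X_s))]=\int_{\R}G_{t-s}(y-x)\,\partial_y\Lambda(F(s,y))\,dy=-\int_{\R}\partial_y G_{t-s}(y-x)\,\Lambda(F(s,y))\,dy$, the integration by parts producing no boundary term because $G_{t-s}$ decays at $\pm\infty$ while $\Lambda(F(s,\cdot))$ stays bounded; since $G_{t-s}$ is even, this equals $\partial_x G_{t-s}*\Lambda(F(s,\cdot))(x)$. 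Integrating over $s\in(0,t)$ — the norm $\|\partial_x G_{t-s}\|_{L^1}$ behaves like $(t-s)^{-1/2}$, hence is integrable — produces \eqref{milf}.

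For \eqref{milfnh}, I would run the same computation along each reordered process $Y^{i,N,h}$, using its semimartingale decomposition \eqref{dynamicY} and $d\langle Y^{i,N,h}\rangle_s=\sigma^2\,ds$, then sum over $i$ and divide by $N$. Four groups of terms arise. (i) $\frac1N\sum_i\phi(0,Y^{i,N,h}_0)=\frac1N\sum_i\phi(0,X^i_0)=G_t*F^{N,h}_0(x)$, as above. (ii) The $d\beta^i$ term: substituting $\beta^i_s=\sum_j\int_0^s\mathbf{1}_{\{Y^{i,N,h}_u=X^{j,N,h}_u\}}\,dW^j_u$ and using \eqref{exchanG} to collapse $\sum_i\mathbf{1}_{\{Y^{i,N,h}_s=X^{j,N,h}_s\}}G_{t-s}(Y^{i,N,h}_s-x)=G_{t-s}(X^{j,N,h}_s-x)$ turns it into $-\frac{\sigma}{N}\sum_i\int_0^t G_{t-s}(X^{i,N,h}_s-x)\,dW^i_s$. (iii) The drift term $-\frac1N\sum_i\int_0^t G_{t-s}(Y^{i,N,h}_s-x)\,\lambda^N\big(\eta_{\tau^h_s}^{-1}(\eta_s(i))\big)\,ds$: writing $\lambda^N(\eta_{\tau^h_s}^{-1}(\eta_s(i)))=\lambda^N(i)-\big(\lambda^N(i)-\lambda^N(\eta_{\tau^h_s}^{-1}(\eta_s(i)))\big)$ splits it into the last term of \eqref{milfnh} and $-\int_0^t\frac1N\sum_i\lambda^N(i)G_{t-s}(Y^{i,N,h}_s-x)\,ds$; since $Y^{1,N,h}_s\le\dots\le Y^{N,N,h}_s$, the distributional derivative of $y\mapsto\Lambda(F^{N,h}(s,y))$ equals $\frac1N\sum_i\lambda^N(i)\delta_{Y^{i,N,h}_s}$, whence $\frac1N\sum_i\lambda^N(i)G_{t-s}(Y^{i,N,h}_s-x)=\partial_x G_{t-s}*\Lambda(F^{N,h}(s,\cdot))(x)$ and the second term of \eqref{milfnh} appears. (iv) The reflection term $-\frac1N\sum_i\int_0^t G_{t-s}(Y^{i,N,h}_s-x)(\gamma^i_s-\gamma^{i+1}_s)\,d|K|_s$: an Abel summation using $\gamma^1_s=\gamma^{N+1}_s=0$ rewrites the sum over $i$ as $\sum_{i=2}^N\gamma^i_s\big(G_{t-s}(Y^{i,N,h}_s-x)-G_{t-s}(Y^{i-1,N,h}_s-x)\big)$, and by \eqref{procVarFin} each nonzero $\gamma^i_s$ forces $Y^{i,N,h}_s=Y^{i-1,N,h}_s$ $d|K|_s$-a.e., so this group vanishes. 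Collecting (i)--(iv) and letting $\varepsilon\downarrow0$ (once more using that $x$ is not an atom of any $X^{i,N,h}_t$, both to identify $F^{N,h}(t,x)$ and to pass to the limit in the stochastic integrals) yields \eqref{milfnh}.

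The main obstacle, and the reason the identities are stated only $dx$-a.e., is the singularity at the terminal time $s=t$: the datum $\mathbf{1}_{(-\infty,x]}$ is reached only in an a.e.\ sense and the kernels $G_{t-s}$ and $\partial_x G_{t-s}$ blow up in $L^\infty$- and $L^1$-norm as $s\uparrow t$, so one has to combine the truncation at $t-\varepsilon$, the absence of atoms of $X_t$ and of the $X^{i,N,h}_t$, and the distributional chain rule (which relies on the density of $X_s$ from Lemma \ref{ctyF}) in order to make all the limits converge. By comparison, once the semimartingale decomposition \eqref{dynamicY} is available, the algebraic rearrangements in the particle case and the cancellation of the local-time term are routine.
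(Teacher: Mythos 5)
Your proof is correct but takes a genuinely different route from the paper. The paper never touches the singularity: it fixes a $C^1$ compactly supported test function $f$, sets $\varphi(s,y)=\int_\R \mathbf{1}_{\{y\le z\}}G_{t-s}*f(z)\,dz$ (the heat semigroup applied to the \emph{smooth} antiderivative of $f$), applies It\^o's formula on all of $[0,t]$ without truncation, takes expectations, uses Fubini and the stochastic Fubini Lemma \ref{StochFub} to pull $f$ out of each term, and finally invokes the arbitrariness of $f$ to get the $dx$-a.e. identity. You instead fix $x$, work with the heat mollification $\phi(s,y)=\int_{y-x}^{+\infty}G_{t-s}(w)\,dw$ of the discontinuous datum $\mathbf{1}_{(-\infty,x]}$, truncate It\^o's formula at $t-\varepsilon$, and use the no-atom property of $X_t$ and of the $X^{i,N,h}_t$ (from Lemma \ref{ctyF} and the density of the vector on $\R^N$) to pass $\varepsilon\downarrow 0$ in each term. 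The trade-off: your version avoids the stochastic Fubini theorem entirely (the stochastic integral with kernel $G_{t-s}(X^{i,N,h}_s-x)$ comes out directly of It\^o's formula) and in fact yields the identity a.s.\ for each fixed $x$ rather than only $dx$-a.e.; the price is that you must justify the $\varepsilon\to 0$ limit in each term (boundedness of $\phi$ and dominated convergence for the terminal value; $L^1$-integrability of $\|G_{t-s}\|_{L^\infty}$ near $s=t$ for the drift integral; a.s.\ finiteness of $\int_0^t G^2_{t-s}(X^{i,N,h}_s-x)\,ds$ for the martingale term, which you correctly observe follows from $X^{i,N,h}_t\neq x$ a.s.), whereas the paper's choice of terminal function is smooth across $s=t$ so no limiting argument is needed. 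A minor point worth making explicit: to recover the joint "a.s.\ $dx$-a.e." statement of \eqref{milfnh} from your "for each $x$, a.s." version one still needs a Fubini step over the product of $\Omega$ and $\R$, but this is immediate. Your identification in step (iii) of $\frac1N\sum_i\lambda^N(i)G_{t-s}(Y^{i,N,h}_s-x)$ with $\partial_x G_{t-s}*\Lambda(F^{N,h}(s,\cdot))(x)$ matches the computation the paper performs at the level of $G_{t-s}*f$, and your Abel-summation cancellation of the local-time term (iv) coincides with the paper's summation by parts.
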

\begin{remark}
When $h=0$, one should notice that the fourth term in $F^{N,h}(t,x)$ is null so that: 
$$\displaystyle F^{N,0}(t,x) = G_t*F^{N,0}_0(x) - \int_0^t \partial_xG_{t-s}*\Lambda(F^{N,0}(s,.))(x)\,ds - \frac{\sigma}{N}\sum \limits_{i=1}^N \int_0^t G_{t-s}(X^{i,N,0}_s -x)\,dW^{i}_s.$$ 
\end{remark}
\begin{remark}\label{mildEDSbis}
  Let similarly for each $t\ge 0$, $\breve \eta_t$ be a permutation of $\{1,\hdots,N\}$ such that $\breve X^{\breve \eta_t(1),N,h}_t\le \breve X^{\breve \eta_t(2),N,h}_t\le\hdots\le \breve X^{\breve \eta_t(N),N,h}_t$ and $\left(\breve Y^{i,N,h}_t=\breve X^{\breve \eta_t(i),N,h}_t\right)_{i \in \llbracket1,N\rrbracket}$. Let also $\breve \eta_t^{-1}$ denote the inverse of the permutation $\breve \eta_t$ for $t>0$ and $\breve\eta_0^{-1}=\eta_0^{-1}$. Reasoning like in the proof of Proposition \ref{propspde}, we may derive the perturbed mild equation satisfied by the associated empirical cumulative distribution function $\breve F^{N,h}(t,x)$:
\begin{align*}
\breve F^{N,h}(t,x) = &\quad G_t*F^{N,h}_0(x) - \int_0^t \partial_xG_{t-s}*\Lambda(\breve F^{N,h}(s,.))(x)\,ds - \frac{\sigma}{N}\sum \limits_{i=1}^N \int_0^t G_{t-s}(\breve X^{i,N,h}_s -x)\,dW^{i}_s \notag \\
 &+ \frac{1}{N}\sum \limits_{i=1}^N \int_0^t G_{t-s}(\breve Y^{i,N,h}_s -x) \left[ \lambda^N\left(i\right) - \lambda\left(\breve \eta_{\tau^h_s}^{-1}(\breve \eta_s(i))/N\right) \right]\,ds.   
\end{align*}
Using the estimation $$\int_\R\left|\frac{1}{N}\sum \limits_{i=1}^N\int_0^t G_{t-s}\left(\breve Y^{i,N,h}_s -x\right) \left[ \lambda^N\left(\breve \eta_{\tau^h_s}^{-1}(\breve \eta_s(i))\right)-\lambda\left(\breve \eta_{\tau^h_s}^{-1}(\breve \eta_s(i))/N\right) \right]\,ds\right|\,dx\le t\max_{1\le j\le N}|\lambda^N(j)-\lambda(j/N)|$$ of the additional error term in comparison with \eqref{milfnh}, we may adapt all proofs to check the statements at the end of Remark \ref{remcheck}.
\end{remark}
\begin{proof}
  Let $t>0$, $f$ be a $C^1$ and compactly supported function on $\R$ and $\displaystyle \varphi(s,x)=\int_\R {\mathbf 1}_{\{x\le y\}}G_{t-s}*f(y)\,dy$ be the convolution of $G_{t-s}$ with $\displaystyle x\mapsto\int_x^{+\infty}f(y)\,dy$ for $(s,x)\in[0,t)\times\R$ and $\displaystyle \varphi(t,x)=\int_\R {\mathbf 1}_{\{x\le y\}}f(y)\,dy$. The function $\varphi(s,x)$ is continuously differentiable w.r.t. to $s$ and twice continuously differentiable w.r.t. to $x$ on $[0,t]\times\R$ and solves
  \begin{equation}\label{edpvarphi}
   \partial_s \varphi(s,x)+\frac{\sigma^2}{2}\partial_{xx}\varphi(s,x)=0\mbox{ for }(s,x)\in[0,t]\times\R.
 \end{equation}
 Computing $\varphi(t,X_t)$ where $(X_s)_{s\ge 0}$ solves \eqref{edsnl} and using \eqref{edpvarphi}, we obtain that:
 $$\varphi(t,X_t)=\varphi(0,X_0)-\sigma\int_0^tG_{t-s}*f(X_s)\,dW_s-\int_0^t\lambda(F(s,X_s))G_{t-s}*f(X_s)\,ds.$$
 Since, on $[0,t)\times\R$, $G_{t-s}*f(x)$ is bounded by the supremum of $|f|$, the expectation of the stochastic integral is zero. By Fubini's theorem and since $G_t$ is even, the expectations of $\varphi(t,X_t)$ and $\varphi(0,X_0)$ are respectively equal to $\displaystyle \int_\R \int_\R{\mathbf 1}_{\{x\le y\}}\mu_t(dx)f(y)\,dy=\int_\R F(t,y)f(y)\,dy$ and $\displaystyle \int_\R F_0(y)G_t*f(y)\,dy=\int_\R G_t*F_0(y)f(y)\,dy$. Using Fubini's theorem, the equality $\displaystyle G_{t-s}*f(x)=-\int_\R{\mathbf 1}_{\{x\le y\}}\partial_yG_{t-s}*f(y)\,dy$, the fact that, by the chain rule for continuous functions with finite variation, $\displaystyle \int_\R{\mathbf 1}_{\{x\le y\}}\lambda(F(s,x))p(s,x)\,dx=\Lambda(F(s,y))-\Lambda(0)$, the equality $\displaystyle \int_\R\partial_yG_{t-s}*f(y)\,dy=0$ and the oddness of $\partial_y G_{t-s}$, we obtain that the expectation of the last term on the right-hand side is equal to
\begin{align*}
  \displaystyle \int_0^t\int_\R\int_\R{\mathbf 1}_{\{x\le y\}}\lambda(F(s,x))p(s,x)\,dx\,\partial_yG_{t-s}*f(y)\,dy\,ds
   &=\int_0^t\int_\R(\Lambda(F(s,y))-\Lambda(0))\partial_yG_{t-s}*f(y)\,dy\,ds\\
   &=-\int_0^t\int_\R\partial_yG_{t-s}*\Lambda(F(s,.))(y)f(y)\,dy\,ds.
\end{align*}
Exchanging the time and space integrals by Fubini's theorem, we  deduce that
$$\int_\R F(t,x)f(x)dx=\int_\R G_t*F_0(x)f(x)\,dx-\int_\R f(x)\int_0^t\partial_x G_{t-s}*\Lambda(F(s,.))(x)\,ds\,dx.$$
 Since $f$ is arbitrary, we conclude that $F$ satisfies the mild formulation \eqref{milf}.

 Let us now establish that $F^{N,h}$ satisfies a perturbed version of this equation given by \eqref{milfnh}.
  By computing $\varphi(t,Y^{i,N,h}_t)$ by It\^o's formula, using \eqref{edpvarphi} and summing over $i\in\{1,\hdots,N\}$, we obtain
  \begin{align}\label{itoy}
   \int_{\R}\sum_{i=1}^N{\mathbf 1}_{\{Y^{i,N,h}_t\le y\}}f(y)\,dy=&\int_{\R}\sum_{i=1}^N{\mathbf 1}_{\{Y^{i,N,h}_0\le y\}}G_t*f(y)\,dy-\sum_{i=1}^N\int_0^tG_{t-s}*f(Y^{i,N,h}_s)\left(\sigma\,d\beta^i_s+\lambda^N\left( \eta^{-1}_{\tau^h_s}(\eta_s(i) \right)\,ds\right)\\
   &+\sum_{i=1}^N\int_0^t\partial_x\varphi(s,Y^{i,N,h}_s)(\gamma^i_s-\gamma^{i+1}_s)\,d|K|_s.\notag
  \end{align}

  By summation by parts and \eqref{procVarFin},
$$\sum_{i=1}^N\int_0^t\partial_x\varphi(s,Y^{i,N,h}_s)(\gamma^i_s-\gamma^{i+1}_s)\,d|K|_s =\sum_{i=2}^N\int_0^t(\partial_x\varphi(s,Y^{i,N,h}_s)-\partial_x\varphi(s,Y^{i-1,N,h}_s))\gamma^i_s\,d|K|_s=0.$$
 Since the empirical cumulative distribution functions of the original and the reordered systems at time $t$ coincide and the function $G_t$ is even, the left-hand side and the first term in the right-hand side are respectively equal to $\displaystyle N\int_\R  F^{N,h}(t,y)f(y)\,dy$ and $ \displaystyle N\int_\R  G_t*F^{N,h}(0,y)f(y)\,dy$. The definition of the Brownian motion $\beta$ and \eqref {exchanG} imply that
  $$\sum_{i=1}^N\int_0^tG_{t-s}*f(Y^{i,N,h}_s)\,d\beta^i_s=\sum_{j=1}^N\int_0^tG_{t-s}*f(X^{j,N,h}_s)\,dW^j_s.$$
 Dividing \eqref{itoy} by $N$, we deduce that
  \begin{align*}
  \displaystyle \int_\R  F^{N,h}(t,y)f(y)\,dy=&\int_\R  G_t*F^{N,h}(0,y)f(y)\,dy-\frac{\sigma}{N}\sum_{i=1}^N\int_0^tG_{t-s}*f\left(X^{i,N,h}_s\right)\,dW^i_s\\
  &-\frac{1}{N}\sum_{i=1}^N\int_0^tG_{t-s}*f(Y^{i,N,h}_s)\lambda^N\left( \eta^{-1}_{\tau^h_s}(\eta_s(i) \right)\,ds.
  \end{align*}
 We are going to add and substract
\begin{align*}
  \displaystyle \frac 1 N\sum_{i=1}^NG_{t-s}*f(Y^{i,N,h}_s)\lambda^N(i)&=-\int_{\R}\partial_yG_{t-s}*f(y)\sum_{i=1}^N{\mathbf 1}_{\{Y^{i,N,h}_s\le y\}}\left(\Lambda(i/N)-\Lambda((i-1)/N)\right)\,dy\\
  &=-\int_\R\partial_yG_{t-s}*f(y)\left(\Lambda\left(F^{N,h}(s,y)\right)-\Lambda(0)\right)\,dy=\int_\R f(y)\partial_yG_{t-s}*\Lambda\left(F^{N,h}(s,.)\right)(y)\,dy.
\end{align*}
 On the other hand, since $f$ is square integrable and with the use of Young's inequality for the product and the estimate \eqref{GSquareEsp} from Lemma \ref{EstimHeatEq}, we have that: 
 \begin{align*}
  \displaystyle \int_{\R} \left\{\int_0^t \left| G_{t-s}(X^{i,N,h}_s -x)f(x)\right|^2\,ds \right\}^{1/2}dx = &\int_{\R} |f(x)|\left\{\int_0^t| G_{t-s}(X^{i,N,h}_s -x)|^2\,ds \right\}^{1/2}\,dx \\
    &\leq \frac{1}{2}\int_{\R} f^2(x)\,dx + \frac{1}{2}\int_{\R} \int_0^t G^2_{t-s}(X^{i,N,h}_s -x)\,ds\,dx \\
    &= \frac{1}{2}\int_{\R} f^2(x)\,dx + \frac{1}{2\sigma} \sqrt{\frac{t}{\pi}} < \infty.
 \end{align*}
 For that reason, we can use a stochastic Fubini theorem stated by Veraar \cite{VERAA} and recalled in Lemma \ref{StochFub} to deduce that $\displaystyle \frac{\sigma}{N}\sum_{i=1}^N\int_0^tG_{t-s}*f\left(X^{i,N,h}_s\right)\,dW^i_s=\frac{\sigma}{N}\sum \limits_{i=1}^N  \int_{\R}f(x)\left\{ \int_0^t  G_{t-s}\left(X^{i,N,h}_s -x\right)\,dW^{i}_s\right\}\,dx$.
Therefore
\begin{align*}
   \int_\R  F^{N,h}(t,x)f(x)\,dx=&\int_\R  G_t*F^{N,h}(0,x)f(x)\,dx-\frac{\sigma}{N}\sum \limits_{i=1}^N  \int_{\R}f(x)\left\{ \int_0^t  G_{t-s}(X^{i,N,h}_s -x)\,dW^{i}_s\right\}\,dx\\
   &-\int_0^t\int_\R f(x)\partial_xG_{t-s}*\Lambda\left(F^{N,h}(s,.)\right)(x)\,dx\,ds\\
   &+\frac{1}{N}\sum_{i=1}^N\int_0^t\int_\R G_{t-s}\left(Y^{i,N,h}_s-x\right)f(x)\,dx\left\{\lambda^N(i)-\lambda^N\left( \eta^{-1}_{\tau^h_s}(\eta_s(i) \right)\right\}\,ds.
\end{align*}
Since $f$ is bounded and $\Lambda$ is bounded on the interval $[0,1]$, using \eqref{FirstDerivG}, we check that we can apply Fubini's theorem to interchange the space and time integrals in the two last terms of the right-hand side. Since $f$ is arbitrary, we conclude that \eqref{milfnh} holds a.s. $dx$ a.e..
\end{proof}


\section{Proofs of the results in Section 2}
\subsection{Quantitative propagation of chaos result}
The proof of Theorem \ref{thmpropchaos} relies on the following Lemma which estimates for $t>0$ the $L^\infty$-norm of the density $p(t,x)$ of $X_t$ solution to \eqref{edsnl} which is guaranteed to exist according to Lemma \ref{ctyF}.
\begin{lem}\label{estidens}
 $$\forall \; T\in(0,+\infty),\exists \; C_{\infty,T}<\infty,\forall \; t\in(0,T],\quad \|p(t,.)\|_{L^\infty}\le C_{\infty,T} t^{-1/2}.$$
\end{lem}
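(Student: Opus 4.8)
The plan is to combine a Girsanov change of measure --- which turns $X$ into the sum of $X_0$ and an independent scaled Brownian motion --- with the Duhamel (mild) representation of the density $p$, and to bootstrap an $L^2$-in-space bound into the claimed $L^\infty$ bound.

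First I would fix the solution $(X_s)_{s\le T}$ of \eqref{edsnl}, so that $F$ is a deterministic function and, since $|\lambda|\le L_\Lambda$ on $[0,1]$, the drift $b(s,x):=\lambda(F(s,x))$ is bounded and measurable. Setting $\tilde W_t = W_t + \tfrac1\sigma\int_0^t b(s,X_s)\,ds$ so that $X_t = X_0 + \sigma\tilde W_t$, I would introduce the probability $\mathbb Q$ equivalent to $\mathbb P$ on $\mathcal F_T$ under which $(\tilde W_t)_{t\le T}$ is a Brownian motion independent of $X_0\sim m$ --- the change of measure already underlying Lemma \ref{ctyF}. A direct computation gives $\tfrac{d\mathbb P}{d\mathbb Q}\big|_{\mathcal F_t} = \tilde Z_t := \exp\!\big(\tfrac1\sigma\int_0^t b(s,X_s)\,d\tilde W_s - \tfrac1{2\sigma^2}\int_0^t b^2(s,X_s)\,ds\big)$, and the boundedness of $b$ yields $\E_{\mathbb Q}[\tilde Z_t^2]\le \exp(L_\Lambda^2T/\sigma^2)$ uniformly in $t\in[0,T]$ (write $\tilde Z_t^2$ as a genuine $\mathbb Q$-exponential martingale times a bounded factor). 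Since under $\mathbb Q$ the law of $X_t$ has density $G_t*m$ with $\|G_t*m\|_{L^\infty}\le\|G_t\|_{L^\infty}=(2\pi\sigma^2t)^{-1/2}$, I would then get, for every $\varphi\in L^2(\R)$ and by Cauchy--Schwarz,
\[ \int_\R \varphi(x)p(t,x)\,dx = \E_{\mathbb Q}\big[\tilde Z_t\,\varphi(X_t)\big] \le \big(\E_{\mathbb Q}[\tilde Z_t^2]\big)^{1/2}\,\|\varphi\|_{L^2}\,\|G_t*m\|_{L^\infty}^{1/2} \le C\,t^{-1/4}\|\varphi\|_{L^2}, \]
whence, by duality and $p(t,\cdot)\ge0$, the auxiliary estimate $\|p(t,\cdot)\|_{L^2}\le C\,t^{-1/4}$ for $t\in(0,T]$, with $C=C(\sigma,L_\Lambda,T)$.

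Next I would pass to the mild formulation for $p$ itself. Using $p=\partial_xF$, $\partial_x(G_t*F_0)=G_t*m$, and $\partial_x\Lambda(F(s,\cdot))=b(s,\cdot)p(s,\cdot)$ in the distributional sense (as recalled just before Lemma \ref{ctyF}), differentiating \eqref{milf} in $x$ gives $dx$-a.e.
\[ p(t,x) = (G_t*m)(x) - \int_0^t \partial_xG_{t-s}*\big(b(s,\cdot)p(s,\cdot)\big)(x)\,ds, \]
the manipulation being justified because $\|\partial_xG_u\|_{L^2}=C'u^{-3/4}$, $\|b(s,\cdot)p(s,\cdot)\|_{L^2}\le L_\Lambda C s^{-1/4}$ by the previous step, and $\int_0^t(t-s)^{-3/4}s^{-1/4}\,ds=\Gamma(1/4)\Gamma(3/4)<\infty$, so the Duhamel integral converges absolutely in $L^\infty$. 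Taking $L^\infty$ norms and invoking Young's inequality $\|\partial_xG_{t-s}*g\|_{L^\infty}\le\|\partial_xG_{t-s}\|_{L^2}\|g\|_{L^2}$, I would obtain, for $t\in(0,T]$,
\[ \|p(t,\cdot)\|_{L^\infty}\le \|G_t\|_{L^\infty} + L_\Lambda\int_0^t\|\partial_xG_{t-s}\|_{L^2}\,\|p(s,\cdot)\|_{L^2}\,ds \le \frac{C}{\sqrt t} + C'L_\Lambda C\,\Gamma(1/4)\Gamma(3/4); \]
since the last term is a constant, hence at most $(\text{const})\sqrt T\,t^{-1/2}$ on $(0,T]$, this gives the assertion with a suitable $C_{\infty,T}$. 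Note that because the exponents $-3/4$ and $-1/4$ add up to $-1$, no singular Gr\"onwall iteration is needed: one pass through the mild formulation suffices.

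The hard part is really the auxiliary $L^2$ estimate of the first step: since Lemma \ref{ctyF} only provides the existence of the density, some independent quantitative input is required before the mild formulation can be closed, and the Girsanov representation together with the $L^2$--$L^\infty$ duality is the natural way to produce it. The remaining points --- the rigorous differentiation of \eqref{milf} and the convergence of the Duhamel term near $s=0$ --- are routine once that estimate is in hand.
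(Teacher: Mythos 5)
Your argument is correct, but it takes a genuinely different --- and more elaborate --- route than the paper's for the auxiliary $L^2$ estimate. You view this estimate as requiring "some independent quantitative input" beyond Lemma~\ref{ctyF}, and produce it via the Girsanov weight: writing $\int\varphi\,p(t,\cdot) = \E_{\mathbb Q}[\tilde Z_t\varphi(X_t)]$, bounding $\E_{\mathbb Q}[\tilde Z_t^2]$ by Novikov, using $\|G_t*m\|_{L^\infty}^{1/2}\le (2\pi\sigma^2 t)^{-1/4}$, and dualising. That is valid. But the quantitative input is in fact already free: since $p(s,\cdot)$ is a probability density, $\|\lambda(F(s,\cdot))p(s,\cdot)\|_{L^1}\le L_\Lambda$ for every $s$, and Young's inequality with exponents $(2,1)\to 2$ applied directly to the mild formulation $p(t,\cdot)=G_t*m-\int_0^t\partial_xG_{t-s}*(\lambda(F(s,\cdot))p(s,\cdot))\,ds$ gives $\|p(t,\cdot)\|_{L^2}\le\|G_t\|_{L^2}+\int_0^t\|\partial_xG_{t-s}\|_{L^2}\,L_\Lambda\,ds=O(t^{-1/4})$ with no change of measure at all --- this is exactly what the paper does. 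The rest of your argument (a second pass through the mild formulation with Young $L^2*L^2\to L^\infty$, the Beta integral with exponents $-3/4$ and $-1/4$ summing to $-1$, the remark that the resulting constant is $\le \mathrm{const}\cdot\sqrt{T}\,t^{-1/2}$ on $(0,T]$) matches the paper. One smaller difference: you obtain the mild formulation for $p$ by differentiating \eqref{milf} in $x$, whereas the paper derives it directly by It\^o's formula with the undifferentiated test function $\varphi(s,x)={\mathbf 1}_{[0,t)}(s)G_{t-s}*f(x)+{\mathbf 1}_{\{s=t\}}f(x)$; your justification for interchanging $\partial_x$ with the time integral is a bit terse, and the paper's route avoids the issue altogether. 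Net effect: your Girsanov step buys nothing here --- it trades the trivial $L^1$ normalization of $p$ for an exponential moment computation --- and the paper's version is both shorter and more self-contained.
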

\begin{proof}
  Reasoning like at the beginning of the proof of Proposition \ref{propspde} but with the function $\varphi(s,x)={\mathbf 1}_{[0,t[}(s)G_{t-s}*f(x)+{\mathbf 1}_{\{s=t\}}f(x)$ (in place of its spatial antiderivative) for $f$ $C^2$ and compactly supported on $\R$, we easily check that $p(t,x)$ satisfies the mild formulation :
   \begin{equation}\label{mildp} 
  \displaystyle \forall t>0,\;dx\;a.e.,\quad p(t,x)=G_t*m(x)-\int_0^t\partial_x G_{t-s}*\left(\lambda(F(s,.))p(s,.)\right)(x)\,ds.
 \end{equation}
 Since for $t>0$, $\|G_t*m\|_{L^\infty}\le \|G_t\|_{L^\infty}=(2\pi \sigma^2t)^{-1/2}$, it is enough to check that the estimation holds for the time integral in the mild formulation. By Jensen's inequality, Minkowski's inequality, Young's inequality then  then \eqref{GPrimSqu} and \eqref{GSquare}, \eqref{mildp} implies that, for $t>0$,
 \begin{align*}
  \displaystyle \|p(t,.)\|_{L^2}&\le \|G_t\|_{L^2}+\int_0^t\|\partial_x G_{t-s}\|_{L^2}\|\lambda(F(s,.))p(s,.)\|_{L^1}\,ds\\
   &\le \frac{1}{\sqrt{2\sigma} (\pi t)^{1/4}}+L_\Lambda \int_0^t\frac{ds}{2\sigma^{3/2}\pi^{1/4}(t-s)^{3/4}}=\frac{1}{\sqrt{2\sigma} (\pi t)^{1/4}}+\frac{2L_\Lambda t^{1/4}}{\sigma^{3/2}\pi^{1/4}}
 \end{align*}
where the right-hand side is not greater than  $\left(\frac{1}{\sqrt{2\sigma} (\pi)^{1/4}}+\frac{2L_\Lambda T^{1/2}}{\sigma^{3/2}\pi^{1/4}}\right) t^{-1/4}$ for $t\in(0,T]$.
 With the boundedness of $\lambda$ and Young's inequality for convolutions, we deduce that for $t\in(0,T]$,
 \begin{align*}
   \|p(t,.)\|_{L^\infty}&\le \|G_t\|_{L^\infty}+L_\Lambda\int_0^t\|\partial_x G_{t-s}\|_{L^2}\|\lambda(F(s,.))p(s,.)\|_{L^2}\,ds\\
   &\le (2\pi \sigma^2t)^{-1/2}+\frac{L_\Lambda}{2\sigma^{3/2}\pi^{1/4}}\left(\frac{1}{\sqrt{2\sigma} (\pi)^{1/4}}+\frac{2L_\Lambda T^{1/2}}{\sigma^{3/2}\pi^{1/4}}\right)\int_0^t\frac{ds}{(t-s)^{3/4}s^{1/4}}.
 \end{align*}
 Since $\displaystyle \int_0^t\frac{ds}{(t-s)^{3/4}s^{1/4}}ds=\int_0^1\frac{du}{(1-u)^{3/4}u^{1/4}}\le \int_0^1\frac{du}{(1-u)^{3/4}u^{1/4}}T^{1/2}t^{-1/2}$, we easily conclude.
\end{proof}
We are now ready to prove Theorem \ref{thmpropchaos} by adapting the proof of Theorem 1.6 \cite{ShkoKol}. Since, by Jensen's inequality, the conclusion with $\rho=1$ implies the conclusion with $\rho\in (0,1)$, we suppose without loss of generality that $\rho\ge 1$. 
Lemma \ref{estidens} implies the following estimation of the Lipschitz constant of $x\mapsto \lambda(F(t,x))$:
\begin{equation}
   \label{liplamft}\forall t\in(0,T],\;L_{\lambda(F(t,.))}\le C_{\infty,T}L_\lambda t^{-1/2}. 
\end{equation}
We deduce that for a finite constant $C$ which may change from line to line and depends on $T$ but not on $N$:
\begin{align*}
 \displaystyle &\sup_{s\in[0,t]}\left|X^i_s-\breve X^{i,N}_s\right|^\rho\le \left(\int_0^t\left|\lambda\left(F\left(u,X^i_u\right)\right)-\lambda\left(\breve F^N\left(u,\breve X^{i,N}_u\right)\right)\right|\,du\right)^\rho \\
 &= \left(\int_0^t u^{-\frac{\rho-1}{2\rho}} \times u^{\frac{\rho-1}{2\rho}}\left|\lambda\left(F\left(u,X^i_u\right)\right)-\lambda\left(\breve F^N\left(u,\breve X^{i,N}_u\right)\right)\right|\,du\right)^\rho\\
  &\le \left(\int_0^t u^{-1/2}\,du\right)^{\rho-1} \int_0^tu^{(\rho-1)/2}\left(\left|\lambda\left(F\left(u,X^i_u\right)\right)-\lambda\left(F\left(u,\breve X^{i,N}_u\right)\right)\right|+\left|\lambda\left(F\left(u,\breve X^{i,N}_u\right)\right)-\lambda\left(\breve F^N\left(u,\breve X^{i,N}_u\right)\right)\right|\right)^\rho\,du\\
  &\le C\int_0^t \left(u^{-1/2}\left|X^i_u-\breve X^{i,N}_u\right|^\rho +u^{(\rho-1)/2}\left|\lambda\left(F\left(u,\breve X^{i,N}_u\right)\right)-\lambda\left(\breve F^N\left(u,\breve X^{i,N}_u\right)\right)\right|^\rho \right)\,du,
\end{align*}
where we used H\"older's inequality for the second inequality.
Using exchangeability of $(\breve X^{1,N},\hdots,\breve X^{N,N})$, denoting by $\breve Y^{1,N}_u\le \breve Y^{2,N}_u\le \hdots\le \breve Y^{N,N}_u$ (resp. $Y^1_u\le Y^2_u\le \hdots\le Y^N_u$) the increasing reordering of $\left(\breve X^{1,N}_u,\hdots,\breve X^{N,N}_u\right)$ (resp. $\left(\breve X^{1}_u,\hdots,\breve X^{N}_u\right)$) and using that \eqref{exchanG} and its proof generalizes to the particle system \eqref{PSDEbis} then \eqref{liplamft}, we obtain that
\begin{align*}
   \E&\left[\left|\lambda\left(F\left(u,\breve X^{i,N}_u\right)\right)-\lambda\left(\breve F^N\left(u,\breve X^{i,N}_u\right)\right)\right|^\rho\right]=\E\left[\frac{1}{N}\sum_{j=1}^N\left|\lambda\left(F\left(u,\breve X^{j,N}_u\right)\right)-\lambda\left(\breve F^N\left(u,\breve X^{j,N}_u\right)\right)\right|^\rho\right]\\
   &=\E\left[\frac{1}{N}\sum_{j=1}^N\left|\lambda\left(F\left(u,\breve Y^{j,N}_u\right)\right)-\lambda\left(\breve F^N\left(u,\breve Y^{j,N}_u\right)\right)\right|^\rho\right]\\
   &=\E\left[\frac{1}{N}\sum_{j=1}^N\left|\lambda(F(u,\breve Y^{j,N}_u))-\lambda(F(u,Y^{j}_u))+\lambda(F(u,Y^{j}_u))-\lambda\left(\frac{j}{N}\right)\right|^\rho\right]\\
   &\le C \left(u^{-\rho/2}\E\left[\frac{1}{N}\sum_{j=1}^N\left|\breve Y^{j,N}_u-Y^j_u\right|^\rho\right]+\E\left[\frac{1}{N}\sum_{j=1}^N\left|F(u,Y^{j}_u)-\frac{j}{N}\right|^\rho\right]\right).
\end{align*}
Since $F(u,Y^1_u)\le F(u,Y^2_u)\le \hdots\le F(u,Y^N_u)$ is the increasing reordering of the random variables $(F(u,X^i_u))_{1\le i\le N}$ which are i.i.d. according to the uniform law on $[0,1]$, according to the proof of Theorem 1.6 \cite{ShkoKol}, the second expectation in the right-hand side is bounded from above by $CN^{-\rho/2}$. On the other hand, by \eqref{Wasserstein}, $\frac{1}{N}\sum \limits_{j=1}^N\left|\breve Y^{j,N}_u-Y^j_u\right|^\rho={\cal W}_\rho^\rho\left(\frac 1N\sum \limits_{i=1}^N\delta_{\breve Y^i_u},\frac 1N\sum \limits_{i=1}^N\delta_{X^i_u}\right)\le \frac{1}{N}\sum \limits_{j=1}^N\left|\breve X^{j,N}_u-X^j_u\right|^\rho$. We deduce that for all $t\in[0,T]$,
$$\displaystyle \E\left[\sup_{s\in[0,t]}\left|X^i_s-\breve X^{i,N}_s\right|^\rho\right]\le C\left(N^{-\rho/2}+\int_0^t u^{-1/2}\E\left[\left|X^i_u-\breve X^{i,N}_u\right|^\rho\right]\,du\right).$$
Performing the change of variable $v=\sqrt{u}$ in the integral and setting $f(t)=\E\left[\sup_{s\in[0,t^2]}|X^i_s-\breve X^{i,N}_s|^\rho\right]$, we deduce that $\forall t\in\left[0,\sqrt{T}\right]$, $f(t)\le C\left(N^{-\rho/2}+\int_0^t f(v)dv\right)$. Since, by boundedness of $\lambda$, the function $f$ is locally bounded, we conclude using Gr\"onwall's lemma that $\E\left[\sup_{s\in[0,T]}|X^i_s-\breve X^{i,N}_s|^\rho\right]\le CN^{-\rho/2}$. 

Remarking that $\displaystyle \left|X^{i,N}_t-X^i_0-\sigma W^i_t-\int_0^t\lambda\left(F^{N}\left(s,X^{i,N}_s\right)\right)\,ds\right|\le\frac{L_\lambda t}{2N}$, we may adapt the arguments to deal with the particle system \eqref{PSDE}.

\subsection{Rate of convergence of the strong $L^1$-error}\label{secproofthm1}

To prove Theorem \ref{thm1}, we need the following lemmas.
\begin{lem}\label{XandY}
 $$\forall 0 \le s \le t \le T,\;\forall \rho \ge 1, h \in [0,T],\; N \in \N^{*} \quad \sum \limits_{j=1}^N \left| Y^{j,N,h}_t - Y^{j,N,h}_s \right|^{\rho} \leq \sum \limits_{j=1}^N \left| X^{j,N,h}_t - X^{j,N,h}_s \right|^{\rho}.$$
\end{lem}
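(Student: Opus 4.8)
The plan is to recognize that, up to the factor $1/N$, both sides of the claimed inequality are costs of couplings between the two empirical measures $\mu^{N,h}_t=\frac1N\sum_{i=1}^N\delta_{X^{i,N,h}_t}$ and $\mu^{N,h}_s=\frac1N\sum_{i=1}^N\delta_{X^{i,N,h}_s}$: the right-hand side is the cost of the \emph{synchronous} coupling (which matches the $i$-th particle at time $t$ with the $i$-th particle at time $s$), while the left-hand side is the cost of the comonotone coupling, which is optimal in dimension one. Since any coupling dominates the optimal one, the inequality follows. This is exactly the mechanism already used inside the proof of Theorem \ref{thmpropchaos}.

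Concretely, I would first observe that $\big(X^{i,N,h}_t,X^{i,N,h}_s\big)_{1\le i\le N}$, each weighted by $1/N$, is a coupling of $\mu^{N,h}_t$ and $\mu^{N,h}_s$, whence
$$\mathcal{W}_\rho^\rho\big(\mu^{N,h}_t,\mu^{N,h}_s\big)\le \frac1N\sum_{i=1}^N\big|X^{i,N,h}_t-X^{i,N,h}_s\big|^\rho.$$
Then I would compute the left-hand side via the optimality \eqref{Wasserstein} of the Hoeffding–Fréchet coupling. Since $Y^{j,N,h}_t$ is by definition the increasing reordering of $\big(X^{i,N,h}_t\big)_{1\le i\le N}$, the quantile function of $\mu^{N,h}_t$ is the step function equal to $Y^{j,N,h}_t$ on $\big(\tfrac{j-1}{N},\tfrac jN\big]$ for $j\in\llbracket 1,N\rrbracket$, and likewise for $\mu^{N,h}_s$; hence
$$\mathcal{W}_\rho^\rho\big(\mu^{N,h}_t,\mu^{N,h}_s\big)=\int_0^1\Big|F^{-1}_{\mu^{N,h}_t}(u)-F^{-1}_{\mu^{N,h}_s}(u)\Big|^\rho\,du=\frac1N\sum_{j=1}^N\big|Y^{j,N,h}_t-Y^{j,N,h}_s\big|^\rho.$$
Combining the two displays and multiplying by $N$ yields the statement.

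There is essentially no obstacle: the only points needing a line of justification are that the synchronous pairing is an admissible coupling and that the quantile function of an empirical measure is the right-continuous step function built from its order statistics (valid whether or not the positions are distinct, so the argument also covers $s=0$). If one prefers a self-contained argument avoiding Wasserstein distances, one can instead invoke the two-point rearrangement inequality $|a-c|^\rho+|b-d|^\rho\le |a-d|^\rho+|b-c|^\rho$ for $a\le b$, $c\le d$ (a consequence of the convexity of $x\mapsto|x|^\rho$ for $\rho\ge1$), and remove inversions from the pairing $i\mapsto i$ by successive transpositions until it becomes the comonotone pairing $j\mapsto j$ of the reordered positions, each step not increasing the sum; but the coupling argument above is shorter given the tools already set up in Section \ref{secmainres}.
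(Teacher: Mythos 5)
Your proof is correct and follows essentially the same route as the paper's: identify $\frac1N\sum_j|Y^{j,N,h}_t-Y^{j,N,h}_s|^\rho$ with $\mathcal{W}_\rho^\rho(\mu^{N,h}_s,\mu^{N,h}_t)$ via the optimal comonotone coupling \eqref{Wasserstein}, and bound it by the cost of the synchronous coupling $\frac1N\sum_j\delta_{(X^{j,N,h}_s,X^{j,N,h}_t)}$. The alternative rearrangement-inequality argument you sketch is a valid elementary substitute but is not what the paper uses.
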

\begin{proof}
 By \eqref{Wasserstein}, we have $\displaystyle \mathcal{W}_{\rho}^{\rho}\left( \mu^{N,h}_s,\mu^{N,h}_t\right)=\frac{1}{N}\sum \limits_{j=1}^N \left|Y^{j,N,h}_t - Y^{j,N,h}_s \right|^{\rho}$.
Since $\displaystyle \frac{1}{N} \sum \limits_{j=1}^N \delta_{\left(X^{j,N,h}_s,X^{j,N,h}_t\right)}$ is a coupling measure on $\R^2$ with first marginal $\mu^{N,h}_s$ and second marginal $\mu^{N,h}_t$, we conclude that $$\displaystyle  \frac{1}{N}\sum \limits_{j=1}^N \left|X^{j,N,h}_t - X^{j,N,h}_s \right|^{\rho} \ge \mathcal{W}_{\rho}^{\rho}\left( \mu^{N,h}_s,\mu^{N,h}_t\right) = \frac{1}{N}\sum \limits_{j=1}^N \left|Y^{j,N,h}_t - Y^{j,N,h}_s \right|^{\rho}.$$
 \end{proof}

This second lemma ensures the local integrability of $t \mapsto \E\left[ \mathcal{W}_1\left(\mu^{N,h}_t,\mu_t \right)\right]$.
\begin{lem}\label{integrF}
$$\forall t,h \in [0,T],\forall N \in \N^{*}, \quad \displaystyle \E\left[\mathcal{W}_1\left(\mu^{N,h}_t,\mu_t \right)\right] \leq 2 \, \sigma \, \sqrt{\frac{2 t}{\pi}} + 2 \, L_{\Lambda}t + \E\left[\mathcal{W}_1(\mu^{N,h}_0, m)\right]. $$
 If $\displaystyle \int_{\R} |x|m(dx) < \infty$, then for each $N\in\N^*$ and each $h\in[0,T]$, $t\mapsto \E[{\cal W}_1(\mu^{N,h}_t,\mu_t)]$ is locally integrable on $\R_+$. \end{lem}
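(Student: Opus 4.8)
The plan is to bound $\mathcal{W}_1(\mu^{N,h}_t,\mu_t)$ via the triangle inequality (which holds in $[0,+\infty]$, gluing couplings) by inserting the two initial measures: $\mathcal{W}_1(\mu^{N,h}_t,\mu_t)\le \mathcal{W}_1(\mu^{N,h}_t,\mu^{N,h}_0)+\mathcal{W}_1(\mu^{N,h}_0,m)+\mathcal{W}_1(m,\mu_t)$, so that it suffices to control how far each of the two dynamics travels between times $0$ and $t$.

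For the particle contribution I would use the empirical coupling $\frac1N\sum_{i=1}^N\delta_{(X^{i,N,h}_0,X^{i,N,h}_t)}$, which has marginals $\mu^{N,h}_0$ and $\mu^{N,h}_t$, to get $\mathcal{W}_1(\mu^{N,h}_0,\mu^{N,h}_t)\le \frac1N\sum_{i=1}^N|X^{i,N,h}_t-X^{i,N,h}_0|$. From \eqref{edsnh} one has $X^{i,N,h}_t-X^{i,N,h}_0=\sigma W^i_t+\int_0^t\lambda^N(\cdot)\,ds$; the mean value theorem gives $\lambda^N(k)=\lambda(\xi_k)$ for some $\xi_k\in[0,1]$, hence $|\lambda^N(k)|\le L_\Lambda$ and $|X^{i,N,h}_t-X^{i,N,h}_0|\le\sigma|W^i_t|+L_\Lambda t$. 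Taking expectations and using $\E[|W^i_t|]=\sqrt{2t/\pi}$ yields $\E[\mathcal{W}_1(\mu^{N,h}_0,\mu^{N,h}_t)]\le \sigma\sqrt{2t/\pi}+L_\Lambda t$. The term $\mathcal{W}_1(m,\mu_t)$ is treated the same way with the coupling $(X_0,X_t)$ for $X$ solving \eqref{edsnl} and the bound $|\lambda(F(s,X_s))|\le L_\Lambda$, giving $\mathcal{W}_1(m,\mu_t)\le\E|X_t-X_0|\le\sigma\sqrt{2t/\pi}+L_\Lambda t$. Summing the three bounds produces exactly the claimed inequality, trivially true when the right-hand side equals $+\infty$.

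For the second assertion, the right-hand side $2\sigma\sqrt{2t/\pi}+2L_\Lambda t+\E[\mathcal{W}_1(\mu^{N,h}_0,m)]$ is continuous, hence locally bounded, in $t\in\R_+$ as soon as $\E[\mathcal{W}_1(\mu^{N,h}_0,m)]<\infty$; I would check that $\int_\R|x|\,m(dx)<\infty$ guarantees this for both initializations by comparing with $\delta_0$: $\mathcal{W}_1(\hat\mu^N_0,m)\le\frac1N\sum_i|X^i_0|+\int_\R|x|\,m(dx)$, whose expectation is at most $2\int_\R|x|\,m(dx)$, and $\mathcal{W}_1(\tilde\mu^N_0,m)\le\frac1N\sum_i|F_0^{-1}(\tfrac{2i-1}{2N})|+\int_\R|x|\,m(dx)$, which is finite for each fixed $N$ since $F_0^{-1}$ is finite on $(0,1)$. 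Local integrability on $\R_+$ then follows from local boundedness. There is no genuine obstacle here; the only points to keep an eye on are the elementary estimate $|\lambda^N(k)|\le L_\Lambda$ and whether the initial $\mathcal{W}_1$-distance is finite, which is precisely where the first-moment hypothesis enters.
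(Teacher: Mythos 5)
Your proof is correct and follows the same overall decomposition as the paper's: triangle inequality through $\mu^{N,h}_0$ and $m$, then a coupling-based bound on the displacement of each dynamics between times $0$ and $t$ using $|\lambda^N(k)|\le L_\Lambda$ and $\E|W_t|=\sqrt{2t/\pi}$. The only place where you diverge is the finiteness of $\E[\mathcal{W}_1(\mu^{N,h}_0,m)]$: you compare both initial empirical measures to $\delta_0$ via the triangle inequality, which is elementary and self-contained. The paper instead reuses the inequality $\mathcal{W}_1(\tilde\mu_0^N,m)\le\int_\R|x|\,m(dx)$ (a consequence of the optimality of the median points $F_0^{-1}(\tfrac{2i-1}{2N})$, giving a bound uniform in $N$) and, for the i.i.d. case, Theorem 3.5 of Bobkov--Ledoux, again yielding $\E[\mathcal{W}_1(\hat\mu_0^N,m)]\le 2\int_\R|x|\,m(dx)$. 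Your deterministic bound $\tfrac1N\sum_i|F_0^{-1}(\tfrac{2i-1}{2N})|+\int_\R|x|\,m(dx)$ is not manifestly uniform in $N$, but since the lemma only asserts local integrability in $t$ for \emph{each fixed} $N$ and $h$, this is sufficient, and it is exactly how the lemma is used later (to justify Gr\"onwall's argument). So your route trades a sharper constant for a simpler, more self-contained argument, with no loss for the stated conclusion.
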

    
\begin{proof}
	Using the triangle inequality, we have:
\begin{align}\label{inegTriang}
	\E\left[\mathcal{W}_1(\mu^{N,h}_t, \mu_t)\right] \leq \E\left[\mathcal{W}_1(\mu^{N,h}_t, \mu^{N,h}_0)\right] + \E\left[\mathcal{W}_1(\mu^{N,h}_0, m)\right] + \mathcal{W}_1(m, \mu_t).
\end{align}
Since 
\begin{equation}\label{estimW1}
  \left\{
      \begin{aligned}
        & \mathcal{W}_1(m,\mu_t) \leq \E\left[\left| X_t - X_0 \right|\right] \leq \E \left[ \left| \sigma W_t\right|\right] + \E\left[\left| \displaystyle \int_0^t \lambda(F(s,X_s))\,ds \right|\right] = \sigma \, \sqrt{\frac{2 t}{\pi}} + L_{\Lambda}t, \\
        &\E\left[\mathcal{W}_1(\mu^{N,h}_t, \mu^{N,h}_0)\right] \le  \frac{1}{N} \sum \limits_{i=1}^N \E\left[\left|\sigma W^{i}_t + \displaystyle \int_0^t \lambda^N\left( \mathbf{1}_{\left\{ X^{j,N,h}_s \leq X^{i,N,h}_s \right\}} \right)\,ds \right| \right] \leq \sigma \,\sqrt{\frac{2 t}{\pi}} + L_{\Lambda}t.\\
      \end{aligned}
    \right.
\end{equation}
then by injecting \eqref{estimW1} in \eqref{inegTriang}, we obtain the upper-bound of $\E\left[ \mathcal{W}_1\left(\mu^{N,h}_t,\mu_t \right)\right]$. Since for $i\in\{1,\hdots,N\}$, $x^N_{i} = F^{-1}_0 \left(\frac{2i-1}{2N} \right)$ minimizes $\displaystyle \R\ni y\mapsto \int_{\frac{i-1}{N}}^{\frac{i}{N}}\left|y - F^{-1}_0(u) \right|\,du$, we have that:
\begin{align}
  \displaystyle \mathcal{W}_1\left(\tilde \mu_0^N,m\right) =\sum_{i=1}^N \int_{\frac{i-1}{N}}^{\frac{i}{N}}\left|x^N_{i} - F^{-1}_0(u) \right|\,du &\le \sum_{i=1}^N \int_{\frac{i-1}{N}}^{\frac{i}{N}}\left| F^{-1}_0(u) \right|\,du = \int_{\R}|x|m(dx).\label{majow1det}
\end{align}
For the random initialization, by Theorem 3.5 \cite{bobkovledoux},
\begin{align*}
  \E\left[ \mathcal{W}_1\left(\hat\mu^N_0,m\right)\right]\le 2\int_\R\mathbf{1}_{\{F_0(x)(1-F_0(x))\le \frac{1}{4N}\}}F_0(x)(1-F_0(x))dx+\frac{1}{\sqrt{N}}\int_\R \mathbf{1}_{\{F_0(x)(1-F_0(x))> \frac{1}{4N}\}}\sqrt{F_0(x)(1-F_0(x))}dx.\end{align*}
Since $F_0(x)(1-F_0(x))> \frac{1}{4N}$ implies $\frac{1}{\sqrt{N}}<2\sqrt{F_0(x)(1-F_0(x))}$, we deduce that
$$\E\left[ \mathcal{W}_1\left(\hat\mu^N_0,m\right)\right] \le 2\int_\R F_0(x)(1-F_0(x))dx\le 2\left(\int_{-\infty}^0F_0(x)dx+\int^{+\infty}_0(1-F_0(x))dx\right)=2\int_\R |x|m(dx).$$
Hence for both the random and the optimal deterministic initializations, the finiteness of the first order moment of $m$  implies the finiteness of $\E\left[\mathcal{W}_1(\mu^{N,h}_0, m)\right]$ and, with the first statement, the local integrability of $t \mapsto \E\left[ \mathcal{W}_1\left(\mu^{N,h}_t,\mu_t \right)\right]$.
\end{proof}

The third lemma gives a control of the moments of order $\rho\ge 1$ of $X^{i,N,h}_t$, $\forall i \in \llbracket1,N \rrbracket$.
\begin{lem}\label{controlM}
  If $\displaystyle \int_{\R} |x|^{\rho}m(dx) < \infty$ for some $\rho\ge 1$, then $\forall N \in \N^{*}, \forall h \in [0,T]$, $$\sup_{t \leq T}\E\left[\frac{1}{N}\sum_{i=1}^N\left|X^{i,N,h}_t\right|^{\rho} \right] \leq M := 3^{\rho-1}\left( 2\int_{\R}|x|^{\rho}m(dx)+  \frac{1}{\sqrt \pi}\Gamma\left(\frac{\rho +1}{2} \right)(2\sigma^2T)^{\rho/2}+ (L_{\Lambda}T)^{\rho} \right). $$
\end{lem}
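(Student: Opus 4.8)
The plan is to exploit the explicit representation of $X^{i,N,h}_t$ from \eqref{edsnh} together with the uniform boundedness of the discretized drift. Since $\Lambda$ is $L_\Lambda$-Lipschitz on $[0,1]$, the coefficients defined in \eqref{driftPart} satisfy $|\lambda^N(k)|=N\left|\Lambda\!\left(\tfrac kN\right)-\Lambda\!\left(\tfrac{k-1}{N}\right)\right|\le L_\Lambda$ for every $k\in\llbracket1,N\rrbracket$, and the same holds for $h=0$ with \eqref{PSDE}. Hence, for all $i$, $t\in[0,T]$, $N\in\N^*$ and $h\in[0,T]$,
\[
  \left|X^{i,N,h}_t\right|\le\left|X^i_0\right|+\sigma\left|W^i_t\right|+L_\Lambda t .
\]
Raising to the power $\rho\ge1$, using the convexity bound $(a+b+c)^\rho\le 3^{\rho-1}(a^\rho+b^\rho+c^\rho)$ for nonnegative $a,b,c$, then averaging over $i$ and taking expectations, I would reduce the problem to controlling the three terms $\E\!\left[\tfrac1N\sum_{i=1}^N|X^i_0|^\rho\right]$, $\sigma^\rho\,\E\!\left[|W^1_t|^\rho\right]$ and $(L_\Lambda t)^\rho$ separately.

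For the Gaussian term, writing $W^1_t=\sqrt t\,Z$ with $Z\sim\mathcal N(0,1)$ and performing the change of variables $y=z^2/2$ in $\E|Z|^\rho=\tfrac{2}{\sqrt{2\pi}}\int_0^\infty z^\rho e^{-z^2/2}\,dz$ gives $\E|Z|^\rho=\tfrac{2^{\rho/2}}{\sqrt\pi}\Gamma\!\left(\tfrac{\rho+1}2\right)$, whence $\sigma^\rho\,\E|W^1_t|^\rho=\tfrac{1}{\sqrt\pi}\Gamma\!\left(\tfrac{\rho+1}2\right)(2\sigma^2 t)^{\rho/2}\le\tfrac{1}{\sqrt\pi}\Gamma\!\left(\tfrac{\rho+1}2\right)(2\sigma^2 T)^{\rho/2}$; and clearly $(L_\Lambda t)^\rho\le(L_\Lambda T)^\rho$. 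For the initial term, the i.i.d. case is immediate, $\E\!\left[\tfrac1N\sum_i|X^i_0|^\rho\right]=\int_\R|x|^\rho m(dx)$. For the optimal deterministic initialization $x^N_i=F_0^{-1}\!\left(\tfrac{2i-1}{2N}\right)$, I would bound $\tfrac1N\sum_i|x^N_i|^\rho\le 2\int_\R|x|^\rho m(dx)=2\int_0^1|F_0^{-1}(u)|^\rho\,du$ as follows: when $x^N_i\ge0$, monotonicity of $F_0^{-1}$ gives $|F_0^{-1}(u)|^\rho\ge|x^N_i|^\rho$ for $u\in[\tfrac{2i-1}{2N},\tfrac iN]$, while when $x^N_i<0$ the same holds for $u\in[\tfrac{i-1}N,\tfrac{2i-1}{2N}]$; each such sub-interval $I_i$ has length $\tfrac1{2N}$ and they are pairwise disjoint, being contained in the pairwise disjoint cells $[\tfrac{i-1}N,\tfrac iN]$, so summing $\tfrac1N|x^N_i|^\rho\le 2\int_{I_i}|F_0^{-1}(u)|^\rho\,du$ yields the claim. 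Collecting the three estimates reproduces exactly the constant $M$ in the statement, and since the bound is uniform in $t\in[0,T]$, $N$ and $h$, taking the supremum over $t$ finishes the proof.

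There is no substantial obstacle here: the argument is entirely elementary. The only point requiring a little care is the deterministic-initialization bound, because $u\mapsto|F_0^{-1}(u)|^\rho$ is merely piecewise monotone (decreasing then increasing), which is why one must argue on the half of each cell $[\tfrac{i-1}N,\tfrac iN]$ selected according to the sign of $x^N_i$; this splitting is precisely what forces the factor $2$ in front of $\int_\R|x|^\rho m(dx)$ in $M$.
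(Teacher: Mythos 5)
Your proof is correct and follows essentially the same route as the paper's: bound the solution pathwise by $|X^i_0|+\sigma|W^i_t|+L_\Lambda t$ using $|\lambda^N(k)|\le L_\Lambda$, apply the convexity inequality $(a+b+c)^\rho\le 3^{\rho-1}(a^\rho+b^\rho+c^\rho)$, compute the Gaussian moment $\sigma^\rho\E|W_t|^\rho=\frac{1}{\sqrt\pi}\Gamma\big(\frac{\rho+1}{2}\big)(2\sigma^2 t)^{\rho/2}$, and for the deterministic initialization bound $\frac1N\sum_i |F_0^{-1}(\frac{2i-1}{2N})|^\rho$ by $2\int_\R|x|^\rho m(dx)$ by comparing each term with an integral of $|F_0^{-1}|^\rho$ over a sub-interval chosen on the side of $\frac{2i-1}{2N}$ determined by the sign of $x^N_i$.

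The one place where you deviate slightly is the choice of sub-intervals in that last step. You take the half-cell of length $\frac{1}{2N}$ inside $\big[\frac{i-1}{N},\frac{i}{N}\big]$ (right half if $x^N_i\ge 0$, left half if $x^N_i<0$), so pairwise disjointness is automatic from the disjointness of the cells and the factor $2$ appears directly from comparing $\frac{1}{N}$ with the sub-interval length $\frac{1}{2N}$. The paper instead uses intervals of length $\frac{1}{N}$ adjacent to $\frac{2i-1}{2N}$ ($\big[\frac{2i-1}{2N},\frac{2i+1}{2N}\big]$ when $x^N_i\ge 0$, $\big[\frac{2i-3}{2N},\frac{2i-1}{2N}\big]$ when $x^N_i<0$), which do spill across cell boundaries and hence require edge treatment for $i=1$ and $i=N$, with the factor $2$ coming from separately counting the two boundary contributions. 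Your version is a bit cleaner (no edge cases, disjointness is immediate), and both yield the same constant $M$.
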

\begin{proof}
We have   $$\E\left[\left|X^{i,N,h}_t\right|^{\rho} \right] \le 3^{\rho-1}\left( \E\left[|X^{i}_0|^{\rho}\right] + \sigma^\rho \E\left[|W^{i}_t|^{\rho}\right] + \E\left[\left|\int_0^t \lambda^N\left(\sum \limits_{j=1}^{N} \mathbf{1}_{ \{  X^{j,N,h}_{\tau^h_s} \le X^{i,N,h}_{\tau^h_s} \}}\right)\,ds\right|^{\rho}\right]  \right).$$
  Since $\E\left[|W^{i}_t|^{\rho}\right]^{1/\rho}=\frac{1}{\sqrt \pi}\Gamma\left(\frac{\rho +1}{2} \right)(2t)^{\rho/2}$, one easily concludes when the initial conditions are i.i.d. according to $m$. When they are optimal deterministic, we sum over $i\in\{1,\hdots,N\}$, divide by $N$ and use the estimation
$\frac{1}{N}\sum_{i=1}^N\left|F_0^{-1}\left(\frac{2i-1}{2N}\right)\right|^\rho\le 2\int_\R|x|^\rho m(dx)$ that we now prove.

Since
   \begin{align*}
     \displaystyle &\forall i\in\{1,\hdots,N-1\},\quad F_0^{-1}\left(\frac{2i-1}{2N}\right)\ge 0\Rightarrow \frac{\left|F_0^{-1}\left(\frac{2i-1}{2N}\right)\right|^\rho}{N}\le \int_{\frac{2i-1}{2N}}^{\frac{2i+1}{2N}}\left|F_0^{-1}(u)\right|^\rho\,du,\\
     &\forall i\in\{2,\hdots,N\},\quad F_0^{-1}\left(\frac{2i-1}{2N}\right)\le 0\Rightarrow \frac{\left|F_0^{-1}\left(\frac{2i-1}{2N}\right)\right|^\rho}{N} \le \int_{\frac{2i-3}{2N}}^{\frac{2i-1}{2N}}\left|F_0^{-1}(u)\right|^\rho\,du,
   \end{align*}
   one has 
   \begin{align*}
    \displaystyle \frac{1}{N}\sum_{i=1}^N\left|F_0^{-1}\left(\frac{2i-1}{2N}\right)\right|^\rho &\le \mathbf{1}_{\left\{F_0^{-1}\left(\frac{1}{2N}\right)\le 0 \right\}} \frac{\left|F_0^{-1}\left(\frac{1}{2N}\right)\right|^\rho}{N}+\int_{\frac{1}{2N}}^{\frac{2N-1}{2N}}\left|F_0^{-1}(u)\right|^\rho\,du  + \mathbf{1}_{\left\{F_0^{-1}\left(\frac{2N-1}{2N}\right)\ge 0 \right \}} \frac{\left|F_0^{-1}\left(\frac{2N-1}{2N}\right)\right|^\rho}{N}\\
    &\le \int_0^{\frac{1}{2N}}\left|F_0^{-1}(u)\right|^\rho\,du +\int_{0}^{1}\left|F_0^{-1}(u)\right|^\rho\,du+\int^1_{\frac{2N-1}{2N}}\left|F_0^{-1}(u)\right|^\rho\,du\\
    &\le 2\int_0^1|F_0^{-1}(u)|^\rho du=2\int_\R|x|^\rho m(dx).\end{align*}\end{proof}

\subsubsection{Proof of Theorem \ref{thm1}}
Defining for all $t,h \in [0,T], N \in \N^{*}$,
\begin{align}
\displaystyle &R^{N,h}(t,x) = - \frac{\sigma}{N}\sum \limits_{i=1}^N \int_0^t G_{t-s}(X^{i,N,h}_s -x)\,dW^{i}_s,\label{defrnh} \\
&E^{N,h}(t,x) = \frac{1}{N}\sum \limits_{i=1}^N  \int_0^t  G_{t-s}(Y^{i,N,h}_s-x) \left[ \lambda^{N}\left(i\right) - \lambda^{N}\left(\eta_{\tau^h_s}^{-1}(\eta_s(i))\right) \right]\,ds,
\end{align}
 we deduce from Proposition \ref{propspde} that:
\begin{align}\label{eqFh}
    \displaystyle F^{N,h}(t,x) - F(t,x) =& \quad G_t*\left(F^{N,h}_0 - F_0\right)(x) - \int_0^t \partial_x G_{t-s} * \left(\Lambda(F^{N,h}(s,.)) - \Lambda(F(s,.))\right)(x)\,ds \\
    &+ R^{N,h}(t,x) + E^{N,h}(t,x). \notag
\end{align}
Using the triangle inequality and taking expectations, we deduce that:
\begin{align*}
     \displaystyle \E\left[\left\| F^{N,h}(t,.) - F(t,.) \right\|_{L^1} \right] &\le \E\left[\left\| G_t \ast \left(F^{N,h}_0 - F_0\right)  \right\|_{L^1} \right] 
     + \E\left[\left\| R^{N,h}(t,.) \right\|_{L^1} \right] + \E\left[\left\| E^{N,h}(t,.) \right\|_{L^1} \right]  \\
     &+  \E\left[\left\|\int_0^t \partial_x G_{t-s} * \left(\Lambda(F^{N,h}(s,.)) - \Lambda(F(s,.))\right)\,ds \right\|_{L^1} \right].
\end{align*}
Using the estimate \eqref{FirstDerivG} from Lemma \ref{EstimHeatEq} and setting $\displaystyle A = \frac{L_{\Lambda}}{\sigma} \sqrt{\frac{2}{\pi}}$, we obtain:
\begin{align*}
    \displaystyle \E\left[\left\|\int_0^t \partial_x G_{t-s} * \left(\Lambda(F^{N,h}(s,.)) - \Lambda(F(s,.))\right)\,ds \right\|_{L^1} \right] &\leq \int_0^t \left\| \partial_x G_{t-s}\right\|_{L^1} L_{\Lambda}\E\left[\left\Vert F^{N,h}(s,.) - F(s,.) \right\Vert_{L^1}\right]\,ds \\
    &= A\int_0^t \frac{1}{\sqrt{t-s}}\E\left[\left\Vert F^{N,h}(s,.) - F(s,.) \right\Vert_{L^1}\right]\,ds.
\end{align*}
Therefore,
\begin{align}\label{controlConvoB}
     \displaystyle \E\left[\left\| F^{N,h}(t,.) - F(t,.) \right\|_{L^1} \right] \le \E\left[\left\| F^{N,h}_0 - F_0 \right\|_{L^1} \right] &+ \E\left[\left\| R^{N,h}(t,.) \right\|_{L^1} \right] + \E\left[\left\| E^{N,h}(t,.) \right\|_{L^1} \right] \\
    &+ A \int_0^t \frac{1}{\sqrt{t-s}}\E\left[\left\Vert F^{N,h}(s,.) - F(s,.) \right\Vert_{L^1}\right]\,ds.\notag
\end{align}

The next lemma states that the random variable $R^{N,h}(t,x)$ is centered and provides an upper-bound for $\E\left[\left\| R^{N,h}(t,.)\right\|_{L^1} \right]$.

 \begin{lem}\label{controlRN} We have $\forall N \in \N^{*}, \forall h,t \in [0,T]$, $ \left\| \E\left[R^{N,h}(t,.)\right]\right\|_{L^1} = 0 $. Moreover, if for some $\rho >1$, $\displaystyle \int_{\R} |x|^{\rho}m(dx) < \infty$, then:
 $$ \exists R< \infty,\;\forall N \in \N^{*},\;\forall h \in [0,T],\quad \sup_{t \leq T}\E\left[\left\| R^{N,h}(t,.) \right\|_{L^1}
\right] \leq \frac{R}{\sqrt{N}}.$$ 
\end{lem}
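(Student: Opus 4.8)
The centering claim $\left\|\E[R^{N,h}(t,.)]\right\|_{L^1}=0$ is immediate: by definition \eqref{defrnh}, $R^{N,h}(t,x)$ is, for each fixed $(t,x)$, a sum of It\^o stochastic integrals with respect to the Brownian motions $W^i$, whose integrands $s\mapsto G_{t-s}(X^{i,N,h}_s-x)$ are adapted and square-integrable (the square-integrability being exactly the bound $\frac{1}{2\sigma}\sqrt{t/\pi}$ computed in the proof of Proposition~\ref{propspde}), so $\E[R^{N,h}(t,x)]=0$ for every $x$, hence the $L^1$ norm of the (null) function $x\mapsto\E[R^{N,h}(t,x)]$ vanishes. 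The only subtlety is the measurability/integrability needed to pass from pointwise-in-$x$ vanishing to the statement about the function, which is handled by the stochastic Fubini theorem of Veraar already invoked in the proof of Proposition~\ref{propspde}.

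For the quantitative bound, the plan is: first bound $\E\left[\|R^{N,h}(t,.)\|_{L^1}\right]$ by integrating over a bounded spatial window and controlling the tail via moments. Concretely, by Cauchy--Schwarz (or Jensen) on the probability space and then the It\^o isometry,
\[
\E\left[\left|R^{N,h}(t,x)\right|^2\right]=\frac{\sigma^2}{N^2}\sum_{i=1}^N\E\left[\int_0^t G_{t-s}^2(X^{i,N,h}_s-x)\,ds\right]\le\frac{\sigma^2}{N}\,\sup_i\E\left[\int_0^t G_{t-s}^2(X^{i,N,h}_s-x)\,ds\right],
\]
using independence of the $W^i$ to kill the cross terms; the diagonal terms are all equal by exchangeability. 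Thus $\E\left[|R^{N,h}(t,x)|^2\right]^{1/2}\le \frac{C}{\sqrt N}\,g_t(x)^{1/2}$ where $g_t(x)=\sup_i\E\left[\int_0^t G_{t-s}^2(X^{i,N,h}_s-x)\,ds\right]$. The point is that $\int_\R\sqrt{g_t(x)}\,dx<\infty$ with a bound uniform in $N,h$ and $t\le T$: on the one hand $\int_\R g_t(x)\,dx=\frac{1}{2\sigma}\sqrt{t/\pi}$ by the heat-kernel estimate \eqref{GSquareEsp}, so $\sqrt{g_t}$ is integrable near its "bulk"; on the other hand, since $G_{t-s}^2(y)\le\|G_{t-s}\|_{L^\infty}G_{t-s}(y)\le\frac{1}{\sqrt{2\pi\sigma^2(t-s)}}\,G_{t-s}(y)$, Tonelli gives $g_t(x)\le\frac{\sqrt t}{\sigma}\sqrt{\tfrac1{2\pi}}\,\sup_i\sup_{s\le t}\E\left[G_{t-s}(X^{i,N,h}_s-x)\right]$, and for $|x|$ large this is controlled by $\P(|X^{i,N,h}_s|\ge|x|/2)$ plus a Gaussian factor, hence by Lemma~\ref{controlM} (with the exponent $\rho>1$ from the hypothesis) $g_t(x)\le C\,|x|^{-\rho}$ for $|x|\ge1$, which makes $\sqrt{g_t}\le C|x|^{-\rho/2}$ integrable at infinity precisely because $\rho>1$. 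Then $\E\left[\|R^{N,h}(t,.)\|_{L^1}\right]\le\int_\R\E\left[|R^{N,h}(t,x)|^2\right]^{1/2}dx\le\frac{C}{\sqrt N}\int_\R\sqrt{g_t(x)}\,dx\le\frac{R}{\sqrt N}$, uniformly in $t\le T$, $h\in[0,T]$, $N\in\N^*$.

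The main obstacle is the tail control of $g_t(x)$ for large $|x|$: the crude bound $\int_\R g_t(x)\,dx<\infty$ alone does not give integrability of $\sqrt{g_t}$, so one genuinely needs the pointwise decay $g_t(x)=\O(|x|^{-\rho})$, which is where the moment assumption $\int_\R|x|^\rho m(dx)<\infty$ with $\rho>1$ enters and is used sharply. Care is also needed to interchange $\int_\R$ and $\E[\cdot]$ (justified by Tonelli, as the integrand is nonnegative) and to handle the boundary case $h=0$ where $\tau^h_s=s$ and $X^{i,N,0}=X^{i,N}$ — but the argument above never uses the time discretization structure of $X^{i,N,h}$, only its moment bound from Lemma~\ref{controlM}, so it applies verbatim for all $h\in[0,T]$.
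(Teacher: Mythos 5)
The centering claim and the overall strategy (Cauchy--Schwarz in $\omega$, It\^o isometry, then integrate the square root of the second moment over $x$) agree with the paper's proof. The gap is in the tail estimate of $g_t(x)=\sup_i\E\bigl[\int_0^t G^2_{t-s}(X^{i,N,h}_s-x)\,ds\bigr]$. You bound $G^2_{t-s}\le \|G_{t-s}\|_{L^\infty}G_{t-s}$, pull the supremum over $s$ out of the time integral, and then claim that, after splitting on $\{|X^{i,N,h}_s|\ge |x|/2\}$, Lemma~\ref{controlM} yields $g_t(x)\le C|x|^{-\rho}$. This does not go through: on the event $\{|X^{i,N,h}_s|\ge |x|/2\}$ your only control of $G_{t-s}(X^{i,N,h}_s-x)$ is $\|G_{t-s}\|_{L^\infty}=\bigl(2\pi\sigma^2(t-s)\bigr)^{-1/2}$, so the resulting bound for $\E[G_{t-s}(X^{i,N,h}_s-x)]$ contains a factor $(t-s)^{-1/2}$ that is unbounded as $s\to t$, and hence $\sup_{s<t}\E[G_{t-s}(X^{i,N,h}_s-x)]$ is not finite by this argument. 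If instead you keep the time integral, the factor $(t-s)^{-1/2}$ from $\|G_{t-s}\|_\infty$ in the $G^2\le\|G\|_\infty G$ bound combines with the $(t-s)^{-1/2}$ from the large-$|X|$ event to give $\int_0^t (t-s)^{-1}\,ds=\infty$. In short, the pointwise tail bound $g_t(x)=\mathcal O(|x|^{-\rho})$ would essentially require a uniform (in $s$ and $N$) density bound for $X^{i,N,h}_s$, which the paper never establishes (Lemma~\ref{estidens} is only for the nonlinear limit $X_t$, and there it even blows up like $t^{-1/2}$).

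The paper sidesteps exactly this difficulty. After the It\^o isometry and before integrating in $x$, it writes $\E^{1/2}[\cdot]=\E^{1/2}[\cdot\,(1+|x|^\rho)]\big/\sqrt{1+|x|^\rho}$ and applies Young's inequality $ab\le\tfrac12(a^2+b^2)$. This replaces the search for a pointwise estimate of $g_t(x)$ by the integral estimate $\int_\R g_t(x)(1+|x|^\rho)\,dx<\infty$, which is tractable: after exchanging $\E$, $\int_0^t$ and $\int_\R$ by Tonelli and using $G^2_{t-s}(y)=G_{(t-s)/2}(y)\big/\bigl(2\sigma\sqrt{\pi(t-s)}\bigr)$, the weight $|x|^\rho$ is absorbed by the Gaussian $\rho$-th moment of $G_{(t-s)/2}$ together with $\E[|X^{i,N,h}_s|^\rho]\le M$ from Lemma~\ref{controlM}, and the singularity $(t-s)^{-1/2}$ is integrable. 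If you want to keep your structure, that is the missing ingredient: do not aim at a pointwise decay of $g_t$, but control $\int_\R g_t(x)(1+|x|^\rho)\,dx$ and pass from $\int\sqrt{g_t}$ to this quantity via Young's inequality.
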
 

\begin{proof} 
We have that $\displaystyle \int_{\R}\E\left[\int_0^t G^2_{t-s}(X^{i,N,h}_s - x)\,ds\right]\,dx = \E\left[\int_{\R}\int_0^t G^2_{t-s}(X^{i,N,h}_s - x)\,ds\,dx \right] \le \frac{1}{\sigma} \sqrt{\frac{t}{\pi}}  < \infty $ according to the estimate \eqref{GSquareEsp} from Lemma \ref{EstimHeatEq}. Therefore, $ \E\left[R^{N,h}(t,x) \right] = 0 \quad  \text{dx a.e.}$. Moreover, denoting $\displaystyle I_{\rho} = \int_{\R} \frac{dx}{1 + |x|^{\rho}}$ and using the It\^o isometry for the first equality then Young's inequality for the second inequality and last the estimate \eqref{GSquareEsp} from Lemma \ref{EstimHeatEq}, we obtain:

\begin{align*}
\displaystyle \E\left[ \left\Vert R^{N,h}(t,.)\right\Vert_{L^1} \right] &\le \int_{\R} \E^{1/2}\left[ \left( \frac{\sigma}{N}\sum \limits_{i=1}^N \int_0^t G_{t-s}(X^{i,N,h}_s -x)\,dW^{i}_s\right)^2\right]\,dx \\      
&=\frac{\sigma}{\sqrt{N}} \int_{\R} \E^{1/2} \left[ \frac{1}{N}\sum
\limits_{i=1}^N \int_0^t G^2_{t-s}(X^{i,N,h}_s -x)\,ds\right]\,dx \\   
&= \frac{\sigma}{\sqrt{N}}  \int_{\R} \E^{1/2} \left[ \frac{1}{N}\sum \limits_{i=1}^N \int_0^t G^2_{t-s}(X^{i,N,h}_s -x)\,ds (1 + |x|^{\rho}) \right] \frac{dx}{\sqrt{1+|x|^{\rho}}} \\      
&\le \frac{\sigma}{2\sqrt{N}} \int_{\R} \left(\frac{1}{1 + |x|^{\rho}} + \E\left[\frac{1}{N}\sum \limits_{i=1}^N \int_0^t G^2_{t-s}(X^{i,N,h}_s -x)\,ds (1 + |x|^{\rho}) \right]\right)\,dx \\
&= \frac{\sigma I_{\rho}}{2\sqrt{N}} + \frac{\sigma}{2\sqrt{N}}\E\left[\frac{1}{N}\sum \limits_{i=1}^N\int_0^t\left(\int_{\R}G^2_{t-s}(X^{i,N,h}_s -x)\,dx + \int_{\R} |x|^{\rho} G^2_{t-s}(X^{i,N,h}_s -x)\,dx \right)\,ds \right] \\     
&= \frac{\sigma I_{\rho}}{2\sqrt{N}} + \frac{\sigma}{2\sqrt{N}} \E\left[ \frac{1}{N}\sum
\limits_{i=1}^N  \frac{1}{\sigma} \, \sqrt{\frac{t}{\pi}} + \int_0^t \int_{\R} \frac{|X^{i,N,h}_s - y|^{\rho}}{2\sigma \, \sqrt{\pi (t-s)}}G_{(t-s)/2}(y)\,dy\,ds\right] \\     
&\le \frac{\sigma I_{\rho}}{2\,\sqrt{N}} +\frac{1}{2}\sqrt{\frac{t}{N\pi}} + \frac{2^{\rho -1}\sigma}{2\sqrt{N}}
\E\left[\frac{1}{N}\sum \limits_{i=1}^N \int_0^t \frac{|X^{i,N,h}_s|^{\rho}}{2\sigma \, \sqrt{\pi(t-s)}}\,ds + \frac{\sigma^{\rho-1}}{2 \pi} \Gamma\left(\frac{\rho +1}{2} \right) \int_0^t (t-s)^{(\rho-1)/2} \,ds \right]\\    
&= \frac{1}{2\sqrt{N}}\left(
\sigma I_{\rho} + \sqrt{\frac{t}{\pi}} \left(1 + \frac{2^{\rho -1}\sigma^{\rho}}{\rho +1}\Gamma\left(\frac{\rho +1}{2}\right)\sqrt{\frac{t^{\rho}}{\pi}}\right) + \frac{2^{\rho -1}}{\sqrt{\pi}}\int_0^t \E\left[ \frac{1}{N}\sum \limits_{i=1}^N |X^{i,N,h}_s|^{\rho} \right] \frac{ds}{2\sqrt{(t-s)}}\right).
\end{align*} 
With the use of Lemma \ref{controlM}, we conclude by setting $\displaystyle R
= \frac{1}{2} \left(\sigma I_{\rho}+ \sqrt{\frac{T}{\pi}}\left(1 +2^{\rho -1}M +\frac{2^{\rho -1}\sigma^{\rho}}{\rho +1}\Gamma\left(\frac{\rho +1}{2}\right)\sqrt{\frac{T^{\rho}}{\pi}}\right)\right)$.
\end{proof} 
Therefore, Inequality \eqref{controlConvoB} becomes: 
\begin{align}\label{controlConvo}
\displaystyle \E\left[\left\| F^{N,h}(t,.) - F(t,.) \right\|_{L^1} \right] \le \E\left[\left\| F^{N,h}_0 - F_0 \right\|_{L^1} \right] &+\frac{R}{\sqrt{N}}  +\E\left[\left\|E^{N,h}(t,.)\right\|_{L^1}\right] \\
&+A \int_0^t \frac{1}{\sqrt{t-s}}\E\left[\left\Vert F^{N,h}(s,.) - F(s,.) \right\Vert_{L^1}\right]\,ds.\notag 
\end{align}
\textbullet $\;$ One should notice that for $h=0$, $E^{N,0}(t,x)=0 \quad \forall t \in [0,T], N \in \N^{*}, x \in \R$. Therefore, to control the term $\E\left[ \mathcal{W}_1\left(\mu^{N,0}_t,\mu_t \right)\right]$, we iterate Inequality \eqref{controlConvo} and obtain:
\begin{align*}
    \displaystyle \E\left[\left\Vert F^{N,0}(t,.) - F(t,.) \right\Vert_{L^{1}}\right] \leq \left(2A \sqrt{t} +1\right) &\left( \E\left[\left\| F^{N,h}_0 - F_0 \right\|_{L^1} \right] + \frac{R}{\sqrt N} \right) \\
    &+ A^2 \displaystyle \int_0^t \E\left[\left\Vert F^{N,0}(r,.) - F(r,.) \right\Vert_{L^1}\right] \int_r^t \frac{ds}{\sqrt{t-s}\, \sqrt{s-r}}\,dr.
\end{align*}
Since $\displaystyle \int_r^t \frac{ds}{\sqrt{t-s}\,\sqrt{s-r}} = \pi$ and with the use of Lemma \ref{integrF}, we can apply Gr\"onwall's lemma to deduce that\\ $\displaystyle \forall N \in \N^{*}, \sup_{t \leq T}\E\left[ \mathcal{W}_1\left(\mu^{N,0}_t,\mu_t \right)\right] \le C\left(\E\left[ \mathcal{W}_1\left(\mu^{N}_0,m \right)\right] + \frac{1}{\sqrt{N}} \right)$ where $ C = \max(1,R)\left(2A \sqrt{T} +1 \right)\exp\left( A^2 \pi T\right)$. This concludes the proof of Theorem \ref{thm1} when $h=0$.\\ \\
\textbullet $\;$ When $h>0$, we need to estimate $\E\left[\left\| E^{N,h}(t,.) \right\|_{L^1} \right], h \in (0,T]$.
\begin{prop}\label{ControlFinalE}
We assume that for some $\rho >1$, $\displaystyle \int_{\R} |x|^{\rho}m(dx) < \infty$ and that the function $\lambda$ is Lipschitz continuous. Then $\exists Z <\infty, \forall N \in \N^{*}, \forall h \in (0,T],\forall t\in[0,T],$
$$\displaystyle \E \left[ \left\| E^{N,h}(t,.)\right\|_{L^1} \right] \leq Z  \left( \frac{1}{\sqrt{N}} + h + \sqrt{h}\E\left[\left\|F^{N,h}(t,.) - F(t,.) \right\|_{L^1} \right] + \int_{0}^{t} \frac{1}{2\sqrt{t-s}}\E\left[\left\|F(s,.) - F^{N,h}(s,.) \right\|_{L^1}\right]\,ds \right).$$
\end{prop}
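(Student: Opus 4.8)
The plan is to rewrite $E^{N,h}$ so as to isolate a ``macroscopic'' drift–difference contribution, which after comparison with the limit $F$ generates exactly the $L^1$–error terms on the right–hand side, from a genuinely ``microscopic'' time–discretization contribution of size $\mathcal O(1/\sqrt N+h)$. First, observe that the case $t\le 2h$ is trivial: since $s\mapsto r_{\tau^h_s}(j)$ is constant on $[\tau^h_s,\overline\tau^h_s)$ and $|\lambda^N(i)-\lambda^N(j)|\le 2L_\Lambda$ always, one has $\|E^{N,h}(t,\cdot)\|_{L^1}\le\frac1N\sum_j\int_0^t 2L_\Lambda\,ds= 2L_\Lambda t\le 4L_\Lambda h$. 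So I may assume $t>2h$, which keeps the initial layer $\tau^h_s=0$ away from $s=t$.

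\emph{Decomposition.} Re-index the sum in $E^{N,h}$ by particle label: writing $r_u(j)$ for the rank of particle $j$ at time $u$ (so $\eta_u(i)=j\Leftrightarrow r_u(j)=i$, $Y^{i,N,h}_u=X^{j,N,h}_u$, and $\eta^{-1}_{\tau^h_s}(\eta_s(i))=r_{\tau^h_s}(j)$ with $j=\eta_s(i)$), one gets $E^{N,h}(t,x)=\frac1N\sum_{j}\int_0^t G_{t-s}(X^{j,N,h}_s-x)\bigl(\lambda^N(r_s(j))-\lambda^N(r_{\tau^h_s}(j))\bigr)\,ds$. Applying the summation–by–parts identity used in the proof of Proposition~\ref{propspde}, namely $\frac1N\sum_i G_{t-s}(Y^{i,N,h}_u-x)\lambda^N(i)=\partial_xG_{t-s}*\Lambda(F^{N,h}(u,\cdot))(x)$, at $u=s$ and, after inserting $X^{j,N,h}_{\tau^h_s}$ into the kernel, at $u=\tau^h_s$, I obtain $E^{N,h}=E_1+E_2$ with $E_1(t,x)=\int_0^t\partial_xG_{t-s}*\bigl(\Lambda(F^{N,h}(s,\cdot))-\Lambda(F^{N,h}(\tau^h_s,\cdot))\bigr)(x)\,ds$ and $E_2(t,x)=-\frac1N\sum_{j}\int_0^t\bigl(G_{t-s}(X^{j,N,h}_s-x)-G_{t-s}(X^{j,N,h}_{\tau^h_s}-x)\bigr)\lambda^N(r_{\tau^h_s}(j))\,ds$.

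\emph{Treatment of $E_1$.} Using $\|\partial_xG_u\|_{L^1}=\frac1\sigma\sqrt{2/(\pi u)}$ (estimate \eqref{FirstDerivG}), $L_\Lambda$-Lipschitz continuity of $\Lambda$, \eqref{w1fdr} and Lemma~\ref{XandY}, $\|E_1(t,\cdot)\|_{L^1}\le\frac{L_\Lambda}\sigma\sqrt{2/\pi}\int_0^t(t-s)^{-1/2}\,\mathcal W_1(\mu^{N,h}_s,\mu^{N,h}_{\tau^h_s})\,ds$. Triangulating $\mathcal W_1(\mu^{N,h}_s,\mu^{N,h}_{\tau^h_s})$ through $\mu_s,\mu_{\tau^h_s}$, the two ``empirical vs.\ limit'' pieces give $\mathcal W_1(\mu^{N,h}_s,\mu_s)$ and $\mathcal W_1(\mu^{N,h}_{\tau^h_s},\mu_{\tau^h_s})$; the first one already is the term $\int_0^t(t-s)^{-1/2}\E\|F^{N,h}(s,\cdot)-F(s,\cdot)\|_{L^1}\,ds$, and the second one, regrouped grid-interval by grid-interval (with $\int_{t_k}^{t_{k+1}}(t-s)^{-1/2}ds$ comparable to $\int_{t_k}^{t_{k+1}}(t-t_k)^{-1/2}ds$, after absorbing $\mathcal W_1(\mu^{N,h}_{\tau^h_s},\mu^{N,h}_s)\le C\sqrt h$, Lemma~\ref{XandY}, and isolating the last partial interval $[\tau^h_t,t]$ where $\int_{\tau^h_t}^t(t-s)^{-1/2}ds\le2\sqrt h$) produces the terms $\int_0^t(t-s)^{-1/2}\E\|F^{N,h}(s,\cdot)-F(s,\cdot)\|_{L^1}\,ds$, $\sqrt h\,\E\|F^{N,h}(t,\cdot)-F(t,\cdot)\|_{L^1}$ and an $\mathcal O(h)$ remainder; the leftover smooth piece $\Lambda(F(s,\cdot))-\Lambda(F(\tau^h_s,\cdot))$ (with $\tau^h_s>0$, the layer $\tau^h_s=0$ being handled by the trivial bound above together with $\mathcal W_1(\mu^{N,h}_s,\mu^{N,h}_0)\le C\sqrt{s}$ and $t>2h$) is rewritten via the PDE \eqref{vpde} as $\int_{\tau^h_s}^s\lambda(F(u,\cdot))\bigl(\tfrac{\sigma^2}2\partial_{xx}F-\partial_x\Lambda(F)\bigr)(u,\cdot)\,du$, the first-order part being $\mathcal O(h)$ and the second-order part being held in reserve for the recombination below.

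\emph{Treatment of $E_2$, and the main obstacle.} By Itô's formula applied to $u\mapsto G_{t-s}(X^{j,N,h}_u-x)$ on $[\tau^h_s,s]$, $E_2=E_{2a}+E_{2b}+E_{2c}$ split according to the martingale part $\sigma\partial_xG_{t-s}\,dW^j$, the first-order drift part $\lambda^N(r_{\tau^h_s}(j))\partial_xG_{t-s}\,du$, and the second-order part $\tfrac{\sigma^2}2\partial_{xx}G_{t-s}\,du$. The term $E_{2b}$ carries an explicit factor $s-\tau^h_s\le h$ while $\int_0^t\|\partial_xG_{t-s}\|_{L^1}ds<\infty$, so $\E\|E_{2b}(t,\cdot)\|_{L^1}\le Ch$. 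The term $E_{2a}$ is centred; after a stochastic Fubini in $(s,u)$ (using that $\lambda^N(r_{\tau^h_s}(j))$ is constant on each grid interval and the uniform bound $\sup_{w>0,\,y}\int_0^w|\partial_yG_v(y)|\,dv<\infty$), its second moment is, by independence of the $W^j$ (only diagonal terms survive the Itô isometry), of the form $\frac{\sigma^2}{N}\int_0^t\frac1N\sum_j\E[(\cdots)^2]\,du$; pulling out the factor $1/\sqrt N$ first and only then applying the $(1+|x|^\rho)^{-1/2}$-weighting trick of Lemma~\ref{controlRN} together with Lemma~\ref{controlM} yields $\E\|E_{2a}(t,\cdot)\|_{L^1}\le C/\sqrt N$. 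The remaining term $E_{2c}$ involves $\partial_{xx}G_{t-s}$, whose $L^1$-norm $\sim(t-s)^{-1}$ is \emph{not} integrable at $s=t$: this is the crux. One must recombine $E_{2c}$ with the $\tfrac{\sigma^2}2\partial_{xx}F$ contribution produced in Step~2, re-expressing both through $\partial_sG_{t-s}=\tfrac{\sigma^2}2\partial_{xx}G_{t-s}$ (so that, after a $s$-integration by parts, $\tfrac{\sigma^2}2\partial_{xxx}G_{t-s}=-\partial_s\partial_xG_{t-s}$) and against the Abel identity at frozen time; only after this cancellation does the non-integrable singularity disappear and leave an $\mathcal O(h)$ remainder. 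The hard part is precisely organising this joint treatment: separately, $E_1$ carries a $\sqrt h$-sized contribution (through $\mathcal W_1(\mu_s,\mu_{\tau^h_s})\sim\sqrt{s-\tau^h_s}$) and $E_2$ a non-integrable second-order singularity, and it is only their mutual cancellation that collapses the time-discretization error from $\mathcal O(\sqrt h)$ to $\mathcal O(h)$, all while keeping uniform control near the initial layer $\tau^h_s=0$ where $m$ may fail to admit a bounded density.
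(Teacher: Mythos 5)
There is a genuine gap, and it is exactly where you flagged ``the crux.''

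Your algebraic decomposition $E^{N,h}=E_1+E_2$ (via the re-indexing $j=\eta_s(i)$ and the summation-by-parts identity, inserting the position $X^{j,N,h}_{\tau^h_s}$) is correct, as is the trivial bound $\left\|E^{N,h}(t,\cdot)\right\|_{L^1}\le 2L_\Lambda t$ when $t\le 2h$. The treatment of $E_{2a}$ (martingale) and $E_{2b}$ (first-order drift, carrying the factor $s-\tau^h_s\le h$) would indeed go through along the lines of Lemmas \ref{controlRN} and \ref{controlM}. But the second-order piece $E_{2c}$, produced because you apply It\^o's formula to $u\mapsto G_{t-s}(X^{j,N,h}_u-x)$ with the kernel time $t-s$ \emph{frozen}, yields $\frac{\sigma^2}2\partial_{xx}G_{t-s}$, whose $L^1$-norm $\sim (t-s)^{-1}$ is non-integrable at $s=t$. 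You assert that it must cancel against the $\frac{\sigma^2}2\partial_{xx}F$ contribution hidden inside $\Lambda(F(s,\cdot))-\Lambda(F(\tau^h_s,\cdot))$, but you do not carry this out, and it is not a true cancellation: one object is $\frac1N\sum_j\partial_{xx}G_{t-s}(X^{j,N,h}_u-x)\lambda^N(\cdot)$, hit on singular empirical positions, the other is $\partial_xG_{t-s}*\left[\lambda(F(u,\cdot))\partial_{xx}F(u,\cdot)\right]$ (with the absolutely continuous limit density $p(u,\cdot)$). Their difference is a density-comparison error that is nowhere shown to be small uniformly in $s$ near $t$, nor to restore integrability of the $(t-s)^{-1}$ singularity. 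The argument as written therefore stops short of a proof.

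The paper sidesteps this entirely by a different chain of manipulations. In the decomposition \eqref{decompENh}, the indices are shuffled so that both sides of the difference carry the \emph{same} weight $\lambda^N(i)$ and the difference falls on the Green-function argument, $G_{t-s}(Y^{i,N,h}_s-x)-G_{t-s}(X^{\eta_{\tau^h_s}(i),N,h}_s-x)$, which vanishes at $s=\tau^h_s$. Then It\^o's formula is applied to $u\mapsto G_{t-u}(\cdot)$, i.e. with the kernel time $t-u$ \emph{evolving in $u$}, so that the heat equation $\partial_uG_{t-u}=-\frac{\sigma^2}2\partial_{xx}G_{t-u}$ annihilates the second-order It\^o term outright; the remaining martingale, drift and local-time pieces ($e_1,\dots,e_5$ in Lemma \ref{ControlE}) all inherit the factor $(t\wedge\bar\tau^h_s-s)\le h$, which integrates against $\partial_xG_{t-s}$ to give $\mathcal O(h)$ (or $\mathcal O(N^{-1/2})$ for the stochastic integrals). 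No cancellation between a macroscopic and a microscopic piece is needed, and no bounded density assumption on $m$ is required near the initial layer. The local-time term $e_1$ is the only genuinely delicate one and requires Lemmas \ref{AccroissF} and \ref{estimGam}, which you did not anticipate; they are what produce the $\sqrt h\,\E\left\|F^{N,h}(t,\cdot)-F(t,\cdot)\right\|_{L^1}$ and the $\int_0^t(t-s)^{-1/2}\E\left\|F^{N,h}(s,\cdot)-F(s,\cdot)\right\|_{L^1}ds$ terms in the final estimate. I recommend you revisit your It\^o step with a time-varying kernel; that single change removes the obstruction you correctly identified as the hard part.
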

Proposition \ref{ControlFinalE} will be proved in Section \ref{cetSec}. \\ \\
From Equation\eqref{controlConvo} and Proposition \ref{ControlFinalE}, we have that:
\begin{align*}
    \displaystyle &\left(1 - Z \sqrt{h} \right)\E\left[\left\Vert F^{N,h}(t,.) - F(t,.) \right\Vert_{L^{1}} \right] \\
    &\le \E\left[\left\Vert F^{N,h}_0 - F_0 \right\Vert_{L^{1}} \right]+ \frac{Z+R}{\sqrt{N}} + Z  h  + \left(A+\frac{Z}{2}\right) \int_0^t \frac{1}{\sqrt{t-s}}\E\left[\left\Vert F^{N,h}(s,.) - F(s,.) \right\Vert_{L^1}\right]\,ds.
\end{align*}
Hence, if we denote $\displaystyle J = 2\left(Z + R \right)$ and $K = 2A + Z$ then:                     
\begin{align}\label{EquaIciiii}
    \displaystyle &2\left(1 - Z \sqrt{h} \right) \E\left[\left\Vert F^{N,h}(t,.) - F(t,.) \right\Vert_{L^{1}} \right] \notag\\
    &\le 2\E\left[\left\Vert F^{N,h}_0 - F_0 \right\Vert_{L^{1}} \right] + J \left(\frac{1}{\sqrt{N}} + h \right) + \int_0^t \frac{K}{\sqrt{t-s}}\E\left[\left\Vert F^{N,h}(s,.) - F(s,.) \right\Vert_{L^1}\right]\,ds.
\end{align}
\begin{itemize}
    \item When $h \leq \frac{1}{4Z^2}$, Equation \eqref{EquaIciiii} implies:
\begin{align*}
    &\E\left[\left\|F^{N,h}(t,.) - F(t,.) \right\|_{L^1}\right] \\
    &\le 2\E\left[\left\Vert F^{N,h}_0 - F_0 \right\Vert_{L^{1}} \right]+ J\left(\frac{1}{\sqrt{N}} +h \right) + \int_0^t  \frac{K}{ \sqrt{t-s}}\E\left[ \left\Vert F^{N,h}(s,.) - F^N(s,.) \right\Vert_{L^1}\right]\,ds.
\end{align*}
We iterate this inequality to obtain:
\begin{align*}
    &\E\left[\left\|F^{N,h}(t,.) - F(t,.) \right\|_{L^1}\right] \\
    &\le \left(1 + 2 K \sqrt{t} \right) \left(2\E\left[\left\Vert F^{N,h}_0 - F_0 \right\Vert_{L^{1}} \right] + J\left(\frac{1}{\sqrt{N}}+ h \right) \right) +  K^2 \pi \int_0^t  \E\left[ \left\Vert F^{N,h}(r,.) - F^N(r,.) \right\Vert_{L^1}\right]\,dr.
\end{align*}
With the use of Lemma \ref{integrF}, we can apply Gr\"onwall's Lemma and deduce that:
$$\forall t \in [0,T], \quad \E\left[\left\|F^{N,h}(t,.) - F(t,.) \right\|_{L^1}\right] \le \left(1 + 2 K \sqrt{t} \right)\exp\left(K^2\pi t \right)\left(2\E\left[\left\Vert F^{N,h}_0 - F_0 \right\Vert_{L^{1}} \right] + J\left(\frac{1}{\sqrt{N}}+ h \right) \right). $$
 \item When $h>\frac{1}{4Z^2}$, by Lemma \ref{integrF} and \eqref{w1fdr},
\begin{align*}
	\displaystyle \E\left[\left\|F^{N,h}(t,.) - F(t,.) \right\|_{L^1}\right] &\le 2 \sigma \sqrt{\frac{2t}{\pi}} + 2 L_{\Lambda} t + \E\left[\left\Vert F^{N,h}_0 - F_0 \right\Vert_{L^{1}} \right]\\
	&\le 4Z^2h\left(2\sigma \sqrt{\frac{2t}{\pi}} + 2 L_{\Lambda} t\right)  + \E\left[\left\Vert F^{N,h}_0 - F_0 \right\Vert_{L^{1}} \right].
\end{align*}
\end{itemize}
We choose  $C = \max\left(\max(2,J)\left(1 + 2 K \sqrt{T} \right)\exp\left(K^2\pi T \right), 4Z^2\left(2\sigma \sqrt{\frac{2T}{\pi}} + 2 L_{\Lambda} T\right)\right) $ and conclude that:
$$\forall N \in \N^{*},\forall h \in (0,T],\forall t \in [0,T], \quad \sup_{t \leq T}\E\left[ \mathcal{W}_1\left(\mu^{N,h}_t,\mu_t \right)\right] \le  C\left(\E\left[ \mathcal{W}_1\left(\mu^{N}_0,m \right)\right] +  \left(\frac{1}{\sqrt{N}} + h \right)\right). $$

Let us now prove Proposition \ref{ControlFinalE} in the following section.

\subsubsection{Proof of Proposition \ref{ControlFinalE}}\label{cetSec}

We recall the expression of $E^{N,h}(t,x)$:

$$ \displaystyle E^{N,h}(t,x) = \frac{1}{N}\sum \limits_{i=1}^N  \int_0^t  G_{t-s}(Y^{i,N,h}_s-x) \left[ \lambda^{N}\left(i\right) - \lambda^{N}\left(\eta_{\tau^h_s}^{-1}(\eta_s(i))\right) \right]\,ds. $$
We do not know how to estimate the difference of values of $\lambda^N$ between the brackets. For $s>0$, we are going to take advantage of the permutation $\eta_s^{-1}\circ\eta_{\tau^h_s}$ (because of the convention \eqref{convinvinit}, this is not necessarily a permutation for $s=0$ and $\eta_{\tau^h_s}^{-1}\circ\eta_{\tau^h_s}$ is equal to the identity permutation for $s\ge h$ but not necessarily for $s\in[0,h)$) to change indices and obtain the same value multiplied by a difference of values of the smooth function $G_{t-s}$. Using this permutation for the first equality then that $Y^{\eta_s^{-1}(j),N,h}_s=X^{j,N,h}_s$ for $s>0$ and $1\le j\le N$ for the second one, we obtain that 
\begin{align}
  E^{N,h}(t,x) &= \frac{1}{N}\sum \limits_{i=1}^N  \int_0^t  G_{t-s}(Y^{i,N,h}_s-x)\lambda^{N}\left(i\right)-G_{t-s}\left(Y^{\eta_s^{-1}(\eta_{\tau^h_s}(i)),N,h}_s-x\right)\lambda^{N}\left(\eta_{\tau^h_s}^{-1}(\eta_{\tau^h_s}(i))\right)\,ds\notag\\
    &= \frac{1}{N}\sum \limits_{i=1}^N \int_0^t \left\{\left(G_{t-s}\left(Y^{i,N,h}_s -x\right)- G_{t-s}\left(X^{\eta_{\tau^h_s}(i),N,h}_s -x \right) \right)\right\}\lambda^N(i)\,ds,\notag\\
    &+\frac{1}{N}\sum \limits_{i=1}^N \int_{0}^{t\wedge h}G_{t-s}(X^{\eta_0(i),N,h}_s-x)\left(\lambda^N(i)-\lambda^N\left(\eta_0^{-1}(\eta_0(i))\right)\right)\,ds\label{decompENh}.
  \end{align}

Substracting $G_{t-\tau^h_s}\left(Y^{i,N,h}_{\tau^h_s}-x\right) - G_{t-\tau^h_s} \left(X^{\eta_{\tau^h_s}(i),N,h}_{\tau^h_s}-x\right) =0$  in the brace in the first term of the right-hand side makes apparent that this term is not too large since $\tau^h_s$ is close to $s$. 
Computing $G_{t-s}\left(Y^{i,N,h}_s -x\right)-G_{t-\tau^h_s}\left(Y^{i,N,h}_{\tau^h_s} -x\right)$ and $G_{t-s}\left(X^{\eta_{\tau^h_s}(i),N,h}_s -x \right) - G_{t-{\tau^h_s}}\left(X^{\eta_{\tau^h_s}(i),N,h}_{\tau^h_s} -x \right)$ by It\^o's formula, we obtain the following new expression of $E^{N,h}(t,x)$:
\begin{lem}\label{ControlE} 
The process $\tilde \beta=\left(\tilde \beta^1,\hdots,\tilde \beta^N \right)$ where $\displaystyle \tilde \beta^i_t=\sum_{j=1}^N\int_0^t\mathbf{1}_{\left\{\eta_{\tau^h_s}(i)=j\right\}}dW^j_s$ is a $N$-dimensional Brownian motion and 
we can express $E^{N,h}(t,x)$ as $E^{N,h}(t,x)=\sum \limits_{p=0}^5 e^{N,h}_p(t,x)$ where:
\begin{itemize}
  \item $ \displaystyle e^{N,h}_0(t,x) = \frac{1}{N}\sum \limits_{i=1}^N \int_{0}^{t\wedge h}G_{t-s}\left(X^{\eta_0(i),N,h}_s-x\right)\left(\lambda^N(i)-\lambda^N\left(\eta_0^{-1}(\eta_0(i))\right)\right)\,ds $,  
  \item $\displaystyle e^{N,h}_1(t,x) = \frac{1}{N}\sum \limits_{i=2}^N  \int_{0}^{t}(t \wedge \overline{\tau}^h_s -s)\left(  \lambda^N(i) - \lambda^N(i-1) \right)\partial_x G_{t-s}\left(Y^{i,N,h}_s -x \right)\gamma^{i}_s\,d|K|_s$,
   \item $\displaystyle e^{N,h}_2(t,x) = \frac{\sigma}{N} \sum \limits_{i=1}^N  \int_{0}^{t}(t \wedge \overline{\tau}^h_s -s)\lambda^N(i) \partial_x G_{t-s}\left(Y^{i,N,h}_s -x \right)\,d\beta^{i}_s$,
   \item $\displaystyle e^{N,h}_3(t,x) = - \frac{\sigma}{N}\sum \limits_{i=1}^N  \int_{0}^{t}(t \wedge \overline{\tau}^h_s -s)\lambda^N(i) \partial_x G_{t-s}\left(X^{\eta_{\tau^h_s}(i),N,h}_s -x \right)\,d\widetilde{\beta}^i_s$,
   \item $\displaystyle e^{N,h}_4(t,x) =  \frac{1}{N} \sum \limits_{i=1}^N  \int_{0}^{t}(t \wedge \overline{\tau}^h_s -s)\lambda^N(i) \lambda^N\left( \eta^{-1}_{\tau^h_s}(\eta_s(i))\right)\partial_x G_{t-s}\left(Y^{i,N,h}_s -x \right)\,ds$,
   \item $ \displaystyle e^{N,h}_5(t,x) = - \frac{1}{N}\sum \limits_{i=1}^N \int_{0}^{t}(t \wedge \overline{\tau}^h_s -s) \lambda^N(i)\lambda^N\left( \eta^{-1}_{\tau^h_s}\left(\eta_{\tau^h_s}(i)\right)\right)\partial_x G_{t-s}\left(X^{\eta_{\tau^h_s}(i),N,h}_s -x \right)\,ds $.
\end{itemize}
\end{lem}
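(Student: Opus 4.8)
The second line of the decomposition \eqref{decompENh} is already $e^{N,h}_0(t,x)$, so the task reduces to rewriting
\[
\frac1N\sum_{i=1}^N\lambda^N(i)\int_0^t\Big(G_{t-s}\big(Y^{i,N,h}_s-x\big)-G_{t-s}\big(X^{\eta_{\tau^h_s}(i),N,h}_s-x\big)\Big)\,ds
\]
as $\sum_{p=1}^5 e^{N,h}_p(t,x)$. The plan has three stages: (i) turn the inner difference into the difference of two It\^o increments over the short interval $[\tau^h_s,s]$; (ii) exchange the $ds$-integral with those increments; (iii) reorganise the output.

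For stage (i), fix $s\in(0,t]$. By the very definition of the permutation $\eta_{\tau^h_s}$ one has $Y^{i,N,h}_{\tau^h_s}=X^{\eta_{\tau^h_s}(i),N,h}_{\tau^h_s}$ (this also holds on the first step $[0,h)$, where $\tau^h_s=0$, even if the initial positions have ties), hence $G_{t-\tau^h_s}(Y^{i,N,h}_{\tau^h_s}-x)-G_{t-\tau^h_s}(X^{\eta_{\tau^h_s}(i),N,h}_{\tau^h_s}-x)=0$ and the inner difference equals
\[
\Big[G_{t-s}(Y^{i,N,h}_s-x)-G_{t-\tau^h_s}(Y^{i,N,h}_{\tau^h_s}-x)\Big]-\Big[G_{t-s}(X^{\eta_{\tau^h_s}(i),N,h}_s-x)-G_{t-\tau^h_s}(X^{\eta_{\tau^h_s}(i),N,h}_{\tau^h_s}-x)\Big].
\]
I would then apply It\^o's formula on $[\tau^h_s,s]$ to $r\mapsto G_{t-r}(Z_r-x)$, with $Z=Y^{i,N,h}$ for the first bracket and $Z=X^{\eta_{\tau^h_s}(i),N,h}$ for the second. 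In both cases $G$ solves the heat equation $\partial_tG_t=\tfrac{\sigma^2}{2}\partial_{xx}G_t$ and $d\langle Z\rangle_r=\sigma^2\,dr$, so the $\partial_t$-term produced by the formula cancels the It\^o second-order term and only the first-order term survives: $G_{t-s}(Z_s-x)-G_{t-\tau^h_s}(Z_{\tau^h_s}-x)=\int_{\tau^h_s}^s\partial_xG_{t-r}(Z_r-x)\,dZ_r$. For $Z=Y^{i,N,h}$ the dynamics \eqref{dynamicY} give $dZ_r=\sigma\,d\beta^i_r+\lambda^N(\eta^{-1}_{\tau^h_r}(\eta_r(i)))\,dr+(\gamma^i_r-\gamma^{i+1}_r)\,d|K|_r$; for $Z=X^{\eta_{\tau^h_s}(i),N,h}$ note that $r\mapsto\eta_{\tau^h_r}(i)$ is constant on $[\tau^h_s,s]$ (a single time step), so $Z$ is the trajectory of the single particle $j=\eta_{\tau^h_s}(i)$, with $dZ_r=\sigma\,dW^j_r+\lambda^N(\eta^{-1}_{\tau^h_s}(j))\,dr$ by \eqref{edsnh} and \eqref{convinvinit}, and $dW^j_r=d\widetilde\beta^i_r$ on $[\tau^h_s,s]$ by the definition of $\widetilde\beta$.

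Before stage (ii) I would check that $\widetilde\beta$ is an $N$-dimensional Brownian motion by L\'evy's characterisation: since $i\mapsto\eta_{\tau^h_s}(i)$ is a permutation for each $s$, exactly one of the indicators $\mathbf 1_{\{\eta_{\tau^h_s}(i)=j\}}$ equals $1$, which gives $\langle\widetilde\beta^i\rangle_t=t$, while injectivity gives $\langle\widetilde\beta^i,\widetilde\beta^{i'}\rangle_t=\int_0^t\mathbf 1_{\{\eta_{\tau^h_s}(i)=\eta_{\tau^h_s}(i')\}}\,ds=0$ for $i\neq i'$; that $\beta$ is Brownian is recalled in Section 3. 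For stage (ii), I insert the three expressions above into the double integral and exchange $\int_0^t ds$ with the $d\beta^i$, $d\widetilde\beta^i$, $dr$ and $d|K|$ integrals over $r\in[\tau^h_s,s]$; the exchange with the stochastic integrals is the stochastic Fubini theorem of Lemma \ref{StochFub} (for $dx$-a.e.\ $x$ the integrands are bounded, since $G_{t-r}(Z_r-x)$ decays exponentially as $r\uparrow t$ when $Z_t\neq x$). Writing $\int_{\tau^h_s}^s(\cdot)=\int_0^t\mathbf 1_{\{\tau^h_s\le r\le s\}}(\cdot)$ and computing, for a.e.\ fixed $r\in[0,t]$, the Lebesgue measure $\big|\{s\in[0,t]:\tau^h_s\le r\le s\}\big|=t\wedge\overline\tau^h_r-r$, the exchange produces precisely the weight $(t\wedge\overline\tau^h_s-s)$ that appears in $e^{N,h}_1,\dots,e^{N,h}_5$.

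Collecting the pieces finishes the proof: the $\sigma\,d\beta^i$-part and the $\lambda^N(\eta^{-1}_{\tau^h_s}(\eta_s(i)))$-part of the first bracket yield $e^{N,h}_2$ and $e^{N,h}_4$; the $\sigma\,d\widetilde\beta^i$-part and the $\lambda^N(\eta^{-1}_{\tau^h_s}(\eta_{\tau^h_s}(i)))$-part of the second bracket, dressed with the overall minus sign, yield $e^{N,h}_3$ and $e^{N,h}_5$; and the $(\gamma^i_r-\gamma^{i+1}_r)\,d|K|_r$-part of the first bracket, after the same Abel summation by parts as in the proof of Proposition \ref{propspde} — using $\gamma^1=\gamma^{N+1}=0$ and the fact that $\gamma^i_s\neq 0$ forces $Y^{i,N,h}_s=Y^{i-1,N,h}_s$, hence $\partial_xG_{t-s}(Y^{i,N,h}_s-x)=\partial_xG_{t-s}(Y^{i-1,N,h}_s-x)$, by \eqref{procVarFin} — yields $e^{N,h}_1$. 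I expect the main delicate point to be the stochastic Fubini interchange and the surrounding bookkeeping of the permutations $\eta_s$ and $\eta_{\tau^h_s}$ (together with the degenerate interval $[0,h)$ and the convention \eqref{convinvinit}); the heat-equation cancellation in It\^o's formula and the summation by parts are routine.
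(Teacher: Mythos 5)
Your proposal matches the paper's proof essentially step for step: subtract the vanishing telescoping term $G_{t-\tau^h_s}\bigl(Y^{i,N,h}_{\tau^h_s}-x\bigr)-G_{t-\tau^h_s}\bigl(X^{\eta_{\tau^h_s}(i),N,h}_{\tau^h_s}-x\bigr)$, apply It\^o's formula to both trajectories over $[\tau^h_s,s]$ with the heat-equation cancellation, verify $\tilde\beta$ is Brownian via L\'evy, interchange the $ds$-integral with the $d\tilde\beta^i$, $d\beta^i$, $dr$ and $d|K|$ integrals producing the weight $t\wedge\overline\tau^h_r-r$ (the paper writes this interchange out rather than computing the Lebesgue measure explicitly, but it is the same step), and finally Abel summation by parts using \eqref{procVarFin} for the local-time contribution. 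The only place you are slightly loose is the stochastic Fubini justification: the integrand there is $\partial_xG_{t-r}(Z_r-x)$ rather than $G_{t-r}(Z_r-x)$, and the paper verifies Veraar's integrability condition for $\partial_xG_{t-r}$ explicitly using the a.s.\ absolute continuity of the law of $\bigl(X^{1,N,h}_t,\dots,X^{N,N,h}_t\bigr)$; your exponential-decay intuition is correct but should be attached to the derivative.
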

Notice that in the definition of $e^{N,h}_5(t,x)$, $\lambda^N\left( \eta^{-1}_{\tau^h_s}(\eta_{\tau^h_s}(i))\right)=\lambda^N(i)$ for $s\ge h$, but because of the convention \eqref{convinvinit}, this equality does not necessarily hold for $s\in[0,h)$.

\begin{proof}
 For $1\le i,k\le N$ and $t\ge 0$, one has 
 $$\langle \tilde \beta^i,\tilde \beta^k\rangle_t=\sum_{j=1}^N\int_0^t\mathbf{1}_{\left\{\eta_{\tau^h_s}(i)=j \right\}}\mathbf{1}_{\left\{\eta_{\tau^h_s}(k)=j \right\}}\,ds=\int_0^t\mathbf{1}_{\left\{\eta_{\tau^h_s}(i)=\eta_{\tau^h_s}(k)\right\}}\,ds=\mathbf{1}_{\{i=k\}}t,$$ 
 since $\eta_{\tau^h_s}$ is a permutation for each $s\ge 0$ . One deduces that $\tilde \beta$ is a Brownian motion by applying L\'evy's characterization. 
  By \eqref{decompENh} and the equality $G_{t-\tau^h_s}\left(Y^{i,N,h}_{\tau^h_s}-x\right) - G_{t-\tau^h_s} \left(X^{\eta_{\tau^h_s}(i),N,h}_{\tau^h_s}-x\right) =0$, it is enough to check that 
  \begin{align*}
   \frac{1}{N}\sum \limits_{i=1}^N \int_0^t &\Bigg\{\left(G_{t-s}\left(Y^{i,N,h}_s -x\right)-G_{t-\tau^h_s}\left(Y^{i,N,h}_{\tau^h_s} -x\right) \right) \\
    &\phantom{\frac{1}{N}\sum \limits_{i=1}^N \int_0^t\{( - } - \left(G_{t-s}\left(X^{\eta_{\tau^h_s}(i),N,h}_s -x \right) - G_{t-{\tau^h_s}}\left(X^{\eta_{\tau^h_s}(i),N,h}_{\tau^h_s} -x \right) \right)\Bigg\} \lambda^N(i)\,ds=\sum_{p=1}^5e^{N,h}_p(t,x).
  \end{align*}
We are going to compute the two differences in the right-hand side by applying It\^{o}'s formula. To do so, let us recall the dynamics of $X^{\eta_{\tau^h_s}(i),N,h}_u$ for $u\in[\tau^h_s,\bar\tau^h_s)$ : 
$$dX^{\eta_{\tau^h_s}(i),N,h}_u = \sigma\,d\tilde \beta^{i}_u + \lambda^N \left(\eta_{\tau^h_s}^{-1}\left(\eta_{\tau^h_s}(i)\right)\right)\,du= \sigma\,d\tilde \beta^{i}_u + \lambda^N\left(\eta_{\tau^h_u}^{-1}\left(\eta_{\tau^h_u}(i)\right)\right)\,du.$$ 
We then have:
\begin{align*}
    \displaystyle &G_{t-s}\left(X^{\eta_{\tau^h_s}(i),N,h}_s -x \right) = G_{t-\tau^h_s}\left(X^{\eta_{\tau^h_s}(i),N,h}_{\tau^h_s} -x \right) + \sigma \int_{\tau^h_s}^s \partial_x G_{t-u}\left(X^{\eta_{\tau^h_s}(i),N,h}_u -x \right)\,d\tilde \beta^{i}_u \\
    &+ \int_{\tau^h_s}^s \left(\partial_u G_{t-u}\left(X^{\eta_{\tau^h_s}(i),N,h}_u -x \right) + \frac{\sigma^2}{2}\partial_{xx}G_{t-u}\left(X^{\eta_{\tau^h_s}(i),N,h}_u -x \right) + \lambda^N\left(\eta_{\tau^h_u}^{-1}\left(\eta_{\tau^h_u}(i)\right)\right)\partial_x G_{t-u}\left(X^{\eta_{\tau^h_s}(i),N,h}_u -x \right)\right)\,du. 
\end{align*}
Since $\partial_u G_{t-u} = - \partial_t G_{t-u}$, by the heat equation \eqref{heateq} from Lemma \ref{EstimHeatEq}, we have:
\begin{align}\label{eqXG}
     \displaystyle \int_{0}^t &\left( G_{t-s}\left(X^{\eta_{\tau^h_s}(i),N,h}_s -x \right)-  G_{t-\tau^h_s}\left(X^{\eta_{\tau^h_s}(i),N,h}_{\tau^h_s} -x \right) \right)\,ds \\
    &=  \sigma  \int_{0}^t \int_{\tau^h_s}^s \partial_x G_{t-u}\left(X^{\eta_{\tau^h_s}(i),N,h}_u -x \right)\,d\tilde \beta^{i}_u\,ds +   \int_{0}^t \int_{\tau^h_s}^s \lambda^N \left(\eta_{\tau^h_u}^{-1}\left(\eta_{\tau^h_u}(i)\right)\right)\partial_x G_{t-u}\left(X^{\eta_{\tau^h_s}(i),N,h}_u -x \right)\,du\,ds. \notag
\end{align}
Let us suppose that $t>0$ and treat each term of the right-hand side of the above equation. For $x \in \R$, The function $u \mapsto \partial_xG_{t-u}\left(X^{\eta_{\tau^h_s}(i),N,h}_u -x \right)$ is continuous on $[0,t)$. Since $\left( X^{1,N,h}_t, \dots,X^{N,N,h}_t \right) $ admits a density, as stated after the proof of Lemma \ref{ctyF}, $\mathbb{P}\left(X^{\eta_{\tau^h_s}(i),N,h}_t = x \right) \le \sum\limits_{j=1}^N \mathbb{P}\left(X^{j,N,h}_t = x \right) = 0$ a.s.. Therefore, a.s. the previous function has a vanishing limit as $u\to t$ and is therefore bounded on the interval $[0,t]$. We can then apply Fubini's theorem to obtain:
\begin{align*}
  \displaystyle \int_{0}^t \int_{\tau^h_s}^s\lambda^N\left(\eta_{\tau^h_u}^{-1}\left(\eta_{\tau^h_u}(i)\right)\right)& \partial_x G_{t-u}\left(X^{\eta_{\tau^h_s}(i),N,h}_u -x \right)\,du\,ds \\&=\int_{0}^{t}\left(t \wedge \overline{\tau}^h_u -u \right) \lambda^N \left(\eta_{\tau^h_u}^{-1}\left(\eta_{\tau^h_u}(i)\right)\right)\partial_x G_{t-u}\left(X^{\eta_{\tau^h_u}(i),N,h}_u -x \right)\,du \quad a.s..
\end{align*} 
Secondly, with the use of Young's inequality and the same arguments of density of $\left( X^{1,N,h}_t, \dots,X^{N,N,h}_t \right) $, we get:
\begin{align*}
    \displaystyle &\int_0^t \left( \int_{\tau^h_s}^s \left|\partial_x G_{t-u}\left(X^{\eta_{\tau^h_s}(i),N,h}_u -x \right) \right|^2\,du\right)^{1/2}\,ds \\
    &= \int_0^t \left( \int_{\tau^h_s}^s \frac{\left(X^{\eta_{\tau^h_s}(i),N,h}_u -x \right)^2}{2 \sigma^5 \sqrt{\pi} (t-u)^{5/2}} G_{(t-u)/2}\left(X^{\eta_{\tau^h_s}(i),N,h}_u -x \right) \,du\right)^{1/2}\,ds \\
    &\leq \frac{t}{2} + \frac{1}{2}\int_0^t \int_{\tau^h_s}^s \frac{\left(X^{\eta_{\tau^h_s}(i),N,h}_u -x \right)^2}{2 \sigma^5 \sqrt{\pi} (t-u)^{5/2}} G_{(t-u)/2}\left(X^{\eta_{\tau^h_s}(i),N,h}_u -x \right)\,du\,ds < \infty \quad a.s..
\end{align*}
Therefore, we can apply the stochastic Fubini Lemma \ref{StochFub} and obtain:
\begin{align*}
    \displaystyle \int_{0}^t \int_{\tau^h_s}^s \partial_x G_{t-u}\left(X^{\eta_{\tau^h_s}(i),N,h}_u -x \right)\,d\tilde \beta^{i}_u\,ds &= \int_{0}^{t}(t \wedge \overline{\tau}^h_s -u)\partial_x G_{t-u}\left(X^{\eta_{\tau^h_u}(i),N,h}_s -x \right)\,d\tilde \beta^{i}_u.
\end{align*}
Equation \eqref{eqXG} becomes:
\begin{align}\label{eqXGnew}
    \displaystyle& \int_{0}^t \left( G_{t-s}\left(X^{\eta_{\tau^h_s}(i),N,h}_s -x \right)-  G_{t-\tau^h_s}\left(X^{\eta_{\tau^h_s}(i),N,h}_{\tau^h_s} -x \right) \right)\,ds \\
    &=     \sigma \int_{0}^{t}\left(t \wedge \overline{\tau}^h_s -s\right)\partial_x G_{t-s}\left(X^{\eta_{\tau^h_s}(i),N,h}_s -x \right)\,d\tilde \beta^{i}_s  +  \int_{0}^{t}\left(t \wedge \overline{\tau}^h_s -s\right)\lambda^N\left(\eta_{\tau^h_s}^{-1}\left(\eta_{\tau^h_s}(i)\right)\right)\partial_x G_{t-s}\left(X^{\eta_{\tau^h_s}(i),N,h}_s -x \right)\,ds . \notag
\end{align}

Now, let us apply It\^{o}'s formula to $G_{t-s}\left(Y^{i,N,h}_s -x\right)$. Using once again the property \eqref{heateq} of the kernel $G_t(x)$ from Lemma \ref{EstimHeatEq} and the dynamics of $Y^{i,N,h}_s$ given by \eqref{dynamicY}, we have:
\begin{align*}
    \displaystyle G_{t-s}&\left(Y^{i,N,h}_s -x\right) - G_{t-\tau^h_s}\left(Y^{i,N,h}_{\tau^h_s} -x\right) =  \sigma \int_{\tau^h_s}^s \partial_x G_{t-u}\left(Y^{i,N,h}_u -x \right)\,d\beta^{i}_u \\
    &+  \int_{\tau^h_s}^s \partial_x G_{t-u}\left(Y^{i,N,h}_u -x \right) \lambda^N\left( \eta^{-1}_{\tau^h_u}(\eta_u(i) \right)\,du + \int_{\tau^h_s}^s \partial_x G_{t-u}\left(Y^{i,N,h}_u -x \right)\left(\gamma^{i}_u - \gamma^{i+1}_u \right)\,d|K|_u.
\end{align*}

We use the same reasoning as for $X^{\eta_{\tau^h_s}(i),N,h}_s$ to treat the integrals from $0$ to $t$ of the first two terms :
\begin{align*}
     \displaystyle \int_{0}^t \int_{\tau^h_s}^s \partial_x G_{t-u}\left(Y^{i,N,h}_u -x \right)\,d\beta^{i}_u\,ds &=  \int_{0}^{t}(t \wedge \overline{\tau}^h_s -s) \partial_x G_{t-s}\left(Y^{i,N,h}_s -x \right)\,d\beta^{i}_s, \\
    \int_{0}^t \int_{\tau^h_s}^s  \partial_x G_{t-u}\left(Y^{i,N,h}_u -x \right) \lambda^N\left( \eta^{-1}_{\tau^h_u}(\eta_u(i) \right)\,du\,ds &=  \int_{0}^{t}(t \wedge \overline{\tau}^h_s -s) \partial_x G_{t-s}\left(Y^{i,N,h}_s -x \right)\lambda^N\left( \eta^{-1}_{\tau^h_s}(\eta_s(i) \right)\,ds.
\end{align*}

As for the last term $ \displaystyle \int_{0}^t \int_{\tau^h_s}^s \partial_x G_{t-u}\left(Y^{i,N,h}_u -x \right)\left(\gamma^{i}_u - \gamma^{i+1}_u \right)\,d|K|_u\,ds$, we sum over $i \in \llbracket1,N\rrbracket$ after multiplying by $\lambda^N(i)$ then we apply Fubini's theorem. Using the property \eqref{procVarFin}, we finally obtain:
\begin{align*}
    \displaystyle \frac{1}{N}\sum \limits_{i=1}^N  \lambda^N(i)&\int_{0}^t \int_{\tau^h_s}^s  \partial_x G_{t-u}\left(Y^{i,N,h}_u -x \right)\left(\gamma^{i}_u - \gamma^{i+1}_u \right)\,d|K|_u\,ds \\
    &= \frac{1}{N}  \sum \limits_{i=2}^N \int_{0}^{t}(t \wedge \overline{\tau}^h_s -s) \left(  \lambda^N(i)\partial_x G_{t-s}\left(Y^{i,N,h}_s -x \right) - \lambda^N(i-1)\partial_x G_{t-s}\left(Y^{i-1,N,h}_s -x \right)  \right)\gamma^{i}_s\,d|K|_s \\
    &= \frac{1}{N}  \sum \limits_{i=2}^N \int_{0}^{t}(t \wedge \overline{\tau}^h_s -s) \left(  \lambda^N(i) - \lambda^N(i-1)  \right)\partial_x G_{t-s}\left(Y^{i,N,h}_s -x \right)\gamma^{i}_s\,d|K|_s.
\end{align*}
Therefore,
\begin{align*}
    \displaystyle &\frac{1}{N} \sum \limits_{i=1}^N \int_{0}^t \lambda^N(i) \left(G_{t-s}\left(Y^{i,N,h}_s -x\right) - G_{t-\tau^h_s}\left(Y^{i,N,h}_{\tau^h_s} -x\right) \right)\,ds \\
    &= \frac{\sigma}{N} \sum \limits_{i=1}^N  \lambda^N(i) \left\{\int_{0}^{t}(t \wedge \overline{\tau}^h_s -s) \partial_x G_{t-s}\left(Y^{i,N,h}_s -x \right)\,d\beta^{i}_s + \int_{0}^{t}(t \wedge \overline{\tau}^h_s -s) \lambda^N\left( \eta^{-1}_{\tau^h_s}(\eta_s(i) \right)\partial_x G_{t-s}\left(Y^{i,N,h}_s -x \right)\,ds \right\} \notag \\
    &+ \frac{1}{N}  \sum \limits_{i=2}^N\int_{0}^{t}(t \wedge \overline{\tau}^h_s -s) \left(  \lambda^N(i) - \lambda^N(i-1) \right)\partial_x G_{t-s}\left(Y^{i,N,h}_s -x \right)\gamma^{i}_s\,d|K|_s. \notag
\end{align*}
We conclude by combining this equality and the sum over $i \in \llbracket1,N\rrbracket$ of \eqref{eqXGnew} multiplied by $\lambda^N(i)/N$.
\end{proof}

Now that we got rid of the difference of $\lambda^N$ in the term $E^{N,h}(t,x)$, we can control the mean of the $L^1$-norm of this term. We present a succession of lemmas that will estimate each $\E\left[ \left\Vert e_p^{N,h}(t,.)\right\Vert_{L^1} \right]$ for $p \in \llbracket0,5\rrbracket$. \\

Since $G_{t-s}$ is a probability density, $\displaystyle \E \left[ \left\| e_{0}^{N,h}(t,.)\right\|_{L^1} \right] \le \frac{1}{N} \sum \limits_{i=1}^N   \int_{0}^{t\wedge h}\left| \lambda^N(i) - \lambda^N\left( \eta^{-1}_{0}(\eta_{0}(i))\right) \right| \,ds $. Therefore, we obtain the following result concerning the term $e^{N,h}_0(t,x)$:
\begin{lem}\label{ControlE0}
$$ \forall N \in \N^{*}, \forall h \in (0,T], \quad \sup_{t \le T} \left\| \E\left[ e^{N,h}_{0}(t,.)\right]\right\|_{L^1} \le \sup_{t \le T}\E \left[ \left\| e^{N,h}_{0}(t,.)\right\|_{L^1} \right] \le 2L_{\Lambda}h.$$\\
\end{lem}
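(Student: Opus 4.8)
The plan is to handle the two inequalities in turn; both are elementary, so the ``main obstacle'' is really only to make sure that no cancellation is needed. For the first inequality, writing $Z=e^{N,h}_0(t,\cdot)$, I would use $\bigl|\E[Z(x)]\bigr|\le\E\bigl[|Z(x)|\bigr]$ for a.e.\ $x$ (Jensen, or the triangle inequality for the expectation) and then integrate in $x$, exchanging $\E$ and $\int_\R$ by Tonelli, which is justified because $|Z(x)|\le\frac{2\max_i|\lambda^N(i)|}{N}\sum_{i=1}^N\int_0^{t\wedge h}G_{t-s}\bigl(X^{\eta_0(i),N,h}_s-x\bigr)\,ds$, whose integral over $x$ is finite. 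Hence $\bigl\|\E[Z]\bigr\|_{L^1}\le\E\bigl[\|Z\|_{L^1}\bigr]$, which is the first inequality.

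For the second inequality, I would start from the bound already recorded just before the statement, namely $\E\bigl[\|e^{N,h}_0(t,\cdot)\|_{L^1}\bigr]\le\frac1N\sum_{i=1}^N\int_0^{t\wedge h}\bigl|\lambda^N(i)-\lambda^N(\eta_0^{-1}(\eta_0(i)))\bigr|\,ds$, which follows from the triangle inequality, Tonelli, and the fact that each $G_{t-s}$ is a probability density (so $\|G_{t-s}\|_{L^1}=1$). It then suffices to bound $|\lambda^N(i)-\lambda^N(k)|$ by $2L_\Lambda$ for all $i,k$. Here I would use the very definition $\lambda^N(k)=N\bigl(\Lambda(k/N)-\Lambda((k-1)/N)\bigr)$: since $\Lambda$ is $C^1$ with $\Lambda'=\lambda$, the mean value theorem yields $\lambda^N(k)=\lambda(u_k)$ for some $u_k\in[(k-1)/N,k/N]\subset[0,1]$, whence $|\lambda^N(k)|\le\sup_{u\in[0,1]}|\lambda(u)|=L_\Lambda$ for every $k\in\llbracket1,N\rrbracket$. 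Consequently $|\lambda^N(i)-\lambda^N(\eta_0^{-1}(\eta_0(i)))|\le 2L_\Lambda$ for every $i$, regardless of the actual value of $\eta_0^{-1}(\eta_0(i))$, which need not equal $i$ because of convention \eqref{convinvinit} when some initial positions coincide.

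Plugging in this uniform bound gives $\int_0^{t\wedge h}2L_\Lambda\,ds=2L_\Lambda(t\wedge h)\le 2L_\Lambda h$, and averaging the $N$ identical bounds over $i$ leaves $2L_\Lambda h$; taking the supremum over $t\le T$ concludes. I do not expect any genuine difficulty: the single point worth a remark is that one deliberately does not seek cancellation inside $\lambda^N(i)-\lambda^N(\eta_0^{-1}(\eta_0(i)))$ — which would be delicate in the presence of ties and the convention \eqref{convinvinit} — and instead settles for the crude, $N$-free bound $2L_\Lambda$, which is harmless since this term contributes only at order ${\mathcal O}(h)$ to the global estimate.
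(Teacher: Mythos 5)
Your proof is correct and follows essentially the same route as the paper, which records the bound $\E\bigl[\|e^{N,h}_0(t,\cdot)\|_{L^1}\bigr]\le\frac1N\sum_{i=1}^N\int_0^{t\wedge h}\bigl|\lambda^N(i)-\lambda^N(\eta_0^{-1}(\eta_0(i)))\bigr|\,ds$ in the sentence immediately preceding the lemma (using that $G_{t-s}$ is a probability density) and then silently uses $|\lambda^N(k)|\le L_\Lambda$ to conclude. You simply spell out the details the paper leaves implicit — Jensen/Tonelli for the first inequality, the mean value theorem (or equivalently $\lambda^N(k)=N\int_{(k-1)/N}^{k/N}\lambda$) for the uniform bound $|\lambda^N(k)|\le L_\Lambda$, and the observation that no cancellation inside $\lambda^N(i)-\lambda^N(\eta_0^{-1}(\eta_0(i)))$ is needed in the presence of ties.
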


We remark that the terms $e_4^{N,h}(t,x)$ and $e_5^{N,h}(t,x)$ are of the same nature.
\begin{lem}\label{ControlDetE}
For $r \in \{4,5\}$: $$\exists C_{4,5}<\infty, \forall N \in \N^{*}, \forall h \in (0,T], \quad \sup_{t \le T} \left\| \E\left[ e^{N,h}_{r}(t,.)\right]\right\|_{L^1} \le \sup_{t \le T}\E \left[ \left\| e^{N,h}_{r}(t,.)\right\|_{L^1} \right] \le C_{4,5}\, h.$$ 
\end{lem}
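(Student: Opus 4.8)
The plan is to exploit that, in contrast with $e^{N,h}_2$ and $e^{N,h}_3$, the terms $e^{N,h}_4$ and $e^{N,h}_5$ are ordinary (pathwise) time integrals, so that no It\^o isometry is involved and a purely deterministic bound will be available. The two a priori ingredients I would use are: first, the prefactor satisfies $0\le t\wedge\overline{\tau}^h_s-s\le h$ for every $s\in[0,t]$ (indeed $\overline{\tau}^h_s-s\in[0,h)$); and second, $|\lambda^N(i)|=\left|N\big(\Lambda(i/N)-\Lambda((i-1)/N)\big)\right|\le L_\Lambda$ for all $1\le i\le N$, because $\Lambda$ is $L_\Lambda$-Lipschitz, so that $\left|\lambda^N(i)\lambda^N(\cdot)\right|\le L_\Lambda^2$.

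First I would dispatch the elementary inequality $\left\|\E\left[e^{N,h}_r(t,\cdot)\right]\right\|_{L^1}\le\E\left[\left\|e^{N,h}_r(t,\cdot)\right\|_{L^1}\right]$, which follows from Tonelli's theorem applied to the nonnegative function $(\omega,x)\mapsto\left|e^{N,h}_r(t,x)\right|$ together with $\left|\E[\cdot]\right|\le\E[\left|\cdot\right|]$. It then remains to bound $\E\left[\left\|e^{N,h}_r(t,\cdot)\right\|_{L^1}\right]$, and I expect this bound to hold pathwise. Take $r=4$: bounding the prefactor by $h$ and the product of $\lambda^N$-values by $L_\Lambda^2$, and applying Minkowski's integral inequality to pull the spatial $L^1$-norm inside $\frac1N\sum_{i=1}^N\int_0^t$, one gets
$$\left\|e^{N,h}_4(t,\cdot)\right\|_{L^1}\le L_\Lambda^2\,h\,\frac1N\sum_{i=1}^N\int_0^t\left\|\partial_xG_{t-s}\big(Y^{i,N,h}_s-\cdot\big)\right\|_{L^1}\,ds.$$
By translation invariance of the $L^1$-norm this equals $L_\Lambda^2\,h\int_0^t\left\|\partial_xG_{t-s}\right\|_{L^1}\,ds$, and the estimate \eqref{FirstDerivG} of Lemma \ref{EstimHeatEq} gives $\left\|\partial_xG_{t-s}\right\|_{L^1}=\frac1\sigma\sqrt{\tfrac2{\pi(t-s)}}$, whose $s$-integrability on $[0,t]$ is precisely what legitimises the Minkowski/Fubini exchange just used. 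Since $\int_0^t(t-s)^{-1/2}\,ds=2\sqrt t\le2\sqrt T$, this yields $\left\|e^{N,h}_4(t,\cdot)\right\|_{L^1}\le\frac{2L_\Lambda^2}{\sigma}\sqrt{\tfrac{2T}{\pi}}\,h$.

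The case $r=5$ is handled verbatim, with $Y^{i,N,h}_s$ replaced by $X^{\eta_{\tau^h_s}(i),N,h}_s$ and $\lambda^N\left(\eta^{-1}_{\tau^h_s}(\eta_s(i))\right)$ by $\lambda^N\left(\eta^{-1}_{\tau^h_s}(\eta_{\tau^h_s}(i))\right)$ (still of modulus at most $L_\Lambda$, whether or not it equals $\lambda^N(i)$ on $[0,h)$), translation invariance again eliminating the dependence on which particle centers the heat kernel. Taking the supremum over $t\le T$ and then expectations — harmless since the bound is deterministic — gives the claim with $C_{4,5}=\frac{2L_\Lambda^2}{\sigma}\sqrt{\tfrac{2T}{\pi}}$. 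There is no genuine obstacle here; the only point requiring a word of care is the measurability-and-integrability justification for moving $\left\|\cdot\right\|_{L^1}$ through the time integral, and that is supplied by $\int_0^t\left\|\partial_xG_{t-s}\right\|_{L^1}\,ds<\infty$.
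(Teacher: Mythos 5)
Your proof is correct. The one place where you deviate from the paper is in how the prefactor $t\wedge\overline{\tau}^h_s-s$ is handled: you bound it pointwise by $h$ and pull it out of the time integral, which directly yields $\int_0^t\|\partial_xG_{t-s}\|_{L^1}\,ds=\frac{2\sqrt t}{\sigma}\sqrt{2/\pi}$ and hence $C_{4,5}=\frac{2L_\Lambda^2}{\sigma}\sqrt{2T/\pi}$. The paper instead keeps the factor inside, splits $[0,t]$ along the discretization grid, and on each cell $[t_k,t\wedge t_{k+1}]$ applies a Chebyshev-type integral inequality exploiting the opposite monotonicities of $s\mapsto t\wedge t_{k+1}-s$ and $s\mapsto(t-s)^{-1/2}$: this replaces the worst-case value $h$ of the prefactor by its cell-average $h/2$, producing the constant $C_{4,5}=\frac{L_\Lambda^2}{\sigma}\sqrt{2T/\pi}$, exactly half of yours. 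Both routes rest on the same two ingredients ($|\lambda^N(i)|\le L_\Lambda$ and estimate \eqref{FirstDerivG}); yours is shorter and entirely pathwise, while the paper's rearrangement trick buys a factor $2$ in the constant, which is immaterial for the statement as written. A small stylistic point: what you call Minkowski's integral inequality is here just the triangle inequality for the vector-valued integral $\int_0^t\cdot\,ds$, together with translation invariance of the $L^1$-norm, and your justification via the finiteness of $\int_0^t\|\partial_xG_{t-s}\|_{L^1}\,ds$ is the right one.
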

\begin{proof}
Let us treat the term $e^{N,h}_5(t,x)$. \\
We have, using the estimate \eqref{FirstDerivG} from Lemma \ref{EstimHeatEq} for the second inequality, then the opposite monotonicities of the functions $s \mapsto \frac{1}{\sqrt{t-s}} $ and $s \mapsto (t \wedge t_{k+1} -s) $ on the time interval $[t\wedge t_k,t\wedge t_{k+1}]$ for the third inequality that: 
\begin{align*}
    \displaystyle \left\| \E\left[ e^{N,h}_{5}(t,.)\right]\right\|_{L^1} &\le \E \left[ \left\| e_{5}^{N,h}(t,.)\right\|_{L^1} \right] \leq \frac{1}{N} \sum \limits_{i=1}^N   \int_{0}^{t}(t \wedge \overline{\tau}^h_s -s) \left| \lambda^N(i) \right| \left| \lambda^N\left( \eta^{-1}_{\tau^h_s}(\eta_{\tau^h_s}(i))\right) \right| \left\Vert \partial_xG_{t-s} \right\Vert_{L^1}\,ds \\
    &\le \frac{L_{\Lambda}^2}{\sigma}\sqrt{\frac{2}{\pi}}\sum_{k\in\N:t_k<t}\int_{t_k}^{t\wedge t_{k+1}}\frac{(t \wedge t_{k+1}-s)}{\sqrt{t-s}}\,ds \\&\le \frac{L_{\Lambda}^2}{\sigma}\sqrt{\frac{2}{\pi}} \left\{ \sum_{k\in\N:t_k<t}\frac{1}{t\wedge t_{k+1}-t_k}\int_{t_k}^{t\wedge t_{k+1}}(t \wedge t_{k+1}-s)ds\int_{t_k}^{t\wedge t_{k+1}}\frac{ds}{\sqrt{t-s}} \right\} \\
    &\le \frac{L_{\Lambda}^2}{\sigma}\sqrt{\frac{2}{\pi}} \left\{ \frac{h}{2}\sum_{k\in\N:t_k<t}\int_{t_k}^{t\wedge t_{k+1}}\frac{ds}{\sqrt{t-s}} \right\}=\frac{L_{\Lambda}^2}{\sigma}\sqrt{\frac{2t}{\pi}}\,h.
\end{align*}
The term $e^{N,h}_4(t,x)$ can be estimated in the same way and the conclusion holds with $\displaystyle C_{4,5} = \frac{L_{\Lambda}^2}{\sigma}\sqrt{\frac{2T}{\pi}}$.
\end{proof}
We remark that the terms $e_2^{N,h}(t,x)$ and $e_3^{N,h}(t,x)$ are of the same nature as well.
\begin{lem}\label{ControlStochE}
For $r \in \{2,3\}$, $\forall N \in \N^{*}, \forall h \in (0,T]$, $\left\| \E\left[ e^{N,h}_{r}(t,.)\right]\right\|_{L^1} = 0$. Moroever, if $\displaystyle \int_{\R} |x|^{\rho}m(dx) < \infty$ for some $\rho >1$, then:
  $$\exists C_{2,3}<\infty, \quad  \sup_{t \le T}\E \left[ \left\| e^{N,h}_{r}(t,.)\right\|_{L^1} \right] \leq \frac{C_{2,3}}{\sqrt{N}}.$$ 
\end{lem}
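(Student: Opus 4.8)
I would prove Lemma~\ref{ControlStochE} by following, almost verbatim, the scheme used for $R^{N,h}$ in Lemma~\ref{controlRN}. Since $e^{N,h}_2$ and $e^{N,h}_3$ are of the same nature, the plan is to carry out the argument for $e^{N,h}_3$ and to note that for $e^{N,h}_2$ one repeats it with $X^{\eta_{\tau^h_s}(i),N,h}_s$ replaced by the reordered position $Y^{i,N,h}_s$ and $\widetilde{\beta}$ by $\beta$, using that $\frac1N\sum_{i=1}^N|Y^{i,N,h}_s|^{\rho}=\frac1N\sum_{i=1}^N|X^{i,N,h}_s|^{\rho}$ because the two families are permutations of one another. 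For the first assertion I would first observe, using the heat-kernel estimates of Lemma~\ref{EstimHeatEq} and $(t\wedge\overline{\tau}^h_s-s)^2\le(t-s)^2$, that $\int_{\R}\E\big[\int_0^t(t\wedge\overline{\tau}^h_s-s)^2\big(\partial_xG_{t-s}(X^{\eta_{\tau^h_s}(i),N,h}_s-x)\big)^2\,ds\big]\,dx<\infty$, since $\int_{\R}(\partial_xG_r(z))^2\,dz=\|\partial_xG_r\|_{L^2}^2$ is of order $r^{-3/2}$ and its time integral against $(t-s)^2$ converges at $s=t$; hence for $dx$-a.e.\ $x$ each stochastic integral defining $e^{N,h}_3(t,x)$ is a square-integrable martingale in $t$ with zero mean, and since $\widetilde{\beta}$ is an $N$-dimensional Brownian motion (Lemma~\ref{ControlE}) this gives $\E[e^{N,h}_3(t,x)]=0$ $dx$-a.e., whence $\|\E[e^{N,h}_3(t,.)]\|_{L^1}=0$.

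For the $L^1$-bound, the plan is to write $\E[\|e^{N,h}_3(t,.)\|_{L^1}]=\int_{\R}\E[|e^{N,h}_3(t,x)|]\,dx\le\int_{\R}\E^{1/2}[|e^{N,h}_3(t,x)|^2]\,dx$ and to compute $\E[|e^{N,h}_3(t,x)|^2]$ by the It\^o isometry. Because the coordinates of $\widetilde{\beta}$ are orthogonal ($\langle\widetilde{\beta}^i,\widetilde{\beta}^j\rangle_s=\mathbf{1}_{\{i=j\}}s$, established in the proof of Lemma~\ref{ControlE}), the cross terms vanish and
$$\E[|e^{N,h}_3(t,x)|^2]=\frac{\sigma^2}{N^2}\sum_{i=1}^N\E\Big[\int_0^t(t\wedge\overline{\tau}^h_s-s)^2\,\lambda^N(i)^2\,\big(\partial_xG_{t-s}(X^{\eta_{\tau^h_s}(i),N,h}_s-x)\big)^2\,ds\Big].$$
The prefactor $\frac1{N^2}\sum_{i=1}^N$ is precisely what produces the rate $N^{-1/2}$. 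As in Lemma~\ref{controlRN}, I would then bring in the weight $1+|x|^{\rho}$: writing $\E^{1/2}[|e^{N,h}_3(t,x)|^2]=\E^{1/2}[|e^{N,h}_3(t,x)|^2(1+|x|^{\rho})]\,(1+|x|^{\rho})^{-1/2}$ and applying $ab\le\tfrac12(a^2+b^2)$ gives
$$\E[\|e^{N,h}_3(t,.)\|_{L^1}]\le\tfrac12 I_{\rho}+\tfrac12\int_{\R}\E\big[|e^{N,h}_3(t,x)|^2(1+|x|^{\rho})\big]\,dx,\qquad I_{\rho}=\int_{\R}\frac{dx}{1+|x|^{\rho}}<\infty$$
since $\rho>1$.

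It then remains to bound the last integral. The plan is to use Fubini (legitimate by the finiteness observed above) to exchange the $dx$, $ds$ and $\E$ integrations and, for fixed $s$ and $y=X^{\eta_{\tau^h_s}(i),N,h}_s$, to evaluate $\int_{\R}(\partial_xG_{t-s}(y-x))^2(1+|x|^{\rho})\,dx$ via the translation $x\mapsto x+y$, the inequality $|x+y|^{\rho}\le2^{\rho-1}(|x|^{\rho}+|y|^{\rho})$ and the Gaussian identity $(\partial_xG_r(x))^2=\frac{x^2}{2\sigma^5\sqrt{\pi}\,r^{5/2}}\,G_{r/2}(x)$; this reduces everything to $\|\partial_xG_r\|_{L^2}^2\le c\,r^{-3/2}$ and $\int_{\R}|x|^{\rho}(\partial_xG_r(x))^2\,dx\le c'\,r^{(\rho-3)/2}$. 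Combining this with $|\lambda^N(i)|\le L_{\Lambda}$, $(t\wedge\overline{\tau}^h_s-s)^2\le(t-s)^2$ and $\frac1N\sum_{i=1}^N|X^{\eta_{\tau^h_s}(i),N,h}_s|^{\rho}=\frac1N\sum_{i=1}^N|X^{i,N,h}_s|^{\rho}$ should give a bound of the form
$$\int_{\R}\E\big[|e^{N,h}_3(t,x)|^2(1+|x|^{\rho})\big]\,dx\le\frac{C}{N}\int_0^t\Big((t-s)^{1/2}\Big(1+\E\Big[\tfrac1N\textstyle\sum_{i=1}^N|X^{i,N,h}_s|^{\rho}\Big]\Big)+(t-s)^{(\rho+1)/2}\Big)\,ds,$$
which by Lemma~\ref{controlM} is $\le C'/N$ uniformly in $t\le T$, $h\in(0,T]$ and $N\in\N^*$; setting $C_{2,3}$ accordingly would conclude. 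I expect the only genuinely delicate point to be the bookkeeping of the powers of $t-s$: one has to retain a full factor $t-s$ (one of the two factors in $(t\wedge\overline{\tau}^h_s-s)^2\le(t-s)^2$) against the $(t-s)^{-3/2}$ singularity of $\|\partial_xG_{t-s}\|_{L^2}^2$ so that the time integral is finite at $s=t$, and to check the Fubini and a.s.-finiteness conditions exactly as in the proofs of Proposition~\ref{propspde} and Lemma~\ref{controlRN}.
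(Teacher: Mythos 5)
Your overall strategy is the same as the paper's (It\^o isometry, then the weight $(1+|x|^{\rho})^{-1/2}$, Young's inequality, Fubini, and Lemma~\ref{controlM}), and the first assertion $\left\| \E[ e^{N,h}_{r}(t,.)]\right\|_{L^1}=0$ is argued correctly. However, the Young-inequality step as you wrote it destroys the $N^{-1/2}$ rate. You set $a=\E^{1/2}\left[|e^{N,h}_3(t,x)|^2(1+|x|^{\rho})\right]$ and $b=(1+|x|^{\rho})^{-1/2}$ and apply $ab\le\tfrac12(a^2+b^2)$, arriving at
\[
\E\left[\|e^{N,h}_3(t,.)\|_{L^1}\right]\le\tfrac12 I_{\rho}+\tfrac12\int_{\R}\E\left[|e^{N,h}_3(t,x)|^2(1+|x|^{\rho})\right]dx .
\]
The first term $\tfrac12 I_{\rho}$ is a constant independent of $N$, so this inequality proves only a bound that is $O(1)$ in $N$, not $O(N^{-1/2})$, contradicting your own observation that the prefactor $\tfrac1{N^2}\sum_{i=1}^{N}$ is the source of the rate. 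The fix, which is exactly what the paper (and Lemma~\ref{controlRN}, which you cite) does, is to use the It\^o isometry to factor out $\frac{\sigma}{\sqrt{N}}$ \emph{before} invoking Young: write $\E^{1/2}\left[|e^{N,h}_3(t,x)|^2\right]=\frac{\sigma}{\sqrt{N}}\,\E^{1/2}\!\left[\frac1N\sum_{i=1}^N\int_0^t(t\wedge\overline{\tau}^h_s-s)^2\,\lambda^N(i)^2(\partial_xG_{t-s})^2(\cdot)\,ds\right]$, and only then multiply and divide by $\sqrt{1+|x|^\rho}$ and apply $ab\le\tfrac12(a^2+b^2)$, so that both resulting terms carry the factor $\frac{\sigma}{2\sqrt{N}}$. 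With this correction, the remainder of your argument (translation in the Gaussian integral, $|x+y|^\rho\le 2^{\rho-1}(|x|^\rho+|y|^\rho)$, the bookkeeping of the singularity $(t-s)^{-3/2}$ against the factor $(t\wedge\overline{\tau}^h_s-s)^2\le(t-s)^2$, and Lemma~\ref{controlM}) goes through and matches the paper's proof up to the (harmless) fact that the paper retains the sharper bound $\int_0^t (t\wedge\overline{\tau}^h_s-s)^2(t-s)^{-3/2}\,ds\le \tfrac83 h^{3/2}$ rather than the cruder but uniformly-in-$h$ bounded $\int_0^t(t-s)^{1/2}ds\le\tfrac23 T^{3/2}$.
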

\begin{proof}
Let us treat the term $e_{3}^{N,h}(t,x)$. \\
Using the estimate \eqref{firstDerivSquar} from Lemma \ref{EstimHeatEq}, we obtain that
\begin{align*}
   \int_{\R} \E\left[ \int_0^t(t \wedge \overline{\tau}^h_s -s)^2 (\lambda^N(i))^2\left(\partial_xG_{t-s}\right)^2\left(X^{\eta_{\tau^h_s}(i),N,h}_s-x \right)\,ds \right]\,dx\le \int_0^t\frac{(t-s)^2L_\Lambda^2}{4\sigma^3(t-s)^{3/2}\sqrt{\pi}}ds<\infty.
\end{align*}
Therefore, $ \E\left[e^{N,h}_3(t,x) \right] = 0 $ dx a.e.. Moreover, using the estimate \eqref{GPrimSqu} from Lemma \ref{EstimHeatEq}, the It\^o isometry for the first equality, Young's inequality for the second inequality and last Fubini's theorem, we obtain:
\begin{align*}
    \displaystyle &\E \left[ \left\| e_{3}^{N,h}(t,.)\right\|_{L^1} \right] \leq \int_{\R} \E^{1/2}\left[ \left(\frac{\sigma}{N}\sum \limits_{i=1}^N \int_{0}^{t}(t \wedge \overline{\tau}^h_s -s) \lambda^N(i)\partial_x G_{t-s}\left(X^{\eta_{\tau^h_s}(i),N,h}_s -x \right)\,d\tilde \beta^{i}_s \right)^2 \right]\,dx \\
    &= \frac{\sigma}{\sqrt{N}} \int_{\R} \E^{1/2}\left[ \frac{1}{N}\sum \limits_{i=1}^N \int_{0}^{t}(t \wedge \overline{\tau}^h_s -s)^2 \left|\lambda^N(i)\right|^2 \left(\partial_x G_{t-s}\right)^2\left(X^{\eta_{\tau^h_s}(i),N,h}_s -x \right)\,ds\right]\,dx \\
    &= \frac{\sigma}{\sqrt{N}} \int_{\R} \E^{1/2}\left[ \frac{1}{N}\sum \limits_{i=1}^N \int_{0}^{t}(t \wedge \overline{\tau}^h_s -s)^2 \left|\lambda^N(i)\right|^2 \left(\partial_x G_{t-s}\right)^2\left(X^{\eta_{\tau^h_s}(i),N,h}_s -x \right)\,ds(1+|x|^{\rho})  \right]\frac{dx}{\sqrt{1+|x|^{\rho}}} \\
    &\leq \frac{\sigma }{2\sqrt{N}} \int_{\R} \left( \frac{1}{1+|x|^{\rho}} + L_{\Lambda}^2\E\left[ \frac{1}{N}\sum \limits_{i=1}^N \int_{0}^{t}(t \wedge \overline{\tau}^h_s -s)^2  \left(\partial_x G_{t-s}\right)^2\left(X^{\eta_{\tau^h_s}(i),N,h}_s -x \right)\,ds(1+|x|^{\rho}) \right]\right)\,dx \\
    &= \frac{\sigma I_{\rho} }{2\sqrt{N}} + \frac{L_{\Lambda}^2}{4 \sigma^4\sqrt{N \pi}}\E\Bigg[ \frac{1}{N}\sum \limits_{i=1}^N \int_{0}^{t} \frac{(t \wedge \overline{\tau}^h_s -s)^2}{(t-s)^{5/2}}\bigg\{ \int_{\R} \left(X^{\eta_{\tau^h_s}(i),N,h}_s -x \right)^2G_{(t-s)/2}\left(X^{\eta_{\tau^h_s}(i),N,h}_s -x \right)\,dx \\
    &\phantom{\frac{\sigma \pi }{2\sqrt{N}} + \frac{L_{\Lambda}^2}{4 \sigma^4\sqrt{N \pi}} \E\Bigg[\frac{1}{N}\sum \limits_{i=1}^N \int_{0}^{t} \frac{(t \wedge \overline{\tau}^h_s -s)^2}{(t-s)^{5/2}} \bigg\{ \;}+ \int_{\R} |x|^{\rho}\left(X^{\eta_{\tau^h_s}(i),N,h}_s -x \right)^2G_{(t-s)/2}\left(X^{\eta_{\tau^h_s}(i),N,h}_s -x \right)\,dx\bigg\}\,ds\Bigg] \\
    &= \frac{\sigma I_{\rho} }{2\sqrt{N}} + \frac{L_{\Lambda}^2}{4 \sigma^4\sqrt{N \pi}}\E\left[ \frac{1}{N}\sum \limits_{i=1}^N \int_{0}^{t} \frac{(t \wedge \overline{\tau}^h_s -s)^2}{(t-s)^{
    5/2}}\left\{ \int_{\R} y^2G_{(t-s)/2}(y)\,dy + \int_{\R} \left|X^{\eta_{\tau^h_s}(i),N,h}_s -y \right|^{\rho}y^2G_{(t-s)/2}(y)\,dy \right\}\,ds\right] \\
    &\le \frac{\sigma I_{\rho} }{2\sqrt{N}} + \frac{L_{\Lambda}^2}{4 \sigma^4\sqrt{N \pi}}\E\Bigg[ \frac{1}{N}\sum \limits_{i=1}^N \int_{0}^{t} \frac{(t \wedge \overline{\tau}^h_s -s)^2}{(t-s)^{5/2}}\Bigg\{ \left(1 + 2^{\rho -1} \left|X^{\eta_{\tau^h_s}(i),N,h}_s \right|^{\rho}\right) \frac{\sigma^2(t-s)}{2}  \\
    &\phantom{\frac{\sigma I_{\rho} }{2\sqrt{N}} + \frac{L_{\Lambda}^2}{4 \sigma^4\sqrt{N \pi}}\E\Bigg[ \frac{1}{N}\sum \limits_{i=1}^N \int_{0}^{t} \frac{(t \wedge \overline{\tau}^h_s -s)^2}{(t-s)^{5/2}}\Bigg\{ \left(1 + 2^{\rho -1} \left|X^{\eta_{\tau^h_s}(i),N,h}_s \right|^{\rho}\right) \Bigg\{\;} + 2^{\rho -1} \frac{\left(\sigma \sqrt{t-s} \right)^{2+\rho}}{\sqrt \pi}\Gamma\left(\frac{\rho +3}{2} \right) \Bigg\}\,ds\Bigg] \\
    &= \frac{1}{2\sqrt{N}}\Bigg( \sigma I_{\rho} + \frac{L_{\Lambda}^2}{4 \sigma^2 \sqrt{\pi}} \Bigg\{ \int_0^t \frac{(t \wedge \overline{\tau}^h_s -s)^2}{(t-s)^{3/2}}\,ds + 2^{\rho-1}\int_0^t \frac{(t \wedge \overline{\tau}^h_s -s)^2}{(t-s)^{3/2}} \E\left[\frac{1}{N}\sum \limits_{i=1}^N \left|X^{\eta_{\tau^h_s}(i),N,h}_s\right|^{\rho} \right]\,ds \\
    &\phantom{\frac{1}{2\sqrt{N}}\Bigg( \sigma I_{\rho} + \frac{L_{\Lambda}^2}{4 \sigma^2 \sqrt{\pi}} \Bigg\{ \;}+ \frac{2^{\rho -2}\sigma^{\rho}}{\sqrt \pi}\Gamma\left(\frac{\rho + 3}{2} \right) \int_0^t \frac{(t \wedge \overline{\tau}^h_s -s)^2}{(t-s)^{\frac{3-\rho}{2}}}\,ds \Bigg\} \Bigg). 
\end{align*}
With the use of Lemma \ref{controlM}, we obtain:
\begin{align*}
\displaystyle \E \left[ \left\| e_{3}^{N,h}(t,.)\right\|_{L^1} \right] &\le \frac{1}{2\sqrt{N}}\Bigg(\sigma I_{\rho} + \frac{L_{\Lambda}^2}{4 \sigma^2 \sqrt{\pi}} \Bigg\{\left(1+2^{\rho-1}M \right) \int_0^t \frac{(t \wedge \overline{\tau}^h_s -s)^2}{(t-s)^{3/2}}\,ds \\
&\phantom{\frac{1}{2\sqrt{N}}\Bigg(\sigma I_{\rho} + \frac{L_{\Lambda}^2}{4 \sigma^2 \sqrt{\pi}} \Bigg\{\left(1+2^{\rho-1}M \right) \Bigg\{\;} + \frac{2^{\rho -2}\sigma^{\rho}}{\sqrt \pi}\Gamma\left(\frac{\rho + 3}{2} \right) \int_0^t \frac{(t \wedge \overline{\tau}^h_s -s)^2}{(t-s)^{\frac{3-\rho}{2}}}\,ds \Bigg\} \Bigg).
\end{align*}
When $t \ge h$, $\displaystyle\int_0^t \frac{(t \wedge \overline{\tau}^h_s -s)^2}{(t-s)^{3/2}}\,ds \leq h^2\int_0^{t-h}\frac{ds}{(t-s)^{3/2}} + \int_{t-h}^t \sqrt{t-s}\,ds  \leq \frac{8}{3}h^{3/2}$ and $\displaystyle \int_0^t \frac{(t \wedge \overline{\tau}^h_s -s)^2}{(t-s)^{\frac{3-\rho}{2}}}\,ds \le \frac{2}{\rho-1}h^2 t^{\frac{\rho-1}{2}} + \frac{2}{\rho +3}h^{\frac{\rho +3}{2}}\le \frac{2(\rho+2)}{(\rho-1)(\rho+3)}h^{\frac{\rho +3}{2}}$. When $t \le h$, $\displaystyle\int_0^t \frac{(t \wedge \overline{\tau}^h_s -s)^2}{(t-s)^{3/2}}\,ds = \int_0^t \sqrt{t-s}\,ds = \frac{2}{3}t^{3/2} \leq \frac{2}{3}h^{3/2}$ and $\displaystyle \int_0^t \frac{(t \wedge \overline{\tau}^h_s -s)^2}{(t-s)^{\frac{3-\rho}{2}}}\,ds \le \frac{2}{\rho +3}h^{\frac{\rho +3}{2}}$. The term $e^{N,h}_2(t,x)$ can be estimated in the same way and the conclusion holds with  $$\displaystyle C_{2,3} = \frac{1}{2}\left( \sigma I_{\rho} + \frac{L_{\Lambda}^2 T^{3/2}}{4 \sigma^2 \sqrt{\pi}} \left( \frac{8}{3}\left(1+2^{\rho-1}M \right) + \frac{2^{\rho -1}\sigma^{\rho}}{\sqrt \pi}\times\frac{2(\rho+2)}{(\rho-1)(\rho+3)}\Gamma\left(\frac{\rho + 3}{2} \right) T^{\frac{\rho -1}{2}}\right) \right).$$
\end{proof}

Now, we finally treat the term $e^{N,h}_1(t,x)$ in the lemma below:
\begin{lem}\label{ControlGam}
    If $\lambda$ is Lipschitz continuous with constant $L_{\lambda}$ then $\exists C_{1}<\infty, \forall N \in \N^{*}, \forall h \in (0,T], \forall t \in [0,T]$,
    $$\displaystyle \E \left[ \left\| e^{N,h}_{1}(t,.)\right\|_{L^1} \right] \leq C_1\left(h + \sqrt{h}\E \left[ \left\| F^{N,h}(t,.) - F(t,.) \right\|_{L^1} \right] +  {\mathbf 1}_{\{t\ge h\}}\int_0^{t-h} \frac{h}{2(t-s)^{3/2}}\E \left[ \left\| F^{N,h}(s,.) - F(s,.) \right\|_{L^1} \right]\,ds \right).$$ 
\end{lem}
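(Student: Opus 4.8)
\emph{Overall strategy.} The plan is to strip $e^{N,h}_1(t,x)$ down to a time--weighted collision local time, and then to estimate that local time through the boundedness of the limiting density together with the proximity of $F^{N,h}$ to $F$.

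\emph{Reduction.} Since $\gamma^i_s\ge 0$ and $d|K|_s$ is a nonnegative measure, and $\int_\R|\partial_xG_{t-s}(Y^{i,N,h}_s-x)|\,dx=\|\partial_xG_{t-s}\|_{L^1}=\frac1\sigma\sqrt{\frac2{\pi(t-s)}}$ by \eqref{FirstDerivG}, integrating $|e^{N,h}_1(t,x)|$ in $x$ gives
$$\E\big[\|e^{N,h}_1(t,\cdot)\|_{L^1}\big]\le\frac1{\sigma}\sqrt{\tfrac2\pi}\,\frac1N\sum_{i=2}^N|\lambda^N(i)-\lambda^N(i-1)|\;\E\Big[\int_0^t\frac{t\wedge\overline\tau^h_s-s}{\sqrt{t-s}}\,\gamma^i_s\,d|K|_s\Big].$$
Writing $\lambda^N(i)=N\int_{(i-1)/N}^{i/N}\lambda(u)\,du$, Lipschitz continuity of $\lambda$ yields $|\lambda^N(i)-\lambda^N(i-1)|=N\big|\int_{(i-1)/N}^{i/N}(\lambda(u)-\lambda(u-\tfrac1N))\,du\big|\le L_\lambda/N$, so it suffices to estimate $\frac1{N^2}\sum_{i=2}^N\E[\int_0^t k(s)\gamma^i_s\,d|K|_s]$ with the nonnegative kernel $k(s)=(t\wedge\overline\tau^h_s-s)/\sqrt{t-s}$.

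\emph{The collision local time.} Applying Tanaka's formula to the nonnegative semimartingale $Y^{i,N,h}_s-Y^{i-1,N,h}_s$, and using \eqref{procVarFin} together with the a.s.\ absence of triple collisions (a consequence of \eqref{exchanG}), one identifies $\gamma^i_s\,d|K|_s$ with a fixed multiple of the local time at $0$ of the gap $Y^{i,N,h}-Y^{i-1,N,h}$. The occupation--times formula then expresses $\E[\int_0^t k(s)\gamma^i_s\,d|K|_s]$ through the value at $0$ of the density of $Y^{i,N,h}_s-Y^{i-1,N,h}_s$; summing over $i$ and using exchangeability reduces this to the density at $0$ of $X^{1,N,h}_s-X^{2,N,h}_s$. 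Since the drift $\lambda^N$ is bounded by $L_\Lambda$ uniformly in $N$, a Girsanov argument in the spirit of Lemmas \ref{ctyF} and \ref{estidens} controls this density at $0$ by the $L^\infty$-norm $\|p(s,\cdot)\|_{L^\infty}\le C_{\infty,T}s^{-1/2}$ of the limiting density, up to a correction measuring the distance between the empirical law $\mu^{N,h}_s$ and $\mu_s$, i.e.\ $\|F^{N,h}(s,\cdot)-F(s,\cdot)\|_{L^1}$ (weighted by a factor of order $(t-s)^{-1/2}$ arising from a first--order expansion of $\partial_xG_{t-s}$ around a reference point and the bound $\|\partial_{xx}G_{t-s}\|_{L^1}\sim(t-s)^{-1}$). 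This is the technical core, and the main obstacle: turning the weighted collision local time between consecutive ranked particles into the boundedness of the limiting density plus a defect controlled by $\|F^{N,h}(s,\cdot)-F(s,\cdot)\|_{L^1}$, which requires combining Tanaka's formula for the gaps, the absence of triple collisions, the Girsanov regularisation, and the mild formulation \eqref{milf}.

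\emph{Kernel bounds and conclusion.} The nonnegative kernel $k$ obeys $\int_0^t k(s)\,ds\le h\sqrt t$ and $\int_0^t k(s)s^{-1/2}\,ds\lesssim h$, both by the opposite--monotonicity argument of Lemma \ref{ControlDetE} (the functions $s\mapsto(t\wedge\overline\tau^h_s-s)$ and $s\mapsto 1/\sqrt{t-s}$ are oppositely monotone on each step $[t_j,t_{j+1}\wedge t]$). Pairing $k$ with the $s^{-1/2}$ part of the density bound produces the $\mathcal O(h)$ term. For the correction term one splits $\int_0^t=\int_0^{(t-h)^+}+\int_{(t-h)^+}^t$: on $[(t-h)^+,t]$ one has $k(s)\le\sqrt{t-s}\le\sqrt h$, which — after bounding the collision local time on the last step by the associated increment and recalling $\|F^{N,h}(t,\cdot)-F(t,\cdot)\|_{L^1}=\mathcal W_1(\mu^{N,h}_t,\mu_t)$ — contributes $\sqrt h\,\E[\|F^{N,h}(t,\cdot)-F(t,\cdot)\|_{L^1}]$; on $[0,(t-h)^+]$ one has $k(s)\le h/\sqrt{t-s}$, and multiplying by the $(t-s)^{-1/2}$ factor from the expansion gives $\int_0^{t-h}\frac h{2(t-s)^{3/2}}\E[\|F^{N,h}(s,\cdot)-F(s,\cdot)\|_{L^1}]\,ds$. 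Collecting the three contributions yields the stated estimate with a constant $C_1$ depending only on $\sigma,T,L_\lambda,L_\Lambda,C_{\infty,T}$ and the moment of $m$.
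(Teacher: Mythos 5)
The reduction step is correct and matches the paper: integrating in $x$, using $\|\partial_xG_{t-s}\|_{L^1}=\sqrt{2/(\pi\sigma^2(t-s))}$ and $|\lambda^N(i)-\lambda^N(i-1)|\le L_\lambda/N$, you arrive at bounding $\frac{1}{N^2}\sum_{i=2}^N\E\bigl[\int_0^t\frac{(t\wedge\overline\tau^h_s-s)}{\sqrt{t-s}}\gamma^i_s\,d|K|_s\bigr]$, which is exactly inequality \eqref{here}. Beyond that, your argument diverges from the paper and has a genuine gap precisely where you flag one.

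You propose to identify $\gamma^i_s\,d|K|_s$ with the local time at $0$ of $Y^{i,N,h}-Y^{i-1,N,h}$, apply the occupation--times formula to reduce to the density at $0$ of the gap, sum and use exchangeability to pass to the density at $0$ of $X^{1,N,h}_s-X^{2,N,h}_s$, and finally control this via a Girsanov argument by $\|p(s,\cdot)\|_{L^\infty}\lesssim s^{-1/2}$ plus a ``defect'' of order $(t-s)^{-1/2}\|F^{N,h}(s,\cdot)-F(s,\cdot)\|_{L^1}$. You yourself call this ``the technical core, and the main obstacle,'' and it is not carried out. Several steps are unjustified and at least one seems off-target: (i) the density at $0$ of $X^{1,N,h}_s-X^{2,N,h}_s$ is a two-point function of the $N$-particle system; controlling it by the \emph{one}-particle limiting density $p(s,\cdot)$ plus a Wasserstein-type defect is plausible in spirit but is a theorem in its own right, not a routine Girsanov estimate in the spirit of Lemmas \ref{ctyF} or \ref{estidens}; (ii) the ``factor of order $(t-s)^{-1/2}$ arising from a first-order expansion of $\partial_xG_{t-s}$'' is asserted, not derived --- there is no explanation of what reference point is used or why the remainder weights by $\|F^{N,h}(s,\cdot)-F(s,\cdot)\|_{L^1}$ with precisely this singularity; the $(t-s)^{-1/2}$ appears reverse-engineered from the target display; (iii) even the ratio $k(s)/\sqrt{t-s}\le h/(t-s)$ on $[0,t-h]$ does not match the claimed $h/(2(t-s)^{3/2})$ without a further argument.

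The paper avoids all of this density/local-time technology. The key is the elementary telescoping identity $\int_s^t\gamma^i_u\,d|K|_u=\sum_{j\ge i}\bigl\{(Y^{j,N,h}_t-Y^{j,N,h}_s)-\sigma(\beta^j_t-\beta^j_s)-\int_s^t\lambda^N(\cdot)\,du\bigr\}$, which yields directly (Lemma \ref{estimGam}) the two estimates $\E[(\int_s^t\gamma^i\,d|K|)^2]\le 9N^2(\sigma^2+L_\Lambda^2T)(t-s)$ and $\E[\int_s^t\gamma^i\,d|K|]\le N(\E[\|F^{N,h}(t,\cdot)-F^{N,h}(s,\cdot)\|_{L^1}]+L_\Lambda(t-s))$. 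Then an integration by parts in time with $A_s=-\int_s^t\gamma^i_u\,d|K|_u$ converts $\int_0^{t-h}\frac{1}{\sqrt{t-s}}\gamma^i_s\,d|K|_s+\frac{1}{\sqrt h}\int_{t-h}^t\gamma^i\,d|K|$ into $\frac{1}{\sqrt t}\int_0^t\gamma^i\,d|K|+\int_0^{t-h}\frac{1}{2(t-s)^{3/2}}\bigl(\int_s^t\gamma^i\,d|K|\bigr)\,ds$; the triangle inequality and the modulus-of-continuity estimate for $F$ (Lemma \ref{AccroissF}, which accounts for the $-L_\Lambda(t-s)\ln(t-s)$ singularity and is not mentioned in your sketch) then produce the stated bound. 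Your plan would need these same conclusions obtained by a much heavier route, and the hardest lemma it relies on is only gestured at.
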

The proof of this assertion relies on the following results.
\begin{lem}\label{AccroissF}
  We have: $$\exists Q < \infty, \forall 0 \le s \le t \le T, \quad \left\| F(t,.) - F(s,.)\right\|_{L^1} \leq Q \left(\sqrt{t}-\sqrt{s}\right) - L_{\Lambda}(t-s)\ln(t-s). $$
\end{lem}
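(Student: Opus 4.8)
The plan is to subtract the mild formulation \eqref{milf} of $F$ written at times $t$ and $s$. Splitting the drift integral at $s$, this gives
$$F(t,\cdot)-F(s,\cdot) \;=\; (G_t-G_s)*F_0 \;-\; \int_s^t \partial_x G_{t-u}*\Lambda(F(u,\cdot))\,du \;-\; \int_0^s \big(\partial_x G_{t-u}-\partial_x G_{s-u}\big)*\Lambda(F(u,\cdot))\,du ,$$
and I would bound the $L^1$-norm of each of the three pieces, pulling $\|\cdot\|_{L^1}$ inside the time integrals by Minkowski's integral inequality and using Young's convolution inequality together with the heat-kernel estimates of Lemma \ref{EstimHeatEq}. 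The one non-trivial input is the identity $\partial_x G_\tau *\Lambda(F(u,\cdot)) = G_\tau*\big(\lambda(F(u,\cdot))\,p(u,\cdot)\big)$ for $u>0$, obtained by the same integration by parts as in the proof of Proposition \ref{propspde} (legitimate since, by Lemma \ref{ctyF}, $F(u,\cdot)$ is absolutely continuous with density $p(u,\cdot)$ for $u>0$, the value $u=0$ being Lebesgue-negligible in the time integrals); combined with $\|\lambda(F(u,\cdot))\,p(u,\cdot)\|_{L^1}\le L_\Lambda\|p(u,\cdot)\|_{L^1}=L_\Lambda$, this reduces everything to convolving an $L^1$ function with $L^1$-norm at most $L_\Lambda$ against heat kernels.

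Concretely: for the first term, writing $(G_t-G_s)*F_0=\int_s^t\tfrac{\sigma^2}{2}(\partial_x G_\tau)*m\,d\tau$ (via the heat equation \eqref{heateq} and $\partial_x G_\tau *F_0=G_\tau*m$) and using \eqref{FirstDerivG}, its $L^1$-norm is at most $\int_s^t\tfrac{\sigma^2}{2}\|\partial_x G_\tau\|_{L^1}\,d\tau=\sigma\sqrt{2/\pi}\,(\sqrt t-\sqrt s)$, which is the term producing the $Q(\sqrt t-\sqrt s)$ part. The second term is at most $\int_s^t\|G_{t-u}*(\lambda(F(u,\cdot))p(u,\cdot))\|_{L^1}\,du\le L_\Lambda(t-s)$. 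For the third term, rewriting each integrand as $\big(G_{t-u}-G_{s-u}\big)*\big(\lambda(F(u,\cdot))p(u,\cdot)\big)$ and bounding, via $\partial_\tau G_\tau=\tfrac{\sigma^2}{2}\partial_{xx}G_\tau$, $\|G_{t-u}-G_{s-u}\|_{L^1}\le\int_{s-u}^{t-u}\|\partial_\tau G_\tau\|_{L^1}\,d\tau=\tfrac{c_0}{2}\ln\tfrac{t-u}{s-u}$ with $c_0=\E\big[|Z^2-1|\big]\le 2$ ($Z\sim\mathcal N(0,1)$, since $\|\partial_{xx}G_\tau\|_{L^1}=c_0/(\sigma^2\tau)$), I obtain the bound $L_\Lambda\tfrac{c_0}{2}\int_0^s\ln\tfrac{t-u}{s-u}\,du$. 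The elementary primitive $\int_0^s\ln\tfrac{t-u}{s-u}\,du=t\ln t-s\ln s-(t-s)\ln(t-s)$ together with $t\ln t-s\ln s=\int_s^t(1+\ln x)\,dx\le (t-s)\big(1+(\ln T)^+\big)$ then gives, for the third term, a bound of the form $C(T)(t-s)-L_\Lambda\tfrac{c_0}{2}(t-s)\ln(t-s)$.

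Finally I would assemble the three estimates. Every term proportional to $(t-s)$ is absorbed into $Q(\sqrt t-\sqrt s)$ using $t-s=(\sqrt t+\sqrt s)(\sqrt t-\sqrt s)\le 2\sqrt T(\sqrt t-\sqrt s)$. For the logarithmic term, when $t-s\le 1$ the quantity $-(t-s)\ln(t-s)$ is $\ge 0$, so $\tfrac{c_0}{2}\le 1$ lets me replace the coefficient $\tfrac{c_0}{2}$ by $L_\Lambda$ for free; when $t-s>1$ (possible only if $T>1$) the discrepancy $L_\Lambda(1-\tfrac{c_0}{2})(t-s)\ln(t-s)$ is a constant $\le L_\Lambda T\ln T$, and on that region $\sqrt t-\sqrt s\ge\tfrac{t-s}{2\sqrt T}\ge\tfrac1{2\sqrt T}$, so this constant too is $\le 2L_\Lambda T^{3/2}\ln T\,(\sqrt t-\sqrt s)$ and gets absorbed into $Q$. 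Taking $Q$ to be the resulting constant concludes. The main obstacle is this last bookkeeping: making the coefficient in front of $-(t-s)\ln(t-s)$ come out exactly $L_\Lambda$ (not merely a constant multiple of it) uniformly over $0\le s\le t\le T$, for which the bound $\E[|Z^2-1|]\le 2$ and the sign analysis of $\ln(t-s)$ are the key points; the rest is routine heat-kernel estimation.
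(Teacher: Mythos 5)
Your proof is correct and follows essentially the same route as the paper: start from the mild formulation, split the difference $F(t,\cdot)-F(s,\cdot)$ into the same three pieces, use the distributional identity $\partial_x\Lambda(F(u,\cdot))=\lambda(F(u,\cdot))p(u,\cdot)$, and apply the heat-kernel $L^1$-estimates of Lemma \ref{EstimHeatEq}. The treatment of the first and second terms matches the paper's exactly, including the $\sigma\sqrt{2/\pi}(\sqrt t-\sqrt s)$ computation and the $L_\Lambda(t-s)$ bound.

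The one place where you diverge is the third term, and there you have created an obstacle for yourself that the paper does not have. You compute the \emph{exact} value $\|\partial_{xx}G_\tau\|_{L^1}=\E[|Z^2-1|]/(\sigma^2\tau)$ and carry the constant $c_0=\E[|Z^2-1|]\approx 0.968$ through the argument, which lands you with a coefficient $\frac{c_0}{2}L_\Lambda<L_\Lambda$ in front of $-(t-s)\ln(t-s)$, forcing the sign analysis and case split on $t-s\lessgtr 1$. The paper instead uses the \emph{upper bound} $\|\partial_{xx}G_\tau\|_{L^1}\le 2/(\sigma^2\tau)$ recorded as \eqref{SecondDerivG}. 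Since the quantity $\int_0^s\ln\frac{t-u}{s-u}\,du$ being multiplied is nonnegative, enlarging the constant is harmless, and with the cruder bound the coefficient comes out to exactly $L_\Lambda$, so no case split or sign analysis is needed; the only remaining work is to absorb $L_\Lambda(t\ln t-s\ln s)$ into $Q(\sqrt t-\sqrt s)$, which the paper does via $t\ln t-s\ln s=\int_s^t(1+\ln x)\,dx\le\int_s^t x\,dx\le 2T^{3/2}(\sqrt t-\sqrt s)$. So what you flag as "the main obstacle" in the final bookkeeping is self-inflicted: using $c_0\le 2$ at the point where you estimate $\|\partial_{xx}G_\tau\|_{L^1}$ removes it entirely. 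Your version still closes, but it spends effort where the paper's weaker intermediate estimate is in fact the better choice.
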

\begin{proof}
Let $ 0 \le s \le t \le T$. We recall that:
$$\displaystyle F(t,x) - F(s,x) = \left(G_t - G_s \right)* F_0(x) - \int_0^t \partial_xG_{t-u} * \Lambda\left(F(u,.) \right)(x)\,du + \int_0^s \partial_xG_{s-u} * \Lambda\left(F(u,.) \right)(x)\,du.$$
Using Equality \eqref{heateq} and the estimates \eqref{FirstDerivG} and \eqref{SecondDerivG} from Lemma \ref{EstimHeatEq}, as well as the fact that $(t-s) \leq 2\sqrt{T}\left(\sqrt{t} - \sqrt{s} \right)$ and $t\ln t-s\ln s=\int_s^t(1+\ln x)dx\le \int_s^txdx=\frac{t+s}{2}(t-s)\le T\times 2\sqrt{T}(\sqrt{t}-\sqrt{s})$, we obtain:
\begin{align*}
    \displaystyle &\left\|F(t,.) - F(s,.) \right\|_{L^1} \\
    &\le \left\|\left(G_t - G_s \right)* F_0 \right\|_{L^1}+ \left\|\int_0^s\left(G_{t-u}-G_{s-u}\right)*\partial_x\Lambda\left(F(u,.)\right)\,du \right\|_{L^1} + \left\|\int_s^t G_{t-u}*\partial_x\Lambda\left(F(u,.)\right)\,du\right\|_{L^1} \\
    &\le \left\|\int_s^t \left(\partial_u G_u*F_0\right)\,du \right\|_{L^1} + \left\|\int_0^s \int_{s-u}^{t-u} \partial_r G_r* \partial_x\Lambda\left(F(u,.)\right)\,dr\,du\right\|_{L^1} + \int_s^t \left\| G_{t-u}*\left(\lambda\left(F(u,.) \right)p(u,.)\right)\right\|_{L^1}\,du  \\
    &\le \frac{\sigma^2}{2}\int_s^t \left\| \left(\partial_{x}G_u*m\right)\right\|_{L^1} \,du + \frac{\sigma^2}{2}\int_0^s \int_{s-u}^{t-u} \left\|  \partial_{xx}G_r* \left(\lambda\left(F(u,.)\right)p(u,.)\right) \right\|_{L^1} \,dr\,du + L_{\Lambda}(t-s) \\
    &\le \frac{\sigma^2}{2}\int_s^t\sqrt{\frac{2}{\pi \sigma^2 u}}\,du + L_{\Lambda}\int_0^s \ln\left(\frac{t-u}{s-u}\right)\,du + L_{\Lambda}(t-s) \\
    &=  \sigma \sqrt{\frac{2}{\pi}} \left(\sqrt{t}-\sqrt{s} \right) + L_{\Lambda}\left( t\ln(t) - s\ln(s) - (t-s)\ln(t-s)\right) + L_{\Lambda}(t-s)\\
    &\leq \left(\sigma \sqrt{\frac{2}{\pi}}+ 2L_{\Lambda} \sqrt{T}\left(1+T\right) \right)\left(\sqrt{t}-\sqrt{s}\right) - L_{\Lambda} (t-s)\ln(t-s).
\end{align*}
The conclusion holds with $Q=\sigma \sqrt{2 / \pi}+ 2 L_{\Lambda}\sqrt{T}\left(1+T\right)$.
\end{proof}
The next lemma provides two different estimations of the term $\displaystyle \E\left[ \int_{s}^{t}  \gamma^{i}_u d|K|_u \right]$. They are both useful to prove Lemma \ref{ControlGam}.
\begin{lem}\label{estimGam}
$\forall N \in \N^{*}, \forall i \in \llbracket2,N\rrbracket,  \forall h \in (0,T], \forall 0 \le s \le t \le T$,
\begin{align}\label{estimGam1}
    &\displaystyle \E\left[\left(\int_{s}^{t}\gamma^{i}_u\,d|K|_u\right)^2 \right] \leq  9N^2 \left(\sigma^2 + L_{\Lambda}^2 T\right)(t-s),
\end{align}
\begin{align}\label{estimGam2}
    &\mbox{and }\displaystyle \E\left[ \int_{s}^{t}\gamma^{i}_u\,d|K|_u \right] \leq N \left(\E\left[ \left\|F^{N,h}(t,.) - F^{N,h}(s,.) \right\|_{L^1} \right] + L_{\Lambda} (t-s)\right).
\end{align}
\end{lem}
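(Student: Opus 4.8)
The plan is to isolate the local-time term by a telescoping summation over the reordered coordinates. Fix $i\in\llbracket 2,N\rrbracket$ and set $Z^i_t=\sum_{j=i}^{N}Y^{j,N,h}_t$. Summing the dynamics \eqref{dynamicY} over $j\in\llbracket i,N\rrbracket$ and using that $\sum_{j=i}^{N}(\gamma^{j}_u-\gamma^{j+1}_u)=\gamma^{i}_u$ (telescoping, together with the boundary condition $\gamma^{N+1}_u=0$ from \eqref{procVarFin}), one obtains
\[
\int_s^t\gamma^{i}_u\,d|K|_u=(Z^i_t-Z^i_s)-\sigma\sum_{j=i}^{N}\bigl(\beta^{j}_t-\beta^{j}_s\bigr)-\int_s^t\sum_{j=i}^{N}\lambda^N\bigl(\eta^{-1}_{\tau^h_u}(\eta_u(j))\bigr)\,du .
\]
Here $(\beta^1,\dots,\beta^N)$ is an $N$-dimensional Brownian motion whose coordinates are independent (their brackets satisfy $\langle\beta^{j},\beta^{k}\rangle_u=\mathbf{1}_{\{j=k\}}u$), and $|\lambda^N(k)|\le L_{\Lambda}$ for every $k$ since $\lambda^N(k)=N\int_{(k-1)/N}^{k/N}\lambda(v)\,dv$. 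Note that $\int_s^t\gamma^{i}_u\,d|K|_u\ge 0$, so the two estimates will follow by bounding, respectively, the square and the expectation of the right-hand side.

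For \eqref{estimGam1}, I would square the identity, use $(a+b+c)^2\le 3(a^2+b^2+c^2)$, and bound the three expectations. By Cauchy--Schwarz, $(Z^i_t-Z^i_s)^2\le N\sum_{j=1}^{N}\bigl|Y^{j,N,h}_t-Y^{j,N,h}_s\bigr|^2$, and Lemma \ref{XandY} with $\rho=2$ allows one to replace the reordered increments by those of $X^{j,N,h}$; their second moments are at most $2\sigma^2(t-s)+2L_{\Lambda}^2(t-s)^2$ by \eqref{edsnh}, whence $\E\bigl[(Z^i_t-Z^i_s)^2\bigr]\le 2N^2(\sigma^2+L_{\Lambda}^2 T)(t-s)$. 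By independence of the $\beta^{j}$, $\E\bigl[\bigl(\sum_{j=i}^{N}(\beta^{j}_t-\beta^{j}_s)\bigr)^2\bigr]=(N-i+1)(t-s)\le N^2(t-s)$. The drift term is bounded pathwise by $N^2L_{\Lambda}^2(t-s)^2\le N^2 L_{\Lambda}^2 T(t-s)$. Summing these three contributions gives exactly $9N^2(\sigma^2+L_{\Lambda}^2 T)(t-s)$.

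For \eqref{estimGam2}, I would instead take expectations directly in the identity. The Brownian increments are centred, so that term drops out, and the drift term is bounded in absolute value by $N L_{\Lambda}(t-s)$. For the remaining term, $\E[Z^i_t-Z^i_s]\le\E\bigl[\sum_{j=1}^{N}|Y^{j,N,h}_t-Y^{j,N,h}_s|\bigr]$, and by the one-dimensional optimal coupling identity \eqref{Wasserstein} applied to $\mu^{N,h}_s$ and $\mu^{N,h}_t$, together with \eqref{w1fdr}, this equals $N\,\E\bigl[\|F^{N,h}(t,.)-F^{N,h}(s,.)\|_{L^1}\bigr]$. Adding the two bounds yields \eqref{estimGam2}.

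The moment computations are routine; the only points that require care are the telescoping identity and the bookkeeping of the boundary terms ($\gamma^1$ does not enter for $i\ge 2$, and $\gamma^{N+1}=0$), together with the integrability of $\int_s^t\gamma^{i}_u\,d|K|_u$, which is automatic here because that integral is nonnegative and the right-hand side of the identity has finite second (hence first) moment. I do not anticipate a genuine obstacle; the main subtlety is simply arranging the constants so that the stated bounds come out cleanly.
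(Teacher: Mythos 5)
Your proof is correct and follows essentially the same route as the paper: the telescoping identity over the reordered coordinates using $\gamma^{N+1}=0$, Jensen/Cauchy--Schwarz together with Lemma \ref{XandY} (for $\rho=2$) for the square, and a direct expectation bound with the $\mathcal{W}_1$ identity \eqref{Wasserstein}--\eqref{w1fdr} for the first moment; the arithmetic lands on the identical constants $9N^2(\sigma^2+L_\Lambda^2T)$ and $N(\E\|F^{N,h}(t,\cdot)-F^{N,h}(s,\cdot)\|_{L^1}+L_\Lambda(t-s))$. The one cosmetic divergence is that you group the decomposition into three blocks via $(a+b+c)^2\le 3(a^2+b^2+c^2)$ and invoke independence of the $\beta^j$, while the paper applies Jensen to the flat sum of $3N$ terms; the paper even remarks that independence of the $\beta^j$ buys nothing here since the dominant term comes from the $Y$-increments, which is consistent with your immediately relaxing $(N-i+1)\le N^2$ to recover the stated constant.
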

\begin{proof}
Let $2 \le i \le N$. Since $\gamma^{N+1}_u = 0$, we have $\displaystyle \int_{s}^{t} \gamma^{i}_u\,d|K|_u   = \int_{s}^{t} \sum \limits_{j=i}^{N} \left(\gamma^{j}_u - \gamma^{j+1}_u \right)\,d|K|_u$ and with the dynamics \eqref{dynamicY} of $Y^{j,N,h}$, we deduce that  
\begin{align*}
   \displaystyle \int_{s}^{t} \gamma^{i}_u\,d|K|_u   &= \int_{s}^{t} \sum \limits_{j=i}^{N} \left(\gamma^{j}_u - \gamma^{j+1}_u \right)\,d|K|_u =  \sum \limits_{j=i}^{N}\left\{ \left( Y^{j,N,h}_t - Y^{j,N,h}_s \right) - \sigma\left( \beta^{j}_t - \beta^{j}_s \right) - \int_s^t \lambda^N \left( \sigma^{-1}_{\tau^h_u}(\eta_{u}(j))\right)\,du \right\}.
\end{align*}  
Let us start by proving the estimation of $\displaystyle \E\left[\left( \int_{s}^{t}\gamma^{i}_u\,d|K|_u \right)^2\right]$. With the use of Jensen's inequality and Lemma \ref{XandY} for $\rho = 2$, we obtain: 
\begin{align*}
    \displaystyle \E\left[\left(\int_{s}^{t}\gamma^{i}_u\,d|K|_u\right)^2 \right] &\leq 3 N \left( \sum \limits_{j=1}^{N} \E\left[\left| Y^{j,N,h}_t - Y^{j,N,h}_s \right|^2\right] + \sum \limits_{j=1}^{N}\left(\int_{s}^{t} \left| \lambda^N\left( \sigma^{-1}_{\tau^h_u}(\eta_{u}(j))\right)\right|\,du \right)^2 + \sigma^2\sum \limits_{j=1}^{N} \E\left[ \left|\beta^{j}_t - \beta^{j}_s \right|^2\right] \right) \\
    &\le 3N \left( \sum \limits_{j=1}^{N}\E\left[\left| X^{j,N,h}_t - X^{j,N,h}_s \right|^2\right] + N L_{\Lambda}^2 (t-s)^2 + N \sigma^2(t-s) \right)\\
  &\le 3N \left( \sum \limits_{j=1}^{N}2\E\left[\sigma^2|W^j_t-W^j_s|^2+L_{\Lambda}^2 (t-s)^2
    \right] + N L_{\Lambda}^2 (t-s)^2 + N \sigma^2(t-s) \right)\\&\leq 9N^2\left(\sigma^2 + L_{\Lambda}^2 T \right)(t-s).
\end{align*}
Notice that because of the latter contribution of $\E\left[|W^j_t-W^j_s|^2\right]$, it was not useful to take advantage of the independence of the Brownian motions $\beta^j$ which ensures $\E\left[ \left|\sum \limits_{j=i}^{N}(\beta^{j}_t - \beta^{j}_s) \right|^2\right]=(N+1-i)(t-s)$.
Let us now prove the second estimation of $\displaystyle \E\left[ \int_{s}^{t}\gamma^{i}_u\,d|K|_u \right]$. To do so, we use that, according to \eqref{Wasserstein} and \eqref{w1fdr}, $\displaystyle \frac{1}{N} \sum \limits_{i=1}^N \left|Y^{i,N,h}_t - Y^{i,N,h}_s \right| = \mathcal{W}_1\left(\mu^{N,h}_t, \mu^{N,h}_s\right) =  \int_{\R}\left|F^{N,h}(t,x) - F^{N,h}(s,x)\right|\,dx = \left\|F^{N,h}(t,.) - F^{N,h}(s,.)\right\|_{L^1}$ to obtain: 
\begin{align*}
    \displaystyle \E\left[\int_{s}^{t}\gamma^{i}_u\,d|K|_u \right] \leq& \sum \limits_{j=1}^{N} \E\left[\left| Y^{j,N,h}_t - Y^{j,N,h}_s \right|\right] + \sum \limits_{j=1}^{N} \int_{s}^{t} \left| \lambda^N\left( \sigma^{-1}_{\tau^h_u}(\eta_{u}(j)) \right)\right|\,du  \\
    \leq& N \E\left[ \left\|F^{N,h}(t,.) - F^{N,h}(s,.) \right\|_{L^1} \right] + N L_{\Lambda} (t-s).
\end{align*}
\end{proof}

Let us now prove Lemma \ref{ControlGam}. 
\begin{proof}
We recall that $\displaystyle e^{N,h}_1(t,x) = \frac{1}{N}\sum \limits_{i=2}^N \int_{0}^{t}(t \wedge \overline{\tau}^h_s -s)\left(  \lambda^N(i) - \lambda^N(i-1) \right)\partial_x G_{t-s}\left(Y^{i,N,h}_s -x \right)\gamma^{i}_s\,d|K|_s$. For $i \in \llbracket 2,N \rrbracket$, we have $\displaystyle \left|\lambda^N(i)-\lambda^N(i-1) \right| = \left| N\int_{\frac{i-1}{N}}^{\frac{i}{N}} \left( \lambda(u) - \lambda\left(u-\frac{1}{N}\right)\right)\,du\right| \le \frac{L_{\lambda}}{N}$. Using the estimate \eqref{FirstDerivG} from Lemma \ref{EstimHeatEq} and the property \eqref{procVarFin}, we have:
\begin{align}\label{here}
    \displaystyle \E\left[\left\|e_{1}^{N,h}(t,.)\right\|_{L^1} \right] &\leq \frac{L_{\lambda}}{\sigma N}\sqrt{\frac{2}{\pi}}\left\{\frac{1}{N}\sum_{i=2}^N  \E\left[ \int_{0}^{t} \frac{(t \wedge \overline{\tau}^h_s -s)}{\sqrt{t-s}}\gamma^{i}_s\,d|K|_s \right]\right\}.
\end{align}
\begin{itemize}
    \item For $t \le h$, since $\frac{(t \wedge \overline{\tau}^h_s -s)}{\sqrt{t-s}} = \sqrt{t-s} \le \sqrt h$, we deduce from \eqref{estimGam1} that $\displaystyle \E\left[\left\|e_{1}^{N,h}(t,.)\right\|_{L^1} \right] \leq \frac{3L_{\lambda}}{\sigma}\sqrt{\frac{2(\sigma^2 + L_{\Lambda}^2T)}{\pi}}\,h$.
    \item For $t \ge h$, we decompose the right-hand side of inequality \eqref{here} onto the sub-intervals $[0,t-h]$ and $[t-h,t]$ for a better control. Therefore,
    \begin{align*}
       \displaystyle \E\left[\left\|e_{1}^{N,h}(t,.)\right\|_{L^1} \right] &\le \frac{L_{\lambda}h}{\sigma N^2}\sqrt{\frac{2}{\pi}} \sum_{i=2}^N \left(\E\left[ \int_{0}^{t-h} \frac{1}{\sqrt{t-s}}\gamma^{i}_s\,d|K|_s \right] +  \frac{1}{\sqrt{h}}\E\left[ \int_{t-h}^{t}\gamma^{i}_s\,d|K|_s \right]\right).
    \end{align*}
    As for the first term of the right-hand side of the above inequality, we introduce $\displaystyle A_s = - \int_s^t \gamma_u^{i}d|K|_u$ and apply Fubini's theorem to obtain:
    \begin{align*}
    \displaystyle \int_0^{t-h}A_s \frac{ds}{2(t-s)^{3/2}} &= \int_0^{t-h} \left(A_0 + \int_0^sdA_r \right)\frac{ds}{2(t-s)^{3/2}} = A_0\left( \frac{1}{\sqrt h} - \frac{1}{\sqrt t}\right) + \int_0^{t-h}\int_0^s\frac{dA_r}{2(t-s)^{3/2}}\,ds \\
    &= A_0\left( \frac{1}{\sqrt h} - \frac{1}{\sqrt t}\right) + \int_0^{t-h}\int_r^{t-h}\frac{ds}{2(t-s)^{3/2}}\,dA_r \\
    &= A_0\left( \frac{1}{\sqrt h} - \frac{1}{\sqrt t}\right) + \int_0^{t-h} \left( \frac{1}{\sqrt h} - \frac{1}{\sqrt{t-r}}\right)\,dA_r \\
    &= -\frac{1}{\sqrt t}A_0 + \frac{1}{\sqrt h}A_{t-h} - \int_0^{t-h}\frac{1}{\sqrt{t-r}}\,dA_r.
    \end{align*}
    Consequently, we obtain that:
    \begin{align*}
      \displaystyle  \E\left[ \int_{0}^{t-h} \frac{1}{\sqrt{t-s}}\gamma^{i}_s\,d|K|_s \right] + \frac{1}{\sqrt{h}} \E\left[ \int_{t-h}^t\gamma_u^{i}\,d|K|_u\right] = \frac{1}{\sqrt{t}}\E\left[\int_0^t\gamma_u^{i}\,d|K|_u\right] + \E\left[\int_{0}^{t-h} \frac{1}{2(t-s)^{3/2}}\int_{s}^{t}\gamma^{i}_u\,d|K|_u\,ds \right].
    \end{align*}

    We shall use the estimate \eqref{estimGam1} and the estimate \eqref{estimGam2} from Lemma \ref{estimGam} for respectively the first term and the second term of the right-hand side of the following inequality:
    \begin{align*}
       \displaystyle &\E\left[\left\|e_{1}^{N,h}(t,.)\right\|_{L^1} \right] \\
       &\le \frac{L_{\lambda}h}{\sigma N^2}\sqrt{\frac{2}{\pi}}\sum_{i=2}^N \left\{ \frac{1}{\sqrt{t}} \E\left[\int_0^t\gamma_u^{i}\,d|K|_u\right] +\int_{0}^{t-h} \frac{1}{2(t-s)^{3/2}}\E\left[ \int_{s}^{t}\gamma^{i}_u\,d|K|_u\right]\,ds \right\} \\
        &\le \frac{L_{\lambda}h}{\sigma N^2}\sqrt{\frac{2}{\pi}}\sum_{i=2}^N \Bigg\{ 3N \sqrt{ \left(\sigma^2 +  L_{\Lambda}^2 T\right)} + N \int_{0}^{t-h} \frac{L_{\Lambda}}{2\sqrt{t-s}}\,ds + N\int_{0}^{t-h} \frac{1}{2(t-s)^{3/2}}\E\left[ \left\|F^{N,h}(t,.) - F^{N,h}(s,.) \right\|_{L^1} \right] \,ds \Bigg\} \\
        &=  \frac{L_{\lambda}}{\sigma }\sqrt{\frac{2}{\pi}}\left(3\sqrt{\left(\sigma^2 + L_{\Lambda}^2 T\right)} + L_{\Lambda}\left(\sqrt{t}-\sqrt{h} \right) \right)h + \frac{L_{\lambda}}{\sigma}\sqrt{\frac{2}{\pi}}\int_{0}^{t-h} \frac{h}{2(t-s)^{3/2}}\E\left[ \left\|F^{N,h}(t,.) - F^{N,h}(s,.) \right\|_{L^1} \right]\,ds \\
       &\le \frac{L_{\lambda}}{\sigma}\sqrt{\frac{2}{\pi}}\left( 3\sqrt{\left(\sigma^2 + L_{\Lambda}^2 T\right)} + L_{\Lambda}\sqrt{T}\right)h  + \frac{L_{\lambda}}{\sigma}\sqrt{\frac{2}{\pi}}\int_{0}^{t-h} \frac{h}{2(t-s)^{3/2}}\E\left[ \left\|F^{N,h}(t,.) - F^{N,h}(s,.) \right\|_{L^1} \right]\,ds.
    \end{align*}
    Since 
    \begin{align*}
       \displaystyle \E\left[ \left\|F^{N,h}(t,.) - F^{N,h}(s,.) \right\|_{L^1} \right] \leq  \E\left[ \left\|F^{N,h}(t,.) - F(t,.) \right\|_{L^1} \right]&+ \left\|F(t,.) - F(s,.) \right\|_{L^1} + \E\left[ \left\|F(s,.) - F^{N,h}(s,.) \right\|_{L^1} \right], 
    \end{align*}
    using Lemma \ref{AccroissF}, we obtain:
    \begin{align*}
    \displaystyle \int_{0}^{t-h} \frac{h}{2(t-s)^{3/2}}&\E\left[ \left\|F^{N,h}(t,.) - F^{N,h}(s,.) \right\|_{L^1} \right]\,ds \le \sqrt{h}\E\left[ \left\|F^{N,h}(t,.) - F(t,.) \right\|_{L^1} \right] \\
    &+ h\int_{0}^{t-h} \frac{\E\left[ \left\|F(s,.) - F^{N,h}(s,.) \right\|_{L^1}\right]}{2(t-s)^{3/2}}\,ds + h\int_{0}^{t} \left(Q\frac{\sqrt{t}-\sqrt{s} }{2(t-s)^{3/2}} - \frac{L_{\Lambda}}{2} \frac{\ln(t-s)}{ \sqrt{t-s}}\right)\,ds.
    \end{align*}
   To treat the last term of the right-hand side of the above inequality, we will use the fact that $\underset{x>0}{\sup}\left\{\sqrt{x}\left(2 - \ln(x)\right)\right\} = 2$.
    \begin{align*}
\displaystyle \int_{0}^{t} \left(Q\frac{\sqrt{t}-\sqrt{s} }{2(t-s)^{3/2}} - \frac{L_{\Lambda}}{2} \frac{\ln(t-s)}{ \sqrt{t-s}}\right)\,ds &= Q \int_0^1 \frac{1 - \sqrt{x}}{2(1-x)^{3/2}}\,dx - L_{\Lambda}\left(\sqrt{t}\ln(t) - 2\sqrt{t} \right) \\
&= Q \left(\left[(1-x)^{-1/2}(1-\sqrt x)\right]_0^1 + \int_0^1 \frac{dx}{2 \sqrt{x} \sqrt{1-x}} \right) + L_{\Lambda} \sqrt{t}\left( 2 - \ln(t)\right) \\
&=Q\left( \frac{\pi}{2}-1\right) + L_{\Lambda} \sqrt{t}\left( 2 - \ln(t)\right) \\
 &\le Q\left( \frac{\pi}{2}-1\right) + 2L_{\Lambda}.
    \end{align*}
Therefore,
\begin{align*}
    \displaystyle \E\left[\left\|e_{1}^{N,h}(t,.)\right\|_{L^1} \right] \leq& C_1 \left( h + \sqrt{h}\E\left[\left\|F^{N,h}(t,.) - F(t,.) \right\|_{L^1} \right] + \int_{0}^{t-h} \frac{h}{2(t-s)^{3/2}}\E\left[\left\|F(s,.) - F^{N,h}(s,.) \right\|_{L^1}\right]\,ds \right)
\end{align*}
where $\displaystyle C_1 =  \frac{L_{\lambda}}{\sigma}\sqrt{\frac{2}{\pi}} \left[1 \vee \left( 3\sqrt{\left(\sigma^2 + L_{\Lambda}^2 T\right)} + L_{\Lambda}(2 +\sqrt{T}) +  Q\left( \frac{\pi}{2}-1\right)\right)\right]$.
\end{itemize}
\end{proof}

Using Lemmas \ref{ControlE}, \ref{ControlE0}, \ref{ControlDetE}, \ref{ControlStochE} and \ref{ControlGam} and the fact that for $s\in[0,t-h]$, $\frac{h}{2(t-s)^{3/2}}\le \frac{1}{2 \sqrt{t-s}} $, we conclude the proof of Proposition \ref{ControlFinalE} for the choice $Z = 2 \max\left(L_{\Lambda}+C_1/2+C_{4,5}, C_{2,3} \right)$.

\begin{remark}
In Lemma \ref{estimGam}, we provide two estimations of $\displaystyle \E\left[\int_s^t \gamma^{i}_u d|K|_u\right]$. If we only use the first estimation \eqref{estimGam1} in the proof of Lemma \ref{ControlGam}, we obtain, using a decomposition that we will detail in Section \ref{SectBiais}, a rough estimation of $\E\left[\left\|e_{1}^{N,h}(t,.)\right\|_{L^1} \right]$ where we lose a $\ln(h)$ factor. 
\end{remark}

\subsection{Estimation of the bias}\label{secproofthm2}

We recall Equation \eqref{eqFh}: 
\begin{align*}
    &F^{N,h}(t,x) - F(t,x) \\
    &= \quad G_t*\left(F^{N,h}_0 - F_0\right)(x) - \displaystyle \int_0^t \partial_x G_{t-s} * \left(\Lambda(F^{N,h}(s,.)) - \Lambda(F(s,.))\right)(x)\,ds + R^{N,h}(t,x) + E^{N,h}(t,x), 
\end{align*}
and we shall use the expression of $E^{N,h}(t,x)$ proved in Lemma \ref{ControlE}. The next lemma provides an upper-bound of $\left\Vert \E\left[E^{N,h}(t,.) \right] \right\Vert_{L^1}$.
\begin{lem}\label{normL1ofE}
Assume that $\lambda$ is Lipschitz continuous and 
the initial positions are \begin{itemize}
\item either i.i.d. according to $m$ and $\displaystyle \int_{\R}\sqrt{F_0(x)(1 - F_0(x))}\,dx < \infty$,
  \item  or optimal deterministic and $\displaystyle \sup_{x \ge 1} x\int_{x}^{+\infty} \left(F_0(-y) + 1 - F_0(y) \right)\,dy<\infty$.
\end{itemize}Then
$$\exists Z_b <\infty, \forall N \in \N^{*}, \forall h \in (0,T], \quad \sup_{t\le T}\left\Vert \E\left[E^{N,h}(t,.) \right] \right\Vert_{L^1} \leq Z_b \left(\sqrt{\frac h N} + h\right).$$
\end{lem}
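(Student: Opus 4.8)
The plan is to bound $\|\E[E^{N,h}(t,.)]\|_{L^1}$ by starting from the decomposition $E^{N,h}(t,x)=\sum_{p=0}^5 e^{N,h}_p(t,x)$ of Lemma \ref{ControlE} and estimating each of the six pieces \emph{after} taking the expectation, using $\|\E[E^{N,h}(t,.)]\|_{L^1}\le\sum_{p=0}^5\|\E[e^{N,h}_p(t,.)]\|_{L^1}$. The crucial observation is that it is essential to take the expectation before the $L^1$-norm: by Lemma \ref{ControlStochE} the two martingale-type terms satisfy $\|\E[e^{N,h}_2(t,.)]\|_{L^1}=\|\E[e^{N,h}_3(t,.)]\|_{L^1}=0$, whereas their pathwise $L^1$-norms are only $\mathcal O(N^{-1/2})$, which is too crude here since $N^{-1/2}$ is not $\mathcal O(\sqrt{h/N}+h)$ for small $h$. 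So these two terms drop out entirely.

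For the four remaining pieces I would use $\|\E[e^{N,h}_p(t,.)]\|_{L^1}\le\E[\|e^{N,h}_p(t,.)\|_{L^1}]$ and invoke the estimates already proved. Lemma \ref{ControlE0} gives $\E[\|e^{N,h}_0(t,.)\|_{L^1}]\le 2L_\Lambda h$ and Lemma \ref{ControlDetE} gives $\E[\|e^{N,h}_4(t,.)\|_{L^1}]+\E[\|e^{N,h}_5(t,.)\|_{L^1}]\le 2C_{4,5}h$. For $e^{N,h}_1$, Lemma \ref{ControlGam} yields
\[
\E[\|e^{N,h}_1(t,.)\|_{L^1}]\le C_1\Bigl(h+\sqrt h\,\E[\|F^{N,h}(t,.)-F(t,.)\|_{L^1}]+\mathbf{1}_{\{t\ge h\}}\!\int_0^{t-h}\!\frac{h}{2(t-s)^{3/2}}\,\E[\|F^{N,h}(s,.)-F(s,.)\|_{L^1}]\,ds\Bigr).
\]
Since the hypotheses of the present lemma coincide exactly with those of Corollary \ref{cor1} (including the Lipschitz continuity of $\lambda$), we have, using \eqref{w1fdr}, $\sup_{s\le T}\E[\|F^{N,h}(s,.)-F(s,.)\|_{L^1}]=\sup_{s\le T}\E[\mathcal W_1(\mu^{N,h}_s,\mu_s)]\le C(N^{-1/2}+h)$; combining this with the elementary bound $\int_0^{t-h}\frac{h}{2(t-s)^{3/2}}\,ds=\sqrt h-h/\sqrt t\le\sqrt h$ shows that the right-hand side above is at most $C_1\bigl(h+2\sqrt h\,C(N^{-1/2}+h)\bigr)$.

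Collecting the four contributions, and using $h^{3/2}\le\sqrt T\,h$ for $h\in(0,T]$ together with $\sqrt h\cdot N^{-1/2}=\sqrt{h/N}$, one obtains $\sup_{t\le T}\|\E[E^{N,h}(t,.)]\|_{L^1}\le Z_b(\sqrt{h/N}+h)$ for a suitable $Z_b$ depending on $L_\Lambda$, $C_1$, $C_{4,5}$, $C$ and $T$. I do not expect any genuine obstacle: all the analytic work sits in Lemmas \ref{ControlE}, \ref{ControlE0}, \ref{ControlDetE}, \ref{ControlStochE}, \ref{ControlGam} and in Corollary \ref{cor1}. The only subtle points are the two already flagged — exploiting the vanishing of $\E[e^{N,h}_2]$ and $\E[e^{N,h}_3]$ by not passing to the pathwise $L^1$-norm for those terms, and noticing that the slow $N^{-1/2}$ rate coming from the local-time term $e^{N,h}_1$ only ever appears multiplied by $\sqrt h$, which is precisely what converts it into the required $\sqrt{h/N}$.
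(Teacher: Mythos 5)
Your proposal is correct and follows essentially the same route as the paper: both start from the decomposition of Lemma \ref{ControlE}, drop the martingale pieces $e^{N,h}_2$, $e^{N,h}_3$ because their expectations vanish (Lemma \ref{ControlStochE}), bound $e^{N,h}_0$, $e^{N,h}_4$, $e^{N,h}_5$ by $\mathcal O(h)$ via Lemmas \ref{ControlE0} and \ref{ControlDetE}, and control $e^{N,h}_1$ by combining Lemma \ref{ControlGam} with the strong-rate estimate of Corollary \ref{cor1}, using $\int_0^{t-h}\frac{h}{2(t-s)^{3/2}}\,ds\le\sqrt h$ to turn the $N^{-1/2}$ factor into $\sqrt{h/N}$. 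The two observations you flag as the only subtle points are exactly those the paper exploits.
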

\begin{proof}
To estimate $\left\Vert \E\left[E^{N,h}(t,.) \right] \right\Vert_{L^1}$, we estimate each $\left\Vert \E\left[e^{N,h}_p(t,.) \right] \right\Vert_{L^1}, p \in \llbracket0,5\rrbracket$. From Lemmas \ref{ControlE0}, \ref{ControlDetE} and \ref{ControlStochE} we have  $\left\Vert \E\left[E^{N,h}(t,.) \right] \right\Vert_{L^1} \le \left\Vert \E\left[e_1^{N,h}(t,.) \right] \right\Vert_{L^1} + 2L_{\Lambda}h+ 2C_{4,5}h$. 
By Lemma \ref{ControlGam} and Corollary \ref{cor1}, we have: 
\begin{align*}
\displaystyle &\left\Vert \E\left[e_1^{N,h}(t,.) \right] \right\Vert_{L^1}\le  \E\left[\left\Vert e_1^{N,h}(t,.) \right\Vert_{L^1}\right]  \\
&\le C_1h + C_1\sqrt{h}\E\left[ \left\|F^{N,h}(t,.) - F(t,.) \right\|_{L^1} \right] + C_1{\mathbf 1}_{\{t\ge h\}}\int_{0}^{t-h} \frac{h}{2(t-s)^{3/2}}\E\left[ \left\|F^{N,h}(s,.) - F(s,.) \right\|_{L^1} \right]\,ds\\
&\le C_1h + C_1\sqrt{h} C \left( \frac{1}{\sqrt N} + h\right) + C_1{\mathbf 1}_{\{t\ge h\}}\int_{0}^{t-h} \frac{h}{2(t-s)^{3/2}}C\left( \frac{1}{\sqrt N} + h\right)\,ds \\
&\le C_1h+C_1C h^{3/2}+ C_1C \sqrt{\frac h N} + C_1C\left( \frac{1}{\sqrt N} + h\right)\sqrt h\\
&\le  \left(C_1 + 2C_1C \sqrt T \right)h +2C_1C\sqrt{\frac h N}.
\end{align*}
The conclusion holds with $Z_b = \max(2L_{\Lambda} + 2C_{4,5} + C_1 + 2C_1C \sqrt T, 2C_1C)$.
\end{proof}

The proof of Theorem \ref{thmBias} relies on the following Proposition that we will prove in Section \ref{SectBiais}.
\begin{prop}\label{normL2}
Assume that $\displaystyle \int_{\R}|x|m(dx) < \infty$ and $\lambda$ is Lipschitz continuous. Then:
  $$\exists M_b< \infty, \forall N \in \N^{*}, \forall h \in (0,T],\quad \sup_{t \leq T}\E\left[\left\Vert F^{N,h}(t,.) - F(t,.) \right\Vert^{2}_{L^2}\right] \le  M_b\left(\frac{1}{N} + h \right). $$
\end{prop}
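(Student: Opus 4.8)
The plan is to transpose the mild-formulation plus Gr\"onwall argument of Theorem~\ref{thm1} from $L^1(\R)$ to $L^2(\R)$, squaring everywhere. Write $g(t,x):=F^{N,h}(t,x)-F(t,x)$, so that $0\le|g|\le 1$ and $g(t,.)$ is the sum of the four terms in \eqref{eqFh}. From that identity, the inequality $(a+b+c+d)^2\le 4(a^2+b^2+c^2+d^2)$, Minkowski's integral inequality, Young's convolution inequality together with $\|\partial_xG_u\|_{L^1}=\frac1\sigma\sqrt{2/(\pi u)}$ from \eqref{FirstDerivG}, the $L_\Lambda$-Lipschitz continuity of $\Lambda$ and the Cauchy--Schwarz bound $\bigl(\int_0^t(t-s)^{-1/2}a(s)\,ds\bigr)^2\le 2\sqrt t\int_0^t(t-s)^{-1/2}a(s)^2\,ds$, I get after taking expectations
\begin{align*}
\E[\|g(t,.)\|_{L^2}^2]\le 4\,\E[\|F^{N,h}_0-F_0\|_{L^2}^2]+4\,\E[\|R^{N,h}(t,.)\|_{L^2}^2]+4\,\E[\|E^{N,h}(t,.)\|_{L^2}^2]+\frac{16L_\Lambda^2\sqrt T}{\pi\sigma^2}\int_0^t\frac{\E[\|g(s,.)\|_{L^2}^2]}{\sqrt{t-s}}\,ds.
\end{align*}
Since $|g|\le 1$ forces $\|F^{N,h}_0-F_0\|_{L^2}^2\le\|F^{N,h}_0-F_0\|_{L^1}$, the first term is bounded by $\frac1N\int_\R|x|\,m(dx)$ for the i.i.d.\ initialization (the empirical c.d.f.\ being unbiased, $\E[\|F^{N,h}_0-F_0\|_{L^2}^2]=\frac1N\int_\R F_0(x)(1-F_0(x))\,dx\le\frac1N\int_\R|x|\,m(dx)$) and by $\frac1{2N}\int_\R|x|\,m(dx)$ for the optimal deterministic one (using $|\tilde F^N_0-F_0|\le\frac1{2N}$ from after \eqref{w1opt} together with \eqref{majow1det}). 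For the second term, Tonelli's theorem, the It\^o isometry, independence of the $W^i$, the identity $\int_\R G_u(y-x)^2\,dx=\|G_u\|_{L^2}^2$ and \eqref{GSquare} give $\E[\|R^{N,h}(t,.)\|_{L^2}^2]=\frac{\sigma^2}N\int_0^t\|G_{t-s}\|_{L^2}^2\,ds=\frac\sigma N\sqrt{t/\pi}$.

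The bulk of the work is the third term, $\E[\|E^{N,h}(t,.)\|_{L^2}^2]\le 6\sum_{p=0}^5\E[\|e^{N,h}_p(t,.)\|_{L^2}^2]$, estimated through the decomposition of Lemma~\ref{ControlE}. The non-singular pieces are direct: for $e^{N,h}_0$, using $\|G_{t-s}\|_{L^2}\le c(t-s)^{-1/4}$, the increment of $\lambda^N$ being at most $2L_\Lambda$, and the integration running over $[0,t\wedge h]$ only, one gets $\|e^{N,h}_0(t,.)\|_{L^2}\le c\int_0^{t\wedge h}(t-s)^{-1/4}\,ds\le c'h^{3/4}$ by concavity of $u\mapsto u^{3/4}$, hence $\|e^{N,h}_0(t,.)\|_{L^2}^2\le c''Th$; for $r\in\{4,5\}$, bounding $|\lambda^N(i)\lambda^N(\cdot)|\le L_\Lambda^2$, $t\wedge\overline{\tau}^h_s-s\le\min(h,t-s)$ and $\|\partial_xG_{t-s}\|_{L^2}\le c(t-s)^{-3/4}$, one gets $\|e^{N,h}_r(t,.)\|_{L^2}\le cL_\Lambda^2\int_0^t\min(h,t-s)(t-s)^{-3/4}\,ds\le c'h$, hence $\|e^{N,h}_r(t,.)\|_{L^2}^2\le c''Th$. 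For the martingale terms $r\in\{2,3\}$, Tonelli's theorem, the It\^o isometry for the independent Brownian motions $\beta,\tilde\beta$ of Lemma~\ref{ControlE}, the identity $\int_\R(\partial_xG_u)^2(y-x)\,dx=\|\partial_xG_u\|_{L^2}^2$, \eqref{GPrimSqu} and the bound $\int_0^t\frac{(t\wedge\overline{\tau}^h_s-s)^2}{(t-s)^{3/2}}\,ds\le\frac83h^{3/2}$ (already obtained in the proof of Lemma~\ref{ControlStochE}) give $\E[\|e^{N,h}_r(t,.)\|_{L^2}^2]\le\frac cN h^{3/2}$.

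The main obstacle is $e^{N,h}_1$, which pairs the reflection measure $\gamma^i_s\,d|K|_s$ — controlled only in mean square, via \eqref{estimGam1} — with the singular kernel $\partial_xG_{t-s}$. Pulling the $L^2$-norm inside by Minkowski's inequality against $\gamma^i_s\,d|K|_s$, using $|\lambda^N(i)-\lambda^N(i-1)|\le L_\lambda/N$ and $\|\partial_xG_{t-s}\|_{L^2}\le c(t-s)^{-3/4}$, it remains to control $\frac1{N^2}\sum_{i=2}^N\int_0^t\frac{t\wedge\overline{\tau}^h_s-s}{(t-s)^{3/4}}\,\gamma^i_s\,d|K|_s$. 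On $[t-h,t]$ — or on all of $[0,t]$ when $t\le h$ — I use $t\wedge\overline{\tau}^h_s-s\le t-s$, so that the kernel is at most $h^{1/4}$ while the tail mass $\int_{t-h}^t\gamma^i_s\,d|K|_s$ is of order $\sqrt h$ by \eqref{estimGam1} (it is essential to keep this tail mass separate from the full mass $\int_0^t\gamma^i_s\,d|K|_s$, otherwise one only reaches order $h^{1/2}$); on $[0,t-h]$ I use $t\wedge\overline{\tau}^h_s-s\le h$ and integrate by parts with $A^i_s:=-\int_s^t\gamma^i_u\,d|K|_u$ (so $A^i_t=0$, $A^i\le 0$ and $dA^i_s=\gamma^i_s\,d|K|_s$), which gives, all terms being nonnegative,
\begin{align*}
\int_0^{t-h}\frac{\gamma^i_s\,d|K|_s}{(t-s)^{3/4}}\le t^{-3/4}\int_0^t\gamma^i_u\,d|K|_u+\frac34\int_0^{t-h}\frac{1}{(t-s)^{7/4}}\Bigl(\int_s^t\gamma^i_u\,d|K|_u\Bigr)\,ds.
\end{align*}
Squaring, using $\bigl(\sum_{i=2}^N c_i\bigr)^2\le N\sum_{i=2}^N c_i^2$, Minkowski's inequality for the $ds$-integral, the estimate $\E\bigl[(\int_s^t\gamma^i_u\,d|K|_u)^2\bigr]\le 9N^2(\sigma^2+L_\Lambda^2T)(t-s)$ of \eqref{estimGam1} and $\int_0^{t-h}(t-s)^{-5/4}\,ds\le 4h^{-1/4}$, the factors $N$ cancel and I expect to reach $\E[\|e^{N,h}_1(t,.)\|_{L^2}^2]\le c\,h^{3/2}\le c\,T^{1/2}h$; the power counting works out because the $h$-prefactor on $[0,t-h]$ and the $h^{1/4}$ kernel bound on $[t-h,t]$ together recover a full power of $h$. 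Only \eqref{estimGam1} is used here, and squaring absorbs the slight loss it entails; this is the decomposition alluded to in the Remark after Lemma~\ref{ControlGam}.

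Collecting the pieces gives $\E[\|g(t,.)\|_{L^2}^2]\le C_\ast(\frac1N+h)+C_\sharp\int_0^t(t-s)^{-1/2}\E[\|g(s,.)\|_{L^2}^2]\,ds$ with $C_\ast,C_\sharp$ depending only on $T,\sigma,\lambda$. Iterating this inequality once and using $\int_r^t\frac{ds}{\sqrt{t-s}\sqrt{s-r}}=\pi$ turns the singular kernel into a constant one; since $\|g(t,.)\|_{L^2}^2\le\|g(t,.)\|_{L^1}=\mathcal{W}_1(\mu^{N,h}_t,\mu_t)$, Lemma~\ref{integrF} shows that $t\mapsto\E[\|g(t,.)\|_{L^2}^2]$ is bounded on $[0,T]$, so Gr\"onwall's lemma applies and yields $\sup_{t\le T}\E[\|g(t,.)\|_{L^2}^2]\le M_b(\frac1N+h)$, which is the claim.
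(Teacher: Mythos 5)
Your proposal is correct and follows the same overall architecture as the paper's proof: square the mild-formulation identity \eqref{eqFh}, estimate the initialization term via $\|F^{N,h}_0-F_0\|_{L^2}^2\le\|F^{N,h}_0-F_0\|_{L^1}$, compute $\E[\|R^{N,h}\|_{L^2}^2]$ by It\^o isometry and \eqref{GSquareEsp}, bound the convolution term via Cauchy--Schwarz to produce the $\int_0^t(t-s)^{-1/2}\E[\|g(s)\|_{L^2}^2]\,ds$ kernel, treat $E^{N,h}$ through the six-term decomposition of Lemma~\ref{ControlE} (the paper's Lemma~\ref{estimQuad}), and close with Gr\"onwall after iterating once. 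The bounds on $e_0$ and $e_2,\dots,e_5$ agree with the paper's up to constants. The one place you depart is the central term $e_1^{N,h}$: the paper applies Cauchy--Schwarz against the random measure $\gamma^i_s\,d|K|_s$ to bring the $L^2_x$-norm inside (producing a factor $\int_0^t\gamma^i_r\,d|K|_r$), and then controls $\E^{1/2}\bigl[\bigl(\int_0^t\frac{(t\wedge\bar\tau^h_s-s)^2}{(t-s)^{3/2}}\gamma^i_s\,d|K|_s\bigr)^2\bigr]$ by a dyadic decomposition of $[0,t]$ into $\lceil\log_2(t/h)\rceil$ blocks and \eqref{estimGam1} on each; you instead pull the $L^2_x$-norm inside by Minkowski's inequality (no cross factor), split $[0,t]$ into $[0,t-h]\cup[t-h,t]$, and on $[0,t-h]$ integrate by parts in time with $A^i_s=-\int_s^t\gamma^i_u\,d|K|_u$ exactly as in the $L^1$ proof (Lemma~\ref{ControlGam}). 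Both routes rely only on \eqref{estimGam1}, avoid \eqref{estimGam2} (this is precisely the point of the remark after Lemma~\ref{ControlGam}), and produce $\E[\|e_1\|_{L^2}^2]=\mathcal O(h)$; your version keeps the $L^2$-estimate structurally parallel to Lemma~\ref{ControlGam} and sidesteps the dyadic bookkeeping, while the paper's dyadic version is somewhat more self-contained within Lemma~\ref{estimQuad}. All the individual steps you sketch — the Cauchy--Schwarz bound on the integration-by-parts term, the $h^{1/4}$ kernel bound on $[t-h,t]$ paired with the $\sqrt h$ tail mass, and the use of $(\sum_i c_i)^2\le N\sum_i c_i^2$ to cancel the $N^{-3}$ against the $N^2$ from \eqref{estimGam1} — check out.
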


\subsubsection{Proof of Theorem \ref{thmBias}}

Taking the expectation of Equation \eqref{eqFh} and using Lemma \ref{controlRN}, we obtain that $dx$ a.e.:
\begin{align*}
  \displaystyle &\E\left[ F^{N,h}(t,x)\right] - F(t,x)  \\
  &=  G_t * \E\left[F^N_0(x) - F_0(x) \right] -  \int_0^t \partial_xG_{t-s}*\E\left[ \left(\Lambda(F^{N,h}(s,.)) - \Lambda(F(s,.))\right)(x) \right]\,ds + \E\left[E^{N,h}(t,x) \right].
    \end{align*}
Besides, using Taylor-Young's inequality, we have that:
\begin{align*}
  \left| \Lambda\left(F^{N,h}(s,.)\right) - \Lambda(F(s,.)) - \lambda(F(s,.)) \left[F^{N,h}(s,.) - F(s,.) \right] \right| \le \frac{L_{\lambda}}{2}\left| \left( F^{N,h}(s,.) - F(s,.) \right)^2 \right|
\end{align*}
which implies:
\begin{align*}
  \left\Vert\E\left[ \Lambda(F^{N,h}(s,.))\right] - \Lambda(F(s,.))\right\Vert_{L^1} \le \left\Vert \lambda(F(s,.)) \right\Vert_{L^{\infty}} & \left\Vert \E\left[F^{N,h}(s,.)\right] - F(s,.)  \right\Vert_{L^1} + \frac{L_{\lambda}}{2}\E\left[\left\Vert F^{N,h}(s,.) - F(s,.) \right\Vert^{2}_{L^2}\right].
\end{align*}
Therefore, using the fact that $G_t$ is a probability density and the estimate \eqref{FirstDerivG} from Lemma \ref{EstimHeatEq}, we obtain:
\begin{align*}
  \displaystyle \left\Vert \E\left[ F^{N,h}(t,.)\right] - F(t,.) \right\Vert_{L^1} \le \left\Vert \E\left[ F^{N}_0 \right] - F_0 \right\Vert_{L^1} + &\sqrt{\frac{2}{\pi \sigma^2}}\int_0^t \frac{1}{\sqrt{t-s}} \Bigg\{ L_{\Lambda} \left\Vert \E\left[ F^{N,h}(s,.)\right]  - F(s,.) \right\Vert_{L^1} \\
  &+ \frac{L_{\lambda}}{2}\E\left[\left\Vert F^{N,h}(s,.) - F(s,.) \right\Vert^{2}_{L^2}\right] \Bigg\}\,ds + \left\Vert\E\left[E^{N,h}(t,x) \right]\right\Vert_{L^1} .
\end{align*} 

Using Lemma \ref{normL1ofE} and Proposition \ref{normL2} then Young's inequality, we deduce that:
\begin{align*}
  \displaystyle &\left\Vert \E\left[ F^{N,h}(t,.)\right] - F(t,.) \right\Vert_{L^1} \\
  &\le \left\Vert \E\left[ F^{N}_0 \right] - F_0 \right\Vert_{L^1} + \sqrt{\frac{2}{\pi \sigma^2}}\int_0^t \frac{1}{\sqrt{t-s}} \Bigg\{L_{\Lambda} \left\Vert \E\left[ F^{N,h}(s,.)\right] - F(s,.) \right\Vert_{L^1} +\frac{L_{\lambda}}{2} M_b \left(\frac 1 N + h \right)\Bigg\}\,ds + Z_b \left(\sqrt{\frac h N} + h\right) \\
  &\le \left\Vert \E\left[ F^{N}_0 \right] - F_0 \right\Vert_{L^1} + \left( \frac{1+\sqrt 2}{2}Z_b + \frac{L_{\lambda}M_b}{\sigma} \sqrt{\frac{2t}{\pi}} \right) \left(\frac{1}{N} + h \right) + \frac{L_{\Lambda}}{\sigma} \sqrt{\frac{2}{\pi}}\int_0^t \frac{1}{\sqrt{t-s}} \ \left\Vert \E\left[ F^{N,h}(s,.) \right] - F(s,.) \right\Vert_{L^1}\,ds. 
\end{align*} 
We iterate this inequality to obtain:

\begin{align*}
\displaystyle \left\Vert \E\left[ F^{N,h}(t,.) - F(t,.) \right]\right\Vert_{L^1} &\le \left( \frac{1+\sqrt2}{2} Z_b + \frac{L_{\lambda}M_b + (1 + \sqrt 2)L_{\Lambda}Z_b}{\sigma}\sqrt{\frac{2t}{\pi}}  + \frac{4 L_{\lambda}L_{\Lambda}M_bt}{\sigma^2} \right)\left(\frac{1}{N} +h \right) \\
&+ \left(1 + \frac{L_{\Lambda}}{\sigma}\sqrt{\frac{8t}{\pi}} \right) \left\Vert \E\left[ F^{N}_0 \right] - F_0 \right\Vert_{L^1}  + \frac{2L_{\Lambda}^2}{\sigma^2} \int_0^t \left\Vert \E\left[ F^{N,h}(r,.) - F(r,.) \right]\right\Vert_{L^1}\,dr.
\end{align*}
By Lemma \ref{integrF}, the application $t \mapsto \left\Vert \E\left[ F^{N,h}(t,.)\right] - F(t,.) \right\Vert_{L^1}$ is locally integrable $\forall h \in [0,T], N \in \N^{*}$. Therefore, we can apply Gr\"onwall's lemma and choosing $$ C_b = \max\left(1 + \frac{L_{\Lambda}}{\sigma}\sqrt{\frac{8T}{\pi}}, \frac{1+\sqrt2}{2} Z_b + \frac{L_{\lambda}M_b + (1 + \sqrt 2)L_{\Lambda}Z_b}{\sigma}\sqrt{\frac{2T}{\pi}}  + \frac{4L_{\lambda}L_{\Lambda}M_bT}{\sigma^2} \right) \exp\left(\frac{2 L_{\Lambda}^2}{\sigma^2}T \right)$$ concludes the proof of the theorem. 

\subsubsection{Proof of Proposition \ref{normL2}}\label{SectBiais}
For all $t,h \in [0,T], N \in \N^{*}$,  we use Jensen's inequality upon Equation \eqref{eqFh} and obtain:
\begin{align}\label{eqFhSqu}
   \displaystyle \E\left[\left\Vert F^{N,h}(t,.) - F(t,.) \right\Vert^{2}_{L^2}\right] \le 4 &\int_{\R}  \E\left[G_t*\left(F^{N,h}_0 - F_0\right)^2(x)\right]\,dx + 4 \int_{\R}\E\left[R^{N,h}(t,x)^2\right]\,dx + 4 \int_{\R}\E\left[E^{N,h}(t,x)^2\right]\,dx \\
   &+ 4\int_{\R}\E\left[\left( \int_0^t \partial_xG_{t-s} * \left(\Lambda(F^{N,h}(s,.)) - \Lambda(F(s,.))\right)(x)\,ds \right)^2 \right]\,dx. \notag
\end{align}

On the one hand, we have using the definition \eqref{defrnh} of $R^{N,h}(t,x)$, It\^o's isometry and the estimate \eqref{GSquareEsp} from Lemma \ref{EstimHeatEq} that: $$\displaystyle \int_{\R}\E\left[R^{N,h}(t,x)^2\right]\,dx = \frac{\sigma^2}{N^2} \sum \limits_{i=1}^N \int_0^t \mathbb{E} \left[ \int_{\mathbb{R}} G^2_{t-s}\left(X^{i,N,h}_s -x\right)dx \right] ds  = \frac{\sigma}{N}\sqrt{\frac{t}{\pi}}.$$
On the other hand, using Minkowski's, Young's and Cauchy-Schwarz's inequalities in addition to the estimate \eqref{FirstDerivG} from Lemma \ref{EstimHeatEq}, we get:
\begin{align*}
\displaystyle &\int_{\R}\E\left[\left(\int_0^t \partial_xG_{t-s}*\left(\Lambda\left(F^{N,h}(s,.)\right) - \Lambda(F(s,.))\right)(x)\,ds \right)^2 \right]\,dx =\E\left[\left\Vert \int_0^t \partial_xG_{t-s}*\left(\Lambda(F^{N,h}(s,.)) - \Lambda(F(s,.))\right)\,ds \right\Vert_{L^2}^2 \right] \\
&\le \E\left[\left( \int_0^t \left\Vert \partial_xG_{t-s} \right\Vert_{L^1}\left\Vert \Lambda(F^{N,h}(s,.)) - \Lambda(F(s,.)) \right\Vert_{L^2}\,ds\right)^2\right] \\
&\le \E\left[ \left(\int_0^t \sqrt{\frac{2 L_{\Lambda}^2}{\pi \sigma^2(t-s)}}\left\Vert F^{N,h}(s,.) - F(s,.) \right\Vert_{L^2}\,ds \right)^2 \right]\\
    &\le \frac{2L_{\Lambda}^2}{\pi \sigma^2}\int_0^t \frac{du}{\sqrt{t-u}} \int_0^t \frac{1}{\sqrt{t-s}} \E\left[\left\Vert F^{N,h}(s,.) - F(s,.) \right\Vert^2_{L^2} \right]\,ds \\
    &= \frac{4 L_{\Lambda}^2 \sqrt{t}}{\pi \sigma^2}\int_0^t \frac{1}{\sqrt{t-s}} \E\left[\left\Vert F^{N,h}(s,.) - F(s,.) \right\Vert^2_{L^2} \right]\,ds.
\end{align*}

Therefore, Inequality \eqref{eqFhSqu} becomes:
\begin{align*}
  \displaystyle \E\left[\left\Vert F^{N,h}(t,.) - F(t,.) \right\Vert^{2}_{L^2}\right] \le &4\int_{\R}\E\left[\left(F^{N,h}_0 - F_0\right)^2(x)\right]\,dx + \frac{4 \sigma}{N} \sqrt{\frac t \pi} + 4 \int_{\R}\E\left[E^{N,h}(t,x)^2\right]\,dx \\
  &+ \frac{16L_{\Lambda}^2 \sqrt{t}}{\pi \sigma^2}\int_0^t \frac{1}{\sqrt{t-s}} \E\left[\left\Vert F^{N,h}(s,.) - F(s,.) \right\Vert^2_{L^2} \right]\,ds . \notag
\end{align*}
As for the intialization term, since $|\tilde F_0^N-F_0|$ is not greater than $1/(2N)$, using \eqref{majow1det} for the second inequality, we obtain that
$$ \left\|\tilde F^N_0-F_0 \right\|_{L^2}^2\le \frac{\left\|\tilde F^N_0-F_0 \right\|_{L^1}}{2N}=\frac{ \mathcal{W}_1\left(\tilde \mu_0^N,m\right)}{2N}\le\frac{1}{2N}\displaystyle \int_{\R}|x|m(dx).$$
On the other hand, since $N\hat F^N_0(x)$ is a binomial random variable with parameter $(N,F_0(x))$, 
\begin{align*}
   N\int_{\R}\E\left[\left(\hat F^{N}_0  - F_0\right)^2(x)\right]\,dx=\int_{\R}F_0(x)(1-F_0(x))dx\le \int_{-\infty}^0F_0(x)dx+\int_0^{+\infty}(1-F_0(x))dx=\int_\R|x|m(dx).
\end{align*}
Therefore we have $\displaystyle \int_{\R}\E\left[\left(F^{N,h}_0 - F_0\right)^2(x)\right]\,dx\le\frac{1}{N}\int_\R|x|m(dx)$ for both the random and the deterministic optimal initializations.
With Lemma \ref{estimQuad} below which provides an estimation of the term $\displaystyle \int_{\R} \E\left[E^{N,h}(t,x)^2 \right]\,dx$, we deduce that:
\begin{align*}
    \displaystyle \E\left[\left\Vert F^{N,h}(t,.) - F(t,.) \right\Vert^{2}_{L^2}\right] &\le \frac{4}{N}\int_\R|x|m(dx) + \frac{4\sigma}{N}\sqrt{\frac t \pi}  + 4Q_bh + \frac{16 L_{\Lambda}^2 \sqrt{t}}{\pi \sigma^2}\int_0^t \frac{1}{\sqrt{t-s}} \E\left[\left\Vert F^{N,h}(s,.) - F(s,.) \right\Vert^2_{L^2} \right]\,ds.
\end{align*}
Iterating the previous inequality, we obtain:
\begin{align*}
\displaystyle &\E\left[\left\Vert F^{N,h}(t,.) - F(t,.) \right\Vert^{2}_{L^2}\right] \\
&\le 4\left(1 + \frac{32L_{\Lambda}^2t}{\pi \sigma^2} \right) \left(\frac{1}{N}\int_\R|x|m(dx) +\frac{\sigma}{N}\sqrt{\frac{t}{\pi}}+ Q_bh \right) + \frac{256 L_{\Lambda}^4 t}{\pi \sigma^4}\int_0^t  \E\left[\left\Vert F^{N,h}(r,.) - F(r,.) \right\Vert^2_{L^2} \right]\,dr.
\end{align*}
By Lemma \ref{integrF} and since $\left|F^{N,h}(t,.) - F(t,.)\right| \le 1$, the function $t \mapsto \E\left[\left\Vert F^{N,h}(t,.) - F(t,.) \right\Vert^{2}_{L^2} \right]$ is locally integrable for all $h \in [0,T],\;N \in \N^{*}$. We use Gr\"onwall's lemma once again and conclude for the choice $$ M_b = 4 \max\left\{Q_b\left(1 + \frac{32L_{\Lambda}^2T}{\pi \sigma^2} \right),\left(1 + \frac{32L_{\Lambda}^2t}{\pi \sigma^2} \right)\left(\int_{\R}|x|m(dx) + \sigma\sqrt{\frac{T}{\pi}} \right) \right\}\exp\left(\frac{256 L_{\Lambda}^4T^2}{\pi \sigma^4} \right).$$
\begin{lem}\label{estimQuad}
Assume that $\lambda$ is Lipschitz continuous. Then
$$\exists Q_b, \forall h \in (0,T], \forall t \in [0,T], \quad \displaystyle \int_{\R} \E\left[E^{N,h}(t,x)^2 \right]\,dx \le Q_b \; h .$$
\end{lem}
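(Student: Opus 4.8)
The strategy is to bound the $L^2$-norm of each of the six pieces $e^{N,h}_p(t,\cdot)$, $p\in\llbracket0,5\rrbracket$, produced by Lemma \ref{ControlE}, using $\int_{\R}\E[E^{N,h}(t,x)^2]\,dx\le 6\sum_{p=0}^5\int_{\R}\E[e^{N,h}_p(t,x)^2]\,dx$ (Jensen). The already‑smoothed structure of these terms — each carries a factor $(t\wedge\overline\tau^h_s-s)$ of size at most $h$, and at most one spatial derivative of $G$ — is exactly what makes the $L^2$‑bound $\mathcal O(h)$ plausible, because the time integral of $(t\wedge\overline\tau^h_s-s)^2$ against a singular kernel like $(t-s)^{-3/2}$ contributes an extra power of $h$ relative to the $\mathcal O(h^{1/2})$ bounds we proved for the $L^1$‑norm of $e^{N,h}_1$.

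First I would dispatch the ``easy'' deterministic and stochastic pieces. For $e^{N,h}_0$, since $G_{t-s}$ is a probability density, $\|e^{N,h}_0(t,\cdot)\|_{L^1}\le 2L_\Lambda h$ by Lemma \ref{ControlE0}, and since $\|f\|_{L^2}^2\le\|f\|_{L^\infty}\|f\|_{L^1}$ with $\|e^{N,h}_0(t,\cdot)\|_{L^\infty}\le 2L_\Lambda(t\wedge h)\|G_{t-s}\|_{L^\infty}$ ... but $\|G_{t-s}\|_{L^\infty}$ blows up as $s\to t$, so this crude bound is not enough; instead I would write $\|e^{N,h}_0(t,\cdot)\|_{L^2}\le\frac1N\sum_i\int_0^{t\wedge h}\|G_{t-s}\|_{L^2}|\lambda^N(i)-\lambda^N(\eta_0^{-1}(\eta_0(i)))|\,ds$ by Minkowski, use $\|G_{t-s}\|_{L^2}=(2\sigma\sqrt{\pi(t-s)})^{-1/2}$ and $|\lambda^N(i)-\lambda^N(\cdot)|\le 2L_\Lambda$, so $\|e^{N,h}_0(t,\cdot)\|_{L^2}\le C\int_0^{t\wedge h}(t-s)^{-1/4}\,ds\le C(t\wedge h)^{3/4}$, hence $\|e^{N,h}_0\|_{L^2}^2\le Ch^{3/2}\le CTh/2$. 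For $e^{N,h}_4,e^{N,h}_5$ the same Minkowski argument with $\|\partial_xG_{t-s}\|_{L^2}\le C(t-s)^{-3/4}$ and the bound $(t\wedge\overline\tau^h_s-s)\le h$ gives $\|e^{N,h}_r(t,\cdot)\|_{L^2}\le C\,h\int_0^t(t-s)^{-3/4}\,ds\le C\,h\,T^{1/4}$, so their $L^2$‑norm squared is $\mathcal O(h^2)$. For the stochastic pieces $e^{N,h}_2,e^{N,h}_3$ I would use the $L^2$ version already implicit in the proof of Lemma \ref{ControlStochE}: by It\^o's isometry and $|\lambda^N(i)|\le L_\Lambda$,
\begin{align*}
\int_{\R}\E\left[e^{N,h}_3(t,x)^2\right]\,dx&=\frac{\sigma^2}{N^2}\sum_{i=1}^N\int_{\R}\E\left[\int_0^t(t\wedge\overline\tau^h_s-s)^2(\lambda^N(i))^2\left(\partial_xG_{t-s}\right)^2\!\left(X^{\eta_{\tau^h_s}(i),N,h}_s-x\right)\,ds\right]\,dx\\
&\le\frac{\sigma^2L_\Lambda^2}{N}\int_0^t(t\wedge\overline\tau^h_s-s)^2\,\frac{1}{4\sigma^3\sqrt\pi(t-s)^{3/2}}\,ds\le\frac{C}{N}\,h^{3/2},
\end{align*}
using estimate \eqref{GPrimSqu} from Lemma \ref{EstimHeatEq} to compute $\int_{\R}(\partial_xG_{t-s})^2(y)\,dy$ and the bound $\int_0^t(t\wedge\overline\tau^h_s-s)^2(t-s)^{-3/2}\,ds\le\frac83h^{3/2}$ already established in the proof of Lemma \ref{ControlStochE}; since $N^{-1}h^{3/2}\le T^{1/2}h$ this is again $\mathcal O(h)$.

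The main obstacle is $e^{N,h}_1$, the local‑time term, because it carries $\partial_xG_{t-s}$ (whose $L^2$-norm is singular of order $(t-s)^{-3/4}$) against the finite‑variation measure $d|K|_s$, which only has the crude bound from estimate \eqref{estimGam1} in Lemma \ref{estimGam} controlling $\E[(\int_s^t\gamma^i_u\,d|K|_u)^2]\le 9N^2(\sigma^2+L_\Lambda^2T)(t-s)$; a naive bound loses a $\ln h$ factor, as flagged in the Remark after Lemma \ref{ControlGam}. Here I would proceed as follows: write, for $x$ fixed, $e^{N,h}_1(t,x)=\frac1N\sum_{i=2}^N(\lambda^N(i)-\lambda^N(i-1))\int_0^t(t\wedge\overline\tau^h_s-s)\partial_xG_{t-s}(Y^{i,N,h}_s-x)\gamma^i_s\,d|K|_s$, use $|\lambda^N(i)-\lambda^N(i-1)|\le L_\lambda/N$, apply Cauchy–Schwarz in $i$ to get $e^{N,h}_1(t,x)^2\le\frac{L_\lambda^2}{N^3}\sum_{i=2}^N\big(\int_0^t(t\wedge\overline\tau^h_s-s)\partial_xG_{t-s}(Y^{i,N,h}_s-x)\gamma^i_s\,d|K|_s\big)^2$. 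For the inner square I would Cauchy–Schwarz against $d|K|_s$ in the form $\big(\int_0^t f_s\,\gamma^i_s\,d|K|_s\big)^2\le\big(\int_0^t|f_s|\gamma^i_s\,d|K|_s\big)\cdot\big(\int_0^t|f_s|\gamma^i_s\,d|K|_s\big)$ — better, split into the bulk $[0,t-h]$ and the boundary layer $[t-h,t]$, using the bound $(t\wedge\overline\tau^h_s-s)^2\le h^2\wedge(t-s)^2$ so that $(t\wedge\overline\tau^h_s-s)^2|\partial_xG_{t-s}(y)|^2\le Ch^2(t-s)^{-3}y^2G_{(t-s)/2}(y)^2/\sigma^{10}$ on the bulk (absorbing one $(t-s)^2$) and $\le C(t-s)^2(t-s)^{-3}(\cdots)$ on the layer. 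Integrating $x$ first (so the $\int_\R(\partial_xG_{t-s})^2(Y^{i,N,h}_s-x)\,dx=\frac{1}{4\sigma^3\sqrt\pi(t-s)^{3/2}}$ by \eqref{GPrimSqu}) and then taking expectations, the $\gamma^i_s\,d|K|_s$ factors are controlled by Tonelli together with \eqref{estimGam1}: one obtains a bound of the form $\frac{L_\lambda^2}{N}\int_0^t\frac{(t\wedge\overline\tau^h_s-s)^2}{(t-s)^{3/2}}\cdot N\,(\text{const})\,ds$. Wait — more carefully, after $\int_\R\E[\cdot]\,dx$ the remaining object is $\frac{L_\lambda^2}{N^3}\sum_{i=2}^N\E[\int_0^t\frac{C(t\wedge\overline\tau^h_s-s)^2}{\sigma^3(t-s)^{3/2}}\gamma^i_s\,d|K|_s\cdot(\text{total }|K|\text{-mass bound})]$, which by Cauchy–Schwarz and \eqref{estimGam1} (applied to $\int_0^t\gamma^i_s\,d|K|_s$ over $[0,t]$, giving $\le 3N\sqrt{(\sigma^2+L_\Lambda^2T)T}$ in $L^2$) yields $\le\frac{CL_\lambda^2}{N}\cdot N\cdot\int_0^t\frac{(t\wedge\overline\tau^h_s-s)^2}{(t-s)^{3/2}}\,ds\le C L_\lambda^2\cdot\frac83h^{3/2}$, hence $\mathcal O(h^{3/2})=\mathcal O(h)$ on $[0,T]$. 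Collecting the six bounds and setting $Q_b$ equal to six times the largest constant, we get $\int_\R\E[E^{N,h}(t,x)^2]\,dx\le Q_b\,h$, uniformly in $N\in\N^*$ and $t\in[0,T]$, which is the claim.
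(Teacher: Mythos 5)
Your handling of $e^{N,h}_0,\dots,e^{N,h}_5$ for $p\neq 1$ follows the paper's route (Minkowski/It\^o isometry plus the kernel estimates and the bound $\int_0^t\frac{(t\wedge\overline\tau^h_s-s)^2}{(t-s)^{3/2}}\,ds\le\frac 83 h^{3/2}$), and that part is correct modulo a few slips in exponents and constants that do not affect the rate.

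The genuine gap is in $e^{N,h}_1$. After applying Cauchy--Schwarz in $i$, then in $s$ against $\gamma^i_s\,d|K|_s$, then integrating $x$ via \eqref{GPrimSqu} and taking $\E[\cdot]$ with one more Cauchy--Schwarz, one is left with exactly
$$\frac{L_\lambda^2}{4\sigma^3\sqrt\pi\,N^3}\sum_{i=2}^N\E^{1/2}\!\left[\left(\int_0^t\frac{(t\wedge\overline\tau^h_s-s)^2}{(t-s)^{3/2}}\,\gamma^i_s\,d|K|_s\right)^2\right]\E^{1/2}\!\left[\left(\int_0^t\gamma^i_r\,d|K|_r\right)^2\right],$$
and the whole lemma hinges on estimating $\E^{1/2}[A^2]$ with $A=\int_0^t\frac{(t\wedge\overline\tau^h_s-s)^2}{(t-s)^{3/2}}\gamma^i_s\,d|K|_s$. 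At this point you assert that ``one obtains a bound of the form $\frac{L_\lambda^2}{N}\int_0^t\frac{(t\wedge\overline\tau^h_s-s)^2}{(t-s)^{3/2}}\cdot N\,(\mathrm{const})\,ds$''. This step is not valid: $\gamma^i_s\,d|K|_s$ is a random measure, not a multiple of Lebesgue measure, so you cannot simply replace it by $N\,ds$; ``Tonelli together with \eqref{estimGam1}'' does not justify the substitution, and the bulk/boundary-layer split you gesture at is not carried through. The conclusion $\int_\R\E[e^{N,h}_1(t,x)^2]\,dx=\mathcal O(h^{3/2})$ is in fact false --- this term is genuinely only $\mathcal O(h)$, which is tight (take $|K|$-mass concentrated near $s\approx t-h$). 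The paper obtains $\E^{1/2}[A^2]\le 15Nh\sqrt{\sigma^2+L_\Lambda^2T}$ via a dyadic decomposition: set $m=\lceil\log_2(t/h)\rceil$, split $[0,t]$ into $\cup_{k=0}^{m-1}[t-t/2^k,t-t/2^{k+1}]\cup[t-t/2^m,t]$, apply Minkowski's inequality in $L^2(\Omega)$ across the pieces, bound $\sup_{I_k}\frac{(t\wedge\overline\tau^h_s-s)^2}{(t-s)^{3/2}}$ on each piece, and use \eqref{estimGam1} on each piece. The geometric sum $\sum_k\frac{h^2}{(t/2^{k+1})^{3/2}}\sqrt{t/2^{k+1}}=\frac{2h^2}{t}(2^m-1)\lesssim h$ is what closes the argument. (An equal-length grid of mesh $h$ with the same Minkowski step would also give $\mathcal O(h)$ via $\sum_{k\ge1}k^{-3/2}<\infty$, but some such partition-and-Minkowski device is indispensable here and is entirely missing from your write-up.)

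A minor point on $e^{N,h}_0$: you are right to abandon the $L^\infty$--$L^1$ interpolation. The paper's and your Minkowski route both give $\mathcal O(h^{3/2})$, which is fine; note also that your displayed formula $(t\wedge\overline\tau^h_s-s)^2|\partial_x G_{t-s}(y)|^2\le Ch^2(t-s)^{-3}y^2G_{(t-s)/2}(y)^2/\sigma^{10}$ has a spurious square on $G_{(t-s)/2}$ and the wrong power of $(t-s)$ (it should be $(t-s)^{-5/2}$ with $G_{(t-s)/2}$ to the first power, cf.\ \eqref{firstDerivSquar}), though this does not propagate since you ultimately invoke \eqref{GPrimSqu}.
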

\begin{proof}
We have that $\displaystyle \int_{\R} \E\left[E^{N,h}(t,x)^2 \right]\,dx \leq 6 \sum \limits_{p=0}^5 \int_{\R} \E\left[e^{N,h}_p(t,x)^2 \right]\,dx $. For this reason, we shall estimate, in what follows, each $\displaystyle \int_{\R} \E\left[e^{N,h}_p(t,x)^2 \right]\,dx, p \in \llbracket0,5\rrbracket$. \\
On the one hand, for $r \in \left\{ 2,3\right\}$, we have using It\^o's isometry and the estimate \eqref{GPrimSqu} from Lemma \ref{EstimHeatEq} that $\forall h \in (0,T]$:
\begin{align*}
  \displaystyle \int_{\R} \E\left[e^{N,h}_r(t,x)^2 \right]\,dx &= \frac{\sigma^2}{N^2} \sum \limits_{i=1}^N \int_0^t (t \wedge \overline{\tau}^h_s -s)^2 \left(\lambda^N(i)\right)^2 \E \left[\int_{\R} \left(\partial_x G_{t-s}\left( Y^{i,N,h}_s -x\right)\right)^2\,dx\right]\,ds  \\
  &\le \frac{L_{\Lambda}^2 }{4 \sigma \sqrt{\pi} N} \int_0^t \frac{(t \wedge \overline{\tau}^h_s -s)^2}{(t-s)^{3/2}} \,ds \le \frac{2L_{\Lambda}^2 }{3 \sigma \sqrt{\pi}} \frac{h^{3/2}}{N},
\end{align*}
where the last inequality has already been derived at the end of the proof of Lemma \ref{ControlStochE}.
The Cauchy-Schwarz inequality then a similar reasoning implies that, for $r \in \{4,5\}$, $\displaystyle \int_{\R} \E\left[e^{N,h}_r(t,x)^2 \right]\,dx \le \frac{2L_{\Lambda}^4T }{3 \sigma^3 \sqrt{\pi}}h^{3/2}$. As for the term $e^{N,h}_0$, we have using the estimate \eqref{GSquare} that $\displaystyle \int_{\R} \E\left[e^{N,h}_0(t,x)^2 \right]\,dx \le \frac{4L_{\Lambda}^2}{\sigma\sqrt{\pi}}h^{3/2}$.\\

On the other hand, using Cauchy-Schwarz's inequality twice, \eqref{GPrimSqu} and the estimation \eqref{estimGam1}, we obtain:

\begin{align*}
    \displaystyle &\int_{\R} \E\left[e^{N,h}_1(t,x)^2 \right]\,dx = \E \left[\int_{\R} \left(\frac{1}{N}\sum \limits_{i=2}^N  \int_{0}^{t}(t \wedge \overline{\tau}^h_s -s)\left(  \lambda^N(i) - \lambda^N(i-1) \right)\partial_x G_{t-s}\left(Y^{i,N,h}_s -x \right)\gamma^{i}_s\,d|K|_s \right)^2\,dx \right] \\
    &\leq \E \left[ \int_{\R}\frac{1}{N}\sum \limits_{i=2}^N\left(\int_{0}^{t}(t \wedge \overline{\tau}^h_s -s)^2\left(\lambda^N(i) - \lambda^N(i-1)\right)^2\left(\partial_x G_{t-s}\right)^2\left(Y^{i,N,h}_s -x \right)\gamma^{i}_s\,d|K|_s\right)\left(\int_0^t \gamma^{i}_r\,d|K|_r\right)\,dx \right] \\
    &\leq \frac{L_{\lambda}^2}{4 \sigma^3 \sqrt\pi }\E \left[\frac{1}{N^3} \sum \limits_{i=2}^N \left(\int_0^t \frac{(t \wedge \overline{\tau}^h_s -s)^2}{(t-s)^{3/2}}\gamma^{i}_s\,d|K|_s\right)\left(\int_0^t \gamma^{i}_r\,d|K|_r\right) \right] \\
    &\leq \frac{L_{\lambda}^2}{4 \sigma^3 \sqrt\pi} \frac{1}{N^3} \sum \limits_{i=2}^N \E^{1/2}\left[\left(\int_0^t \frac{(t \wedge \overline{\tau}^h_s -s)^2}{(t-s)^{3/2}}\gamma^{i}_s\,d|K|_s\right)^2 \right]\E^{1/2}\left[\left(\int_0^t \gamma^{i}_r\,d|K|_r\right)^2 \right]\\
    &\le \frac{3L_{\lambda}^2}{4\sigma^3} \sqrt{\frac{t\left(\sigma^2 + L_{\Lambda}^2T\right)}{\pi}} \frac{1}{N^2} \sum \limits_{i=2}^N \E^{1/2}\left[\left(\int_0^t \frac{(t \wedge \overline{\tau}^h_s -s)^2}{(t-s)^{3/2}}\gamma^{i}_s\,d|K|_s\right)^2 \right].
\end{align*}

We denote by $m=\lceil\log_2\left(t/h \right) \rceil$ and rewrite the integral the following way:
$$\displaystyle \int_{0}^t\frac{(t \wedge \overline{\tau}^h_s -s)^2}{(t-s)^{3/2}}\gamma^{i}_s\,d|K|_s = \sum \limits_{k=0}^{m -1} \int_{t-t/2^{k}}^{t-t/2^{k+1}}\frac{(t \wedge \overline{\tau}^h_s -s)^2}{(t-s)^{3/2}}\gamma^{i}_s\,d|K|_s +  \int_{t-t/2^{m}}^{t}\frac{(t \wedge \overline{\tau}^h_s -s)^2}{(t-s)^{3/2}}\gamma^{i}_s\,d|K|_s.$$
Therefore,
  \begin{align*}
  \displaystyle \E^{1/2}\left[\left(\int_{0}^t \frac{(t \wedge \overline{\tau}^h_s -s)^2}{(t-s)^{3/2}}\gamma^{i}_s\,d|K|_s\right)^2 \right] &\le  \sum \limits_{k=0}^{m -1} \frac{h^2}{\left(\frac{t}{2^{k+1}}\right)^{3/2}} \E^{1/2}\left[\left(\int_{t-t/2^{k}}^{t-t/2^{k+1}} \gamma^{i}_s\,d|K|_s\right)^2 \right] + \sqrt{h} \E^{1/2}\left[\left(\int_{t-t/2^{m}}^{t} \gamma^{i}_s\,d|K|_s\right)^2 \right] \\
  &\le 3N \sqrt{\left(\sigma^2 + L_{\Lambda}^2 T\right)} \left(h^2 \sum \limits_{k=0}^{m -1}\left(\frac{t}{2^{k+1}} \right)^{-1} + h \right) \\
  &= 3N \sqrt{\left(\sigma^2 + L_{\Lambda}^2 T\right)} \left(\frac{2h^2}{t}(2^m -1) + h\right) \\
   &\le 15Nh \sqrt{\left(\sigma^2 + L_{\Lambda}^2 T\right)}. 
  \end{align*}
  We then have $\displaystyle \int_{\R} \E\left[e^{N,h}_1(t,x)^2 \right]\,dx \le \frac{45 L_{\lambda}^2\left(\sigma^2 + L_{\Lambda}^2 T\right)}{4 \sigma^3} \sqrt{\frac T \pi} \; h$. \\
  The conclusion holds for the choice $\displaystyle Q_b =\frac{1}{\sigma }\sqrt{\frac T \pi} \left\{\frac{16L_{\Lambda}^2}{3}\left(1 + \frac{L_{\Lambda}^2 T}{4 \sigma^2} \right) + \frac{45 L_{\lambda}^2}{4}\left(1 + \frac{L_{\Lambda}^2 T}{\sigma^2}\right) \right\}$.
\end{proof}


\subsection{Particle initialization : proof of \eqref{stindet}}\label{proofinitdet}

Since $\displaystyle \int_{\R}y^2m(dy)=2\left(-\int_{-\infty}^0 yF_0(y)\,dy+\int_0^{+\infty}y(1-F_0(y))\,dy\right)$, one has
  \begin{align*}
    \displaystyle \forall x>0,\quad \int_{-\infty}^{-x}F_0(y)\,dy+\int_x^{+\infty}(1-F_0(y))\,dy \le \frac{1}{2x}\int_{\R}y^2m(dy).
  \end{align*}
  Using the monotonicity of $F_0$ for the first inequality then the inequality $F_0(y)+(1-F_0(-y))=m((-\infty,-y]\cup (y,+\infty))\le 1$ valid for $y\ge 0$, we deduce that for all $x>0$,
  \begin{align}
   F_0(-x)+(1-F_0(x))\le \frac{2}{x}\int_{x/2}^x (F_0(-y)+(1-F_0(y))dy&\le \frac{2}{x}\int_{x/2}^{+\infty} (F_0(-y)+(1-F_0(y))dy\label{minointqueue}\\&\le\frac{2}{x^2}\int_{\R}y^2m(dy).\notag
  \end{align}
  Hence $\int_{\R}y^2m(dy)<\infty\Rightarrow \sup_{x\ge 0}x^2(F_0(-x)+(1-F_0(x)))<\infty$.
  Let us now suppose that $\sup_{x\ge 0}x^2(F_0(-x)+(1-F_0(x)))=C<\infty$ and deduce that $\displaystyle \int_{\R}|x|^{2-\varepsilon}m(dx)<\infty$ for all $\varepsilon\in (0,1]$ (and therefore all  $\varepsilon\in (0,2]$) and that $\sup_{N\ge 1}\sqrt{N}{\cal W}_1(\tilde\mu^N_0,m)<\infty$. We have
\begin{equation}
   \forall x>0,\;\int_x^{+\infty}(F_0(-y)+(1-F_0(y)))dy\le \int_x^{+\infty}\frac{C}{y^2}dy=\frac{C}{x}.\label{majointintsurv}
\end{equation}
 For $\varepsilon=1$, we have $$ \displaystyle \int_{\R}|x|m(dx)=\int_0^{+\infty}(F_0(-y)+(1-F_0(y)))\,dy \le 1+\int_1^{+\infty}(F_0(-y)+(1-F_0(y)))\,dy \le 1+C.$$

  Let now $\varepsilon\in (0,1)$. Writing $\frac{x^{2-\varepsilon}}{(2-\varepsilon)(1-\varepsilon)}=\frac{1}{1-\varepsilon}\int_0^x y^{1-\varepsilon}dy=\int_0^x\int_0^yz^{-\varepsilon}dzdy$ and using Fubini's theorem,  we obtain that
  \begin{align*}
  \displaystyle \frac{1}{(2-\varepsilon)(1-\varepsilon)}&\int_0^{+\infty} x^{2-\varepsilon}m(dx)= \int_0^{+\infty}z^{-\varepsilon} \int_z^{+\infty}(1-F_0(y))\,dy\,dz\\
  &\le \int_0^{1}z^{-\varepsilon} dz\int_0^{+\infty}(1-F_0(y))dy+\int_1^{+\infty}z^{-\varepsilon}\int_z^{+\infty}(1-F_0(y))\,dy\,dz.
  \end{align*}

 Combining this inequality with the symmetric one then using the above estimation of $\displaystyle \int_0^{+\infty}(F_0(-y)+(1-F_0(y)))\,dy$ and \eqref{majointintsurv}, we conclude that
  \begin{align*}
   \displaystyle \frac{1}{2-\varepsilon}\int_\R |x|^{2-\varepsilon}m(dx) &\le \int_0^{+\infty}\left(F_0(-y)+(1-F_0(y))\right)\,dy+(1-\varepsilon)C\int_1^{+\infty}z^{-1-\varepsilon}\,dz \le 1+\frac{C}{\varepsilon}.
  \end{align*}
Let $N\ge 1$. Since $1-F_0(x) \le 1/(2N)$ for $x\ge F_0^{-1}(1-1/(2N))$ and $F_0^{-1}(1-1/(2N))\ge F_0^{-1}(1/2)$, we have
\begin{align*}
  \int_{F_0^{-1}\left(\frac{2N-1}{2N}\right)}^{+\infty}(1-F_0(x))\,dx &\le \int_{F_0^{-1}\left(\frac{2N-1}{2N}\right) \wedge \sqrt{N}}^{\sqrt{N}}(1-F_0(x))\,dx + \int_{\sqrt{N}}^{+\infty}(1-F_0(x))\,dx\\&\le\frac{(\sqrt{N}-F_0^{-1}(1/2))^+}{2N}+\int_{\sqrt{N}}^{+\infty}(1-F_0(x))\,dx.\end{align*}
Dealing with $\int^{F_0^{-1}\left(\frac{1}{2N}\right)}_{-\infty}F_0(x)\,dx$ in a symmetric way and using \eqref{majointintsurv},
we deduce that
\begin{align}
   \int^{F_0^{-1}\left(\frac{1}{2N}\right)}_{-\infty}F_0(x)\,dx+\int_{F_0^{-1}\left(\frac{2N-1}{2N}\right)}^{+\infty}(1-F_0(x))\,dx\le \frac{\sqrt{N}-F_0^{-1}(1/2)\vee\left(-\sqrt{N}\right)}{2N}+\frac{C}{\sqrt{N}}.\label{majointqueuesbis}
\end{align}
 Either $F_0^{-1}(1-1/(2N))\le 0$ or $\frac{1}{2N}\le 1-F_0(F_0^{-1}(1-1/(2N))-)\le \frac{C}{(F_0^{-1}(1-1/(2N)))^2}$ so that $F_0^{-1}(1-1/(2N))\le \sqrt{2CN}$. By a symmetric reasoning, $F_0^{-1}(1/(2N))\ge -\sqrt{2CN}$. 
Since $|\tilde F_0^N-F_0|$ is not greater than $1/(2N)$, we deduce that $\int_{F_0^{-1}\left(\frac{1}{2N}\right)}^{F_0^{-1}\left(\frac{2N-1}{2N}\right)} \left|\tilde F_0^N(x)-F_0(x)\right|\,dx\le \sqrt{\frac{2C}{N}}$. With \eqref{w1fdr} and \eqref{majointqueuesbis}, we conclude that
\begin{align}
  \displaystyle {\cal W}_1(\tilde\mu^N_0,m) \le \sqrt{\frac{2C}{N}}+\frac{\sqrt{N}+|F_0^{-1}(1/2)|\vee \sqrt{N}}{2N}+\frac{C}{\sqrt{N}}.
\end{align}
Let us now suppose that $\sup_{N\ge 1}{\cal W}_1(\tilde\mu^N_0,m)<\infty$. Then $\tilde C:=\sup_{N\ge 1}\sqrt{N}\int^{F_0^{-1}\left(\frac{1}{2N}\right)}_{-\infty}F_0(x)\,dx<\infty$ and for $N\ge 1$, since $F_0(x)\ge\frac{1}{4N}$ when $x\ge F_0^{-1}\left(\frac{1}{4N}\right)$,
\begin{align*}
   \frac{F_0^{-1}\left(\frac{1}{2N}\right)-F_0^{-1}\left(\frac{1}{4N}\right)}{4N}\le \int^{F_0^{-1}\left(\frac{1}{2N}\right)}_{F_0^{-1}\left(\frac{1}{4N}\right)}F_0(x)\,dx\le \frac{\tilde C}{\sqrt{N}}.
\end{align*}
For $k\in\N^*$, we deduce that $F_0^{-1}\left(2^{-(k+1)}\right)-F_0^{-1}\left(2^{-k}\right)\ge -\tilde C 2^{\frac{k+3}{2}}$, and after summation that
$$F_0^{-1}\left(2^{-k}\right)\ge F_0^{-1}\left(1/2\right)-\frac{4\tilde C}{\sqrt{2}-1}(2^{\frac{k-1}{2}}-1).$$
With the monotonicity of $F_0^{-1}$, we deduce that
$$\forall u\in(0,1/2],\;F_0^{-1}(u)\ge F_0^{-1}(1/2)-\frac{4\tilde C}{\sqrt{2}-1}(u^{-1/2}-1),$$
an therefore that $\inf_{u\in(0,1/2]}(\sqrt{u}F_0^{-1}(u))>-\infty$. With the inequality $F_0^{-1}(F_0(x))\le x$ valid for $x\in\R$, this implies that
$\inf_{x\in\R:F_0(x)\le 1/2}(x\sqrt{F_0(x)})>-\infty$ and therefore that $\sup_{x\ge 0}x^2F_0(-x)<\infty$. With a symmetric reasoning, we conclude that $\sup_{x\ge 0}x^2(1-F_0(x))<\infty$.

\begin{remark}
  In fact, we have proved that
 \begin{align*}
   \sup_{x\ge 0}x^2\left(F_0(-x)+1-F_0(x)\right)<\infty&\Leftrightarrow\sup_{x\ge 0}x\int_x^{+\infty}\left(F_0(-y)+1-F_0(y)\right)dy<\infty\\&\Leftrightarrow\sup_{N\ge 1}\sqrt{N}{\cal W}_1(\tilde\mu^N_0,m)<\infty\Leftrightarrow\sup_{u\in(0,1/2]}\sqrt{u}\left(F_0^{-1}(1-u)-F_0^{-1}(u)\right)<\infty.
 \end{align*}
 Let us assume that if $\displaystyle \limsup_{x\to\infty}x\int_x^{+\infty}(F_0(-y)+1-F_0(y))dy>0$, which by \eqref{minointqueue} is implied by $\displaystyle \limsup_{x\to\infty}x^2(F_0(-x)+1-F_0(x))>0$ and, by monotonicity of the integral implies that $\limsup_{N\to\infty}x_N\int_{x_N}^{+\infty}(F_0(-y)+1-F_0(y))dy>0$ along any sequence $(x_N)_{N\in\N}$ of positive numbers increasing to $+\infty$ and such that $\limsup_{N\to\infty}\frac{x_{N+1}}{x_N}<\infty$. Then, under the equivalent conditions,
 $$\hat{C}:=\sup_{N\ge 1}\frac{F_0^{-1}(1-\frac{1}{2N})}{\sqrt{N}}\vee\sup_{N\ge 1}\frac{-F_0^{-1}(\frac{1}{2N})}{\sqrt{N}}\in(0,+\infty)$$ and
 $$\sqrt{N}{\cal W}_1(\tilde\mu^N_0,m)\ge \sqrt{N}\left(\int_{-\infty}^{F_0^{-1}(\frac{1}{2N})}F_0(y)dy+\int^{+\infty}_{F_0^{-1}(1-\frac{1}{2N})}(1-F_0(y))dy\right)\ge \sqrt{N}\int_{\hat{C}\sqrt{N}}^{+\infty}(F_0(-y)+1-F_0(y))dy$$
 so that $\limsup_{N\to\infty}\sqrt{N}{\cal W}_1(\tilde\mu^N_0,m)>0$.

\end{remark}

\section{Numerical experiments for the Burgers equation}

In order to confirm our theoretical estimates for the strong and the weak $L^1$-error between $F^{N,h}$ and its limit $F$, we consider, for the choice $\Lambda(u) = -(1-u)^2/2$ and the initial condition $F(0,x) = \mathbf{1}_{\{x \ge 0 \}}$, the following equation:
$$\left\{
  \begin{aligned}
    &\partial_tF(t,x) + \partial_x F(t,x) \Big(1 - F(t,x)\Big) = \frac{\sigma^2}{2}\partial_{xx}F(t,x),\\
    &F(0,x) = \mathbf{1}_{\{x \ge 0 \}}.
  \end{aligned}
\right.$$
We can notice that the function $\left(1-F(t,.)\right)$ is solution of the Burgers equation that was also used in \cite{BossTal}. The Cole-Hopf transformation yields the following closed-form expression of $F$:
\begin{align*}
\displaystyle F(t,x) = 1 - \frac{\mathcal{N}\left(\frac{t-x}{\sigma \sqrt t}\right)}{\mathcal{N}\left(\frac{t-x}{\sigma \sqrt t}\right) + \exp\left(\frac{2x-t}{2\sigma^2}\right)\mathcal{N}\left(\frac{x}{\sigma \sqrt t}\right)},
\end{align*}
where $\displaystyle \mathcal{N}\left(x\right) = \int_{-\infty}^x \frac{\exp(y^2/2)}{\sqrt{2\pi}}\,dy$.

The drift coefficient of the $i^{th}$ particle in the increasing order is then equal to $\lambda^{N}(i) = 1 -\frac{2i -1}{2N} $ and the Euler discretization with step $h \in (0,T]$ of the particle system is: 
\begin{align*}
    \displaystyle dX^{i,N,h}_t &=  \sigma dW^{i}_t + \left(1 + \frac{1}{2N} - \frac{1}{N} \sum \limits_{j=1}^{N} \mathbf{1}_{ \left\{ X^{j,N,h}_{\tau^{h}_{t}} \leq X^{i,N,h}_{\tau^{h}_{t}} \right\}} \right)\,dt,\;1\le i\le N,\;t\in[0,T].
\end{align*}
As $F_0$ is the cumulative function of the Dirac mass centered at zero, we place the $N$ particles at zero for their initialisation.\\

We seek to observe the dependence of the strong $L^1$-error $\E\left[ \mathcal{W}_1\left(\mu^{N,0}_T,\mu_T \right)\right]$ and the weak $L^1$-error $\mathcal{W}_1 \left(\E\left[\mu^{N,h}_T\right],\mu_T\right)$ at time $T$ on the number $N$ of particles and on the time step $h$. We recall \eqref{Wasserstein} and \eqref{w1fdr} where the Wasserstein distance between a probability measure $\nu$ and $\mu_T$ can be expressed either using the quantile functions or the cumulative distribution functions:
\begin{align*}
\mathcal{W}_1\left(\nu,\mu_T \right)&= \int_0^1 \left|F^{-1}_{\nu}(u) - F^{-1}_T(u) \right|\,du \\
&= \int_{\R}\left|F_{\nu}(x) - F(T,x)\right|\,dx.
\end{align*}
We choose to use the second expression because we have an explicit formula for $F(T,.)$ unlike the inverse $F^{-1}_T(.)$ (which can still be numerically estimated but this is costly and induces additional numerical error). \\

When $\nu$ is an empirical measure of the form $\frac{1}{N}\sum\limits_{i=1}^N \delta_{x^{i}}$, we choose to approximate the $\mathcal{W}_1$ distance not using a grid but in the following way. For $(y^{i})_{1 \le i \le N}$ denoting the increasing reordering of $(x^{i})_{1 \le i \le N}$, we have: $\mathcal{W}_1\left( \frac{1}{N}\sum\limits_{i=1}^N \delta_{x^{i}}, \mu_T \right)\simeq\Psi \left( y^1,y^2,\dots, y^N \right) $ where $$\displaystyle \Psi \left( y^1,y^2,\dots, y^N \right) = \sum \limits_{i=1}^{N-1}\frac{1}{2}\left( y^{i+1}-y^{i}\right)\left(\left|F(T,y^{i+1}) - \frac{i}{N}\right| + \left|F(T,y^{i}) - \frac{i}{N} \right|\right).$$

Therefore, for the strong $L^1$-error, $\left(Y^{i,N,h}_{t,r}\right)_{i \in \llbracket1,N\rrbracket}$ being the increasing reordering of the particles positions $\left(X^{i,N,h}_{t,r}\right)_{i\in \llbracket1,N\rrbracket}$, $t>0$ in the $r^{th}$ out of $R$ independent Monte-Carlo runs, we obtain the following approximation:
\begin{align*}
\E\left[ \mathcal{W}_1\left(\mu^{N,h}_T,\mu_T\right)\right] &\simeq \frac{1}{R} \sum \limits_{r =1}^R \Psi\left(Y^{1,N,h}_{T,r}, \dots, Y^{N,N,h}_{T,r}\right).
\end{align*}
We also define the precision of this estimation as half the width of the $95 \%$ confidence interval of the empirical error i.e. $\text{Precision} = 1.96 \times \sqrt{\text{Variance}/R}$ where Variance denotes the empirical variance over the runs of the empirical error over the particles. \\

Concerning the weak $L^1$-error, we approximate $\E\left[ \mu^{N,h}_T\right]$ by $\frac{1}{R \times N} \sum\limits_{r=1}^{R} \sum\limits_{i=1}^{N} \delta_{X^{i,N,h}_{T,r}}$. But as $R \times N$ will be as big as $10^8$ in our simulations, rather than using the previous grid free approximation, we use the grid $\left(F^{-1}_T\left(\frac{k}{K} \right)\right)_{1 \le k \le K-1}$ ($K$ will be chosen equal to $5000$) to compute the $\mathcal{W}_1$ distance. For $k \in \llbracket 0,K-1\rrbracket$ and $x \in \left[F^{-1}_T\left(\frac{k}{K} \right), F^{-1}_T\left(\frac{k+1}{K} \right) \right]$, we make the following approximation $F(T,x) \simeq \frac{2k+1}{2K}$. We also define the function $\varphi$ as:
\begin{align*}
\varphi \left(u_0, u_1, \dots, u_{K-1} \right) =  &\sum \limits_{k=1}^{K-2} \left|u_k - \frac{2k+1}{2K}\right| \left(F^{-1}_T\left(\frac{k+1}{K} \right)- F^{-1}_T\left(\frac{k}{K} \right) \right) + 2\left|u_0-\frac{1}{2K} \right|\left(F^{-1}_T\left(\frac{1}{K} \right)- F^{-1}_T\left(\frac{1}{2K} \right) \right) \\
&+ 2\left|u_{K-1} -\left(1 - \frac{1}{2K}\right) \right|\left(F^{-1}_T\left(1 -\frac{1}{2K} \right)- F^{-1}_T\left(1 - \frac{1}{K} \right) \right).
\end{align*}
Therefore, we can approach the weak $L^1$-error by $\mathcal{W}_1 \left(\E\left[\mu^{N,h}_T\right],\mu_T\right) \simeq \varphi \left(\left(\frac{1}{R}\sum\limits_{r=1}^R F^{N,h}_r\left(T,F^{-1}_T\left(\frac{2k+1}{2K} \right)\right) \right)_{0 \le k \le K-1}\right)$.
We divide the $R$ runs into $B$ batches of $M = R/B$ independent simulations in order to estimate the associated precision. Indeed, we estimate the empirical variance over the batches while estimating the weak error for each independent simulation over the batches. And by the delta method, we may expect, denoting $E=\E\left[ \left(F^{N,h}\left(T,F^{-1}_T\left(\frac{2k+1}{2K} \right) \right)\right)_{0 \le k \le K-1}\right]$, that

\begin{align*}
&\sqrt{\rho}\left[ \varphi \left(\left(\frac{1}{\rho}\sum\limits_{r=1}^\rho F^{N,h}_r\left(T,F^{-1}_T\left(\frac{2k+1}{2K} \right)\right) \right)_{0 \le k \le K-1}\right)- \varphi\left(E\right) \right] \\
  &\phantom{\sqrt{\rho}\left[ \varphi \left(\left(\frac{1}{\rho}\sum\right.\right.\right.}
    \overset{\mathcal L}{\longrightarrow} \mathcal{N}\Bigg(0,\nabla\varphi^{T}\left(E\right)\text{Cov}\left[ \left(F^{N,h}\left(T,F^{-1}_T\left(\frac{2k+1}{2K} \right) \right)\right)_{0 \le k \le K-1}\right]\nabla\varphi\left(E\right) \Bigg)
\end{align*}
when $\rho \to +\infty$. Applying this result with $\rho = M$ and $\rho = R$, one expects that $1/B$ times the empirical variance of $\left(\varphi \left(\left(\frac{1}{M}\sum\limits_{r=(b-1)M+1}^{bM} F^{N,h}_r\left(T,F^{-1}_T\left(\frac{2k+1}{2K} \right)\right) \right)_{0 \le k \le K-1}\right) \right)_{1 \le b \le B}$ over the batches provides an estimator of the variance of $\varphi \left(\left(\frac{1}{R}\sum\limits_{r=1}^R F^{N,h}_r\left(T,F^{-1}_T\left(\frac{2k+1}{2K} \right)\right) \right)_{0 \le k \le K-1}\right)$. So the precision is computed as $1.96$ times the square root of this estimator. \\

For both of the errors, we fix the time horizon $T=1$ and the diffusion coefficient $\sigma^2 = 0.2$. 

\subsection{Strong $L^1$-error behaviour}
We present numerical estimates of $\E\left[ \mathcal{W}_1\left(\mu^{N,h}_T,\mu_T\right)\right]$, computed as described above.  \\

$\blacktriangleright$ \textbf{Dependence on $N$}:\\

 We fix the time-step $h=0.002$ small enough in order to observe the effect of the number $N$ of particles on the error. The simulation is done with $R=100$ Monte-Carlo runs. We obtain the following results for the estimation of the error and the associated precision:\\

\begin{center}
	\begin{tabular}{ |c|c|c|c| }
		\hline
		\multicolumn{4}{|c|}{Evolution of the strong $L^1$-error w.r.t. $N$} \\ \hline
		Number of particles $N$ & Estimation  & Precision & Ratio of decrease \\ \hline
    $250$   & $0.03312361$ & $0.00290442$ & $\times$  \\ \hline
    $1000$  & $0.01598253$ & $0.00133181$ & $2.07$ \\ \hline
    $4000$  & $0.00841976$ & $0.00077491$ & $1.90$ \\ \hline
    $16000$ & $0.00358799$ & $0.00028319$ & $2.35$  \\ \hline
    $64000$ & $0.00193416$ & $0.00016111$ & $1.86$ \\ 
		\hline
	\end{tabular}
\end{center}

We observe that the ratio of successive estimations $\frac{\text{Estimation}(N/4)}{\text{Estimation}(N)}$ is around $2$ when we multiply $N$ by $4$, which means that the strong $L^1$-error is roughly proportional to $N^{-1/2}$.

\begin{remark}
This strong error was computed for the choice $\sigma^2 = 0.2$. When choosing the larger variance $\sigma^2 = 20$ with the same time-step $h = 0.002$, the error is approximately multiplied by $10$ and the $\mathcal{O}\left(N^{-1}\right)$ behaviour is still observed. For the smaller variance $\sigma^2 = 0.002$, we need to choose the smaller time-step $h = 0.001$ in order to recover the $\mathcal{O}\left(N^{-1}\right)$ behaviour in the number of particles since the estimated error is very small. 
 \end{remark}

$\blacktriangleright$ \textbf{Dependence on $h$}:\\

We apply the same strategy to study the dependence of the error on $h$ by choosing a large number $N = 500000$ of particles. The following table presents numerical estimates of the $L^1$-norm of the error and its associated precision for $R=100$ runs.\\

\begin{center}
  \begin{tabular}{ |c|c|c|c|}
    \hline
    \multicolumn{4}{|c|}{Evolution of the strong $L^1$-error w.r.t. $h$} \\ \hline
    Time-step $h$ & Estimation  & Precision & Ratio of decrease \\ \hline
    $1/2$   & $0.07963922$ & $5.8279 \times 10^{-5}$ & $\times$ \\ \hline
    $1/4$   & $0.03550774$ & $5.9151 \times 10^{-5}$ & $2.24$   \\ \hline
    $1/8$   & $0.01682159$ & $5.9598 \times 10^{-5}$ & $2.11$   \\ \hline
    $1/16$  & $0.00817936$ & $6.5275 \times 10^{-5}$ & $2.06$   \\ \hline
    $1/32$  & $0.00409004$ & $5.6482 \times 10^{-5}$ & $2.00$   \\ \hline
    $1/64$  & $0.00206127$ & $5.3539 \times 10^{-5}$ & $1.98$   \\ \hline
    $1/128$ & $0.00116977$ & $4.7468 \times 10^{-5}$ & $1.76$   \\ \hline
    $1/256$ & $0.00084118$ & $5.9673 \times 10^{-5}$ & $1.39$   \\ 
  \hline
  \end{tabular}
\end{center}
          
 We observe that when the time step $h$ is divided by $2$, the ratio of decrease $\frac{\text{Estimation}(h)}{\text{Estimation}(h/2)}$ is approximately equal to $2$.

 \begin{remark}
This strong error was computed for the choice $\sigma^2 = 0.2$. When choosing the larger variance $\sigma^2 = 20$ with the same number of particles $N=500000$, the error is of the same order for the $3$ first time-steps but deteriorates afterwards and becomes larger compared to errors obtained for $\sigma^2 = 0.2$. The ratio also deteriorates quickly and tends to be constant. In order to only observe the effect of the time-step on the weak error, we need to increase the number of particles when $\sigma$ is large. For the smaller variance $\sigma^2 = 0.002$, the error is smaller than the ones obtained for greater $\sigma^2$. The $\mathcal{O}\left(h\right)$-behaviour is recovered when the time-steps are small.
 \end{remark}

\subsection{Weak $L^1$-error behaviour}

We present numerical estimates of $\mathcal{W}_1 \left(\E\left[\mu^{N,h}_T\right],\mu_T\right)$, computed as described above.\\

$\blacktriangleright$ \textbf{Dependence on $N$}:\\

We fix the time-step $h=0.002$ small enough once again to observe the effect of the number $N$ of particles on the weak error. The estimation is done with $B=100$ batches of $M = 200$ independent simulations for a total of $R = 20000$ Monte-Carlo runs and $K=5000$. The results are shown in the following table:

\begin{center}
  \begin{tabular}{ |c|c|c|c| }
    \hline
    \multicolumn{4}{|c|}{Evolution of the weak $L^1$-error w.r.t. $N$} \\ \hline
    Number of particles $N$ & Estimation  & Precision & Ratio of decrease \\ \hline
    $100$  & $0.01018160$ & $5.6947 \times 10^{-4}$ & $\times$ \\ \hline
    $200$  & $0.00483151$ & $3.8455 \times 10^{-4}$ & $2.11$ \\ \hline
    $400$  & $0.00248807$ & $2.0485 \times 10^{-4}$ & $1.94$ \\ \hline
    $800$  & $0.00136491$ & $1.4707 \times 10^{-4}$ & $1.82$ \\ \hline
    $1600$ & $0.00077723$ & $1.0822 \times 10^{-4}$ & $1.76$ \\ \hline
    $3200$ & $0.00038285$ & $4.9747 \times 10^{-5}$ & $2.03$ \\ 
    \hline
  \end{tabular}
\end{center}

We observe that multiplying the number of particles by $2$ implies a division of the error estimation by approximately $2$ which proves that the weak $L^1$-error is roughly proportional to $N^{-1}$.

\begin{remark}
This weak error was computed for the choice $\sigma^2 = 0.2$. When choosing the larger variance $\sigma^2 = 20$ with the time-step $h = 0.002$, the $\mathcal{O}\left(N^{-1}\right)$ behaviour is still observed. When choosing the smaller variance $\sigma^2 = 0.05$ and the smaller time-step $h = 0.001$, the $\mathcal{O}\left(N^{-1}\right)$ behaviour is recovered. The weak error estimates only vary very slightly when varying $\sigma$. However, for smaller values of $\sigma$, the computation of the quantile function becomes difficult. 
 \end{remark}

$\blacktriangleright$ \textbf{Dependence on $h$}:\\

Once again, we do the same to study the dependence of the weak error on $h$ by choosing a large number $N=100000$ of particles, $B=20$ batchs of $M=50$ independent simulations for a total of $R = 1000$ Monte-Carlo runs and $K=5000$. \\

\begin{center}
  \begin{tabular}{ |c|c|c|c|}
    \hline
    \multicolumn{4}{|c|}{Evolution of the weak $L^1$-error w.r.t. $h$} \\ \hline
    Time-step $h$ & Estimation & Precision & Ratio of decrease  \\ \hline
    $1/2$   & $0.07954397$ & $4.7356 \times 10^{-5}$ & $\times$   \\ \hline
    $1/4$   & $0.03546112$ & $4.7932 \times 10^{-5}$ & $2.24$ \\ \hline
    $1/8$   & $0.01681185$ & $4.0437 \times 10^{-5}$ & $2.11$ \\ \hline
    $1/16$  & $0.00816986$ & $4.1616 \times 10^{-5}$ & $2.06$ \\ \hline
    $1/32$  & $0.00407191$ & $3.9306 \times 10^{-6}$ & $2.01$ \\ \hline
    $1/64$  & $0.00199744$ & $3.0719 \times 10^{-5}$ & $2.04$ \\ \hline
    $1/128$ & $0.00096767$ & $5.6043 \times 10^{-5}$ & $2.06$ \\ \hline
    $1/256$ & $0.00048294$ & $3.6172 \times 10^{-5}$ & $2.00$ \\ 
  \hline
  \end{tabular}
\end{center}

We observe that dividing the time step $h$ by $2$ implies a ratio of decrease $\frac{\text{Estimation}(h)}{\text{Estimation}(h/2)}$ greater or equal to $2$ which shows an $L^1$-weak error roughly proportional to $h$.

\begin{remark}
This weak error was computed for the choice $\sigma^2 = 0.2$. When choosing the smaller variance $\sigma^2 = 0.05$ with the same number of particles $N=100000$, the $\mathcal{O}\left(h\right)$ behaviour is recovered. When choosing the larger variance $\sigma^2 = 20$, the $\mathcal{O}\left(h\right)$ behaviour is observed for the first time-steps until $1/32$. However, for smaller time-steps, the error becomes larger, compared to the values obtained for smaller $\sigma$, and the ratio tends to be constant. In order to only observe the effect of the time-step on the weak error, we need to increase the number of particles when $\sigma$ is large.
 \end{remark}

\begin{appendix}
\section{Appendix}
The first lemma gives a condition under which we can interchange a Lebesgue and a stochastic integral. It is called the stochastic Fubini theorem and is a consequence of Theorem 2.2 proved by Veraar in \cite{VERAA}. 
\begin{lem}\label{StochFub}
 Let $V:[0,T] \times \R \times \Omega \to \R$ be a progressively measurable function. If $\displaystyle \int_{\R}\left( \int_0^T |V(t,x)|^2dt \right)^{1/2}dx < \infty$ almost surely then one has:
   $$ \forall t \in [0,T], \quad \text{a.s.}, \quad \displaystyle \int_{\R} \left(\int_0^t V(s,x)\,dW_s \right)\,dx  =  \int_0^t \left(\int_{\R} V(s,x)\,dx \right)\,dW_s. $$
\end{lem}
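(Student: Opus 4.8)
The statement is a specialization of Veraar's stochastic Fubini theorem \cite{VERAA} to the scalar Hilbert-space setting, and the plan is to deduce it from Theorem 2.2 of \cite{VERAA} in a few short steps. First I would fix $t\in[0,T]$ and reduce to the case $t=T$: replacing $V$ by $\tilde V(s,x)=V(s,x)\mathbf 1_{\{s\le t\}}$, which is again progressively measurable and satisfies $\int_{\R}\left(\int_0^T|\tilde V(s,x)|^2\,ds\right)^{1/2}dx\le\int_{\R}\left(\int_0^T|V(s,x)|^2\,ds\right)^{1/2}dx<\infty$ almost surely, it suffices to prove the identity with upper integration limit $T$ for every integrand satisfying the hypotheses. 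Since the conclusion is of the form ``for all $t$, almost surely'' (and not ``almost surely, for all $t$''), treating each $t$ separately is legitimate.

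Next I would invoke Theorem 2.2 of \cite{VERAA} with the $\sigma$-finite measure space $(\R,\mathcal B(\R),dx)$, the driving Hilbert space and the UMD Banach space both taken equal to $\R$, and the kernel $(s,x)\mapsto V(s,x)$, the variable $x$ playing the role of the point of the measure space. In this scalar setting the $\gamma$-radonifying norm $\|V(\cdot,x)\|_{\gamma(L^2(0,T);\R)}$ is simply $\left(\int_0^T|V(s,x)|^2\,ds\right)^{1/2}$, so that the integrability requirement of the theorem is exactly the hypothesis assumed here, while progressive measurability of $V$ supplies the joint measurability and adaptedness demanded there. Its conclusion reads precisely $\int_{\R}\int_0^T V(s,x)\,dW_s\,dx=\int_0^T\int_{\R}V(s,x)\,dx\,dW_s$ almost surely, which is the claim.

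The only point I expect to require care — and hence the main, if mild, obstacle — is the passage from the finite-measure version of the stochastic Fubini theorem to the $\sigma$-finite one needed for Lebesgue measure on $\R$. If one wants to argue from the finite case, one exhausts $\R$ by the intervals $[-R,R]$, applies the theorem with $dx$ restricted to $[-R,R]$ (finite measure, and the hypothesis is inherited), and lets $R\to\infty$: the right-hand sides converge because $\int_{[-R,R]}V(\cdot,x)\,dx\to\int_{\R}V(\cdot,x)\,dx$ in $L^2([0,T]\times\Omega)$ after the standard localization making $\int_{\R}(\int_0^T|V|^2\,ds)^{1/2}dx$ integrable, so the corresponding stochastic integrals converge in probability; and the left-hand sides converge because $x\mapsto\int_0^T V(s,x)\,dW_s$ belongs to $L^1(\R)$ almost surely, which follows from the Burkholder--Davis--Gundy inequality together with the same hypothesis. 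All the remaining verifications are routine measurability bookkeeping, and if one quotes the $\sigma$-finite form of \cite[Theorem 2.2]{VERAA} directly this exhaustion argument is unnecessary.
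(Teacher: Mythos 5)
Your proposal is correct and follows the same route as the paper, which simply records the lemma as a consequence of Theorem 2.2 in Veraar's article without further elaboration. You go further by spelling out the specialization that the paper leaves implicit — taking $H=E=\R$ so that $\gamma(L^2(0,T);\R)\cong L^2(0,T)$ and the hypothesis becomes exactly the one stated, reducing from a general $t$ to $t=T$ via $V\mathbf 1_{\{s\le t\}}$, and noting (correctly) that the exhaustion of $\R$ by $[-R,R]$ is superfluous because Veraar's theorem is already formulated for a $\sigma$-finite measure space.
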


For $t>0$, let $G_t$ denote the probability density function of the normal law $\mathcal{N}(0,\sigma^2t)$: $$G_t(x) = \exp\left(-\frac{x^2}{2\sigma^2t}\right) \Big/ \sqrt[]{2\pi\sigma^2t} .$$ The following lemma provides a set of estimates that are very useful:
\begin{lem}\label{EstimHeatEq}
The function $G_t(x)$ solves the heat equation:
\begin{align}\label{heateq}
    \partial_t G_t(x) - \frac{\sigma^2}{2} \partial_{xx} G_t(x) = 0, \quad (t,x) \in [0,+\infty)\times \R.
\end{align}
We can express the square of the first order spatial derivative as:
\begin{align}\label{firstDerivSquar}
    \left(\partial_x G_t\right)^2(x) = \frac{x^2}{2 \sigma^5 t^{5/2} \sqrt{\pi}}G_{t/2}(x),
\end{align}
and deduce the $L^1$-norm of $(\partial_xG_t)^2$:
\begin{align}\label{GPrimSqu}
  \displaystyle \|(\partial_xG_t)^2\|_{L^1}=\|\partial_x G_t\|_{L^2}^2 = \frac{1}{4\sigma^3 t^{3/2} \sqrt{\pi}}.
\end{align}
Moreover, we have estimates of the $L^1$-norm of the spatial derivatives of $G$:
\begin{align}\label{FirstDerivG}
  \left\Vert \partial_xG_{t} \right\Vert_{L^1} = \sqrt{\frac{2}{\pi\sigma^2 t}},
\end{align}
\begin{align}\label{SecondDerivG}
    \left\Vert \partial_{xx}G_{t} \right\Vert_{L^1} \leq \frac{2}{\sigma^2 t}.
\end{align}
We may also compute the $L^1$-norm of $G^2_t$:
\begin{align}\label{GSquare}
    \displaystyle \|G_t^2\|_{L^1}=\|G_t\|_{L^2}^2 = \frac{1}{ 2 \sigma \, \sqrt{\pi t}},
\end{align}
which implies that for every measurable function $ \; y:[0,T] \to \R$,
\begin{align}\label{GSquareEsp}
    \displaystyle \int_{\R} \int_{0}^{t} G^2_{t-s}\left(y(s)-x\right)\,ds\,dx = \frac{1}{\sigma} \; \sqrt[]{\frac{t}{\pi}}.
\end{align}
\end{lem}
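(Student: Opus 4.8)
The plan is to establish the seven assertions of Lemma~\ref{EstimHeatEq} by direct elementary computation, organised so that the later formulas are deduced from the earlier ones. I would start from the logarithmic form $\log G_t(x) = -\frac{x^2}{2\sigma^2 t} - \frac12\log(2\pi\sigma^2 t)$, which gives at once
$$\partial_x G_t(x) = -\frac{x}{\sigma^2 t}\,G_t(x),\qquad \partial_{xx}G_t(x) = \Big(\frac{x^2}{\sigma^4 t^2} - \frac{1}{\sigma^2 t}\Big)G_t(x),\qquad \partial_t G_t(x) = \Big(\frac{x^2}{2\sigma^2 t^2} - \frac{1}{2t}\Big)G_t(x).$$
Comparing the expressions for $\partial_t G_t$ and $\frac{\sigma^2}{2}\partial_{xx}G_t$ proves the heat equation~\eqref{heateq}.

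Next I would square the identity $\partial_x G_t = -\frac{x}{\sigma^2 t}G_t$, obtaining $(\partial_x G_t)^2(x) = \frac{x^2}{\sigma^4 t^2}G_t^2(x)$, and then use the algebraic rewriting $G_t^2(x) = \frac{1}{2\pi\sigma^2 t}\exp\!\big(-\tfrac{x^2}{\sigma^2 t}\big) = \frac{1}{2\sigma\sqrt{\pi t}}\,G_{t/2}(x)$. Substituting this into the previous line yields~\eqref{firstDerivSquar}, and integrating it over $\R$ (since $\int_\R G_{t/2} = 1$) proves~\eqref{GSquare}. Integrating~\eqref{firstDerivSquar} together with $\int_\R x^2 G_{t/2}(x)\,dx = \sigma^2 t/2$, the variance of $\mathcal N(0,\sigma^2 t/2)$, gives~\eqref{GPrimSqu}. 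For~\eqref{FirstDerivG} I would integrate $|\partial_x G_t(x)| = \frac{|x|}{\sigma^2 t}G_t(x)$ and invoke $\int_\R |x|\,G_t(x)\,dx = \E[|Z|] = \sigma\sqrt{t}\,\sqrt{2/\pi}$ for $Z\sim\mathcal N(0,\sigma^2 t)$. For~\eqref{SecondDerivG}, the only inequality of the lemma, the triangle inequality $|\partial_{xx}G_t(x)| \le \big(\frac{x^2}{\sigma^4 t^2} + \frac{1}{\sigma^2 t}\big)G_t(x)$ combined with $\int_\R x^2 G_t = \sigma^2 t$ and $\int_\R G_t = 1$ gives the bound $\frac{2}{\sigma^2 t}$.

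Finally, \eqref{GSquareEsp} is a consequence of~\eqref{GSquare}: since the integrand is nonnegative, Tonelli's theorem and the translation invariance of Lebesgue measure give
$$\int_\R\int_0^t G_{t-s}^2(y(s)-x)\,ds\,dx = \int_0^t \|G_{t-s}\|_{L^2}^2\,ds = \int_0^t \frac{ds}{2\sigma\sqrt{\pi(t-s)}} = \frac{1}{\sigma}\sqrt{\frac{t}{\pi}}.$$

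All of these are routine, so I do not expect a genuine obstacle; the only points demanding a little care are the identity $G_t^2 = \frac{1}{2\sigma\sqrt{\pi t}}G_{t/2}$, on which both~\eqref{firstDerivSquar} and~\eqref{GSquare} rest, keeping the powers of $\sigma$ and $t$ straight throughout, and the fact that in~\eqref{GSquareEsp} one should invoke Tonelli (nonnegative integrand) rather than Fubini so that no integrability hypothesis on $y$ is needed.
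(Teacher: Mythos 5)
Your proof is correct and follows essentially the same route as the paper: everything is derived by direct Gaussian computation from $\partial_x G_t = -\frac{x}{\sigma^2 t}G_t$ and the key identity $G_t^2 = \frac{1}{2\sigma\sqrt{\pi t}}G_{t/2}$, which is exactly what the paper uses. The only cosmetic difference is in \eqref{SecondDerivG}, where the paper integrates $-x\,\partial_x G_t$ by parts to get the value $1$ whereas you invoke $\int_\R x^2 G_t\,dx = \sigma^2 t$ directly; the two computations are the same thing, and your version correctly spells out the triangle inequality on $|\partial_{xx}G_t|$ that the paper leaves implicit.
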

\begin{proof}
The second estimate is obtained by rewriting $\partial_{xx}G_{t}(x)$ as $\partial_{xx}G_t(x) = - \frac{1}{\sigma^2t}G_t(x) + \frac{1}{\sigma^2t} \left(-x \partial_xG_t(x) \right)$. We apply an integration by parts for the second term and obtain: $\displaystyle \int_{\R}\partial_{xx}G_t(x)dx \leq  \frac{2}{\sigma^2 t}\left\Vert G_{t} \right\Vert_{L^1} = \frac{2}{\sigma^2 t}$. \\
 As for the values of $\left\Vert G_{t}^2 \right\Vert_{L^1}$ and $\displaystyle \int_{\R} \int_{0}^{t} G^2_{t-s}\left(y(s)-x\right)\,ds\,dx $, we use the fact that $G^2_{t}(x) = G_{t/2}(x) / 2 \sigma \, \sqrt{\pi t}$. 
\end{proof}
\end{appendix}

\pagenumbering{gobble}
\bibliographystyle{plain}

\begin{thebibliography}{3}
\bibitem{BanFerKar} A.D. Banner, E.R. Fernholz and I. Karatzas, \textit{Atlas models of equity markets}. Annals of Applied Probability, Vol.15(4), pp.2296--2330, 2005.

\bibitem{BenJou} O. Bencheikh and B. Jourdain, \textit{Bias behaviour and antithetic sampling in mean-field particle approximations of SDEs nonlinear in the sense of McKean}. ESAIM:ProcS, Vol.65, pp.219--235, 2019.

\bibitem{bobkovledoux} S. Bobkov and M. Ledoux, \textit{One-Dimensional Empirical Measures, Order Statistics, and Kantorovich Transport Distances}. Memoirs of the American Mathematical Society, Vol.265(1259), 2019.
\bibitem{cardaliaguet} P. Cardaliaguet, \textit{Notes on Mean-Field Games (from P.-L. Lions lectures at {C}oll\`ege de {F}rance)}. https://www.ceremade.dauphine.fr/cardaliaguet/MFG20130420.pdf, 2013.

\bibitem{MBoss} M. Bossy, \textit{Optimal rate of convergence of a stochastic particle method to solutions of $1$D viscous scalar conservation laws}. Mathematics of Computation, Vol.73(246), pp.777--812, 2003.

\bibitem{BossTal} M. Bossy and D. Talay, \textit{Convergence rate for the approximation of the limit law of weakly interacting particles: Application to the Burgers equation}. The Annals of Applied Probability, Vol.6(3), pp.818--861, 1996.
  
\bibitem{Chassa} J-F. Chassagneux, L.Szpruch and A. Tse, \textit{Weak quantitative propagation of chaos via differential calculus on the space of measures}. arXiv:1901.02556.

\bibitem{ChaudruFrikha} P.-E. Chaudru de Raynal and N. Frikha, \textit{From the backward Kolmogorov PDE on the Wasserstein space to propagation of chaos for McKean-Vlasov SDEs}. arXiv:1907.01410.
  
\bibitem{Fern} E.R. Fernholz, \textit{Stochastic Portfolio Theory}. Applications of Mathematics - Springer, Vol.48, pp.1--24, 2002.   
\bibitem{JDP} B. Jourdain, \textit{Diffusion Processes Associated with Nonlinear
Evolution Equations for Signed Measures}. Methodology and Computing in Applied Probability, Vol.2(1), pp.69--91, 2000. 

\bibitem{Jabir} J.-F. Jabir, \textit{Rate of propagation of chaos for diffusive stochastic particle systems via Girsanov transformation}. Preprint arXiv:1907.09096, 2019.

\bibitem{JEsaim} B. Jourdain, \textit{Diffusions with a nonlinear irregular drift coefficient and probabilistic interpretation of generalized Burger's equations}. ESAIM : P\&S, Vol.1, pp.339--355, 1997.
  
\bibitem{JSPA} B. Jourdain, \textit{Probabilistic approximation for a porous medium equation}. Stochastic Processes and their Applications, Vol.89(1), pp.81--99, 2000.
  
\bibitem{JAppPro} B. Jourdain, \textit{Probabilistic characteristics method for a 1D scalar conservation law}.  Annals of Applied Probability, Vol.12(1), pp.334--360, 2002.

\bibitem{JMal} B. Jourdain and F. Malrieu, \textit{Propagation of chaos and Poincar\'e inequalities for a system of particles interacting through their cdf}. Annals of Applied Probability, Vol.18(5), pp.1706--1736, 2008.

\bibitem{JMelWoy} B. Jourdain, S. M\'el\'eard and W.A. Woyczynski, \textit{Probabilistic approximation and inviscid limits for 1-D fractional conservation laws}. Bernoulli, Vol.11(4), pp.689--714, 2005.

\bibitem{jrdcds} B. Jourdain and J. Reygner. \textit{Optimal convergence rate of the multitype sticky particle approximation of one-dimensional diagonal hyperbolic systems with monotonic initial data}. Discrete \& Continuous Dynamical Systems - A, Vol.36(9), pp. 4963--4996, 2016.

\bibitem{jrProp} B. Jourdain and J. Reygner, \textit{Propagation of chaos for rank-based interacting diffusions and long time behaviour of a scalar quasilinear parabolic equation}. Stochastic PDE: Analysis \& Computations, Vol.1(3), pp.455--506, 2013.


\bibitem{jrSmall} B. Jourdain and J. Reygner, \textit{The small noise limit of order-based diffusion processes}. Electronic Journal of Probability, Vol.19, pp.29--36, 2014.

\bibitem{jRoux} B. Jourdain and R. Roux. \textit{Convergence of a stochastic particle approximation for fractional scalar conservation laws}. Stochastic Processes and their Applications, Vol.121(5), pp.957-988, 2011.

  
\bibitem{ShkoKol} P. Kolli and M. Shkolnikov, \textit{SPDE limit of the global fluctuations in rank-based models}. Annals of Probability, Vol.46(2), pp.1042--1069, 2018.
  
\bibitem{Mischler} S. Mischler, C. Mouhot and B. Wennberg, \textit{A new approach to quantitative propagation of chaos for drift, diffusion and jump processes}. Probability Theory and related fields - Springer, Vol.161(1-2), pp.1--59, 2015.

\bibitem{RachRusch} S. Rachev and L. R\"uschendorf, \textit{Mass Transportation Problems, Volume I: Theory}. Probability and its Applications, p.45, 1998.

\bibitem{RevYo} D. Revuz and M. Yor, \textit{Continuous Martingales and Brownian Motion}. Vol.293(3), p.224, 2005.
 \bibitem{Sarantsev} A. Sarantsev, \textit{Triple and simultaneous collisions of competing Brownian particles}. Electronic Journal of Probability, Vol.20(29), 2015. 
\bibitem {Szn91} A.-S. Snitzman, \textit{Topics in propagation of chaos; Ecole d'\'{e}t\'{e} de Probabilit\'{e}s de Saint-Flour XIX - 1989}. Lecture Notes in Mathematics. Springer Berlin Heidelberg, pp.165--251, 1991.
  \bibitem{TangTsai} W. Tang and L.-C. Tsai, \textit{Optimal surviving strategy for drifted Brownian motions with absorption}. Annals of Probability, Vol.46(3), pp.1597--1650, 2018.  
\bibitem{VERAA}
M. Veraar, \textit{The Stochastic Fubini theorem revisited}. Stochastics: An International Journal of Probability and Stochastic Processes, Vol.84(4), pp.1--9, 2011.

\bibitem{Veret} A. J. Veretennikov, \textit{On strong solutions and explicit formulas for solutions of stochastic integral equations}. Mathematics of the USSR-Sbornik, Vol.39(3), pp.387--403, 1981.









 



\end{thebibliography}

\end{document}